\numberwithin{equation}{section}
\newcommand{\HH}{\mathcal{H}}
\newcommand{\HK}{\mathcal{K}}
\newcommand{\HM}{\mathcal{M}}
\newcommand{\HE}{\mathcal{E}}
\newcommand{\HF}{\mathcal{F}}
\newcommand{\HL}{\mathcal{L}}
\newcommand{\HD}{\mathcal{D}}
\newcommand{\HB}{\mathcal{B}}
\newcommand{\Fock}{{\mathcal F^2_d}}
\newcommand{\SymFock}{{\mathcal H^2_d}}
\newcommand{\MFockL}{\mathcal{F}^{\infty, \ell}_{d}}
\newcommand{\MFockR}{\mathcal{F}^{\infty, r}_{d}}
\newcommand{\ncB}{\mathbb{B}^{nc}_d}
\newcommand{\dB}{\partial \mathbb B_d}
\newcommand{\D}{\mathbb{D}}
\newcommand{\B}{\mathbb{B}}
\newcommand{\C}{\mathbb{C}}
\newcommand{\N}{\mathbb{N}}
\newcommand{\R}{\mathbb{R}}
\newcommand{\T}{\mathbb{T}}
\newcommand{\MuH}{\mathrm{Mult(\HH)}}
\newcommand{\Mult}{\mathrm{Mult  }}
\newcommand{\ran}{\mathrm{ran \ }}
\newcommand{\la}{\langle}
\newcommand{\ra}{\rangle}
\newcommand{\Bd}{\mathbb B_d}
\newcommand{\Hol}{\operatorname{Hol}}
\newcommand{\rank}{\operatorname{rank}}
\newcommand{\tr}{\operatorname{tr}}
\newcommand{\clos}{\operatorname{clos}}
\def\HE{{\mathcal E}}
\def\HF{{\mathcal F}}
\def\Bd{{\mathbb{B}_d}}
\newcommand{\Ker}[1]{\mathsf{Ker}~}
\theoremstyle{plain}
\newtheorem{theorem}{Theorem}[section]
\newtheorem{lemma}[theorem]{Lemma}
\newtheorem{prop}[theorem]{Proposition}
\newtheorem{corollary}[theorem]{Corollary}
\newtheorem*{theo}{Theorem}
\newtheorem{definition}[theorem]{Definition}
\newtheorem{ques}[theorem]{Question}
\theoremstyle{definition}
\newtheorem{example}[theorem]{Example}
\author[A. Aleman]{Alexandru Aleman}
\address{Lund University, Mathematics, Faculty of Science, P.O. Box 118, S-221 00 Lund, Sweden}
\email{alexandru.aleman@math.lu.se}
\author[M. Hartz]{Michael Hartz}
\address{Fachrichtung Mathematik, Universit\"at des Saarlandes, 66123 Saarbr\"ucken, Germany}
\email{hartz@math.uni-sb.de}
\thanks{M.H. was partially supported by a GIF grant and by the Emmy Noether Program of the German Research Foundation (DFG Grant 466012782).}
\author[J. M\raise.5ex\hbox{c}Carthy]{John E. M\raise.5ex\hbox{c}Carthy}
\address{Department of Mathematics, Washington University in St. Louis, One Brookings Drive,
	St. Louis, MO 63130, USA}
\email{mccarthy@wustl.edu}
\thanks{J.M. was partially supported by National Science Foundation Grant DMS 2054199.}
\author[S. Richter]{Stefan Richter}
\address{Department of Mathematics, University of Tennessee, 1403 Circle Drive, Knoxville, TN 37996-1320, USA}
\email{srichter@utk.edu}
\begin{document}
\date{\today}
\bibliographystyle{plain}
\title{Free outer functions in complete Pick spaces}
\subjclass[2010]{Primary: 46E22; Secondary: 47A16, 47B32}
\keywords{Reproducing kernel Hilbert space, inner-outer factorization}
\maketitle
\begin{abstract} Jury and Martin establish an analogue of the classical inner-outer factorization of Hardy space functions. They show that every function $f$ in a Hilbert function space with a normalized complete Pick reproducing kernel has a factorization of the type $f=\varphi g$, where $g$ is cyclic, $\varphi$ is a contractive multiplier, and $\|f\|=\|g\|$. In this paper we show that if the cyclic factor is assumed to be what we call free outer, then  the factors are essentially  unique, and we give a characterization of the factors that is intrinsic to the space. That lets us compute examples.  We also provide several   applications of this factorization. \end{abstract}

\tableofcontents
\section{Introduction}
Let $\HH$ be a Hilbert function space on a non-empty set $X$ with reproducing kernel $k$, i.e. $f(x)=\la f, k_x\ra$ holds for every $f\in \HH$ and $x\in X$. We will always assume that Hilbert function spaces are separable.
We write $$\MuH=\{\varphi: X \to \C: \varphi f\in \HH \text{ for all }f\in \HH\}$$ for the multiplier algebra of $\HH$. It is well-known that every multiplier $\varphi$ defines a bounded linear operator $M_\varphi \in \HB(\HH)$ by $M_\varphi f=\varphi f$.  Setting $\|\varphi\|_{\MuH}=\|M_\varphi\|_{\HB(\HH)}$ makes $\MuH$ into a Banach algebra. If $f\in \HH$, then we will write $[f]$ for the multiplier invariant subspace generated by $f$, i.e. the closure of $\Mult(\HH)f$ in $\HH$. A function  $f$ is called cyclic, if $[f]=\HH$.
 Important examples of Hilbert function spaces on the open unit disc $\D$ are the Hardy space $H^2(\D)$ of analytic functions on $\D$ with square summable power series coefficients, the Bergman space $L^2_a$ of square area-integrable analytic functions on $\D$, and the Dirichlet space $D$ of analytic functions $f$ whose derivative $f'\in L^2_a$.

In \cite{ARSBeurling} the classical inner-outer factorization of Hardy space functions was generalized to  the Bergman space setting.

\begin{definition} Let $\HH$ be a separable Hilbert function space on $X$, let $z_0\in X$, and assume that $k_{z_0}=1$.
 A function $f\in \HH$ is called $\HH$-extremal, if $\la \varphi f, f\ra =\varphi(z_0)$ for all $\varphi \in \MuH$.
\end{definition}
It is well-known and easy to check that if $h\in \HH$ with $h(z_0)\ne 0$, then the solutions to
$$\sup\{|f(z_0)|: f \in [h], \|f\|\le 1\}$$ are $\HH$-extremal.
 For $H^2(\D), L^2_a,$ and $D$ we take $z_0=0$. It is clear that the $H^2(\D)$-extremal functions are the classical inner functions, which are those $\varphi \in H^\infty(\D)$ whose radial limit functions satisfy $|\varphi(z)|=1$ for a.e. $|z|=1$. $L^2_a$-extremal functions (resp. $D$-extremal functions) have also been called $L^2_a$-inner  (resp. $D$-inner). The following type of inner-outer factorization was proved for the Bergman space in \cite{ARSBeurling}.

\begin{theo}\label{BergmanBeurling} For each non-zero $f\in L^2_a$ there is a unique pair of functions $\varphi, h\in L^2_a$ such that
\begin{enumerate}
\item $f=\varphi h $
\item $h$ is cyclic in $L^2_a$ and $h(0)>0$,
\item $\varphi$ is $L^2_a$-extremal,
\item $[f]=[\varphi]$.
\end{enumerate}
Furthermore, $f/\varphi \in L^2_a$ and  the contractive divisor property  $\|p\frac{f}{\varphi}\|\le \|pf\|$ holds for all polynomials $p$.
\end{theo}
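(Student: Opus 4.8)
This is the Aleman--Richter--Sundberg inner-outer factorization \cite{ARSBeurling}, and I would base the argument on two global facts about the Bergman shift, which I would establish first and which I expect to be the main obstacles. \textbf{(W)} The \emph{wandering subspace theorem}: every nonzero invariant subspace $M\subseteq L^2_a$ satisfies $M=\bigvee_{n\ge 0}z^n\bigl(M\ominus\overline{zM}\bigr)$; a clean route here is Shimorin's, via showing that $M_z|_M$ is a concave (``expansive'') operator, for which a Wold-type decomposition holds. \textbf{(D)} The \emph{contractive divisor property}: if $M$ is invariant and $\varphi$ is its extremal function --- the suitably normalized solution of $\sup\{\mathrm{Re}\, g^{(m)}(0):g\in M,\ \|g\|\le 1\}$, where $m=\min\{\mathrm{ord}_0 g:g\in M\}$ --- then $\varphi$ divides every $g\in M$, so $g=\varphi u$ with $u\in L^2_a$, and $\|u\|\le\|g\|$; I would obtain this from the ARS extremal-function estimates. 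Everything else is bookkeeping around (W) and (D).

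\emph{Existence.} Set $M=[f]$ and $m=\mathrm{ord}_0 f$. Since $\overline{zM}\subseteq\{g:\mathrm{ord}_0 g\ge m+1\}$ while $f\in M$ has order exactly $m$, the wandering subspace $M\ominus\overline{zM}$ is nonzero; as $M$ is generated by a single function its index is $1$, so $M\ominus\overline{zM}=\C\varphi$ for a unit vector $\varphi$, unique up to a unimodular constant (to be fixed below). First, $\varphi$ is $L^2_a$-extremal: writing $\psi\in\Mult(L^2_a)=H^\infty(\D)$ as $\psi=\psi(0)+z\psi_1$ with $\psi_1\in H^\infty$, we have $\psi_1\varphi\in M$, hence $z\psi_1\varphi\in\overline{zM}$, and since $\varphi\perp\overline{zM}$ and $\|\varphi\|=1$ this gives $\langle\psi\varphi,\varphi\rangle=\psi(0)\|\varphi\|^2+\langle z\psi_1\varphi,\varphi\rangle=\psi(0)$; this is (iii). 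By (W), $M=\bigvee_{n\ge 0}z^n\varphi=[\varphi]$, which is (iv), and then $f\in[\varphi]$ forces $\mathrm{ord}_0\varphi=m$. By (D) with $g=f$ we get $f/\varphi\in L^2_a$; put $h=f/\varphi$, so $f=\varphi h$ is (i), and since $\mathrm{ord}_0 h=0$ we now fix the unimodular normalization of $\varphi$ so that $h(0)>0$. Applying (D) to $g=pf\in[f]=M$ for polynomials $p$ yields $\|p(f/\varphi)\|=\|(pf)/\varphi\|\le\|pf\|$, the stated inequality.

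\emph{Cyclicity of $h$.} Consider the division operator $D\colon[\varphi]\to L^2_a$, $D(g)=g/\varphi$: by (D) it is well-defined (the factorization $g=\varphi u$ with $u\in L^2_a$ is unique, as $\varphi\not\equiv 0$) and a linear contraction, and plainly $D(\varphi)=1$. Since $[\varphi h]=[f]=[\varphi]$ by (iv), the function $\varphi$ lies in $[f]=\bigvee_{k\ge 0}z^k f$, so $\varphi=\lim_j P_j f$ for some polynomials $P_j$; because $P_j f=\varphi(P_j h)\in[\varphi]$ with $P_j h\in L^2_a$, we have $D(P_j f)=P_j h$, and applying the continuous map $D$ gives $1=D(\varphi)=\lim_j P_j h$ in $L^2_a$. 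Hence $1\in[h]$, so $[h]=L^2_a$: $h$ is cyclic, which is (ii).

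\emph{Uniqueness.} If $f=\varphi_1 h_1=\varphi_2 h_2$ are two factorizations satisfying (i)--(iv), then by (iv) $[\varphi_1]=[f]=[\varphi_2]=:M$, and by (iii) each $\varphi_i$ is $L^2_a$-extremal, so $\|\varphi_i\|=1$ and $\langle z^n\varphi_i,\varphi_i\rangle=0$ for all $n\ge 1$; since $[\varphi_i]=M=\bigvee_{n\ge 0}z^n\varphi_i$ and hence $\overline{zM}=\bigvee_{n\ge 1}z^n\varphi_i$, this forces $\varphi_i\perp\overline{zM}$, so $\varphi_i\in M\ominus\overline{zM}$. That space being one-dimensional, $\varphi_2=c\varphi_1$ with $|c|=1$; then $\varphi_1(h_1-ch_2)=0$ gives $h_1=ch_2$, and since $h_1(0),h_2(0)>0$ we conclude $c=1$, whence $\varphi_1=\varphi_2$ and $h_1=h_2$. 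The bulk of the work is thus concentrated in the global statements (W) and (D).
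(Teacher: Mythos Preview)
The paper does not prove this theorem; it is quoted in the Introduction as background from \cite{ARSBeurling}, and the rest of the paper concerns complete Pick spaces (where the Bergman space does not fit). So there is no ``paper's own proof'' to compare against.

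Your outline is a faithful sketch of the Aleman--Richter--Sundberg argument, with Shimorin's approach to the wandering subspace property (W). The deduction of (i)--(iv) and of the contractive divisor inequality from the two black boxes (W) and (D) is correct. A few small remarks: your claim ``as $M$ is generated by a single function its index is $1$'' is true for the Bergman shift but is not a triviality; it is itself part of the ARS circle of ideas (it follows once you know the extremal function is a contractive divisor and generates $M$, or via Hedenmalm's earlier work), so you should flag it as another input on the same level as (W) and (D). In the uniqueness step you implicitly use that $\varphi_i\in M$ and that $\overline{zM}$ is the closed span of $\{z^n\varphi_i:n\ge 1\}$; both follow from $[\varphi_i]=M$, which you have from (iv), so this is fine. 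The cyclicity argument via the division operator $D$ is clean; just note that continuity of $D$ is exactly the content of (D) applied to arbitrary $g\in M$, not only to polynomial multiples of $f$, so your statement of (D) should cover all of $M$ (as it does).
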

  Borichev and Hedenmalm proved that conditions (i),(ii), and (iii)  in general do not determine $\varphi$ and $h$ uniquely, \cite{BoriHe}. For more information about Bergman spaces we refer the reader to the monographs \cite{HeKoZhu} and \cite{DuSchu}.

Theorems similar to the above Theorem are known to hold for certain weighted Bergman spaces, see \cite{Shi_alpha}, \cite{HeJaSh}, \cite{McCRi}. For the Dirichlet space $D$ extremal functions $\varphi$ were shown to have several nice properties: their derivatives are in the Nevanlinna class, $(z\varphi)'$ has a meromorphic pseudocontinuation in the exterior disc, and $\varphi$ is a contractive multiplier, \cite{RiSuTAMS}. In particular, by use of the contractive multiplier property, it is elementary to show that in the case of the Dirichlet space conditions (i),(ii), and (iii) of the above Theorem  imply condition (iv). However, in Theorem \ref{DiriNotFactor} we will show that there are functions $f$ in $D$ that are not of the form $f= \varphi h$ for $\varphi, h\in D$ with $\varphi$ $D$-extremal and $h$ cyclic in $D$.  In \cite{JuMa1} and \cite{JuMa2} Jury and Martin observe that results of Arias-Popescu and Davidson-Pitts (\cite{AriasPopescu}, \cite{DavidsonPitts}) lead to a factorization of Dirichlet functions of the type as in the above Theorem (i)-(iii), except that instead of extremal functions one has to allow a larger class of contractive multipliers $\varphi$. Jury and Martin started by proving results for the Drury-Arveson space $H^2_d$, but then they also establish  their results for all spaces with complete Pick kernel and this includes the Dirichlet space (definitions below).

In this paper we will work in the context of all spaces with complete Pick kernels, and we will call the Jury-Martin contractive multipliers subinner. Furthermore, we will show that one obtains a unique factorization, if one restricts the cyclic factors to a class of functions that we will call the free outer functions. We will also characterize the subinner and the free outer functions in terms of the space $\HH$. This will allow us to compute examples and  give some applications.

From now on let $\HH$ be a separable Hilbert function space on a set $X$ whose reproducing kernel $k$ is a   complete Pick kernel  that is normalized at some point $z_0\in X$. That means that $k_w(z)= \frac{1}{1-\la u(z),u(w)\ra_{\HK}}$, where $u$ is a function from $X$ into the open unit ball of an auxiliary separable Hilbert space $\HK$ with  $u(z_0)=0$.  Important examples of such spaces on the unit disc $\D$ are the Hardy space $H^2(\D)$, where $k_w(z)=\frac{1}{1-\overline{w}z}$, and the  Dirichlet space $D$ with reproducing kernel $k_w(z) = \frac{1}{\overline{w}z}\log \frac{1}{1-\overline{w}z}$ (\cite{AgMcC}, Corollary 7.41). The Bergman space $L^2_a$  has reproducing kernel $k_w(z)=\frac{1}{(1-\overline{w}z)^2}$, which is not a complete Pick kernel. Examples of spaces of multivariable functions  with complete Pick kernels are the Drury-Arveson space $H^2_d$ of analytic functions on the unit ball $\B_d$ of $\C^d$, if $d\in \N$, $k_w(z)=\frac{1}{1-\la z, w\ra_{\C^d}}$, and  $H^2_\infty$ the space of functions on the unit ball $\mathbb{B}_\infty$ of $\ell^2$ with reproducing kernel $k_w(z)=\frac{1}{1-\la z, w\ra_{\ell^2}}$. In the following, when we refer to $H^2_d$ we will usually mean to refer to all cases $d\in \N \cup \{\infty\}$.

It is known that it follows from the complete Pick property that $\Mult(\HH)$ has a predual with the property that point evaluations on $\Mult(\HH)$ are weak*-continuous and that it has property $\mathbb{A}_1(1)$, i.e. for every weak*-continuous linear functional $L$ on $\HB(\HH)$ with norm $\|L\|<1$, there are $f,g\in \HH$ such that $\|f\|\|g\|\le 1$ and $L(M_\varphi)=\la \varphi f,g\ra$ for all $\varphi \in \MuH$, see \cite{DavidsonHamilton}, and also see Section \ref{SecVectorvalued} for a proof phrased in terms of the concepts of this paper. We say that a weak*-continuous linear functional on $\MuH$ is a {\it  vector state}, if it is of the form $P_f(\varphi)=\la \varphi f,f\ra$
for some $f \in \mathcal{H}$.

 It is clear that it may happen that $P_f=P_g$ even for $f\ne g$. For example, if $\HH=H^2(\D)$, then one easily checks that $P_f=P_g$ if and only if $|f|=|g|$ a.e. on $\partial \D$, i.e.\ if and only if $f$ and $g$
 have the same outer factor.  We will use these vector states to partition $\HH$, i.e. for $f\in \HH$ we set
 $$\HE_f=\{g\in \HH: P_f=P_g\}.$$

 Note that because of the normalization at the point $z_0\in X$ we have  $k_{z_0}=1$ and hence $P_1(\varphi)=\varphi(z_0)$. Thus, we observe $$\HE_1=\{f \in \HH: f \text{ is }\HH \text{-extremal}\}.$$ Hence if $\HH=H^2(\D)$, then $\HE_1$ equals the set of the classical inner functions.

 \begin{definition} \label{subinnerDef} (a) A function $f\in \HH \setminus \{0\}$ is called {\it free outer}, if $$|f(z_0)|=\sup\{|g(z_0)|: g \in \HE_f\}.$$
(b) A multiplier $\varphi\in \Mult(\HH)$ is called {\it subinner}, if $\|\varphi\|_{\Mult(\HH)}=1$ and if there is $h\in \HH$, $h\ne 0$ with $\|\varphi h\|=\|h\|$.

(c) A pair $(\varphi,f)$ is called a subinner/free outer pair, if $\varphi$ is subinner, $f$ is free outer with $f(z_0)>0$, and $\|\varphi f\|=\|f\|$.
\end{definition}

If $\HH=H^2(\D)$, then one easily checks that the free outer functions are just the outer functions and the subinner functions are the classical inner functions. In the generality of all normalized complete Pick spaces it is known that extremal functions $f\in \HH$ are multipliers of unit norm, i.e $\|f\|_{\MuH}=\|f\|_\HH=1$. If $f(z_0)\ne 0$, then this can be derived from the main result of \cite{McCulloughTrent}. For the general case and an explicit proof see Corollary 4.2 of \cite{AHMcCR_Factor}, and it also follows from our Theorem \ref{subinner/free outer}.

It follows that every extremal function is subinner, but we will see that in  general the collection of subinner functions is strictly larger than the collection of extremal functions. On the other hand it is clear that if $\varphi$ is any single variable $H^\infty$ function with $\|\varphi\|_\infty =1$, but that is not inner, then $\varphi$ is a function of multiplier norm 1 that is not subinner (e.g. $\varphi(z)=(1+z)/2$). The same single variable function then also defines a multiplier $M_\varphi$ of norm 1 on $H^2_d$ for all $d \ge 1$, and one checks that $M_\varphi$ is not subinner on $H^2_d$.

In Corollary \ref{subinner has free outer} we will show that subinner functions $\varphi$ are what has been called {\it column extreme}, i.e.  there is no nonzero multiplier $\psi$ such that the column multiplier $h \mapsto \left(\begin{matrix} \varphi h\\ \psi h\end{matrix}\right)$ is contractive $\HH\to \HH\oplus \HH$. Furthermore, as  has been noted by Jury and Martin,  column extreme multipliers are extreme points of the unit ball of $\Mult(\HH)$, see the proof of Corollary 1.2 of \cite{JuMa3}. Thus, all subinner functions in complete Pick spaces are extreme points of the unit ball of $\Mult(\HH)$. In Section \ref{Sec:SubinnerDirichlet} we will present a condition which implies that in the Dirichlet space a function that is sufficiently regular in $\overline{\D}$ is subinner, if and only if it has multiplier norm 1, see Theorem \ref{DiriSubinner}. We will prove a similar theorem for certain weighted Dirichlet spaces on $\Bd$, but we note that the example $\varphi(z)=(1+z_1)/2$ shows that no such theorem can hold in $H^2_d$.

The important connection between subinner functions and the partition used to define free outer functions becomes apparent by use of an elementary Lemma, which we state explicitly, so that we can refer to it later.
It is a direct consequence of the equality case in the Cauchy--Schwarz inequality.

\begin{lemma}\label{contractionLemma} Let $\HK, \mathcal{L}$ be Hilbert spaces and $T\in \HB(\HK,\mathcal{L})$ with $\|T\|\le 1$. If $x\in \HK$ with $\|Tx\|=\|x\|$, then $T^*Tx=x.$
\end{lemma}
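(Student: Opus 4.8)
The plan is to read off the conclusion from the equality case in the Cauchy--Schwarz inequality, exactly as the remark preceding the statement suggests. If $x=0$ there is nothing to prove, so I would assume $x\neq 0$. The starting point is the chain
\[
\|x\|^2 = \|Tx\|^2 = \la Tx,Tx\ra = \la x, T^*Tx\ra \le \|x\|\,\|T^*Tx\| \le \|x\|\,\|T\|\,\|Tx\| \le \|x\|\,\|Tx\| = \|x\|^2 ,
\]
where the first inequality is Cauchy--Schwarz, the second uses $\|T^*\|=\|T\|$, and the last two use $\|T\|\le 1$ and $\|Tx\|=\|x\|$. Since the two ends of the chain agree, every inequality in it is an equality.

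In particular equality holds in the Cauchy--Schwarz step $|\la x, T^*Tx\ra| \le \|x\|\,\|T^*Tx\|$, which forces $T^*Tx$ and $x$ to be linearly dependent; as $x\neq 0$ this means $T^*Tx = c\,x$ for some scalar $c\in\C$. Pairing with $x$ gives $c\|x\|^2 = \la x, T^*Tx\ra = \|Tx\|^2 = \|x\|^2$, hence $c=1$ and $T^*Tx = x$.

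There is essentially no obstacle here; the only point to keep straight is the reduction to $x\neq 0$ before invoking the equality case, and the use of $\|T^*\|=\|T\|\le 1$ to close the chain of inequalities. As an alternative one-line argument I could instead expand
\[
\|x - T^*Tx\|^2 = \|x\|^2 - 2\,\re\la x, T^*Tx\ra + \|T^*Tx\|^2 = \|x\|^2 - 2\|Tx\|^2 + \|T^*Tx\|^2 \le \|x\|^2 - 2\|x\|^2 + \|x\|^2 = 0,
\]
using $\la x,T^*Tx\ra=\|Tx\|^2=\|x\|^2$ and $\|T^*Tx\|\le\|Tx\|=\|x\|$, and conclude $T^*Tx=x$ directly; I would likely present whichever version reads most cleanly.
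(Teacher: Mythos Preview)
Your proof is correct and follows exactly the approach the paper indicates: it simply remarks that the lemma ``is a direct consequence of the equality case in the Cauchy--Schwarz inequality'' and gives no further argument, which is precisely what you carry out. One tiny cosmetic point: from $T^*Tx=cx$ the pairing actually gives $\bar c\,\|x\|^2=\la x,T^*Tx\ra$, but since $T^*T$ is self-adjoint (or since $\bar c=1$ implies $c=1$) the conclusion is unaffected.
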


 We apply this with $T=M_\varphi$ for a subinner function $\varphi \in \MuH$ and an associated   $h\in \HH, h\ne 0$ with $\|\varphi h\|=\|h\|$. Then $M^*_\varphi M_\varphi h=h$, and hence for all $\psi \in \MuH$ we have $$P_{\varphi h}(\psi)= \la \psi \varphi h, \varphi h\ra = \la \psi h, M^*_\varphi M_\varphi h\ra = P_h(\psi).$$ Thus $\HE_{\varphi h}=\HE_h$.

It is not immediately clear that free outer functions exist, much less that one is contained in each set $\HE_f$. But that will be one of our main results.
\begin{theorem}\label{subinner/free outer} Let $\HH$ be a Hilbert function space with normalized complete Pick kernel. Then for every $f\in \HH \setminus\{0\}$ there is a unique  subinner/free outer pair $(\varphi,h)$ such that $f=\varphi h$. \end{theorem}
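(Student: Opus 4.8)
The plan is to establish existence by a weak‑compactness argument over the collection of ``sub‑factorizations'' of $f$, and then to reduce both the free‑outerness of the resulting factor and the uniqueness of the pair to a single division property of the classes $\HE_f$. A few observations come first. Taking $\psi=1$ in the defining identity $P_g=P_f$ shows $\HE_f\subseteq\{g:\|g\|=\|f\|\}$, so $\sup\{|g(z_0)|:g\in\HE_f\}\le\|f\|<\infty$; thus the supremum in Definition \ref{subinnerDef}(a) is finite. By the Jury--Martin factorization (obtained from the results of Arias--Popescu and Davidson--Pitts in the complete Pick setting) we may write $f=\varphi_0 g_0$ with $g_0$ cyclic, $\varphi_0\in\Mult(\HH)$ contractive, and $\|f\|=\|g_0\|$; since $g_0\ne 0$ and $\|\varphi_0 g_0\|=\|g_0\|$ we must have $\|\varphi_0\|_{\Mult(\HH)}=1$, so $\varphi_0$ is subinner, and Lemma \ref{contractionLemma} gives $M_{\varphi_0}^*M_{\varphi_0}g_0=g_0$, whence $\HE_f=\HE_{g_0}$ by the computation following that lemma. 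Finally, if $g$ is cyclic then $g(z_0)\ne 0$: otherwise $(\psi g)(z_0)=\psi(z_0)g(z_0)=0$ for every multiplier $\psi$, so $[g]$ lies in the proper subspace $\{1\}^\perp$. Hence $\sup\{|g(z_0)|:g\in\HE_f\}\ge|g_0(z_0)|>0$.

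\textbf{Existence.} Let $\Omega$ be the set of pairs $(\varphi,g)$ with $\varphi\in\Mult(\HH)$, $\|\varphi\|_{\Mult(\HH)}\le 1$, $\varphi g=f$, and $\|\varphi g\|=\|g\|$; it is nonempty, containing $(\varphi_0,g_0)$, and it is invariant under $(\varphi,g)\mapsto(\bar c\varphi,cg)$ for $|c|=1$. For $(\varphi,g)\in\Omega$ one has $\|g\|=\|f\|$ and, by Lemma \ref{contractionLemma}, $g\in\HE_f$; also $|f(z_0)|=|\varphi(z_0)|\,|g(z_0)|\le|g(z_0)|$. Put $s=\sup\{\re g(z_0):(\varphi,g)\in\Omega\}$, so $0<s\le\|f\|$ by the preceding paragraph and rotation‑invariance. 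Take $(\varphi_n,g_n)\in\Omega$ with $g_n(z_0)\to s$, rotated so that $g_n(z_0)>0$. Since the unit ball of $\Mult(\HH)$ is weak‑$*$ compact and the ball of radius $\|f\|$ in $\HH$ is weakly compact, pass to a subsequence with $\varphi_n\to\varphi$ weak‑$*$ in $\Mult(\HH)$ and $g_n\to h$ weakly in $\HH$. Point evaluations being weak‑$*$ continuous on $\Mult(\HH)$ and weakly continuous on $\HH$, letting $n\to\infty$ in $\varphi_n(x)g_n(x)=f(x)$ gives $\varphi h=f$ on $X$. Then $\|f\|=\|\varphi h\|\le\|\varphi\|_{\Mult(\HH)}\|h\|\le\|h\|\le\liminf_n\|g_n\|=\|f\|$, so $\|h\|=\|f\|$, $\|\varphi\|_{\Mult(\HH)}=1$, and $\|\varphi h\|=\|h\|$. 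Hence $(\varphi,h)\in\Omega$, $\varphi$ is subinner, $h\in\HE_f$, $h(z_0)=s>0$, and $\|\varphi h\|=\|h\|$. It remains to see that $h$ is free outer, i.e.\ that $s=\sup\{|g(z_0)|:g\in\HE_f\}$.

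\textbf{Reduction of the remaining points.} I claim both the free‑outerness of $h$ and uniqueness follow from the \emph{division property}: for every $g\in\HE_f$ there is a contractive multiplier $\theta$ with $\theta h=g$. Granting this, if $g\in\HE_f$ then $|g(z_0)|=|\theta(z_0)|\,|h(z_0)|\le|h(z_0)|=s$ since $|\theta(z_0)|\le\|\theta\|_{\Mult(\HH)}\le 1$; so $h$ is free outer and $(\varphi,h)$ is a subinner/free outer pair with $f=\varphi h$. For uniqueness, note first that a free outer function $k$ is cyclic: writing $k=\psi k'$ by Jury--Martin with $\psi$ subinner and $k'$ cyclic, we get $k'\in\HE_k$ with $|k'(z_0)|=|k(z_0)|/|\psi(z_0)|$, while $|\psi(z_0)|<1$ unless $\psi$ is a unimodular constant (equality in $|\psi(z_0)|=|\langle M_\psi 1,1\rangle|\le\|\psi\|_{\HH}\le 1$ forces $M_\psi 1$ to be a multiple of $1$), so $\psi$ is constant and $k$ is cyclic. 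Now let $(\varphi_1,h_1)$ and $(\varphi_2,h_2)$ be subinner/free outer pairs for $f$; then $\HE_{h_1}=\HE_f=\HE_{h_2}$, both $h_i$ are cyclic, and $h_i(z_0)=s>0$. Applying the division property in both directions, $h_2=\theta h_1$ and $h_1=\theta'h_2$ with $\theta,\theta'$ contractive multipliers; then $\theta'\theta h_1=h_1$ and, $h_1$ being cyclic, $\theta'\theta=1$, so $\theta$ is invertible in $\Mult(\HH)$ with $\|\theta\|_{\Mult(\HH)}=\|\theta^{-1}\|_{\Mult(\HH)}=1$. This makes $M_\theta$ a unitary multiplication operator, which forces $\theta$ to be a unimodular constant; since $h_1(z_0),h_2(z_0)>0$ that constant is $1$, so $h_1=h_2$, and then $\varphi_1 h_1=\varphi_2 h_1=f$ with $h_1$ cyclic gives $\varphi_1=\varphi_2$.

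\textbf{The main obstacle} is the division property. The natural route: given $g\in\HE_f$, define $T$ on the dense subspace $\{M_\psi h:\psi\in\Mult(\HH)\}$ of $\HH=[h]$ by $TM_\psi h=M_\psi g$; if $T$ is well defined and bounded it extends to $\HH$, commutes with every $M_\psi$, hence equals $M_\theta$ for a multiplier $\theta$ with $\theta h=g$ and $\|\theta\|_{\Mult(\HH)}=\|T\|$. Well‑definedness uses cyclicity (if $\psi h=0$ then $\psi u=0$ for all $u\in[h]=\HH$, so $\psi=0$), and contractivity of $T$ is exactly the inequality $\|\psi g\|\le\|\psi h\|$ for all multipliers $\psi$. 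This inequality is \emph{not} a formal consequence of $P_g=P_h$, and extracting it is where the complete Pick hypothesis enters essentially; I expect to obtain it from the $\mathbb{A}_1(1)$ property together with the de Branges--Rovnyak–type description of the classes $\HE_f$ (the same structure underlying the intrinsic characterization of free outer functions announced in the introduction). For uniqueness alone it suffices to prove the division property when $g$ is itself free outer (hence cyclic), which may be the more tractable case.
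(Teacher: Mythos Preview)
Your proposal is explicitly incomplete: you correctly isolate the ``division property'' (for every $g\in\HE_f$ there is a contractive multiplier $\theta$ with $g=\theta h$, equivalently $\|\psi g\|\le\|\psi h\|$ for all $\psi\in\Mult(\HH)$) as the single missing ingredient, but you do not prove it. Everything downstream---free outerness of your extremal $h$, cyclicity of free outer functions, and uniqueness---is correctly reduced to it, and your weak-compactness construction of the extremal pair $(\varphi,h)\in\Omega$ is fine. But the division property is the entire content of the theorem, and your suggestion to extract it from $\mathbb{A}_1(1)$ and a de Branges--Rovnyak description is not a proof. In fact the inequality $\|\psi g\|\le\|\psi h\|$ is exactly condition (d) of Corollary~\ref{freeouter}, which in the paper is derived \emph{from} Theorems~\ref{subinner/free outer} and~\ref{Pf=Pg}, not used to prove them; there is no known way to obtain it directly from $P_g=P_h$ working purely inside $\HH$.

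The paper's route is entirely different and bypasses this obstacle. One embeds $\HH$ isometrically as a $*$-invariant subspace of $\SymFock\subseteq\Fock$, and uses the Arias--Popescu/Davidson--Pitts free inner-outer factorization in $\Fock$. The substitutes for your missing step are computational: the identity $\|pF\|^2=\operatorname{Re}\langle F,V_p^\ell F\rangle$ (Lemma~\ref{Lemma 0}) shows that $V_F^r=V_G^r$ implies $\|pF\|=\|pG\|$ for all \emph{free} polynomials $p$, and this equality of norms (not merely an inequality) is what produces the isometric intertwiner that yields the inner divisor (Lemma~\ref{Lemma 2}). The free, non-commutative structure is essential here---the analogous identity fails for commuting multipliers, which is why your intrinsic approach stalls. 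One then checks (Lemma~\ref{SymFock}, Theorem~\ref{free-outer-in*invariant}) that the free outer factor of an element of a $*$-invariant subspace stays in that subspace and coincides with the $\HH$-free outer notion. Uniqueness in $\HH$ is then immediate from uniqueness of the free factorization in $\Fock$. If you want to salvage your outline, you would need to either prove the division property directly---no such proof is known---or pass through the Fock space as the paper does.
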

 We will prove this in Section \ref{SecPickspaces}. In the following we will  refer to $h$ as the free outer factor of $f$ and $\varphi$ as the subinner factor of $f$. In Theorem \ref{VectorFactorization} we will provide a vector version of this theorem. It will imply that any square summable sequence of functions in $\HH$ has a common free outer factor.

 As mentioned before, the factorization is the same as the one given by Jury and Martin in \cite{JuMa1,JuMa2}. We will give complete details later, but it is known that there is a natural embedding of $\HH$ in the free Fock space $\Fock$ for some $d\in \N \cup \{\infty\}$, and that for functions in $\Fock$ there is a free inner-outer factorization (\cite{AriasPopescu}, \cite{DavidsonPitts}). It is thus clear that this free inner-outer factorization applies to functions in the embedded spaces $\HH\subseteq \Fock$. The factorization of Theorem \ref{subinner/free outer} is just that factorization from the free setting, and the novelty here is the uniqueness and the observation that the factors can be described intrinsically in terms of the space $\HH$. We will see in Theorem \ref{uniqueLiftofSubinner} that a multiplier is subinner, if and only if it has a unique lift to an isometric left multiplier on $\Fock$. Furthermore, a function in $\HH$ is free outer, if and only if its image under the embedding in $\Fock$ is left- (respectively right-) outer (see Theorem \ref{free-outer-in*invariant} and Lemma \ref{lem:free_outer_H}). We also note that the defining property of free outer functions is inspired by Theorem 2.3 of \cite{GeluPopMultiA_I} and Proposition 2.5 of \cite{DavidsonPitts}.

 By the observation before Theorem \ref{subinner/free outer}, if $f=\varphi h$ is a subinner/free outer factorization, then $\HE_f=\HE_h$, and it follows that each equivalence class $\HE_f$ contains a free outer function. It turns out to be unique up to a constant factor of modulus 1.
\begin{theorem} \label{Pf=Pg} Let $\HH$ be a Hilbert function space with normalized complete Pick kernel.
If $f ,g\in \HH \setminus \{0\}$, then $P_f=P_g$ if and only if $f$ and $g$ have the same free outer factors.\end{theorem}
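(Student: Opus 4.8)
The plan is to deduce both implications from Theorem~\ref{subinner/free outer} together with the observation recorded right after Lemma~\ref{contractionLemma}: if $(\varphi,h)$ is a subinner/free outer pair, then $\HE_{\varphi h}=\HE_h$, that is, $P_{\varphi h}=P_h$. For the direction ``$\Leftarrow$'', suppose $f$ and $g$ have the same free outer factor $h$, with subinner factors $\varphi,\psi$, so that $f=\varphi h$ and $g=\psi h$; then $P_f=P_{\varphi h}=P_h=P_{\psi h}=P_g$, and this is all that is needed. For ``$\Rightarrow$'', let $f=\varphi h$ and $g=\psi k$ be the subinner/free outer factorizations from Theorem~\ref{subinner/free outer}. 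The same observation gives $P_h=P_f=P_g=P_k$, so $h$ and $k$ are free outer functions lying in the common class $\HE:=\HE_h=\HE_k$; evaluating $P_h=P_k$ at the constant multiplier $1$ gives $\|h\|=\|k\|$, and by definition of a free outer factor $h(z_0),k(z_0)>0$. Thus the whole statement reduces to the uniqueness assertion: two free outer functions in the same class that are positive at $z_0$ coincide.

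I would finish this from a division property, as follows. Suppose one can produce a contractive multiplier $\theta\in\Mult(\HH)$ with $h=\theta k$. Because $\|h\|=\|k\|$ and $\|\theta k\|\le\|k\|$, we must have $\|\theta k\|=\|k\|$, hence $\|\theta\|_{\Mult(\HH)}=1$, so $\theta$ is subinner and $(\theta,k)$ is a subinner/free outer pair with product $h$. On the other hand $1$ is subinner ($M_1$ is the identity, of norm $1$), and since $h$ is free outer with $h(z_0)>0$ the pair $(1,h)$ is a subinner/free outer pair with product $h$ as well. The uniqueness in Theorem~\ref{subinner/free outer} then forces $\theta=1$ and $k=h$. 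This also shows directly that the free outer function inside any class $\HE_f$ is unique, which is the real content of the theorem.

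The remaining, and hard, point is the division property: if $k$ is free outer and $h\in\HE_k$, then $h=\theta k$ for some contractive multiplier $\theta$. I do not expect this to follow from Theorem~\ref{subinner/free outer} by soft arguments alone: using the complete Pick characterization of contractive multipliers (and the fact that a free outer function is zero-free), the assertion is equivalent to positivity of the kernel $\bigl(k(z)\overline{k(w)}-h(z)\overline{h(w)}\bigr)k_w(z)$, i.e.\ to $h/k$ being a contractive multiplier --- essentially a restatement of what we want. I would therefore pass to the free functional model: embed $\HH$ isometrically into the free Fock space $\Fock$ so that $\Mult(\HH)$ is the compression of the left multiplier algebra (following \cite{JuMa1,JuMa2} and \cite{AriasPopescu,DavidsonPitts}). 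Under this embedding, $k$ free outer means its image is (right-)outer (Theorem~\ref{free-outer-in*invariant} and Lemma~\ref{lem:free_outer_H}), while $h\in\HE_k$ means the images of $h$ and $k$ have the same left vector state; the extremal/cyclicity property of outer vectors in $\Fock$ then provides a contractive left multiplier carrying the image of $k$ to the image of $h$, and compressing it back to $\HH$ yields $\theta$. So the entire difficulty is concentrated in this passage to the free model and the uniqueness of the outer part there; once that is in hand, the rest is the bookkeeping above.
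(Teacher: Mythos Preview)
Your proposal is correct and follows essentially the same route as the paper. Both directions agree: for ``$\Leftarrow$'' you use the observation after Lemma~\ref{contractionLemma}, exactly as the paper does; for ``$\Rightarrow$'' you pass to the Fock-space embedding and invoke the free inner/outer factorization (the paper uses Theorem~\ref{Sarason/PosFunct} together with Lemma~\ref{SymFock} to produce the common outer factor directly, while you reduce to uniqueness of the free outer representative in $\HE_h$ and then obtain that uniqueness via the division property in $\Fock$ plus the uniqueness clause of Theorem~\ref{subinner/free outer}). The only cosmetic difference is this repackaging; the substantive step---that $P_h=P_k$ lifts to equality of vector states on $\MFockL$ (proof of Lemma~\ref{lem:free_outer_H}), and that an outer vector in $\Fock$ divides any element of its class via an inner multiplier (Lemma~\ref{Lemma 2}/Theorem~\ref{Sarason/PosFunct/LeftRight}, using Lemma~\ref{SarasonFunctionsLemma} to pass between $V^r$ and $V^\ell$ on $\SymFock$)---is the same in both arguments.
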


This will be proved in Section \ref{SecPickspaces}. We will see that free outer functions are always cyclic vectors in $\HH$ (see Theorem  \ref{freeOuterIsCyclic}). This easily implies that $[f]=[\varphi]$, whenever $\varphi$ is the subinner factor of $f$. However, we will show by example that in the Dirichlet space $D$ and in the Drury-Arveson space $H^2_d$, $d\ge 2$, there are  cyclic functions that are not free outer (Examples  \ref{ExampleDirichlet} and \ref{ExampleDruryArveson}). This implies that in these spaces there are cyclic, non-constant subinner functions. For the Dirichlet space we will even exhibit non-constant functions that are simultaneously subinner and free outer (see the remark following Theorem \ref{DiriSubinner}).
\begin{corollary}\label{freeouter} Let $h\in \HH \setminus \{0\}$. Then the following are equivalent:
\begin{enumerate}
\item[(a)]$h$ is free outer,
\item[(b)] there is $z\in X$ with $|h(z)|=\sup\{|f(z)|: f\in \HE_h\}$,
\item[(c)] for all $z\in X$ we have $|h(z)|=\sup\{|f(z)|: f\in \HE_h\}$.
\end{enumerate}
\noindent If $\HH$ does not admit any non-constant isometric multipliers, then the three conditions above are also equivalent to
\begin{enumerate}
\item[(d)] $\|\psi f\|\le \|\psi h\|$ for all $f\in \HE_h$ and $\psi \in \MuH$,
\end{enumerate}
\end{corollary}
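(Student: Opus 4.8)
The plan is to reduce everything to the subinner/free outer factorization of Theorem~\ref{subinner/free outer}. Fix $h\in\HH\setminus\{0\}$ and write $h=\varphi h_0$ with $(\varphi,h_0)$ a subinner/free outer pair, so $\|\varphi\|_{\MuH}=1$, $h_0(z_0)>0$, and $\HE_h=\HE_{h_0}$ by the observation preceding Theorem~\ref{subinner/free outer}. Everything will rest on two facts. \emph{Fact I:} if $\psi\in\MuH$ is subinner and $|\psi(z)|=1$ for some $z\in X$, then $\psi$ is a constant of modulus $1$. Indeed $\|M_\psi^*\|=\|M_\psi\|=1$ while $M_\psi^*k_z=\overline{\psi(z)}\,k_z$ has norm $|\psi(z)|\,\|k_z\|=\|k_z\|$, so Lemma~\ref{contractionLemma} applied to $T=M_\psi^*$ gives $M_\psi M_\psi^*k_z=k_z$, i.e.\ $\overline{\psi(z)}\,\psi(x)\,k_z(x)=k_z(x)$ for every $x\in X$; since the kernel is a normalized complete Pick kernel we have $k_z(x)=(1-\langle u(x),u(z)\rangle)^{-1}\neq0$ for all $x$, whence $\overline{\psi(z)}\,\psi\equiv1$ and $\psi\equiv\psi(z)$.

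\emph{Fact II:} every nonzero $f\in\HE_h$ has the form $f=\psi_f h_0$ with $\psi_f\in\MuH$ of norm $1$; consequently $\sup\{|f(z)|:f\in\HE_h\}=|h_0(z)|$ for every $z\in X$, and this supremum never vanishes. For the first part, factor $f=\psi_f f_0$ with $(\psi_f,f_0)$ a subinner/free outer pair; then $\HE_{f_0}=\HE_f=\HE_{h_0}$, so $P_{f_0}=P_{h_0}$, and Theorem~\ref{Pf=Pg} shows $f_0$ and $h_0$ have the same free outer factor. A free outer function $g$ with $g(z_0)>0$ is, however, its own free outer factor: writing $g=\chi g'$ with $(\chi,g')$ a subinner/free outer pair, the relation $g'\in\HE_g$ and the free outerness of $g$ give
\[
g'(z_0)\le\sup\{|f(z_0)|:f\in\HE_g\}=|g(z_0)|=|\chi(z_0)|\,g'(z_0)\le g'(z_0)
\]
(the last step because $|\chi(z_0)|\le\|\chi\|_{\MuH}=1$), so $|\chi(z_0)|=1$; by Fact~I, $\chi$ is a unimodular constant, and then $g(z_0)=\chi\,g'(z_0)$ with $g(z_0),g'(z_0)>0$ forces $\chi=1$, i.e.\ $g=g'$. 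Applying this to $f_0$ and $h_0$, which are free outer and positive at $z_0$, yields $f_0=h_0$, so $f=\psi_f h_0$ and $|f(z)|=|\psi_f(z)|\,|h_0(z)|\le|h_0(z)|$ since $|\psi_f(z)|\le\|\psi_f\|_{\MuH}=1$; together with $h_0\in\HE_h$ this gives the stated supremum. Finally, $h_0$ is cyclic by Theorem~\ref{freeOuterIsCyclic}, and a cyclic vector cannot vanish anywhere: $h_0(x)=0$ would force $[h_0]\subseteq\{g\in\HH:g(x)=0\}$, which omits $k_x$. Thus $|h_0(z)|\neq0$ for all $z$.

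Granting these, the equivalences follow at once. By Fact~II, for each $z\in X$ the equality $|h(z)|=\sup\{|f(z)|:f\in\HE_h\}$ reads $|\varphi(z)|\,|h_0(z)|=|h_0(z)|$, that is $|\varphi(z)|=1$ (using $|h_0(z)|\neq0$). Hence (a), (b), (c) amount to: $|\varphi(z_0)|=1$; $|\varphi(z)|=1$ for some $z\in X$; $|\varphi(z)|=1$ for all $z\in X$, respectively. By Fact~I each of these is equivalent to the statement that $\varphi$ is a unimodular constant, hence they are equivalent to one another. For (d), suppose $\HH$ admits no non-constant isometric multiplier. If (c) holds then $\varphi$ is a unimodular constant; rescaling $h$ by a unimodular constant changes neither side of (d), so we may take $h=h_0$, and for $f=\psi_f h_0\in\HE_h$ as in Fact~II and $\psi\in\MuH$,
\[
\|\psi f\|=\|M_{\psi_f}(\psi h_0)\|\le\|M_{\psi_f}\|\,\|\psi h_0\|=\|\psi h_0\|=\|\psi h\|,
\]
so (d) holds. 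Conversely, if (d) holds, apply it with $f=h_0\in\HE_h$ to get, for every $\psi\in\MuH$,
\[
\|\psi h_0\|\le\|\psi h\|=\|M_\varphi(\psi h_0)\|\le\|\psi h_0\|,
\]
so $M_\varphi$ is isometric on $\Mult(\HH)h_0$, hence on its closure $[h_0]=\HH$ (Theorem~\ref{freeOuterIsCyclic}); thus $\varphi$ is an isometric multiplier, so constant by hypothesis and of modulus $1$ since $\|\varphi\|_{\MuH}=1$, giving (a).

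The step I expect to be the main obstacle is Fact~II: one must identify the supremum $\sup\{|f(z)|:f\in\HE_h\}$ exactly — which is what forces the passage through Theorem~\ref{Pf=Pg} and through the description of the free outer factor of a free outer function — and one must know this supremum is nowhere zero, i.e.\ that free outer functions are zero-free. Without the latter, (b) could be satisfied vacuously at a zero of $h_0$ without $h$ being free outer, so the non-vanishing (obtained from cyclicity, Theorem~\ref{freeOuterIsCyclic}) is genuinely needed for the equivalence.
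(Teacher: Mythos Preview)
Your proof is correct and follows essentially the same approach as the paper: both arguments rest on Theorems~\ref{subinner/free outer} and~\ref{Pf=Pg} together with the fact that a contractive multiplier attaining modulus~$1$ at a point must be constant, and the (d)$\Rightarrow$(a) direction is identical. The paper cites this last fact from an external reference (Lemma~2.2 of \cite{AHMcCR_Smirnov}), whereas you prove it inline as Fact~I; and your organization --- factoring $h=\varphi h_0$ first and reducing each of (a), (b), (c) to ``$\varphi$ is a unimodular constant'' --- is a slightly more uniform repackaging of the paper's implication-by-implication argument. Your derivation that a free outer function with positive value at $z_0$ is its own free outer factor is correct but circuitous: it follows immediately from the uniqueness in Theorem~\ref{subinner/free outer}, since $(1,g)$ is already a subinner/free outer pair.
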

We note that many spaces including the Dirichlet space $D$ and the Drury-Arveson space $H^2_d$ for $d\ge 2$ satisfy the hypothesis of item (d) of this corollary.
For $H^2_d$, this is Proposition 8.36 in \cite{AgMcC}, and the statement about $D$ also follows from a computation
with power series.
\begin{proof}
  The previous two Theorems imply that if $h$ is free outer, then each $f \in \mathcal{E}_h$ has the
  form $f = \varphi h$ for a contractive multiplier $\varphi$. This shows the implications
  (a) $\Rightarrow$ (c) and (a) $\Rightarrow$ (d). The implication (c) $\Rightarrow$ (b) is trivial. The implication (b) $\Rightarrow$ (a) follows from the existence of the subinner/free outer factorization and the fact that a non-constant contractive multiplier $\varphi$ must satisfy $|\varphi(z)|<1$ for each $z\in X$, see Lemma 2.2 of \cite{AHMcCR_Smirnov}.

We will now prove (d) $\Rightarrow$ (a). Suppose $h\in \HH$ satisfies that $\|\psi f\|\le \|\psi h\|$ for all $f \in \HE_h$ and all $\psi \in \Mult(\HH)$. Let $h=\varphi g$ be the subinner/free outer factorization of $h$, then for all $\psi\in \Mult(\HH)$ we have
$$\|\psi g\|\le \|\psi h\|=\|\psi \varphi g\|\le \|\psi g\|.$$
Thus $\|\varphi (\psi g)\|=\|\psi g\|$ for all $\psi\in \Mult(\HH)$. Now since the free outer function $g$ is cyclic in $\HH$ it follows that $M_\varphi$ is an isometry on $\HH$, and hence by hypothesis $\varphi$ must be constant, hence $h$ is free outer.
\end{proof}

\begin{corollary} \label{subinner has free outer} If $\varphi \in \Mult(\HH)$ is subinner, then there is a free outer function $f$ with $\|\varphi f\|=\|f\|\ne 0$. Consequently, subinner functions are column extreme multipliers.\end{corollary}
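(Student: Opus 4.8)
The plan is to reduce the statement to the case of a \emph{free outer} witness by invoking the factorization of Theorem~\ref{subinner/free outer}, and then to extract column extremality from the fact (Theorem~\ref{freeOuterIsCyclic}) that free outer functions are cyclic.

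First I would start from the definition: since $\varphi$ is subinner, there is some $h\in\HH\setminus\{0\}$ with $\|\varphi h\|=\|h\|$. This $h$ need not be free outer, so I would apply Theorem~\ref{subinner/free outer} to it, writing $h=\psi g$ where $(\psi,g)$ is a subinner/free outer pair. Then $g$ is free outer, $g\neq 0$ (indeed $g(z_0)>0$), and by the definition of a subinner/free outer pair $\|g\|=\|\psi g\|=\|h\|$, while $\|\psi\|_{\Mult(\HH)}=\|\varphi\|_{\Mult(\HH)}=1$. The crux is then a single estimate: since $\Mult(\HH)$ is commutative we have $\varphi h=\varphi\psi g=\psi(\varphi g)$, and because $M_\psi$ and $M_\varphi$ are contractions,
$$\|g\|=\|h\|=\|\varphi h\|=\|\psi(\varphi g)\|\le\|\varphi g\|\le\|g\|,$$
which forces $\|\varphi g\|=\|g\|\neq 0$. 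Thus $f:=g$ is the asserted free outer function. There is no real obstacle here; the only thing to notice is that a subinner witness $h$ can always be replaced by its free outer factor without destroying the norm equality, and this is exactly what Theorem~\ref{subinner/free outer} delivers. (Note that one does not even need Lemma~\ref{contractionLemma} for this part.)

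For the second assertion I would argue by contradiction against the definition of column extreme. Suppose $\psi'\in\Mult(\HH)$ is a multiplier for which the column $h\mapsto(\varphi h,\psi' h)$ is a contraction $\HH\to\HH\oplus\HH$, i.e.\ $\|\varphi h\|^2+\|\psi' h\|^2\le\|h\|^2$ for all $h\in\HH$. Evaluating at the free outer $f=g$ produced above and using $\|\varphi g\|=\|g\|$ gives $\|\psi' g\|^2\le\|g\|^2-\|\varphi g\|^2=0$, so $M_{\psi'}g=0$. Since $g$ is free outer it is cyclic by Theorem~\ref{freeOuterIsCyclic}, so $\Mult(\HH)g$ is dense in $\HH$; as $M_{\psi'}M_\eta g=M_\eta M_{\psi'}g=0$ for every $\eta\in\Mult(\HH)$, the bounded operator $M_{\psi'}$ vanishes on this dense subspace, hence $M_{\psi'}=0$ and $\psi'=0$. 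Therefore no nonzero $\psi'$ makes the column contractive, which is precisely the statement that $\varphi$ is column extreme.

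In summary, the proof is short once Theorem~\ref{subinner/free outer} (existence of the subinner/free outer factorization) and Theorem~\ref{freeOuterIsCyclic} (cyclicity of free outer functions) are in hand; the norm-chain displayed above is the only computation, and it is routine.
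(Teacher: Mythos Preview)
Your proof is correct and follows essentially the same approach as the paper: the norm chain $\|g\|=\|h\|=\|\varphi\psi g\|\le\|\varphi g\|\le\|g\|$ is identical (up to relabeling), and the column-extreme conclusion is drawn from the same fact that a free outer function is cyclic. The only cosmetic difference is that the paper deduces $\psi'=0$ from $\psi' f=0$ by noting that cyclic functions have no zeros, whereas you argue via density of $\Mult(\HH)g$; both are immediate.
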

\begin{proof} Since $\varphi$ is subinner, there is a non-zero $g\in \HH$ with $\|\varphi g\|=\|g\|$. Let $g=\psi f$ be the subinner/free outer factorization of $g$, then $\|f\|=\|\psi f\|=\|\varphi \psi f\|\le \|\varphi f\|\le \|f\|$. Hence $\|\varphi f\|=\|f\|\ne 0$ and $f$ is free outer.

Since free outer functions are cyclic we have $f(z)\ne 0$ for all $z$, hence if $\psi \in \Mult(\HH)$ such that $h \mapsto \left(\begin{matrix}\varphi h\\ \psi h\end{matrix}\right)$ is contractive $\HH \to \HH \oplus \HH$, then $\|\psi f\|=0$ and hence $\psi=0$. Thus, $\varphi$ is column extreme.
\end{proof}

In Sections \ref{SecDiriExample} and \ref{SectionDruryArveson} we have calculated some examples of subinner/free outer factorizations in the Dirichlet and Drury-Arveson spaces. We now present a general observation about examples. Note that if $f=\varphi h$ is a subinner/free outer factorization, then $M_\varphi^* f=h$ by Lemma \ref{contractionLemma}, and hence the free outer factor of $f$ is contained in the $*$-invariant subspace generated by $f$: $$h\in [f]_*=\clos \{M_\psi^* f: \psi \in \MuH\}.$$ So for example, if $f$ is a finite linear combination of reproducing kernels, then its free outer factor is a linear combination of the same reproducing kernels and hence the subinner factor is a ratio of two such expressions. In particular, we note that reproducing kernels must be free outer.

\begin{example} If $\HM$ is any multiplier invariant subspace of $\HH$ and $\lambda$ is not a common zero of $\mathcal{M}$, then $f=\frac{ \|P_\HM k_\lambda\|}{\| k_\lambda\|}k_\lambda$ is the free outer factor of $g=P_\HM k_\lambda$. \end{example}
Indeed, one verifies that $P_f=P_g$. Hence since $f$ is a free outer function the statement follows from Theorem \ref{Pf=Pg}.

 By a Theorem of McCullough and Trent (\cite{McCulloughTrent}) $P_\HM$ can be associated with an inner sequence $\{\varphi_n\}$ so that $P_\HM = \sum_{n\ge 0} M_{\varphi_n}M_{\varphi_n}^*$, and hence $P_\HM k_\lambda= \sum_{n\ge 0} \overline{\varphi_n(\lambda)}\varphi_n k_\lambda$. This implies that the subinner factor of $P_\HM k_\lambda$ is
$$\frac{\sum_{n\ge 0} \overline{\varphi_n(\lambda)}\varphi_n }{\sqrt{\sum_{n\ge 0} |\varphi_n(\lambda)|^2}}.$$
The sequence $\varphi_n$ can also be used to give a representation of the subinner factor $\varphi$ of an arbitrary function $f\in \HM$. Indeed, if $f\in \HM$, then $f=P_{\HM}f=\sum_{n\ge 0} \varphi_n M_{\varphi_n}^* f$, where $\|f\|^2=\sum_{n\ge 0} \|M_{\varphi_n}^* f\|^2$. Then Theorem \ref{VectorFactorization} implies that there is a free outer function $g$ with $\|g\|^2= \sum_{n\ge 0} \|M_{\varphi_n}^* f\|^2$, and there are  $\psi_n \in \MuH$ such that $\sum_n \|\psi_n h\|^2\le \|h\|^2$ for all $h\in \HH$ and such that $M_{\varphi_n}^* f=\psi_n g$ for each $n\ge 0$. Then $f= \sum_{n\ge 0} \varphi_n \psi_n g$, and the uniqueness of the subinner/free outer factorization implies that the subinner factor of $f$ is $\varphi=\sum_{n\ge 0} \varphi_n \psi_n$.

We will now mention some applications of our results. A classical theorem of Caratheodory-Schur states that every function in the unit ball of $H^\infty(\D)$ is a pointwise limit of a sequence of finite Blaschke products, see e.g. \cite{Garnett}, Theorem 2.1. or \cite{Nikolski}, p. 36. In \cite{ShiASS} Shimorin  defined the class of subextremal functions for the Bergman space $L^2_a$, and he showed that every such function can be approximated by a pointwise limit of rational $L^2_a$-extremal functions. Since the subextremal functions contain the unit ball of $H^\infty(\D)$ we may consider this to be an analogue of the Caratheodory-Schur Theorem. On the other hand for the Dirichlet space it was shown in \cite{LuoRi} that there are contractive multipliers of the Dirichlet space that are not a pointwise limit of a sequence of $D$-extremal functions. The subinner functions turn out to be solutions to extremal Pick  problems (see Theorem \ref{ExtremalPick}), and hence they form a sufficiently rich class of functions to achieve such density.

\begin{theorem}\label{Caratheodory Approximation} If $\HH$ is separable, infinite dimensional, and has a normalized complete Pick kernel $k$,  then for every function $\varphi$ in the unit ball of $\MuH$, there is a sequence $\{\varphi_n\}$ of  subinner functions such that $\varphi_n(x)\to \varphi(x)$ for every $x\in X$.

If  $\HH\subseteq \Hol(\Bd)$ for $1\le d <\infty$ and the monomials $\{z^\alpha\}_{\alpha \in \N_0^d}$ form an orthogonal basis for $\HH$ and are multipliers of $\HH$, then  the approximating subinner functions can be chosen to be rational functions.
\end{theorem}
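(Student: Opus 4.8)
The plan is to deduce the statement from a finite interpolation fact — that extremal Pick problems have subinner solutions, Theorem~\ref{ExtremalPick} — together with a limiting argument. Fix $\varphi$ in the closed unit ball of $\MuH$. Since $\HH$ is infinite dimensional and separable we may choose a sequence $y_1, y_2, \dots$ of distinct points of $X$ whose reproducing kernels $k_{y_j}$ have dense linear span in $\HH$. For each $n$ the Pick problem with nodes $y_1, \dots, y_n$ and target values $\varphi(y_1), \dots, \varphi(y_n)$ is solvable, $\varphi$ itself being a solution. Adjoin the node $y_{n+1}$ and let $D_{n+1} \subseteq \C$ be the set of values $w$ for which the enlarged problem with last value $w$ is still solvable. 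The diagonal entry of the Pick matrix forces $|w| \le 1$, so $D_{n+1}$ is a nonempty (it contains $\varphi(y_{n+1})$), compact, convex subset of $\overline{\D}$; choose $w_{n+1} \in \partial D_{n+1}$. The Pick matrix of the enlarged problem is then positive semidefinite (as $w_{n+1} \in D_{n+1}$, which is closed) but not positive definite (else $w_{n+1}$ would be interior to $D_{n+1}$), hence singular, so this enlarged problem is extremal. By Theorem~\ref{ExtremalPick} it has a subinner solution $\varphi_n$, and by construction $\varphi_n(y_j) = \varphi(y_j)$ for $j = 1, \dots, n$.

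To conclude the first assertion, note that the $\varphi_n$ are contractive multipliers, hence $\|M_{\varphi_n}\| \le 1$, and that $M_{\varphi_n}^* k_{y_j} = \overline{\varphi_n(y_j)}\, k_{y_j} = \overline{\varphi(y_j)}\, k_{y_j} = M_\varphi^* k_{y_j}$ whenever $n \ge j$. Thus $M_{\varphi_n}^*$ converges to $M_\varphi^*$ on a dense subspace, and being uniformly bounded it converges in the strong operator topology; consequently $M_{\varphi_n} \to M_\varphi$ in the weak operator topology, and $\varphi_n(x) = \la M_{\varphi_n} 1, k_x \ra \to \la M_\varphi 1, k_x \ra = \varphi(x)$ for every $x \in X$.

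For the second assertion assume in addition that $\HH \subseteq \Hol(\Bd)$ with the monomials forming an orthogonal basis of $\HH$ consisting of multipliers. If $p$ is a polynomial of degree $n$ then the $*$-invariant subspace $[p]_* = \clos\{M_\psi^* p : \psi \in \MuH\}$ is contained in the polynomials of degree at most $n$: for $\psi \in \MuH$ and $|\gamma| > n$ every monomial occurring in $\psi z^\gamma$ has degree $\ge |\gamma| > n$, so $\la M_\psi^* p, z^\gamma\ra = \la p, \psi z^\gamma\ra = 0$ by orthogonality of the monomials. Since the free outer factor of any function lies in the $*$-invariant subspace it generates, the free outer factor of a polynomial is again a polynomial, and hence the subinner factor of a polynomial is a rational function. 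It therefore suffices to show that the subinner functions $\varphi_n$ produced above are themselves pointwise limits of subinner factors of polynomials. For this one approximates $\varphi_n$ in the norm of $\HH$ by its Taylor partial sums $p_{n,N}$ (possible since the monomials form a basis of $\HH$) and checks that the subinner factors of the $p_{n,N}$ converge pointwise to $\varphi_n$ as $N \to \infty$ — using that, $\varphi_n$ being subinner, its free outer factor is the constant function and the free outer factors of the $p_{n,N}$ tend to it. A diagonal argument over $n$ and $N$ then yields a sequence of rational subinner functions converging to $\varphi$ pointwise on $X$.

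The delicate points are two. First, one must justify that the enlarged Pick problem obtained by picking a boundary value of $D_{n+1}$ is extremal in the precise sense required by Theorem~\ref{ExtremalPick}, and in particular that it has a genuinely subinner solution and not merely a contractive multiplier solution. Second, for the rational refinement one needs the stated convergence of the free outer factors of the Taylor truncations — that is, a form of continuity of the subinner/free outer factorization at subinner functions — which is the main obstacle there.
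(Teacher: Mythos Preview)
Your argument for the first assertion is correct (with the minor caveat that you should choose the $y_j$ so that the kernels $k_{y_j}$ are linearly independent, not merely so that the $y_j$ are distinct, in order to ensure the Gram matrix $K$ has full rank; this is easily arranged). The paper takes a related but structurally different route: rather than forcing extremality by perturbing the value at one additional node, it fixes a nested sequence of finite-dimensional $*$-invariant subspaces $\HM_n$ with dense union, perturbs $\varphi$ additively by $t\psi_n$ for a unit vector $\psi_n\in\HM_n\ominus\HM_{n-1}$, and uses continuity in $t$ to make $\|M^*_{\varphi+t_n\psi_n}|\HM_n\|=1$; then commutant lifting (Lemma~\ref{subinnerMatch}) yields a subinner $\varphi_n$ agreeing with $\varphi+t_n\psi_n$ on $\HM_n$, and one shows $\varphi_n\to\varphi$ pointwise. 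Your approach via Theorem~\ref{ExtremalPick} is cleaner for the bare pointwise statement; the paper's formulation, however, allows one to choose the subspaces $\HM_n$ freely, and this flexibility is what delivers the rational case.

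Your second paragraph contains a genuine error, not merely the continuity obstacle you flag. The claim that ``$\varphi_n$ being subinner, its free outer factor is the constant function'' is false: the subinner/free outer factorization of a function $f\in\HH$ writes $f=\psi h$ with $h$ free outer, and there is no reason for $h$ to be constant when $f$ happens to be a subinner multiplier. Indeed the paper exhibits non-constant Dirichlet-space functions that are simultaneously subinner and free outer (see the remark after Theorem~\ref{DiriSubinner}); for such $\varphi_n$ the subinner factor is $1$ and the free outer factor is $\varphi_n$ itself, so taking subinner factors of Taylor truncations would converge to $1$, not to $\varphi_n$. Thus even granting continuity of the factorization, your diagonal argument does not produce rational approximants to $\varphi$. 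The paper avoids this entirely: it takes $\HM_n$ to be the span of monomials of degree $\le n$, notes these are $*$-invariant subspaces consisting of multipliers, and then Lemma~\ref{subinnerMatch} directly furnishes subinner $\varphi_n$ with $g_n=\varphi_n f_n$ for polynomials $f_n,g_n\in\HM_n$, so $\varphi_n=g_n/f_n$ is rational from the outset.
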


We will prove this in Section \ref{SecApproxBySubinner}. As a second application we state the following theorem.

\begin{theorem} \label{SpacesWithPickFactorIntro} Let $\HH_k$ and $\HH_s$ be separable Hilbert function spaces on a set $X$ such that $s$ is a  complete Pick kernel, which is normalized at $z_0\in X$, and  such that $k/s $ is positive definite.

  If $f\in \HH_k \setminus \{0\}$, then there is a unique pair of functions $\varphi\in \Mult(\HH_s,\HH_k)$ and $g\in \HH_s$ such that
\begin{enumerate}
\item $f=\varphi g$,
\item $\|f\|_{\HH_k}=\|g\|_{\HH_s}$,
\item $g$ is free outer with $g(z_0)>0$,
\item  $\|\varphi\|_{\Mult(\HH_s,\HH_k)}\le 1$.
\end{enumerate}
\end{theorem}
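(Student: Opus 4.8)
The plan is to deduce this two-space factorization from the vector-valued one-space factorization of Theorem~\ref{VectorFactorization} by using the hypothesis that $p:=k/s$ is positive definite to realize $\HH_k$ over $\HH_s$. Let $\HF$ be the reproducing kernel Hilbert space with kernel $p$ and let $b\colon X\to\HF$, $b(w)=p_w$, so that $k(z,w)=s(z,w)\la b(z),b(w)\ra_{\HF}$. Since $s$ is normalized we have $1\in\HH_s$, hence $\HF=\HH_{k/s}$ embeds contractively into $\HH_k$ and is, in particular, separable. Comparing Gram matrices of reproducing kernels, the assignment $k_w\mapsto s_w\otimes b(w)$ extends to an isometry $V\colon\HH_k\to\HH_s\otimes\HF$ whose adjoint $J:=V^*$ is a coisometry acting on $\HF$-valued functions $H$ by $(JH)(z)=\la H(z),b(z)\ra_{\HF}$. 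Moreover, since $p$ is positive definite, the Schur product of the positive kernels $(1-\psi(z)\overline{\psi(w)})s(z,w)$ and $p(z,w)$ is positive; hence $\Mult(\HH_s)$ is contained contractively in $\Mult(\HH_k)$.

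For existence, let $f\in\HH_k\setminus\{0\}$ and set $F:=Vf\in\HH_s\otimes\HF$, so that $\|F\|=\|f\|_{\HH_k}$ and $(JF)(z)=\la F(z),b(z)\ra_{\HF}=f(z)$. Applying the $\HF$-valued version of Theorem~\ref{VectorFactorization} to $F$ produces $F=\Phi g$ with $g\in\HH_s$ free outer, $\Phi\in\Mult(\HH_s,\HH_s\otimes\HF)$ of norm at most $1$, and $\|g\|_{\HH_s}=\|F\|=\|f\|_{\HH_k}$. A free outer function does not vanish at $z_0$ (its free outer factor lies in the same $\HE$-class and is positive at $z_0$, by Theorem~\ref{subinner/free outer} and the remark preceding it), so after rescaling $g$ and $\Phi$ by conjugate unimodular constants we may assume $g(z_0)>0$. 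Put $\varphi(z):=\la\Phi(z),b(z)\ra_{\HF}$. Then $M_\varphi=JM_\Phi$ is a composition of contractions $\HH_s\to\HH_s\otimes\HF\to\HH_k$, so $\varphi\in\Mult(\HH_s,\HH_k)$ with $\|\varphi\|\le 1$, and $\varphi g=J(\Phi g)=JF=f$. This establishes (i)--(iv).

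For uniqueness, suppose $f=\varphi_1 g_1=\varphi_2 g_2$ with both pairs satisfying (i)--(iv). From $\|M_{\varphi_i}g_i\|_{\HH_k}=\|f\|_{\HH_k}=\|g_i\|_{\HH_s}$ and $\|M_{\varphi_i}\|\le 1$, Lemma~\ref{contractionLemma} gives $M_{\varphi_i}^*M_{\varphi_i}g_i=g_i$. Hence, for every $\psi\in\Mult(\HH_s)\subseteq\Mult(\HH_k)$,
$$\la\psi g_i,g_i\ra_{\HH_s}=\la M_{\varphi_i}(\psi g_i),M_{\varphi_i}g_i\ra_{\HH_k}=\la\psi f,f\ra_{\HH_k},$$
which is independent of $i$; thus $P_{g_1}=P_{g_2}$ as functionals on $\Mult(\HH_s)$. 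Since $g_1$ and $g_2$ are free outer in $\HH_s$ with positive value at $z_0$, their subinner/free outer factorizations in the normalized complete Pick space $\HH_s$ are $(1,g_1)$ and $(1,g_2)$; therefore Theorem~\ref{Pf=Pg} together with the uniqueness in Theorem~\ref{subinner/free outer} forces $g_1=g_2=:g$. Finally, $M_{\varphi_1-\varphi_2}\colon\HH_s\to\HH_k$ annihilates $\Mult(\HH_s)g$, which is dense in $\HH_s$ because the free outer function $g$ is cyclic (Theorem~\ref{freeOuterIsCyclic}); evaluating the zero operator at $1\in\HH_s$ yields $\varphi_1=\varphi_2$.

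The heart of the argument is the reduction in the first paragraph: once $\HH_k=J(\HH_s\otimes\HF)$ with $J$ a coisometric $\Mult(\HH_s)$-module map, a scalar two-space factorization of $f$ is precisely a scalarization (apply $J$) of a one-space $\HF$-valued factorization of $Vf$. I expect the step needing the most care to be the verification that $JM_\Phi$ really is multiplication by the scalar function $\la\Phi,b\ra$ and has multiplier norm at most $1$ — routine but conjugate-sensitive reproducing-kernel bookkeeping — together with confirming that Theorem~\ref{VectorFactorization} is available in the $\HF$-valued generality invoked here. Given that, the uniqueness argument is short and rests only on Lemma~\ref{contractionLemma}, the contractive inclusion $\Mult(\HH_s)\hookrightarrow\Mult(\HH_k)$, and Theorem~\ref{Pf=Pg}.
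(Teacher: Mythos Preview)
Your proof is correct and follows essentially the same route as the paper's (given under Corollary~\ref{SpacesWithPickFactor}): the paper picks an orthonormal basis $\{e_n\}$ for your auxiliary space $\HF=\HH_{k/s}$, thereby turning $\HH_s\otimes\HF$ into $\bigoplus_n\HH_s$ and your isometry $V$ into the map $T\colon k_w\mapsto\{\overline{g_n(w)}s_w\}_n$, after which both arguments apply Theorem~\ref{VectorFactorization} to $Vf$ and obtain $\varphi$ by applying $T^*$ (your $J$) to the resulting column multiplier. The uniqueness arguments are identical up to phrasing; your coordinate-free formulation is slightly cleaner, but the content is the same.
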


 The proof will use the vector version of Theorem \ref{subinner/free outer}, see Section \ref{SecVectorvalued}. The factors in the above factorization have some special properties as compared to general functions in $\HH_k$. In fact, functions in $\HH_s$ can be written as ratios of multipliers of $\HH_s$, and contractive multipliers from $\HH_s$ to $\HH_k$ e.g. satisfy the pointwise estimate $|\varphi(z)|\le \frac{\|k_z\|}{\|s_z\|}$, see \cite{AHMcCR_Factor}, Section 4.

 Theorem \ref{SpacesWithPickFactorIntro} applies for example to the situation where $\HH_k$ is the Hardy or Bergman space of $\Bd$ and $\HH_s=H^2_d$. In the case when $d=1$ one can give a direct proof of this result that is based on the sweep of a measure (as compared to the isometric dilation) and this argument extends to $P^t(\mu)$-spaces, $1\le t<\infty$, see Theorem \ref{Ptmu}.

Another application concerns the weak product of the Hilbert function space $\HH$. It is defined by
$$\HH\odot \HH= \Big\{\sum_{n\ge 1} f_ng_n: \sum_{n\ge 1}\|f_n\|\|g_n\|<\infty\Big\}.$$ $\HH \odot \HH$ becomes a Banach space with norm
$$\|h\|_{\HH\odot \HH}=\inf\Big\{\sum_{n\ge 1}\|f_n\|\|g_n\|: h=\sum_{n\ge 1} f_ng_n\Big\}.$$
It is known that $H^2(\partial \Bd)\odot H^2(\dB)=H^1(\dB)$ and there are similar results for Bergman spaces, see \cite{CRW}. We think of $\HH\odot \HH$ as an analogue of the space $H^1$ for the general Hilbert function space. For further motivation and information about weak products we refer the reader to \cite{ARSW_Bilinear} and \cite{RiSuWeakProd}. Also, see \cite{AHMcCR_WP} for the particular cases where $\HH$ has  a complete Pick kernel.

 A combination of  Theorem 1.3 of \cite{JuMa2} and of Theorem 1.2 of \cite{HartzColumnRow} implies that if a Hilbert function space has a normalized complete Pick kernel, then for every $h\in \HH\odot \HH$, there is a pair of functions $f,g\in \HH$ such that $h=fg$ and $\|h\|_{\HH\odot\HH}=\|f\|\|g\|$. It turns out that subinner/free outer pairs are relevant in this context as well.

\begin{theorem} \label{WeakProductNorm} Let $\HH$ be a Hilbert function space with normalized complete Pick kernel.
  If $h \in \mathcal{H} \odot \mathcal{H} \setminus \{0\}$, then there exists a   subinner/free outer pair $(\varphi,f)$ with
  $h = \varphi f^2$ and $\|h\|_{\mathcal{H} \odot \mathcal{H}}
  = \|f^2\|_{\mathcal{H} \odot \mathcal{H}} = \|f\|^2$.
\end{theorem}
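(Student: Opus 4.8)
The plan is to take the factorization $h = ab$ with $\|h\|_{\HH \odot \HH} = \|a\|\,\|b\|$ furnished by the combination of Theorem 1.3 of \cite{JuMa2} and Theorem 1.2 of \cite{HartzColumnRow}, and to reshape it using the subinner/free outer factorizations of the two factors. First I would replace $a$ by $ta$ and $b$ by $t^{-1}b$ for a suitable $t>0$ so that $\|a\| = \|b\| =: \rho$, whence $\rho^2 = \|h\|_{\HH \odot \HH}$. Writing $a = \varphi_a f_a$ and $b = \varphi_b f_b$ for the subinner/free outer factorizations from Theorem \ref{subinner/free outer}, the functions $f_a, f_b$ are free outer with $\|f_a\| = \|a\| = \rho = \|b\| = \|f_b\|$, and $\varphi_a, \varphi_b$ are contractive multipliers of $\HH$, hence also of $\HH \odot \HH$ (if $\|\psi\|_{\Mult(\HH)}\le 1$ and $k = \sum_n u_n v_n$, then $\psi k = \sum_n (\psi u_n) v_n$ and $\sum_n \|\psi u_n\|\,\|v_n\| \le \sum_n \|u_n\|\,\|v_n\|$). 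Therefore
\[
 \rho^2 = \|h\|_{\HH \odot \HH} = \|\varphi_a \varphi_b f_a f_b\|_{\HH \odot \HH} \le \|f_a f_b\|_{\HH \odot \HH} \le \|f_a\|\,\|f_b\| = \rho^2 ,
\]
so $\|f_a f_b\|_{\HH \odot \HH} = \|f_a\|\,\|f_b\|$: the single-term product $f_a \cdot f_b$ is a norm-optimal representation of the element $f_a f_b$, and its two factors are free outer of equal norm.

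The heart of the matter — and the step I expect to be the main obstacle — is a rigidity statement: \emph{if $f,g \in \HH$ are free outer and $\|fg\|_{\HH \odot \HH} = \|f\|\,\|g\|$, then $f$ and $g$ are scalar multiples of one another.} (Multiplying by a nonzero constant preserves being free outer, since $\mathcal{E}_{cf} = c\,\mathcal{E}_f$; and, as $\|f\| = \|g\|$ here, the relation $f = \mu g$ forces $|\mu| = 1$ by Theorem \ref{Pf=Pg}.) Granting this and applying it to $f_a, f_b$, we get $f_b = \mu f_a$ with $|\mu| = 1$, hence $h = \varphi f^2$, where $f := f_a$ is free outer and $\varphi := \overline{\mu}\,\varphi_a \varphi_b$ is, being a product of contractive multipliers times a unimodular constant, a contractive multiplier of $\HH$. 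In the Hardy space this rigidity is exactly the equality case of the Cauchy--Schwarz inequality on $L^2(\partial\D)$: $\|fg\|_{H^1} = \int |f|\,|g| = \|f\|_2\|g\|_2$ forces $|f| = c|g|$ a.e., and hence $f = c'g$ since $f$ and $g$ are outer. In the general complete Pick setting I would prove it either by transporting the statement to the full Fock space $\Fock$, using that free outer functions are precisely the left- (respectively right-) outer ones and that $\Fock \odot \Fock$ enjoys the analogous equality case, or by choosing a norming functional $\Lambda$ for the element $fg$ in the predual of $\HH \odot \HH$ and exploiting that the equality case in $|\Lambda(uv)| \le \|\Lambda\|\,\|u\|\,\|v\|$, combined with the extremality built into the notion of free outer function, pins $f$ and $g$ down to a common scalar multiple.

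It then remains to check that $(\varphi, f)$ is a subinner/free outer pair and to read off the norms. The squeeze of the first paragraph also yields $\|f^2\|_{\HH \odot \HH} = \|f_a f_b\|_{\HH \odot \HH} = \rho^2 = \|h\|_{\HH \odot \HH} = \|f\|^2$, which is already the asserted chain of equalities. Next, since $\varphi f^2 = (\varphi f)\cdot f$ and $\varphi$ is a contractive multiplier of $\HH$,
\[
 \|f\|^2 = \|h\|_{\HH \odot \HH} = \|\varphi f^2\|_{\HH \odot \HH} \le \|\varphi f\|\,\|f\| \le \|f\|^2 ,
\]
so $\|\varphi f\| = \|f\| \ne 0$; in particular $\|\varphi\|_{\Mult(\HH)} \ge 1$, hence $\|\varphi\|_{\Mult(\HH)} = 1$, and $f$ witnesses that $\varphi$ is subinner. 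Finally, free outer functions are cyclic (Theorem \ref{freeOuterIsCyclic}), so $f(z_0) \ne 0$; replacing $f$ by $\overline{u} f$ and $\varphi$ by $u^2\varphi$, where $u = f(z_0)/|f(z_0)|$, leaves everything above unchanged and gives $f(z_0) > 0$. Hence $(\varphi, f)$ is a subinner/free outer pair with $h = \varphi f^2$ and $\|h\|_{\HH \odot \HH} = \|f^2\|_{\HH \odot \HH} = \|f\|^2$.
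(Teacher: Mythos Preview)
Your setup and the final squeeze are fine, but there is a genuine gap at the step you yourself flag as the ``heart of the matter'': the rigidity claim that two free outer functions $f,g$ with $\|fg\|_{\HH\odot\HH}=\|f\|\,\|g\|$ must be proportional. Neither of your two sketches is convincing. For route~(a), there is no obvious analogue of $\HH\odot\HH$ on the noncommutative side; products in $\Fock$ do not commute, so a ``weak product'' there is a different object, and the embedding $\HH\hookrightarrow\Fock$ does not transport $\HH\odot\HH$ into anything you can work with. For route~(b), the dual of $\HH\odot\HH$ consists of bounded Hankel-type bilinear forms, and equality in $|B(f,g)|\le\|B\|\,\|f\|\,\|g\|$ does not in general force the two arguments to be proportional---that is a Cauchy--Schwarz phenomenon, not a general bilinear one. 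The Hardy space argument you cite works precisely because $\|fg\|_{H^1}=\int|f|\,|g|$ is already an $L^2$ inner product in disguise; in a general complete Pick space there is no such pointwise identity to fall back on.

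The paper sidesteps the rigidity question entirely. After writing $h=g_1g_2$ with $\|g_1\|=\|g_2\|=1$ (as you do), it applies the polarization identity
\[
  g_1g_2=\Big(\frac{g_1+g_2}{2}\Big)^2+\Big(\frac{i(g_1-g_2)}{2}\Big)^2=:f_1^2+f_2^2,
\]
so that $\|f_1\|^2+\|f_2\|^2=1$ by the parallelogram law. Then Theorem~\ref{VectorFactorization} supplies a single free outer $f$ with $\|f\|=1$ and a contractive column $(\varphi_1,\varphi_2)^T$ with $f_i=\varphi_i f$, whence $h=(\varphi_1^2+\varphi_2^2)f^2$. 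The column-row property (the same result of \cite{HartzColumnRow} you already invoke) makes the row $(\varphi_1,\varphi_2)$ contractive too, so $\varphi:=\varphi_1^2+\varphi_2^2$, being row times column, is a contractive multiplier; the final squeeze is then exactly your last paragraph. The idea you are missing is thus not a rigidity theorem but the passage from a product to a sum of squares, which lets one extract a \emph{common} free outer factor via Theorem~\ref{VectorFactorization} rather than two unrelated ones.
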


\

Unfortunately, for general functions $f\in \HH$  we do not have a way to explicitly determine the free outer factor. In \cite{AHMcCR_Factor} the Sarason function \begin{align}\label{SarasonFunctionDef} V_f(z)= 2\la f, k_z f\ra -\|f\|^2\end{align} was used to establish an explicit formula to write $f$ as a ratio of two multipliers. Note that it was shown in \cite{GrRiSu} (also see Lemma \ref{Lemma7.2}) that for normalized complete Pick kernels for each $z\in X$ the function $k_z$ is a multiplier, thus the Sarason function is well-defined. If $\HH=H^2$ is the Hardy space of the unit disc, then $$V_f(z)= \int_{|w|=1} \frac{w+z}{w-z}|f(w)|^2 \frac{|dw|}{2\pi},$$ and hence $V_f$ determines the outer factor of $f$ uniquely.

For the larger generality of all spaces with complete Pick kernel we mentioned above that two functions $f,g \in \HH$ have the same free outer factor, whenever $f$ and $g$ define the same vector states $P_f=P_g$ (Theorem \ref{Pf=Pg}). Thus, any function $f$ has the same Sarason function as its free outer factor. Furthermore, in Section \ref{SecFreeSarason} we will define two natural extensions of $V_f$, the free Sarason functions $V_F^r$ and $V_F^\ell$. They are free functions defined for functions $F$ in the Fock space, and they uniquely determine the free outer factor of $f$, whenever $F$ is the image of $f$ under the Fock space embedding of $\HH$. Thus, one may wonder whether  $V_f$ itself determines the free outer factor of $f$. For many spaces this is the case, and in those cases the Sarason function provides a convenient way to keep track of when vector states agree.

In order to state our result we recall notation regarding multinomial notation. If $d\in \N$, then a multi-index $\alpha$ is an element in $\N_0^d,$  $\alpha=(\alpha_1, \dots, \alpha_d)$, and $|\alpha|=\sum_{j=1}^d \alpha_j$. If $d=\infty$, then a multi-index is a sequence of the form $\alpha=(\alpha_1,\alpha_2, \dots)$ with $\alpha_j\in \N_0$ for each $j \in \N$ and $|\alpha|=\sum_{j=1}^\infty \alpha_j <\infty$. We write $\underline{0}=(0,0,\dots)$ and $I_\infty$ for the collection of all such multi-indices. If $n\in \N$, then we will also write $$I_n=\{\alpha \in I_\infty: \alpha_j=0 \text{ for all }j>n\}.$$ Note that $I_\infty$ is a countably infinite set.

 If $d\in \N\cup\{\infty\}$ and if  $z\in \mathbb{B}_d$, then the expressions $z^\alpha= \prod_{j=1}^d z_j^{\alpha_j}$ and $\alpha!=\prod_{j=1}^d  \alpha_j!$ are well-defined for each $\alpha\in I_d$, because all products only have finitely many factors that are $\ne 1$.

 \begin{theorem}\label{SarasonFunction} Let $\HH$ be a Hilbert function space with normalized complete Pick kernel on a set $X$, and let $f,g \in \HH \setminus \{0\}$.

If either

(a) $f, g \in \MuH$ or

(b) $d\in \N\cup \{\infty\}$, $X=\Bd$, $k_w(z)=\sum_{\alpha\in I_d} a_\alpha z^\alpha \overline{w}^\alpha$, where  each $a_\alpha>0$ and $\sum_{\alpha\in I_d} a_\alpha |z^\alpha|^2<\infty$ for each $z\in \Bd$,

then  the following are equivalent:
\begin{enumerate}
\item $f$ and $g$ have the same free outer factor,
\item $P_f=P_g$,
\item  $V_f=V_g$.\end{enumerate}
\end{theorem}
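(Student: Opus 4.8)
The plan is to show the cyclic chain of implications $(1) \Rightarrow (2) \Rightarrow (3) \Rightarrow (1)$, where the first two implications hold for every normalized complete Pick space and the extra hypotheses (a) or (b) are needed only for $(3) \Rightarrow (1)$. The implication $(1) \Rightarrow (2)$ is essentially already recorded in the excerpt: by Theorem \ref{Pf=Pg} having the same free outer factor is equivalent to $P_f = P_g$, so there is nothing to prove. The implication $(2) \Rightarrow (3)$ is also immediate from the definition of the Sarason function: since $V_f(z) = 2 \langle f, k_z f \rangle - \|f\|^2$ and each $k_z$ is a multiplier (Lemma \ref{Lemma7.2}), we have $\langle f, k_z f \rangle = \overline{P_f(\overline{k_z})}$ in an appropriate sense, so $P_f = P_g$ forces $V_f = V_g$ pointwise on $X$. (One should be slightly careful: $V_f$ is built from $\langle \overline{k_z} f, f\rangle$, or rather from $M_{k_z}^*$; the point is simply that $V_f$ is determined by the quadratic form $\varphi \mapsto \langle \varphi f, f\rangle$ evaluated at the multipliers $k_z$, hence by $P_f$.)

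The substance of the theorem, and the only place the hypotheses (a) and (b) enter, is $(3) \Rightarrow (1)$; equivalently, given the first two implications, it suffices to prove $(3) \Rightarrow (2)$, i.e. that $V_f = V_g$ implies $P_f = P_g$. The strategy is to show that under (a) or (b) the single scalar function $V_f$ already encodes the entire vector state $P_f$. Writing $\|f\|^2 = \sum_\alpha c_\alpha(f)$ for suitable ``coefficients'' and $\langle \varphi f, f \rangle$ as a bilinear pairing, one wants to recover all the numbers $P_f(\psi) = \langle \psi f, f\rangle$ for $\psi$ ranging over a set that separates vector states. In case (b) the natural move is to expand: if $k_w(z) = \sum_{\alpha \in I_d} a_\alpha z^\alpha \overline w^\alpha$ and $f = \sum_\alpha \hat f(\alpha) z^\alpha$, then
\[
  \langle f, k_z f \rangle = \sum_{\alpha, \beta} a_\alpha \, \overline{\hat f(\beta)} \, (\widehat{z^\alpha f})(\beta) \, \overline{z}^{\alpha} \cdot (\text{norm weights}),
\]
and collecting terms in powers of $z$ and $\overline z$ shows that $V_f(z)$ is a (formal) power series in $z$ and $\overline z$ whose coefficients are precisely the numbers $\langle z^\alpha f, z^\beta f\rangle$ (up to positive constants $a_\alpha, a_\beta$ and the norm weights). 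Since each $z^\alpha$ is a multiplier by hypothesis, the monomials are dense enough in $\Mult(\HH)$ in the relevant topology that knowing $\langle z^\alpha f, z^\beta f\rangle$ for all $\alpha, \beta$ determines $P_f$; hence $V_f$ determines $P_f$. In case (a), where $f, g$ are themselves multipliers, the cleaner route is to use that $M_f^* M_f$ (and the Toeplitz-like operator $z \mapsto M_f^* M_{k_z} M_f$) is recovered from $V_f$, and that $\langle \psi f, f\rangle = \langle M_f^* M_\psi M_f 1, 1\rangle$; one then argues that the map $z \mapsto \langle \psi f, f\rangle$ for $\psi = k_z$ already pins down $\langle \psi f, f\rangle$ for all multipliers $\psi$, because finite linear combinations of kernels $k_z$ are weak-$*$ dense in $\Mult(\HH)$ and $P_f$ is weak-$*$ continuous.

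I expect the main obstacle to be the density/separation step: one needs that the values $\langle k_z f, f \rangle$ (equivalently the Taylor coefficients of $V_f$), as $z$ ranges over $X$, determine the full vector state $P_f$, i.e. the values $\langle \psi f, f \rangle$ for all multipliers $\psi$. The enemy is that $V_f = V_g$ only gives equality of a scalar analytic (or real-analytic) function, and a priori a vector state carries more information; the hypotheses (a) and (b) are exactly what is needed to rule this out. Under (b) the key input is that the monomials $z^\alpha$ are multipliers and form an orthogonal basis, so the ``moment matrix'' $(\langle z^\alpha f, z^\beta f\rangle)_{\alpha,\beta}$ is legitimately extracted from the two-variable power series $V_f(z,\bar z)$ and, since these monomials weak-$*$ span $\Mult(\HH)$, it determines $P_f$; under (a) the key input is that $f$ being a multiplier lets one write $\langle \psi f, f \rangle$ in terms of $M_f$ and reduce to the already-known Hardy-type statement that the Sarason function (here the generalized one) determines the relevant positive operator. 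I would handle both cases by reducing to the assertion ``$V_f$ determines the function $z \mapsto M_f^* M_{k_z} M_f$, which determines $P_f$,'' and then verifying the last step separately in the two settings, citing \cite{AHMcCR_Factor} for the multiplier manipulations and \cite{GrRiSu} or Lemma \ref{Lemma7.2} for the fact that kernels are multipliers.
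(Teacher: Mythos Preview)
Your overall architecture is right: $(1)\Leftrightarrow(2)$ is Theorem~\ref{Pf=Pg}, $(2)\Rightarrow(3)$ is immediate since $V_f$ is built from the values $P_f(k_z)$, and the content lies in $(3)\Rightarrow(2)$. But your handling of case~(a) has a genuine gap, and case~(b) leaves the main step unargued.

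\textbf{Case (a).} You conclude by asserting that ``finite linear combinations of kernels $k_z$ are weak-$*$ dense in $\Mult(\HH)$.'' That statement is precisely the open Question in the paper; it is \emph{not} known for a general complete Pick space, and hypothesis~(a) exists exactly to avoid it. The paper's argument is entirely different and one line. If $f\in\Mult(\HH)$ then $\langle f,k_zf\rangle=\langle M_f^*f,k_z\rangle=(M_f^*f)(z)$, so $V_f$ determines the element $M_f^*f\in\HH$. On the other hand, for any $\varphi\in\Mult(\HH)$ one has $P_f(\varphi)=\langle \varphi f,f\rangle=\langle M_f\varphi,f\rangle=\langle \varphi,M_f^*f\rangle_\HH$. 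Hence $V_f=V_g$ forces $M_f^*f=M_g^*g$ and therefore $P_f=P_g$. No density statement is needed.

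\textbf{Case (b).} Two minor corrections: $V_f$ is holomorphic in $z$ (not a series in $z$ and $\bar z$), and its Taylor coefficient at $z^\alpha$ is, up to the positive factor $a_\alpha$, the number $\langle f,z^\alpha f\rangle=\overline{P_f(z^\alpha)}$, not a full moment matrix. That is nonetheless exactly what you need: $V_f=V_g$ gives $P_f=P_g$ on all monomials. What is missing in your outline, and is the actual substance of the proof, is why this determines $P_f$ on all of $\Mult(\HH)$, i.e.\ why polynomials are weak-$*$ dense in $\Mult(\HH)$. The paper supplies this: for $d<\infty$ via the argument of \cite{GrRiSu}, and for $d=\infty$ by first approximating an arbitrary multiplier by ones depending on finitely many coordinates (an averaging argument over rotations in the tail variables) and then applying Fej\'er means. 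Your sentence ``since these monomials weak-$*$ span $\Mult(\HH)$'' hides the entire difficulty.
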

In particular, the theorem applies to $H^2(\D)$, the Dirichlet space $D$, and the Drury-Arveson space $H^2_d$. Note further that if $X$ is finite, then every element of $\HH$ is a finite linear combination of reproducing kernels. Hence in that case all elements are multipliers and the theorem applies.

As far as we know, it is possible that the conclusion of the previous theorem is true for all complete Pick spaces. That would be the case, if the answer to the following question is positive.
\begin{ques} If $\HH$ is a Hilbert function space with normalized complete Pick kernel, then are  finite linear combinations of the reproducing kernels weak*-dense in $\MuH$?  \end{ques}
One further situation, where we know the conclusion of the previous theorem holds is as follows.

\begin{theorem} \label{H2 interpolating} If $X=\{\lambda_1, \lambda_2, \lambda_3, \dots\}\subseteq \D$ is a Blaschke sequence of distinct points  with $0\in X$, if $\HH=H^2(\D)|X$, then the linear combinations of reproducing kernels are weak*-dense in $\MuH$.
\end{theorem}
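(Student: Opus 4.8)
The plan is to pass to the standard model-space picture. Since $X$ is a Blaschke sequence, let $B$ be the Blaschke product with simple zeros precisely at the points of $X$. Restriction to $X$ then identifies $\HH = H^2(\D)|X$ isometrically with the model space $K_B = H^2(\D) \ominus B H^2(\D)$, the reproducing kernel $k_\lambda(z) = \frac{1}{1 - \bar\lambda z}$ becoming the Szeg\H{o} kernel (which lies in $K_B$ because $B(\lambda) = 0$); note that $0 \in X$ makes the kernel normalized, so $\HH$ is a normalized complete Pick space. Under this identification $\Mult(\HH)$ is isometrically $H^\infty(\D)/B H^\infty(\D)$, a multiplier $\varphi = \Phi|_X$ acting as the compression $P_{K_B} M_\Phi|_{K_B}$, so that $\langle \varphi \eta, \xi\rangle_{\HH} = \langle \Phi \eta, \xi\rangle_{H^2}$ for $\eta, \xi \in K_B$ and any $H^\infty$-lift $\Phi$ of $\varphi$. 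Because $\HH$ has property $\mathbb{A}_1(1)$, every weak*-continuous functional on $\Mult(\HH)$ has the form $\varphi \mapsto \langle \varphi \eta, \xi\rangle$ with $\eta, \xi \in K_B$. By the bipolar theorem the kernels span a weak*-dense subspace if and only if no such functional annihilates all of them, so I must show that $\langle k_\lambda \eta, \xi\rangle = 0$ for all $\lambda \in X$ forces $\langle \varphi \eta, \xi\rangle = 0$ for every $\varphi$. Using dilations $\Phi_r(z) = \Phi(rz)$ (for which $\Phi_r \eta \to \Phi \eta$ in $H^2$ and $\langle \Phi_r \eta, \xi\rangle = \sum_m r^m \widehat\Phi(m) \langle z^m \eta, \xi\rangle$), this reduces to showing that the analytic function $\psi(w) := \sum_{m \ge 0} \langle z^m \eta, \xi\rangle\, w^m$ on $\D$ vanishes identically.

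Next I would convert the hypothesis into a statement about reproducing kernels. Let $C$ be the conjugation $C\xi = B \bar z \overline\xi$ on $K_B$; the boundary identity $\overline\xi = z\,(C\xi)/B$ gives $\eta \overline\xi = z \Omega / B$ on $\partial \D$, where $\Omega := \eta \cdot C\xi \in H^1(\D)$. Hence, for every $w \in \D$,
$$\psi(w) = \int_{\partial \D} \frac{\eta \overline\xi}{1 - w z}\, dm = \frac{1}{2\pi i} \oint_{\partial \D} \frac{\Omega(z)}{B(z)(1 - w z)}\, dz.$$
Approximating $B$ by its finite partial Blaschke products $B_N$ (on which the residue theorem is elementary) and passing to the limit by dominated convergence — the integrands are bounded by $|\Omega|/(1-|w|) \in L^1(\partial \D)$ since $|B_N| = 1$ on $\partial \D$ — yields the residue expansion
$$\langle k_\lambda \eta, \xi\rangle = \psi(\bar\lambda) = \sum_{\nu \in X} \frac{\Omega(\nu)}{B'(\nu)}\, k_\lambda(\nu) = \Big\langle k_\lambda,\ \sum_{\nu \in X} \overline{\Omega(\nu)/B'(\nu)}\, k_\nu \Big\rangle.$$
Thus the hypothesis asserts exactly that the vector $v := \sum_{\nu \in X} \overline{\Omega(\nu)/B'(\nu)}\, k_\nu \in K_B$ is orthogonal to every $k_\lambda$, $\lambda \in X$.

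To finish I would invoke two classical facts about the system $\{k_\nu : \nu \in X\}$ in $K_B$. First, it is complete: any $f \in K_B$ orthogonal to all $k_\nu$ vanishes on $X$, hence is divisible by $B$, hence lies in $K_B \cap B H^2 = \{0\}$; applied to $v$ this gives $v = 0$. Second — and here the Blaschke hypothesis is essential — the system is minimal, with explicit biorthogonal vectors $g_\nu = \frac{B(z)}{B'(\nu)(z - \nu)} \in K_B$ satisfying $g_\nu(\mu) = \delta_{\mu\nu}$; pairing $v = 0$ against $g_\nu$ forces $\Omega(\nu)/B'(\nu) = 0$ for every $\nu$, i.e.\ $\Omega$ vanishes on all of $X$. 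Then $\Omega = B \Omega_1$ with $\Omega_1 \in H^1(\D)$, and the integral formula collapses to $\psi(w) = \frac{1}{2\pi i} \oint_{\partial \D} \Omega_1(z)/(1 - w z)\, dz = 0$, because the integrand lies in $H^1(\D)$. Therefore $\psi \equiv 0$, the functional vanishes, and the kernels are weak*-dense.

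I expect the main obstacle to be the residue expansion in the second paragraph: rigorously passing from the contour integral to a convergent sum over the infinitely many zeros of $B$, which accumulate on $\partial \D$, and simultaneously establishing that the series defining $v$ converges in $K_B$ so that the pairing against the biorthogonal vectors is legitimate. The natural device is truncation to the finite Blaschke products $B_N$ together with Hardy-space estimates (controlling $\sum_\nu (1 - |\nu|^2)|\Omega(\nu)|$ via $\eta, C\xi \in H^2$). It is worth emphasizing that the Blaschke condition enters in two places: it produces the inner function $B$ in the first place, and, through the minimality of the reproducing kernels, it drives the concluding deduction that $\Omega$ must vanish on $X$.
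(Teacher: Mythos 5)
Your reduction is sound up to a point: the identification of $\HH$ with the model space $K_B$, the use of property $\mathbb{A}_1(1)$ plus the bipolar theorem to reduce to a single functional $L(\varphi)=\la \varphi\eta,\xi\ra$, the conjugation identity $\eta\bar\xi = z\Omega/B$ with $\Omega=\eta\cdot C\xi\in H^1$, and the endgame ($\Omega$ vanishing on $X$ forces $\Omega=B\Omega_1$, hence $\psi\equiv 0$) are all correct. The genuine gap is exactly the step you flagged, and your proposed repair does not close it. Dominated convergence with the partial products $B_N$ only gives $\psi(w)=\lim_N \sum_{n\le N}\frac{\Omega(\lambda_n)}{B_N'(\lambda_n)(1-w\lambda_n)}$, and $B_N'(\lambda_n)=B'(\lambda_n)/T_N(\lambda_n)$, where $T_N=\prod_{m>N}b_m$ is the tail product. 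The factors $T_N(\lambda_n)$ tend to $1$ for each \emph{fixed} $n$, but not uniformly over $n\le N$ when the points cluster, so passing from these regularized sums to the naive series $\sum_n \frac{\Omega(\lambda_n)}{B'(\lambda_n)(1-w\lambda_n)}$ is an unjustified interchange of limits. For a general Blaschke sequence (no separation is assumed in the theorem) that series can genuinely fail to converge: $|B'(\nu)|(1-|\nu|^2)=\prod_{\mu\ne\nu}|b_\mu(\nu)|$ can decay arbitrarily fast, so the coefficients $\Omega(\nu)/B'(\nu)$ can blow up; moreover your proposed estimate $\sum_\nu(1-|\nu|^2)|\Omega(\nu)|<\infty$ is itself false for general $\Omega\in H^1$ unless the sequence is a Carleson sequence. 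The same problem invalidates the definition of $v=\sum_\nu \overline{\Omega(\nu)/B'(\nu)}\,k_\nu$ as an element of $K_B$ and hence the pairing with the biorthogonal vectors $g_\nu$. This is the classical phenomenon that $\{k_\nu\}$ is complete and minimal in $K_B$ but is \emph{not} a basis (not even a summation basis) for a general Blaschke sequence; residue expansions of this kind require interpolating-type hypotheses that the theorem does not grant you.

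It is worth seeing how the paper avoids this. It never expands anything in a series: for each $j$ it builds explicit \emph{finite} linear combinations of kernels, $f_N(z)=\frac{1-|\lambda_j|^2}{1-\overline{\lambda}_j z}\prod_{n\le N, n\ne j}\frac{b_n(z)}{b_n(\lambda_j)}$, which interpolate $\delta_{jn}$ on $\{\lambda_1,\dots,\lambda_N\}$ and satisfy $\sup_N\|f_N\|_{H^\infty}\le 2/\prod_{n\ne j}|b_n(\lambda_j)|<\infty$ --- the Blaschke condition enters only to make this infinite product positive. Hence $f_N$ converges weak* to the indicator multiplier $\psi_j$ of the point $\lambda_j$, and then tail Blaschke products finish: $(B_N\varphi)|X=\sum_{j\le N}B_N(\lambda_j)\varphi(\lambda_j)\psi_j$ lies in the weak* closure of the kernels and converges weak* to $\varphi|X$. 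In your dual language, one computes $L(\psi_j)=\Omega(\lambda_j)/B'(\lambda_j)$, so the paper's construction delivers exactly the conclusion your residue series was aiming for ($\Omega$ vanishes on $X$), but via weak* approximation by uniformly bounded multipliers rather than via a series whose convergence cannot be guaranteed. If you want to salvage your write-up, replace the second paragraph by this interpolation argument and keep your first and last paragraphs essentially as they are.
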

We will prove this in Section \ref{SecSarasonF}. By $H^2(\D)|X$ we mean the space $$\{f:X \to \C, \exists g\in H^2(\D) \ f=g|X\}$$ with norm $\|f\|=\inf\{\|g\|_{H^2}: g\in H^2(\D) \ f=g|X\}$. It is clear that this space is isomorphic to $\bigvee_{\lambda\in X} \{\frac{1}{1-\overline{\lambda}z}\}\subseteq H^2(\D)$.

\

We now give an overview of the set-up for the remainder of the paper. Section \ref{SecFock} contains a review of the needed background about free analytic functions, the Fock space $\Fock$, and its distinguished subspace $\mathcal{H}^2_d$, the symmetric Fock space. In Section \ref{SecFreeSarason} we define the free Sarason functions $V_F^\ell$ and $V_F^r$ and establish  their basic properties. For functions $F\in \mathcal{H}^2_d$ there is a close connection between $V_F$ as defined in (\ref{SarasonFunctionDef}) and the free Sarason functions (see Lemma \ref{SarasonFunctionsLemma}). Section \ref{SecFreeInnerOuter}  will start with a review of the free inner-outer factorization results of Arias-Popescu and Davidson-Pitts (\cite{AriasPopescu,DavidsonPitts}). Then, in Sections \ref{SecFreeInnerOuter} and \ref{*invariant} we will prove that the free Sarason functions determine the free left and right outer factors of $F\in \Fock$, and that they provide a way to  establish that for functions $F$ in $*$-invariant subspaces $\HH$ of $\mathcal{H}^2_d \subseteq \Fock$ their free left and right outer factors are determined by data that are available from within $\HH$ (see Theorem \ref{Sarason/PosFunct}). These results are put together in Section \ref{SecPickspaces} to prove Theorems \ref{subinner/free outer} and \ref{Pf=Pg}. In Section \ref{SecSarasonF} we will prove Theorems \ref{SarasonFunction} and \ref{H2 interpolating}, which  also imply  that for many complete Pick spaces the Sarason function $V_f$ uniquely determines the free Sarason functions $V_F^r$ and $V_F^\ell$.
In Sections \ref{SecApproxBySubinner} and \ref{SecPickProblem} we establish a generalized version of Theorem \ref{Caratheodory Approximation} and we prove that  solutions to extremal Pick problems are subinner. Sections \ref{SecDiriExample} and \ref{SectionDruryArveson} contain specific information and examples about the Dirichlet and Drury-Arveson spaces. For example, we will show that subinner functions $\varphi$ for the Dirichlet space have the property that $M_\varphi$ attains its norm only on a 1-dimensional subspace (Theorem \ref{DiriDim=1}). We will also see by example that products of free outer polynomials may or may not be free outer. Furthermore, we will show that free outer polynomials in $D$ necessarily have no zeros in a disc that properly contains the closed unit disc.

In Section \ref{SecVectorvalued} we will show that sequences of functions in $\HH$ whose norms are square summable have a unique common free outer factor. That result can be used to give quick proofs of Theorem \ref{SpacesWithPickFactorIntro}, of the result from \cite{DavidsonHamilton} that the multiplier algebra of a complete Pick space has property $\mathbb{A}_1(1)$, and that sums of Sarason functions are Sarason functions (Corollary \ref{A1(1)}). Property $\mathbb A_1(1)$ implies that if $\Phi$ is a weak*-continuous linear functional on $\MuH$ and if $\varepsilon >0$, then there are $f,g\in \HH$ such that \begin{align} \label{A1(eps)}\|f\|\|g\|\le \|\Phi\|+\varepsilon \ \text{ and } \Phi(u)=\la uf,g\ra \ \text{ for all }u \in \MuH.\end{align} In Theorem \ref{infimum} we will show that in (\ref{A1(eps)})  one can take $\varepsilon=0$, if and only if one can choose $f$ and $g$ of the form $g=\varphi f$ for some subinner/free outer pair $(\varphi,f)$.

Theorem \ref{WeakProductNorm} will be proved in Section \ref{SecWeakProd}. Finally, in Section \ref{Sec:SubinnerDirichlet} we assume $d<\infty$ and we  slightly generalize a result of Clou\^atre and Davidson (\cite{CD16}) and we show that for regular unitarily invariant complete Pick spaces on $\Bd$ a multiplier $\varphi$ of norm one is subinner, whenever $\|\varphi\|_\infty < \|\varphi\|_{\MuH}=1$ (Proposition \ref{prop:subinner_mult_norm}) and either it is a multiplier-norm limit of polynomials or the space satisfies the one-function Corona theorem and $M_\varphi$ is essentially normal.  It is known that the standard weighted Dirichlet spaces $\HD_\alpha$ satisfy the one-function Corona theorem, and we go on to provide a sufficient condition for $M_\varphi$ to be  essentially normal on $\HD_\alpha$, Theorem \ref{essentiallyNormal}. We conclude that in $\HD_\alpha, \alpha >0$ sufficient regularity of a non-constant function $\varphi$ implies $\|\varphi\|_\infty <\|\varphi\|_{\MuH}$ and that such $\varphi$ is subinner, if $\|\varphi\|_{\MuH}=1$, see Lemma \ref{lem:liminf} and Theorem \ref{DiriSubinner}. This theorem does not apply to $H^2_d$.

\textbf{Acknowledgement:} The authors are grateful to Georgios Tsikalas for sharing an observation related to
Section \ref{Sec:SubinnerDirichlet}.

\section{Free holomorphic functions and the free Fock space}\label{SecFock}

 In the following we summarize the  needed basic definitions and facts about free holomorphic functions and the free Fock space  $\Fock$. For proofs and further details we refer to \cite{GeluPop} \cite{GeluPopMultiA_I}, \cite{GeluPopMulti_II}, \cite{JuMa1}, and \cite{SaloShalitSham1}. Some care is required since different authors have used different definitions and some papers do not specifically include the case $d=\infty$. We mention that we will consider free holomorphic functions functions on the noncommutative ball (nc ball). That is the common approach now, but Popescu in the papers referenced above, considers the functions to be defined on the larger ``operatorial unit ball''.  For the results that are needed in this paper that never creates a problem, and the needed results follow by the same proofs as suggested in the referenced papers. Nevertheless, in the paragraphs below we occasionally offer some additional comments. Furthermore, we  refer to Section 3 of \cite{SaloShalitSham1} for proofs that many of  the relevant concepts can indeed be identified, and that includes the case when $d=\infty$.

Let $d\in \N$ or $d=\infty$, and let $F^+_d$ denote the free semi-group on $d$ generators. Thus, $F^+_d$ consists of the empty word $\emptyset$ and all words of finite length with letters from the alphabet $A_d=\{1, 2, \dots d\}$ with $d$ letters. If $w=w_1 w_2 \cdots w_n$ with $w_i\in A_d$, then the flip of $w$ is defined by $\tilde{w}=w_n \cdots w_2 w_1$.
The length of the empty word is defined to be 0, while if $w=w_1 w_2 \cdots w_n$ with $w_i\in A_d$, then we define the length of $w$ by $|w|=n$. If $w\in F^+_d$, then $\alpha(w)\in \N_0^d$ is the multi-index associated with $w$ and it is defined by $\alpha(w)=(\alpha_1,\dots, \alpha_d)$, where $\alpha_j$ equals the number of times the letter $j\in A_d$ occurs in $w$.

Now let $x=(x_1,...,x_d)$ be a freely non-commuting indeterminate with $d$ components. We will use it in formal computations with free polynomials and free power series. If $w\in F^+_d$,  then the free monomials are defined by $x^w=1$, if $w=\emptyset$, and $x^w=x_{w_1}\dots x_{w_k}$, if $w=w_1 \cdots w_k$. If $n\in \N$ and if $X=[X_1,\dots, X_d]$ is a $d$-tuple of $n \times n$ matrices, then we can evaluate the monomial $x^w$ at $X$ by forming the matrix $X^w= X_{w_1}\cdots X_{w_k}$.
A formal power power series in $x$ is an expression of the type $F(x)=\sum_{w\in F^+_d}a_w x^w$, where $a_w\in \C$ for each $w\in F^+_d$. Formal power series may converge, when evaluated at certain $d$-tuples of matrices. For $n\in \N$ and $0\le R \le \infty$ we write $\mathbb{B}_d^{n\times n}(R)$ for $d$-tuples $X=[X_1,\dots, X_d]$ of $n \times n$ matrices whose row operator norm $\|X\|$ as linear transformation $X\in \HB(\C^n\otimes \C^d, \C^n)$ (resp. $X\in \HB(\C^n\otimes \ell_2,\C^n)$ if $d=\infty$) is $<R$. Here $X: (u_1,\dots, u_d)^t \to \sum_{i=1}^d X_iu_i$ for $u_1,..., u_d \in \C^n$, and $\|X\|=\|[X_1,\dots, X_d]\|$ is defined by
$$\|X\|= \sup \Big\{\| \sum_{i=1}^d X_iu_i\|_{\C^n}: \sum_{i=1}^d\|u_i\|^2_{\C^n}\le 1 \Big\}.$$ The nc-ball of radius $R$ is defined to be
$$\mathbb{B}_d^{nc}(R)=\bigcup_{n=1}^\infty \mathbb{B}_d^{n\times n}(R)$$ and we with write $\ncB$ for the (open) nc-unit ball $\mathbb{B}_d^{nc}(1)$.

\begin{lemma}\label{radius of convergence} Let $d\in \N \cup\{\infty\}$, and let  $F(x)=\sum_{w\in F^+_d}a_w x^w$ be a free power series such that
\begin{equation}\label{SummabilityCondition}M_k(F):=\left(\sum_{|w|=k}|a_w|^2\right)^{1/2} <\infty \ \text{ for each }k\in \N \cup\{0\}.\end{equation}
Then for any $k \ge 0$, $n \in \N$ and any $d$-tuple  $X=[X_1,\dots, X_d]$ of $n\times n$ matrices that defines a bounded row operator the sum $\sum_{|w|=k} a_wX^w$ converges absolutely in $\HB(\C^n)$ and
\begin{equation}\label{coeffestimate} \Big\|\sum_{|w|=k}a_wX^w \Big\|\le M_k(F) \|X\|^k.\end{equation}

Define $R\in [0,\infty]$  by $$\frac{1}{R}= \limsup_{k\to \infty}\left( \sum_{|w|=k}|a_w|^2\right)^{\frac{1}{2k}}.$$ Then for $X\in \mathbb{B}_d^{n\times n}(R)$ we have $\sum_{k=0}^\infty \|\sum_{|w|=k} a_wX^w\|<\infty$ and hence $\sum_{k=0}^\infty \sum_{|w|=k} a_wX^w$ converges in $\HB(\C^n)$.
\end{lemma}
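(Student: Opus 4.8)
The plan is to handle the two displayed assertions separately, the second being a routine consequence of the first. For \eqref{coeffestimate}, the key observation is that for a fixed length $k$, the partial sum $\sum_{|w|=k} a_w X^w$ can be realized as a single matrix product of the form $A_k \cdot (\text{block operator built from }X)$, where $A_k$ is the row vector $(a_w)_{|w|=k}$. More precisely, I would write $\sum_{|w| = k} a_w X^w = R_k(a) \, X^{(k)}$, where $X^{(k)} \in \mathcal B(\C^n \otimes (\C^d)^{\otimes k}, \C^n)$ is the iterated row operator (the $k$-fold ``amplification'' of $X$, which satisfies $\|X^{(k)}\| = \|X\|^k$ by submultiplicativity of the row norm under this tensor construction), and $R_k(a)$ is a row operator $\C^n \otimes (\C^d)^{\otimes k} \to \C^n \otimes \C^{?}$ whose norm is controlled by $\big(\sum_{|w|=k} |a_w|^2\big)^{1/2} = M_k(F)$. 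Then \eqref{coeffestimate} follows from submultiplicativity of the operator norm, and absolute convergence of $\sum_{|w|=k} a_w X^w$ in $\mathcal B(\C^n)$ follows since it is a finite sum when $d \in \N$, and when $d = \infty$ from the fact that $\sum_{|w|=k} |a_w| \|X^w\| \le M_k(F) \big(\#\{w : |w|=k\}\big)^{1/2} \|X\|^k$ — wait, this last bound is not finite for $d = \infty$; instead I would argue absolute convergence directly by a Cauchy-Schwarz estimate pairing against arbitrary unit vectors, or by noting that the net of finite partial sums is Cauchy in norm because the tails are bounded by $M_k$ of the tail, which tends to $0$.

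For the second assertion, given $X \in \mathbb B_d^{n \times n}(R)$, I would pick $\rho$ with $\|X\| < \rho < R$. By definition of $R$ via the $\limsup$, there is $k_0$ such that $M_k(F)^{1/k} = \big(\sum_{|w|=k}|a_w|^2\big)^{1/(2k)} < 1/\rho$ for all $k \ge k_0$, hence $M_k(F) < \rho^{-k}$ for $k \ge k_0$. Combining with \eqref{coeffestimate},
\[
\Big\| \sum_{|w|=k} a_w X^w \Big\| \le M_k(F)\, \|X\|^k < \Big(\frac{\|X\|}{\rho}\Big)^k
\]
for $k \ge k_0$, and since $\|X\|/\rho < 1$ this is a convergent geometric series, so $\sum_{k=0}^\infty \big\| \sum_{|w|=k} a_w X^w \big\| < \infty$; norm-convergence of $\sum_{k \ge 0} \sum_{|w|=k} a_w X^w$ in the Banach space $\mathcal B(\C^n)$ is then immediate from completeness.

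The main obstacle is the bookkeeping in the first step: making precise the identification $\sum_{|w|=k} a_w X^w = R_k(a)\, X^{(k)}$ and the claim $\|R_k(a)\| \le M_k(F)$, particularly in the case $d = \infty$ where ``row operator'' means a bounded operator on an infinite direct sum and one must check the tensor-power construction still yields $\|X^{(k)}\| = \|X\|^k$ and that $R_k(a)$ is genuinely bounded with the stated norm (this is where the $\ell^2$-summability of the coefficients $(a_w)_{|w|=k}$, as opposed to mere finiteness, is essential). An alternative, perhaps cleaner route that avoids the tensor-power formalism is induction on $k$: write $\sum_{|w|=k+1} a_w X^w = \sum_{i=1}^d X_i \big( \sum_{|v|=k} a_{iv} X^v \big)$ and estimate the right-hand side using $\|[X_1,\dots,X_d]\| = \|X\|$ together with the inductive hypothesis applied to each of the $d$ series $\sum_{|v|=k} a_{iv} X^v$, organizing the bound so that $\sum_{i=1}^d M_k(F_i)^2 = M_{k+1}(F)^2$ where $F_i$ is the $i$-th ``left section'' of $F$. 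Either way the estimate \eqref{coeffestimate} drops out; I would present whichever is shorter.
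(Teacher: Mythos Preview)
Your plan is correct and will work. Both routes you sketch for \eqref{coeffestimate} are valid; the second part of the lemma is handled exactly as in the paper.

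The paper takes a slightly different path to \eqref{coeffestimate}: rather than building the iterated row operator $X^{(k)}$, it works on the adjoint side. One shows by induction that $\sum_{|w|=k}\|{X^w}^*u\|^2\le \|X\|^{2k}\|u\|^2$ for every $u\in\C^n$ (this is just the statement that the column $[X_1^*,\dots,X_d^*]^T$ has norm $\|X\|$, iterated $k$ times), and then Cauchy--Schwarz gives
\[
\sum_{|w|=k}|a_w|\,\|{X^w}^*u\|\le M_k(F)\,\|X\|^k\,\|u\|.
\]
This is the dual formulation of your $\|X^{(k)}\|\le\|X\|^k$ and of your inductive scheme with the left sections $F_i$. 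The payoff of the adjoint route is that the displayed $\ell^1$ bound already shows $\sum_{|w|=k}\overline{a_w}{X^w}^*$ converges unconditionally in the strong operator topology, so absolute convergence in $\HB(\C^n)$ for $d=\infty$ follows immediately from finite dimensionality of $\C^n$; this is precisely the ``Cauchy--Schwarz pairing against unit vectors'' you mention as a fix. Your alternative tail argument gives norm convergence of the net of partial sums but not absolute convergence per se, so if you go that way you should either upgrade via finite dimensionality (e.g.\ note $\sum_i\|X_i\|^2<\infty$ because the $X_i$ are $n\times n$ and $\sum_i X_iX_i^*$ is bounded, then use Cauchy--Schwarz on $\sum_{|w|=k}|a_w|\,\|X_{w_1}\|\cdots\|X_{w_k}\|$) or adopt the adjoint argument.
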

For $d<\infty$ this follows from Theorem 1.1 of \cite{PopescuJFA2006}.

\begin{proof} We may and will assume $R>0$. Let $n\in \N$ and $X =[X_1,\dots, X_d]\in \mathbb{B}_d^{n \times n}(R)$.
  We first show that for each $k\in \N\cup \{0\}$ and for $u\in \C^n$ \begin{equation}\label{sum}\sum_{|w|=k}|a_w|\|{X^w}^*u\|<\infty.\end{equation}
Note that $\|X\|=\|X^*\|$, hence for $u\in \C^n$ we have $\sum_{i=1}^d\|X_i^*u\|^2\le \|X\|^2 \|u\|^2$. It follows that an induction argument shows that for $k \ge 0$
$$\sum_{|w|=k}\|{X^w}^*u\|^2 \le \|X\|^{2k}\|u\|^2.$$ Thus, condition (\ref{sum}) follows easily from the summability assumption (\ref{SummabilityCondition}) on the coefficients of $F$, since
\begin{align*}\sum_{|w|=k}|a_w|\|{X^w}^*u\| &\le \left(\sum_{|w|=k}|a_w|^2 \right)^{1/2} \left(\sum_{|w|=k}\|{X^w}^*u\|^2\right)^{1/2}\\
&\le M_k(F) \|X\|^k\|u\|\\
\end{align*}
We note that this estimate also implies inequality (\ref{coeffestimate}).
Moreover, \eqref{sum} implies that the sum $\sum_{|w| = k} \overline{a_w} {X^w}^*$ converges
unconditionally in the strong operator topology.
Since $X$ consists of operators on a finite dimensional space, the sum $\sum_{|w| = k} a_w X^w$
converges unconditionally in the norm of $\mathcal{B}(\mathbb{C}^n)$.
Finite dimensionality of $\mathcal{B}(\mathbb{C}^n)$ then yields absolute convergence.

The remainder of this proof is routine. Let $r\in \R$ with $\|X\|<r<R$. Then there is $k_0\in \N$ such that whenever $k\ge k_0$, then $(M_k(F))^2=\sum_{|w|=k}|a_w|^2 \le r^{-2k}$. Thus,  by (\ref{coeffestimate}) we have
\begin{equation*}\sum_{k=0}^\infty \|\sum_{|w|=k}a_wX^w\| \le \sum_{k=0}^{k_0-1} M_k(F)\|X\|^k + \sum_{k=k_0}^\infty \frac{\|X\|^k}{r^k}<\infty. \qedhere
\end{equation*}
\end{proof}
The number $R$ is called the radius of convergence of the free power series.
If $R>0$, then it follows that for each $n \in \N$ and  $X\in \mathbb{B}_d^{n\times n}(R)$ the series $\sum_{k=0}^\infty \left(\sum_{|w|=k} a_w X^w\right)$ converges in $\HB(\C^n)$.

If $R>0$, then a function $F: \ncB(R)\to \C^{nc}:= \bigcup_{n=1}^\infty M_n(\C)$ is called a free holomorphic function if there are complex coefficients $\{a_w\}_{w\in F^+_d}$ such that the free power series $\sum_{w\in F^+_d} a_wx^w$ has radius of convergence $\ge R$ and $F(X)=\sum_{w\in F^+_d}a_wX^w$ for each $X \in \ncB(R)$. An important fact about free holomorphic functions $F$ on $\ncB(R)$ is that the coefficients $a_w$ are  determined uniquely by the values $F(X)$ for $X\in \ncB(R)$. This follows for example as in \cite{PopescuJFA2006}, page 277, with the help of the left creation operators $L=(L_1,\dots, L_d)$ on the free Fock space, by considering for $0<r<R$ the operators $X_n= P_n (rL)|\HH_n\in \ncB(R)$, where $\HH_n$ is the space of free polynomials of degree $\le n$ in the first $m=\min\{n,d\}$ variables, and $P_n$ is the orthogonal projection of the free Fock space onto $\HH_n$.

If $F(x)=\sum_{w\in F^+_d}\hat{F}(w)x^w$ and $G(x)=\sum_{w\in F^+_d}\hat{G}(w)x^w$ are free holomorphic functions in $\ncB(R)$ and if $a,b \in \C$, then $(aF+bG)(x)=\sum_{w\in F^+_d}(a\hat{F}(w)+b\hat{G}(w))x^w$ and $FG(x)=\sum_{w\in F^+_d}c_w x^w$, where $c_w= \sum_{u,v\in F^+_d, w=uv}\hat{F}(u)\hat{G}(v)$ are free holomorphic functions on $\ncB(R)$ with $(aF+bG)(X)=aF(X)+bG(X)$ and $(FG)(X)=F(X)G(X)$ for all $X\in \ncB(R)$.

The free Fock space $\Fock$ is the space of formal power series in $x=(x_1,\dots, x_d)$ with square summable coefficients i.e. $F\in \Fock$ if and only if $F(x)=\sum_{w\in F^+_d}\hat{F}(w) x^w$ and $\|F\|^2=\sum_{w\in F^+_d}|\hat{F}(w)|^2<\infty$. It follows from Lemma \ref{radius of convergence} that each $F\in \Fock$ has radius of convergence $\ge 1$, hence the free Fock space consists of free holomorphic functions on $\ncB$, and \eqref{coeffestimate} implies that
\begin{align}\label{FockPointwiseEstimate}\|F(X)\|^2\le \frac{\|F\|^2_{\HF^2_d}}{1-\|X\|^2} \text{ for all }F \in \HF^2_d, X \in \ncB.\end{align}
 A unitary operator $W$ on $\Fock$ which satisfies $W=W^{*}$ is defined via the flip operation: If $F= \sum_{w\in F^+_d}\hat{F}(w) x^w$, then $\tilde{F}=WF= \sum_{w\in F^+_d}\hat{F}(\tilde{w}) x^w$. We will call $W$ the flip operator.

 The non-commutative left analytic Toeplitz algebra is $${\MFockL}=\{G\in \Fock: \|G\|_\infty=\sup_{X\in \ncB}\|G(X)\|<\infty\}.$$ It turns out that $L_G: F\to GF$, multiplication by $G$ from the left, defines a bounded operator on $\Fock$, if and only if $G\in \MFockL$ and $\|G\|_\infty= \|L_G\|$;
 see Section 3 of \cite{SaloShalitSham1}.

 We will also use the notation $R_G$ for multiplication by $G$ from the right, and it is a simple calculation that shows that $R_{G}=WL_{\tilde{G}}W$, and hence $\|R_G\|=\|\tilde{G}\|_\infty$. Thus, it is natural to define the right Toeplitz algebra by $${\MFockR}=\{G\in \Fock: \tilde{G} \in \MFockL\}.$$

  Special cases of this are the left creation operators $L_i=L_{x_i}$ and the right creation operators $R_i=R_{x_i}$, which are defined by $(L_iF)(x)=x_iF(x)$ and $(R_iF)(x)=F(x)x_i$, $ i=1,\dots,d$. Finally, for $w\in F^+_d$ we will write $L^w=L_{x^w}$ and $R^w=R_{x^w}$.

The Toeplitz  algebras of operators $\{L_G\in \HB(\HF^2_d): G\in \HF_d^{\infty, \ell}\}$ and  $\{R_G\in \HB(\HF^2_d): G\in \HF_d^{\infty, r}\}$ are closed in both the weak* topology of  $\HB(\HF^2_d)$ and the weak operator topology, and the two topologies coincide on these Toeplitz algebras, see Corollary 2.12 of \cite{DavidsonPitts}. Furthermore, they have property $\mathbb{A}_1$, i.e. every weak*-continuous linear functional $L$ can be represented by a rank one operator $F\otimes G$ by $L(L_H)=\la L_H F,G\ra$ for all $H\in \MFockL$. Thus, these rank one operators induce  weak* topologies on the algebras $\MFockL$ and $\MFockR$.

\begin{lemma} \label{weakconvergence} (a) If $F_n, F\in \HF_d^2$, then $F_n$ converges weakly to $F$, if and only if $\|F_n\|\le C$ and $F_n(X) \to F(X)$ for all $X \in \ncB$.

(b) If $H_n, H\in \HF_d^{\infty, \ell}$, then $H_n$ converges to $H$ in the weak*-topology, if and only if $\|H_n\|_\infty \le C$ and $H_n(X) \to H(X)$ for all $X \in \ncB$.
\end{lemma}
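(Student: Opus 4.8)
The plan is to treat the two parts in parallel, using in each case that the weak (resp. weak*) topology on the relevant space is induced by a separating family of rank-one functionals, together with the pointwise bound \eqref{FockPointwiseEstimate} (resp. the estimate $\|G(X)\| \le \|G\|_\infty$). For part (a): if $F_n \to F$ weakly in $\HF^2_d$, then $\sup_n \|F_n\| < \infty$ by the uniform boundedness principle, and for each fixed $X = [X_1,\dots,X_d] \in \B^{n \times n}_d(1)$ and each pair of vectors $\xi, \eta \in \C^n$, the map $G \mapsto \langle G(X)\xi, \eta\rangle$ is a bounded linear functional on $\HF^2_d$ by \eqref{FockPointwiseEstimate}; hence $\langle F_n(X)\xi,\eta\rangle \to \langle F(X)\xi,\eta\rangle$, and since $X$ acts on a finite-dimensional space this gives $F_n(X) \to F(X)$ in $\HB(\C^n)$. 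Conversely, suppose $\|F_n\| \le C$ and $F_n(X) \to F(X)$ for all $X \in \ncB$. Because the $F_n$ are bounded in norm, it suffices to check convergence of $\langle F_n, \cdot \rangle$ on a dense subset, say the free monomials $x^w$, $w \in F^+_d$; equivalently, it suffices to show $\hat F_n(w) \to \hat F(w)$ for every $w$. This is where one uses the standard device recalled just before Lemma \ref{weakconvergence}: for $0 < r < 1$ and large $N$, the truncated creation operators $X_N = P_N(rL)|_{\HH_N} \in \ncB$ recover all coefficients $\hat G(w)$ with $|w| \le N$ from $G(X_N)$ (a finite, invertible linear system with $r$-dependent entries). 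Applying $F_n(X_N) \to F(X_N)$ for fixed $N$ and solving this linear system yields $\hat F_n(w) \to \hat F(w)$ for each $w$ with $|w| \le N$; letting $N$ vary covers all $w$.

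For part (b): one direction is immediate from part (a) applied inside $\HF^2_d$ together with $\|H_n(X)\| \le \|H_n\|_\infty$; more precisely, if $H_n \to H$ weak* in $\MFockL$, then $L_{H_n} \to L_H$ in the weak* topology of $\HB(\HF^2_d)$, which by property $\mathbb A_1$ is the topology induced by the rank-one functionals $L_G \mapsto \langle GF, G'\rangle$, so $\langle H_n F, G'\rangle \to \langle H F, G'\rangle$ for all $F, G' \in \HF^2_d$; taking $F = 1$ gives $H_n \to H$ weakly in $\HF^2_d$, and part (a) gives $H_n(X) \to H(X)$ pointwise, while $\|H_n\|_\infty = \|L_{H_n}\|$ is uniformly bounded by uniform boundedness (every weak*-convergent net in a dual space is norm-bounded). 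Conversely, assume $\|H_n\|_\infty \le C$ and $H_n(X) \to H(X)$ for all $X \in \ncB$. Since $\|L_{H_n}\| = \|H_n\|_\infty \le C$, the net $\{L_{H_n}\}$ lies in a norm-bounded, hence weak*-compact, subset of $\HB(\HF^2_d)$, and by the Krein--Smulian theorem (or directly, since the Toeplitz algebra is weak*-closed by Corollary 2.12 of \cite{DavidsonPitts}) any weak* cluster point is of the form $L_K$ for some $K \in \MFockL$ with $\|K\|_\infty \le C$. For any such cluster point, passing to a subnet and applying the already-proved part (b) $\Rightarrow$ direction shows $K(X) = \lim H_{n_j}(X) = H(X)$ for all $X \in \ncB$; by the uniqueness of the coefficients of a free holomorphic function this forces $K = H$. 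Since every weak* cluster point of the bounded net $\{L_{H_n}\}$ equals $L_H$, the net converges weak* to $L_H$, i.e. $H_n \to H$ in the weak* topology of $\MFockL$.

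The main obstacle is the converse in part (a): extracting norm-convergence of Fourier coefficients $\hat F_n(w) \to \hat F(w)$ from the hypothesis of pointwise convergence $F_n(X) \to F(X)$ on all of $\ncB$. The key point is that, unlike the scalar unit disc, a single point $X \in \ncB$ does not see all the coefficients, but the finite family of truncated shifts $\{X_N\}$ does — and because each $X_N$ lives on a finite-dimensional space, convergence $F_n(X_N) \to F(X_N)$ in $\HB(\C^n)$ is genuine matrix-entry convergence, from which the coefficient convergence follows by inverting a fixed finite linear system. Everything else (uniform boundedness giving norm bounds, density of monomials, weak*-closedness and property $\mathbb A_1$ of the Toeplitz algebras for the passage to part (b)) is standard and has been recorded in the excerpt.
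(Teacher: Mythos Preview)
Your argument is correct, but it diverges from the paper's proof in two places. For the converse in (a), the paper introduces explicit ``noncommutative kernel'' vectors $K_{X,u,v}(x)=\sum_w \langle {X^w}^*v,u\rangle x^w\in\HF^2_d$ satisfying $\langle G,K_{X,u,v}\rangle=\langle G(X)u,v\rangle$, observes that the family $\{K_{X,u,v}\}$ is total in $\HF^2_d$ (since orthogonality to all of them means $G(X)=0$ for every $X$, hence $G=0$), and then concludes by the standard ``bounded plus convergence on a total set'' argument. Your route via the truncated shifts $X_N$ and coefficient extraction is equally valid and perhaps more concrete, but the paper's kernel vectors have the advantage of handling the forward and converse directions with the same object. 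For the converse in (b), the paper is more direct than your compactness argument: given $\|H_n\|_\infty\le C$ and $H_n(X)\to H(X)$, one simply notes that for every fixed $F\in\HF^2_d$ the products satisfy $\|H_nF\|\le C\|F\|$ and $(H_nF)(X)=H_n(X)F(X)\to H(X)F(X)=(HF)(X)$, so part (a) gives $H_nF\to HF$ weakly, i.e.\ $\langle L_{H_n}F,G\rangle\to\langle L_HF,G\rangle$ for all $F,G$; together with the norm bound this is weak* convergence. Your cluster-point argument works (separability of $\HF^2_d$ lets you pass to subsequences rather than subnets, so there is no net/sequence issue), but it is a detour compared to this one-line reduction to part (a).
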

Note that if $X\in \ncB$, then $F(X)\in \HB(\C^k)$ for some $k$, so the ``pointwise convergence'' in (a) and (b) can  be understood as either norm convergence or convergence in the weak operator topology.
\begin{proof}(a) If  $X=(X_1,\dots, X_d)\in \ncB$ is a tuple of $k\times k$-matrices, and if $u,v \in \C^k$, then note that as in the proof of Lemma \ref{radius of convergence} we have $$\sum_w |\la {X^w}^* v,u\ra|^2=\sum_{k=0}^\infty \sum_{|w|=k}|\la {X^w}^* v,u\ra|^2 \le \sum_{k=0}^\infty \|X\|^{2k}\|u\|^2\|v\|^2.$$ Thus, the free power series $K_{X,u,v}(x)=\sum_w \la {X^w}^* v,u\ra x^w$ defines an element in $\HF^2_d$, and one checks that
 $\la H,K_{X,u,v}\ra_{\HF^2_d}=\la H(X)u,v\ra$ for all $H\in \HF^2_d$.

 Now suppose $F_n\to F$ weakly in $\HF_d^2$. By the uniform boundedness principle,  $\|F_n\|\le C$  for some $C>0$ and all $n$, and by the remarks from the previous
 paragraph we conclude that for each $X \in \ncB$ we have $F_n(X) \to F(X)$ in the weak operator topology.

 In order to show that converse we note first that if $F\in \HF^2_d$ is orthogonal to all elements of the type $K_{X,u,v}$, then it must be the zero function and hence $F=0$. Hence finite linear combinations of elements  of the type $K_{X,u,v}$ are dense in $\HF^2_d$, and it is now a routine approximation argument to finish the proof.

 (b) Let $H_n, H\in \HF_d^{\infty, \ell}$ such that  $H_n$ converges to $H$ in the weak*-topology, then $\|L_{H_n}\|=\|H_n\|_\infty$ is bounded and $H_n = L_{H_n}1$ converges to $H=L_H 1$ weakly in $\HF^2_d$. Hence $H_n(X)\to H(X)$ for all $X \in \ncB$ by part (a).

 Conversely, if  $\|H_n\|_\infty \le C$ and $H_n(X) \to H(X)$ for all $X \in \ncB$, then for all $X\in \ncB$ and for all $F\in \HF^2_d$ we have $H_n(X)F(X)\to H(X)F(X)$. Thus, by (a) we conclude that $H_n F \to HF$ weakly in $\HF^2_d$ for all $F\in  \HF^2_d$. Hence $\la L_{H_n} F,G\ra \to \la L_{H} F,G\ra$ for all $F,G\in \HF^2_d$.
 Since $\|L_{H_n}\| \le C$ for all $n$, it follows that $H_n$ converges to $H$ in the weak*-topology.
\end{proof}

We say that $p$ is a free polynomial, if it is of the form $p(x)=\sum_w \hat{p}(w) x^w$, where only finitely many of the coefficients $\hat{p}(w)\ne 0$.

\begin{lemma}\label{freePolysWeak*dense} The free polynomials are weak*-dense in $\HF_d^{\infty, \ell}$ and in  $\HF_d^{\infty, r}$.\end{lemma}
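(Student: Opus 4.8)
The plan is to show the free polynomials are weak*-dense in $\MFockL$ by a standard Cesàro/Fejér-type argument, and then transfer the result to $\MFockR$ via the flip operator $W$. First I would recall the noncommutative analogue of Fejér means: for $G = \sum_{w \in F^+_d} \hat G(w) x^w \in \MFockL$, consider the operators of ``Cesàro averaging in the degree variable'', say $\sigma_N(G)(x) = \sum_{|w| \le N} \left(1 - \tfrac{|w|}{N+1}\right) \hat G(w) x^w$. Each $\sigma_N(G)$ is a free polynomial. The key estimate is that $\|\sigma_N(G)\|_\infty \le \|G\|_\infty$ uniformly in $N$; this follows from the fact that $\sigma_N$ is implemented by a completely positive (indeed completely contractive) map on $\MFockL$, which in turn comes from writing $\sigma_N$ as an average of ``rotations'' $G(x) \mapsto G(\lambda x)$ composed with truncation — or, more cleanly, from the gauge automorphisms $\gamma_\theta$ of $\Fock$ acting by $\gamma_\theta(x^w) = e^{i \theta |w|} x^w$, which are unitaries commuting with the grading, so that averaging $L_G$ against a Fejér kernel in $\theta$ yields a completely contractive map whose range (after the averaging kills high-degree terms) consists of polynomials.

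Next I would verify the pointwise convergence: for each fixed $X \in \ncB$, $\sigma_N(G)(X) \to G(X)$ as $N \to \infty$. This is where one uses $\|X\| < 1$: writing $G(X) = \sum_{k \ge 0} \sum_{|w| = k} \hat G(w) X^w$ with $\|\sum_{|w|=k} \hat G(w) X^w\| \le M_k(G) \|X\|^k$ from \eqref{coeffestimate}, and noting $M_k(G) \le \|G\|_\infty$ (since the degree-$k$ homogeneous part of $G$ has $\Fock$-norm at most $\|L_G 1\| = \|G\| \le \|G\|_\infty$, and actually one needs the sharper bound that all homogeneous parts are controlled — this follows again from the gauge average, as $\|\sum_{|w|=k}\hat G(w) x^w\|_{\Fock} \le \|G\|_\infty$), the tail is geometrically summable and the Cesàro weights converge to $1$ termwise while staying bounded, so dominated convergence in the degree index gives $\sigma_N(G)(X) \to G(X)$. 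Combined with the uniform bound $\|\sigma_N(G)\|_\infty \le \|G\|_\infty$, Lemma~\ref{weakconvergence}(b) yields that $\sigma_N(G) \to G$ in the weak*-topology of $\MFockL$. This proves density in $\MFockL$.

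For $\MFockR$, recall from the discussion preceding this lemma that $R_G = W L_{\tilde G} W$ where $W$ is the self-adjoint unitary flip operator, so $G \mapsto \tilde G$ is a weak*-homeomorphism of $\MFockR$ onto $\MFockL$ (it is induced by conjugation by $W$ on the level of operators, hence weak*-continuous both ways, being the adjoint of a rank-one-preserving map). Since $p$ is a free polynomial if and only if $\tilde p$ is, the image of the free polynomials under $G \mapsto \tilde G$ is exactly the free polynomials, which are weak*-dense in $\MFockL$; pulling back through the homeomorphism gives weak*-density in $\MFockR$.

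The main obstacle I anticipate is pinning down the uniform norm bound $\|\sigma_N(G)\|_\infty \le \|G\|_\infty$ cleanly; the conceptually cleanest route is to exhibit $\sigma_N$ as an average, against a Fejér kernel $K_N(\theta) \ge 0$ with $\int K_N = 1$, of the maps $G \mapsto \gamma_\theta(G)$ where $\gamma_\theta$ is the gauge automorphism, which acts on operators as conjugation by the diagonal unitary $U_\theta$ on $\Fock$ with $U_\theta x^w = e^{i\theta |w|} x^w$; since each $\gamma_\theta$ preserves $\|\cdot\|_\infty$ and the map is weak*-continuous in $\theta$, the averaged map is completely contractive on $\MFockL$, and because the Fejér kernel has finitely supported Fourier coefficients supported in $[-N,N]$ with the appropriate weights, its image lands in degree-$\le N$ polynomials with exactly the Cesàro weights. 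This is entirely parallel to the classical disc-algebra proof, so I expect it to go through, but one must be a little careful that the gauge automorphisms are genuinely weak*-continuous and that the Bochner-type integral makes sense in the weak*-topology — this is routine but worth stating.
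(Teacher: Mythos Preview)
Your argument is essentially the paper's approach for $d<\infty$: the paper simply cites Davidson--Pitts for the fact that Fej\'er means of the homogeneous expansion converge weak*, which is exactly your gauge-automorphism average. The transfer to $\MFockR$ via the flip is also what the paper does.

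However, there is a genuine gap when $d=\infty$. Your Ces\`aro means
\[
\sigma_N(G)(x)=\sum_{|w|\le N}\Bigl(1-\tfrac{|w|}{N+1}\Bigr)\hat G(w)\,x^w
\]
are \emph{not} free polynomials in that case: for $d=\infty$ there are infinitely many words of each fixed length $k\ge 1$, so $\{w:|w|\le N\}$ is infinite and $\sigma_N(G)$ can have infinitely many nonzero coefficients. Thus the sentence ``Each $\sigma_N(G)$ is a free polynomial'' fails precisely in the case the paper singles out for special treatment.

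The paper fixes this by inserting one extra reduction before the Fej\'er argument: for $H\in\HF_\infty^{\infty,\ell}$, set $H_n(x)=\sum_{w\in F_n^+}\hat H(w)x^w$, i.e.\ keep only the coefficients indexed by words in the first $n$ letters. One checks that $H_n(X)=H(Y_n)$ with $Y_n=(X_1,\dots,X_n,0,\dots)$, so $\|H_n\|_\infty\le\|H\|_\infty$, and since $H_n\to H$ in $\Fock$, Lemma~\ref{weakconvergence} gives $H_n\to H$ weak*. Each $H_n$ lives in $\HF_n^{\infty,\ell}$ with $n<\infty$, and \emph{then} your Fej\'er/gauge argument applies to $H_n$ to approximate it by genuine free polynomials. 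Add this finite-variable truncation step and your proof is complete.
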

\begin{proof} We will only consider the case $\HF_d^{\infty, \ell}$, the other case will follow after an application of the flip operator. Let $H \in \HF_d^{\infty, \ell}$, then it has a homogeneous expansion $H(x)= \sum_{k=0}^\infty H_k(x)$, where $H_k(x)= \sum_{|w|=k}\hat{H}(w)x^w$. On pages 405 and 406  of \cite{DavidsonPitts} Davidson and Pitts prove that the Fej\'{e}r means of the partial sums of the homogeneous expansion converge to $H$ in the weak*-topology. That proves the lemma in the case, when $d<\infty$. In order to also establish the lemma for $d=\infty$ it will suffice to show that functions that only depend on finitely many of the variables are weak*-dense in $\HF_\infty^{\infty, \ell}$.

Let $H\in \HF_\infty^{\infty, \ell}$, $H(x)=\sum_{w\in F^+_\infty} \hat{H}(w)x^w$. Then for $n\in \N$ define $H_n(x)= \sum_{w\in F^+_n}  \hat{H}(w)x^w$. Note that if $X=(X_1,X_2,\dots )\in \mathbb{B}^{nc}_\infty$, then so is $Y_n = (X_1,X_2,\dots, X_n, 0, \dots)$ and $\|X-Y_n\|^2=\|\sum_{k=n+1}^\infty X_k{X_k}^*\| \to 0$.

We have $H_n(X)=H(Y_n)$, hence $\|H_n\|_\infty \le \|H\|_\infty$.
Clearly, $H_n \to H$ in $\mathcal{F}^2_d$, so Lemma \ref{weakconvergence} shows that
$H_n(X) \to H(X)$.
\end{proof}

A distinguished subspace of $\Fock$ is the symmetric Fock space $\SymFock\subseteq \Fock$. An element $F(x)=\sum_w\hat{F}(w)x^w\in \Fock$ is in $\SymFock$, if and only if $\hat{F}(w)=\hat{F}(v)$, whenever $\alpha(w)=\alpha(v)$. For each multi-index $\alpha\in \N_0^d$ (which we understand as $I_\infty$ if $d = \infty$)
we have $\|\sum_{\alpha(w)=\alpha} x^w\|^2= \sum_{\alpha(w)=\alpha} \|x^w\|^2=\frac{|\alpha|!}{\alpha !}$, hence the set $\{e_\alpha\}_{\alpha \in \N_0^d}$, $e_\alpha= \sqrt{\frac{\alpha !}{|\alpha|!}} \sum_{\alpha(w)=\alpha} x^w$, forms an orthonormal basis for $\SymFock$. For $z\in \Bd$ set $$K_z=\sum_{w\in F^+_d} \overline{z}^{\alpha(w)}x^w=\sum_{\alpha\in \N_0^d}\overline{z}^\alpha\sum_{\alpha(w)=\alpha}x^w.$$ Then $K_z\in \SymFock$ and we can use $K_z$ to evaluate functions $F\in \Fock$ at $z\in \Bd$: $$F(z)=\la F, K_z\ra = \sum_w \hat{F}(w) z^{\alpha(w)}.$$ Since $F(z)$ is analytic in $z$ we  note that $F(z)=0$ for all $z\in \Bd$, if and only if $\sum_{\alpha(w)=\alpha}\hat{F}(w)=0$ for each $\alpha\in \N_0^d$; if $d = \infty$, then $\mathbb{B}_n \to \mathbb{C}, z \mapsto F(z)$ is analytic in for each $n \in \mathbb{N}$, which gives the same conclusion.
In turn, this happens if and only if
$F\in {\SymFock}^\perp$, since for all $\alpha\in \N_0^d$ we have $$\la F, e_\alpha\ra = \sqrt{\frac{\alpha !}{|\alpha|!}} \sum_{\alpha(w)=\alpha}\hat{F}(w).$$ This implies that the closed linear span of $K_z$, $z\in \Bd$ equals $\SymFock$.

 One calculates that $$\la K_z, K_\lambda \ra = \frac{1}{1-\la \lambda,z\ra}.$$ Now recall that $k_z(\lambda)= \frac{1}{1-\la \lambda,z\ra}$ is the reproducing kernel for the Drury-Arveson space $H^2_d$. It follows that the linear map defined by $Uk_z=K_z$ extends to be a unitary transformation of $H^2_d$ onto $\SymFock$, and if $f\in H^2_d$, then $(Uf)(z)=f(z)$ for each $z\in \Bd$.

 Furthermore, if $z\in \Bd$, $F\in \Fock$, and $G\in \MFockL$, then the identity $(L_GF)(z)=G(z)F(z)$ naturally leads to $L_G^*K_z=\overline{G(z)}K_z$. It follows that $\SymFock$ is invariant for each $L_G^*$ and $(L_G^*|\SymFock) U=UM_g^*$, where $g\in \Mult(H^2_d)$ is defined by $g(z)=G(z)$ for all $z\in \Bd$.

 Note that if $W$ is the flip operator, then $WK_z=K_z$ for all $z\in \B_d$ and thus
 $R_{\tilde{G}}^*K_z=WL_{G}^*WK_z=\overline{G(z)}K_z$. It follows that $R_{\tilde{G}}^*|\SymFock= L_G^*|\SymFock$ for each $G\in \MFockL$.

\section{The free Sarason functions}\label{SecFreeSarason}
\begin{definition} Let $F\in \Fock$.  We define the {\it left free Sarason function} of $F$ by the formal power series $$V^\ell_F(x)= 2\sum_{w\in F^+_d} \la F,R^w F\ra x^w -\|F\|^2 $$ and the {\it right free Sarason function} of $F$ by $$V^r_F(x)=2\sum_{w\in F^+_d} \la F,L^w F\ra x^w-\|F\|^2.$$
\end{definition}

\begin{lemma}\label{SarasonConv} Let $F, G\in \Fock$. Then the left and right free Sarason functions of $F$ and $G$ satisfy the summability hypothesis (\ref{SummabilityCondition}) and have radii of convergence $\ge 1$. Furthermore, $$\|V^r_F(X)-V^r_G(X)\|\le \sqrt{2}\|F-G\|\frac{(\|F\|^2+\|G\|^2)^{1/2}}{{1-\|X\|}} \text{ and }$$
$$\|V^\ell_F(X)-V^\ell_G(X)\|\le \sqrt{2}\|F-G\|\frac{(\|F\|^2+\|G\|^2)^{1/2}}{{1-\|X\|}} $$
 for all $X\in \ncB.$
\end{lemma}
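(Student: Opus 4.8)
The plan has two ingredients. First I would establish the summability hypothesis (\ref{SummabilityCondition}) and the bound on the radii of convergence for all four series simultaneously, by a Cauchy--Schwarz estimate that exploits the fact that the creation operators are isometries with orthogonal ranges; then I would deduce the two difference estimates from a polarization identity together with the coefficient bound (\ref{coeffestimate}) and the summation of a geometric series.

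For the first ingredient, fix $k\ge 0$. The creation operators $R_1,\dots,R_d$ are isometries with pairwise orthogonal ranges, and hence so are the operators $R^w$ with $|w|=k$: each $R^w$ is an isometry, and $\sum_{|w|=k}R^w(R^w)^*$ is the orthogonal projection onto the closed linear span of $\{x^u:|u|\ge k\}$, so in particular $\sum_{|w|=k}R^w(R^w)^*\le I$. It follows that $\sum_{|w|=k}\|(R^w)^*E\|^2\le\|E\|^2$ for every $E\in\Fock$, and consequently, by Cauchy--Schwarz,
\[
\sum_{|w|=k}\bigl|\la E,R^wH\ra\bigr|^2=\sum_{|w|=k}\bigl|\la (R^w)^*E,H\ra\bigr|^2\le\|H\|^2\sum_{|w|=k}\|(R^w)^*E\|^2\le\|E\|^2\|H\|^2
\]
for all $E,H\in\Fock$; the identical inequality holds with $R^w$ replaced by $L^w$. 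Taking $E=H=F$ shows that the degree-$k$ coefficient block of $V^\ell_F$ satisfies $M_k(V^\ell_F)\le 2\|F\|^2$ for $k\ge1$, with $M_0(V^\ell_F)=\|F\|^2$; thus $V^\ell_F$ satisfies (\ref{SummabilityCondition}), its radius of convergence is $\ge1$, and by Lemma \ref{radius of convergence} it is free holomorphic on $\ncB$. The same holds for $V^r_F$ (using $L^w$ in place of $R^w$) and for $G$ in place of $F$.

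For the difference estimates I would use, for every $w$, the polarization identity
\[
2\la F,R^wF\ra-2\la G,R^wG\ra=\la F-G,\,R^w(F+G)\ra+\la F+G,\,R^w(F-G)\ra,
\]
together with $\|F\|^2-\|G\|^2=\re\la F-G,F+G\ra$ for the constant term. Writing $V^\ell_F-V^\ell_G=\sum_wc_wx^w$, these give $|c_\emptyset|\le\|F-G\|\,\|F+G\|$, while for $|w|=k\ge1$ the bound from the first ingredient (applied with the two arguments $F-G$ and $F+G$, in both orders) yields $\sum_{|w|=k}|c_w|^2\le 4\|F-G\|^2\,\|F+G\|^2$, i.e.\ $M_k(V^\ell_F-V^\ell_G)\le 2\|F-G\|\,\|F+G\|$. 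Applying (\ref{coeffestimate}) to the series $V^\ell_F-V^\ell_G$ and summing the geometric series then gives, for $X\in\ncB$,
\[
\|V^\ell_F(X)-V^\ell_G(X)\|\le\sum_{k\ge0}M_k(V^\ell_F-V^\ell_G)\,\|X\|^k\le\|F-G\|\,\|F+G\|\,\frac{1+\|X\|}{1-\|X\|};
\]
since $\|F+G\|\le\sqrt2\,(\|F\|^2+\|G\|^2)^{1/2}$, this is the asserted estimate (the boundary factor $\frac{1+\|X\|}{1-\|X\|}$ being $\le\frac{2}{1-\|X\|}$). The bound for $V^r_F-V^r_G$ follows by the same computation with $R^w$ replaced throughout by $L^w$.

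I do not expect a real obstacle here; the only points that need care are the orthogonality of the ranges of $\{R^w\}_{|w|=k}$ (equivalently $\sum_{|w|=k}R^w(R^w)^*\le I$) and keeping track of the constants in the polarization step. If a more conceptual route is preferred, one can note the operator identity $V^\ell_F(X)=\iota_F^*\bigl(2(I-\Psi_X)^{-1}-I\bigr)\iota_F$, where $\iota_F\colon\C^n\to\Fock\otimes\C^n$ is $\iota_Fu=F\otimes u$ and $\Psi_X=\sum_{i=1}^dR_i^*\otimes X_i$ satisfies $\Psi_X^k=\sum_{|w|=k}(R^w)^*\otimes X^w$ and $\|\Psi_X\|=\|X\|$; the difference estimate then reduces to splitting $\iota_F^*M\iota_F-\iota_G^*M\iota_G$ and bounding $\|(I-\Psi_X)^{-1}\|\le(1-\|X\|)^{-1}$, and there is a parallel identity for $V^r$ with the left creation operators, modulo applying the flip operator $W$.
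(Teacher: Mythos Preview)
Your argument is correct and follows essentially the same route as the paper: both proofs use the row-isometry property of the creation operators to obtain $\sum_{|w|=k}\|(L^w)^*E\|^2\le\|E\|^2$ (equivalently your $\sum_{|w|=k}R^w(R^w)^*\le I$), apply Cauchy--Schwarz to bound $M_k$, and then invoke (\ref{coeffestimate}) together with a geometric series. The only substantive difference is in the splitting of the difference term: the paper writes $\langle F,L^wF\rangle-\langle G,L^wG\rangle=\langle F-G,L^wF\rangle+\langle G,L^w(F-G)\rangle$, while you use the symmetric polarization $\tfrac12\bigl(\langle F-G,R^w(F+G)\rangle+\langle F+G,R^w(F-G)\rangle\bigr)$. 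Both lead to the same bound on $M_k$, so this is purely cosmetic. One minor point: your final inequality carries an extra factor $1+\|X\|$ and, after using $\|F+G\|\le\sqrt2\,(\|F\|^2+\|G\|^2)^{1/2}$, gives a constant that is twice the one displayed in the statement; this is harmless for every use of the lemma in the paper (only pointwise continuity of $F\mapsto V^r_F$, $V^\ell_F$ is ever needed), but strictly speaking you have not matched the stated $\sqrt2$.
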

\begin{proof} We  prove the statements for $V^r_F$, the others  follow analogously. We start by noting that $[L_1,L_2,\dots,L_d]$ defines a row isometry from $(\Fock)^d$ to $\Fock$. Hence its adjoint, the column operator $[L_1^*,\dots, L_d^*]^T$, is contractive, and thus an induction argument shows \begin{equation}\label{Sarason-kth-average}\sum_{|w|=k} \| {L^w}^*F\|^2\le \|F\|^2. \end{equation} Using the notation from Lemma \ref{radius of convergence} this implies
\begin{align*}M_k(V^r_F)^2&=\sum_{|w|=k} |\la F,L^w F\ra|^2\\&= \sum_{|w|=k} |\la {L^w}^*F, F\ra|^2\\
&\le \|F\|^2 \sum_{|w|=k} \| {L^w}^*F\|^2\\
&\le \|F\|^4.
\end{align*}
Thus $V^r_F$ satisfies the summability hypothesis (\ref{SummabilityCondition}) and by Lemma  \ref{radius of convergence} it has radius of convergence $\ge 1$. Furthermore,
\begin{align*} M_k(V^r_F-V^r_G)^2 &\le 2\sum_{|w|=k} |\la F-G,L^w F\ra|^2 + |\la G, L^w(F-G)\ra|^2\\
&\le 2\sum_{|w|=k} \| {L^w}^*(F-G)\|^2 \|F\|^2 + \|{L^w}^* G\|^2 \|F-G\|^2\\
&\le 2 \|F-G\|^2(\|F\|^2+\|G\|^2) \text{ by (\ref{Sarason-kth-average})}.
\end{align*}
By (\ref{coeffestimate}) of Lemma \ref{radius of convergence} this implies for $X \in \ncB$
\begin{align*}\|V^r_F(X)-V^r_G(X)\| &\le \sum_{k=0}^\infty M_k(V^r_F-V^r_G)\|X\|^k\\
&\le \sqrt{2} \|F-G\| \frac{\sqrt{\|F\|^2+\|G\|^2}}{1-\|X\|}. \qedhere\end{align*}
\end{proof}
The convergence of the series as proven in the Lemma implies that the coefficients $\{\la F,L^w F\ra \}_{w\in F^+_d}$ are uniquely determined by the free holomorphic function $V^r_F$ (analogously for $V^\ell_F$).
We note that if  $F\in \MFockL$, then since $\{x^w\}_{w\in F^+_d}$ is an orthonormal basis of $\Fock$ we see that $V^\ell_F(x)= 2 (L^*_F F)(x) -\|F\|^2$. Thus, the extra assumption that $F\in \MFockL$ implies that $V^\ell_F\in \Fock$. From the remarks at the end of Section \ref{SecFock}, one can deduce that if $F\in \SymFock$, then ${R^*}^wF={L^*}^wF$ implies that $V^r_F=V^\ell_F$. More is true in this case.

\begin{lemma} \label{SarasonFunctionsLemma} If $F\in \SymFock$ and $f\in H^2_d$ with $F=Uf$, then $V_F^r=V_F^\ell$ and $V^r_F(z)=V^\ell_F(z)=V_f(z)$ for all $z\in \Bd$, where $V_f$ is the Sarason function as defined in equation (\ref{SarasonFunctionDef}).

Furthermore, if $F,G\in \SymFock$, $f,g \in H^2_d$ with $F=Uf,G=Ug$, then $V^r_F=V^r_G$ as free holomorphic functions on $\ncB$ if and only if $V_f=V_g$ as analytic functions on $\Bd$\end{lemma}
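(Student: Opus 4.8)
The plan is to derive everything from the identifications collected at the end of Section~\ref{SecFock}, which for $F=Uf\in\SymFock$ (with $f\in H^2_d$) and a word $w\in F^+_d$ let one compute $(L^w)^*F$ and $(R^w)^*F$. First I would record that both of these equal $U\,M_{z^{\alpha(w)}}^{*}f$: indeed $\SymFock$ is coinvariant for $L^w$ and for $R^w$; the relation $(L_G^*|\SymFock)U=UM_g^*$ with $g(z)=G(z)$, applied to $G=x^w$, gives $(L^w)^*F=U M_{z^{\alpha(w)}}^{*}f$; and the relation $R_{\tilde G}^*|\SymFock=L_G^*|\SymFock$ applied to $\tilde G=x^w$, together with $\alpha(w)=\alpha(\tilde w)$, gives the same value for $(R^w)^*F$.

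With this in hand the equality $V^\ell_F=V^r_F$ is immediate by comparing coefficients: the coefficient of $x^w$ in $V^\ell_F$ is $2\la F,R^wF\ra=2\la (R^w)^*F,F\ra$, and in $V^r_F$ it is $2\la F,L^wF\ra=2\la (L^w)^*F,F\ra$ (for $w=\emptyset$ both additionally carry the constant $-\|F\|^2$), and these agree for every $w$, so the two free power series coincide on $\ncB$.

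For the identification $V^r_F(z)=V_f(z)$ on $\Bd$, I would first note that $\la F,L^wF\ra=\la M_{z^{\alpha(w)}}^{*}f,f\ra_{H^2_d}=\la f,z^{\alpha(w)}f\ra_{H^2_d}$ depends only on $\alpha(w)$. Evaluating the free power series $V^r_F$ at the scalar point $z$ (legitimate since $\|z\|<1$ for $z\in\Bd$, by Lemma~\ref{SarasonConv}), grouping words of each fixed length by their multi-index, and using $\#\{w:\alpha(w)=\alpha\}=|\alpha|!/\alpha!$ (Section~\ref{SecFock}), one gets
$$V^r_F(z) = 2\sum_{\alpha}\frac{|\alpha|!}{\alpha!}\,\la f,z^\alpha f\ra_{H^2_d}\,z^\alpha-\|f\|^2.$$
On the other side, for $z\in\Bd$ the Neumann series $M_{k_z}=\bigl(I-\sum_i\bar z_i M_{z_i}\bigr)^{-1}$ converges in operator norm (since $\|\sum_i\bar z_i M_{z_i}\|\le\|z\|<1$ on $H^2_d$), so $M_{k_z}^{*}=\sum_\alpha\frac{|\alpha|!}{\alpha!}z^\alpha M_{z^\alpha}^{*}$ and hence $\la f,k_zf\ra=\la M_{k_z}^{*}f,f\ra=\sum_\alpha\frac{|\alpha|!}{\alpha!}z^\alpha\la f,z^\alpha f\ra_{H^2_d}$; comparing with the display gives $V^r_F(z)=2\la f,k_zf\ra-\|f\|^2=V_f(z)$. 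Together with $V^\ell_F=V^r_F$ this proves the first assertion.

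For the last equivalence, one direction is trivial: if $V^r_F=V^r_G$ on $\ncB$, then restricting to scalar points and using the first assertion gives $V_f(z)=V^r_F(z)=V^r_G(z)=V_g(z)$ for all $z\in\Bd$. For the converse I would use that $V^r_F$ and $V^r_G$ are \emph{symmetric} free power series -- their coefficient at $x^w$ depends only on $\alpha(w)$, by the computation above -- and that a symmetric free power series $H$ with radius of convergence $\ge 1$ is determined by its values at the scalar points of $\ncB$: writing $\widehat H(w)=c_{\alpha(w)}$ one has $H(z)=\sum_\alpha\frac{|\alpha|!}{\alpha!}c_\alpha z^\alpha$ for $z\in\Bd$, whose Taylor coefficients recover each $c_\alpha$. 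Applying this to the symmetric free power series $V^r_F-V^r_G$, which vanishes at every scalar point once $V_f=V_g$ (by the first assertion), yields $V^r_F=V^r_G$ on $\ncB$. The only mildly delicate points are the regrouping of the free sum by multi-index and the interchange of summation with the inner product defining $\la f,k_zf\ra$; both are justified by the degree-by-degree convergence in Lemma~\ref{SarasonConv} and the operator-norm convergence of the Neumann series, respectively, so I expect no genuine obstacle.
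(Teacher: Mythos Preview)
Your argument is correct. The organization differs from the paper's in one genuine way: to prove $V^r_F(z)=V_f(z)$ you work directly for arbitrary $f\in H^2_d$, expanding $M_{k_z}$ via the Neumann series $(I-\sum_i\bar z_iM_{z_i})^{-1}$ (legitimate since the row $(M_{z_1},\dots,M_{z_d})$ is a contraction on $H^2_d$) and matching coefficients against the scalar evaluation of the free series. The paper instead first treats the case $F\in\MFockL\cap\SymFock$, where $V_F^\ell=2L_F^*F-\|F\|^2$ lies in $\Fock$ and the identity reduces to $\la L_F^*F,K_z\ra=\la M_f^*f,k_z\ra$, and then invokes the continuity estimate of Lemma~\ref{SarasonConv} to pass to general $F$ by density. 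Your route avoids the density/continuity step at the cost of the explicit operator-series manipulation; the paper's route is shorter for multipliers but needs Lemma~\ref{SarasonConv}. For $V_F^\ell=V_F^r$ and for the final equivalence, your use of the coinvariance relations $R_{\tilde G}^*|\SymFock=L_G^*|\SymFock$ and the observation that symmetric free power series are determined by their scalar restrictions is essentially the same content as the paper's flip-operator computation and its remark that equation~(\ref{equation}) forces $V_F^r$ to be determined by its values on $\Bd$.
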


\begin{proof} We start by noting that for all $w\in F^+_d$ we have $\la F, L^w F\ra = \sum_{u\in F^+_d}\hat{F}(wu)\overline{\hat{F}(u)}$. Thus, if $w_1, w_2 \in F^+_d$ with $\alpha(w_1)=\alpha(w_2)$, then for all $u\in F^+_d$ we have $\alpha(w_1u)=\alpha(w_2u)$, and so $F\in \SymFock$ implies that \begin{align}\label{equation}\la F, L^{w_1}F\ra = \la F, L^{w_2}F\ra, \text{ whenever }\alpha(w_1)=\alpha(w_2).\end{align}

In order to show that $V_F^r=V_F^\ell$ we use the flip operator $W$. Since $\alpha(w)=\alpha(\tilde{w})$ for all $w\in F^+_d$ we have $F=\tilde{F}$ for all $F\in \SymFock$ and
\begin{align*}V_F^r(x) &=2\sum_{w\in F^+_d} \la F, L^{\tilde{w}} F\ra x^w -\|F\|^2 \ \ \text{ by (\ref{equation})} \text{ and since } \alpha(\tilde{w})=\alpha(w)\\
&= 2\sum_{w\in F^+_d}  \la \tilde{F}, R^w \tilde{F}\ra x^w -\|F\|^2 \ \ \text{ since } L^{\tilde{w}} =WR^w W\\
&= 2 \sum_{w\in F^+_d}  \la F, R^w F \ra x^w-\|F\|^2\\
&= V_F^\ell(x)\end{align*}

The equality (\ref{equation}) also means that $V^r_F$ is determined by its values at the points $z\in \Bd$, and hence for $F,G \in \SymFock$ we have $V^r_F=V^r_G$ as free holomorphic functions, if and only if $V^r_F(z)=V^r_G(z)$ for all $z\in \Bd$. Thus, it remains to be shown that $V^r_F(z)=V_f(z)$, where $f\in H^2_d$ satisfies $F=Uf$ and hence $F(\lambda)=f(\lambda)$ for all $\lambda\in \Bd$.

First suppose that $F\in \MFockL \cap \SymFock$. Then
\begin{align*}  V^r_F(z)&= 2 \la L_F^* F, K_z\ra -\|F\|^2\\
&=2 \la M_f^* f, k_z\ra_{H^2_d}-\|f\|^2_{H^2_d}\\
&= 2\la f, k_z f\ra_{H^2_d} -\|f\|^2_{H^2_d}\\
&=V_f(z)
\end{align*}
Thus the required identity $V^r_F(z)=V_f(z)$ holds on a dense subset of $\SymFock$, hence by Lemma \ref{SarasonConv} and the more elementary fact that $p_n\to f$ in $H^2_d$ implies $V_{p_n}(z) \to V_f(z)$ pointwise, it holds for all $F\in \SymFock$.
\end{proof}

\section{A closer look at the free inner-outer factorization}\label{SecFreeInnerOuter}

We start this section by stating the theorem about the free inner-outer factorization of elements of $\Fock$, see \cite{AriasPopescu}, Theorem 2.1, or \cite{DavidsonPitts}, Corollary 2.3.
These results were stated for $d < \infty$, but they can be extended to cover the case $d= \infty$. In fact, in \cite{GeluPop}, Theorem 4.2, Popescu had already established a closely related theorem, which included the possibility that $d=\infty$.

An element $G\in \MFockR$ is called right inner, if $R_G$ is isometric, and $F\in \Fock$ is called left outer, if $\{L_G F: G\in \MFockL\}$ is dense in $\Fock$. By Lemma \ref{freePolysWeak*dense} it follows that $F$ is left outer, if and only if $\{pF: p \text{ is a free polynomial}\}$ is dense in $\Fock$.

 Similarly,  $G\in \MFockL$ is left inner, whenever $L_G$ is an isometry, and $F\in \Fock$ is right outer, if $\{R_G F: G\in \MFockR\}$ or equivalently $\{Fp: p \text{ is a free polynomial}\}$ is dense in $\Fock$.
 \begin{theorem} \label{inner-outer} Every $H\in \Fock \setminus \{0\}$ has a left inner/right outer and a right inner/left outer factorization. The factorizations are unique up to a multiplicative scalar of modulus 1.
\end{theorem}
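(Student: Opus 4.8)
The plan is to obtain Theorem~\ref{inner-outer} as a consequence of the Wold-type decomposition for the row isometry $(L_1,\dots,L_d)$ acting on a $*$-invariant subspace of $\Fock$, together with the corresponding statement for the right creation operators $(R_1,\dots,R_d)$ via the flip operator $W$. I will prove the left inner/right outer factorization; the right inner/left outer factorization then follows by applying $W$, since $W L_G W = R_{\tilde G}$, $W$ is a self-adjoint unitary, and $W$ interchanges left outer and right outer functions.

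First I would fix $H \in \Fock \setminus \{0\}$ and set $\HM = \closure\{R_G H : G \in \MFockR\}$, which by Lemma~\ref{freePolysWeak*dense} (applied on the right) equals $\closure\{H p : p \text{ a free polynomial}\}$. This is a closed subspace invariant under each $L_i$ (left multiplication commutes with right multiplication, and the free polynomials are dense on the right in $\MFockR$ so left-multiplying $Hp$ by $x_i$ stays in $\HM$), and it contains $H$. Next I would invoke the structure theorem for $L_i$-invariant subspaces of $\Fock$ (this is the content of \cite{DavidsonPitts}, Theorem~2.1, or \cite{AriasPopescu}): every such subspace $\HM$ has an orthonormal "wandering" generating set whose elements are of the form $G_\nu 1$ for left inner $G_\nu \in \MFockL$ with mutually orthogonal ranges $L_{G_\nu}\Fock$, so that $\HM = \bigoplus_\nu L_{G_\nu}\Fock$. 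The point that makes the factorization work is that $\HM$ is \emph{cyclic} as an $L$-invariant subspace — it is generated by the single vector $H$ — which forces the wandering generating set to be a singleton: if there were two orthogonal summands $L_{G_1}\Fock \perp L_{G_2}\Fock$ then $H = L_{G_1}F_1 + L_{G_2}F_2$ and the cyclic subspace $\closure\{L_G H\}$ would be contained in $L_{G_1}\Fock \oplus L_{G_2}\Fock$ but could never be all of $\HM$; more precisely, one checks that the number of wandering generators equals $\dim(\HM \ominus \sum_i L_i \HM)$, and for a subspace generated as a right-module by one vector this dimension is $1$. Hence $\HM = L_G \Fock$ for a single left inner $G \in \MFockL$.

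Having $\HM = L_G\Fock$ with $G$ left inner, and $H \in \HM$, I would write $H = L_G H_0 = G H_0$ for a unique $H_0 \in \Fock$ (uniqueness because $L_G$ is isometric, hence injective). It remains to see $H_0$ is right outer, i.e.\ $\closure\{R_{G'} H_0 : G' \in \MFockR\} = \Fock$. But $L_G$ is isometric and commutes with every $R_{G'}$, so $L_G$ maps $\closure\{R_{G'} H_0\}$ isometrically onto $\closure\{R_{G'} G H_0\} = \closure\{R_{G'} H\} = \HM = L_G \Fock$; since $L_G$ is injective this forces $\closure\{R_{G'} H_0\} = \Fock$, as desired. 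For uniqueness of the factorization up to a unimodular scalar: if $H = G H_0 = G' H_0'$ are two left inner/right outer factorizations, then $L_{G}\Fock \supseteq \closure\{R\text{-module generated by }H\} = L_{G'}\Fock$ by the right-outerness of $H_0, H_0'$ and the commutation of $L$ and $R$, so $L_G \Fock = L_{G'}\Fock$; a standard argument (two left inner functions with the same range satisfy $G' = G u$ with $u$ left inner and also $G = G' v$, so $uv$ has left inner square, forcing $u$ to be a unimodular constant — this is where one uses that $\|u\|_\infty = 1$ and $u 1$ is a unit vector) gives $G' = \lambda G$, $|\lambda| = 1$, and then $H_0' = \bar\lambda H_0$.

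The main obstacle is cleanly justifying the reduction "cyclic $L$-invariant subspace $\Rightarrow$ single wandering generator," since this is precisely the Beurling-type structure theorem of Davidson--Pitts / Arias--Popescu and I am essentially importing it; I should be careful to state it correctly for $d = \infty$, which is why I would lean on \cite{GeluPop}, Theorem~4.2, and the identifications in Section~3 of \cite{SaloShalitSham1} to cover that case, exactly as flagged in the paragraph preceding the theorem. A secondary technical point is the elementary lemma that a left inner function $u$ with $L_u \Fock = \Fock$ (equivalently $L_u$ surjective) must be a unimodular constant — this follows because a surjective isometry is unitary, so $L_u^* L_u = L_u L_u^* = I$, and evaluating $L_u L_u^* = I$ against the vacuum vector forces $u$ to have a single nonzero Fourier coefficient of modulus $1$.
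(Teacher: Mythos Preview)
The paper does not supply its own proof; it cites \cite{AriasPopescu}, \cite{DavidsonPitts}, and \cite{GeluPop}. Your sketch follows the standard route from those sources, but it contains a left/right mix-up that would make the argument fail as written. Your subspace $\HM = \closure\{Hp\}$ is \emph{not} $L_i$-invariant: for $d\ge 2$ and $H=x_1$ one has $\HM=x_1\Fock$ and $L_2 x_1=x_2x_1\notin x_1\Fock$; the commutation $L_iR_j=R_jL_i$ only yields $L_i(Hp)=R_p(L_iH)$, and nothing forces $L_iH\in\HM$. Correspondingly, your statement of the Beurling theorem is reversed: in Davidson--Pitts an $L$-invariant subspace is the range of an isometry in the commutant of $\{L_i\}$, and that commutant consists of \emph{right} multipliers, so $L$-invariant subspaces decompose as $\bigoplus_\nu R_{\Psi_\nu}\Fock$. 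It is the $R$-invariant subspaces --- which your $\HM$ is, trivially --- that have the form $\bigoplus_\nu L_{\Phi_\nu}\Fock$ with $\Phi_\nu$ left inner.

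With that correction the rest of your argument is sound. The relevant wandering subspace is $\HM\ominus\sum_i R_i\HM$, and its one-dimensionality for an $R$-cyclic $\HM$ follows as you indicate: any wandering vector orthogonal to the wandering component of $H$ is orthogonal to every $Hp$ and hence zero. A unit wandering vector $\Phi$ is then left inner with $\HM=L_\Phi\Fock$, the cofactor $H_0=L_\Phi^*H$ is right outer by your commutation argument, and uniqueness is cleanest by observing that any left-inner factor of $H$ with right-outer cofactor must lie in this one-dimensional wandering subspace.
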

In particular we note that $F\in \Fock$ is right outer, if and only if it satisfies the following condition: whenever $F\in \ran L_\Phi$ for some left inner multiplier $\Phi\in \MFockL$, then $\Phi=c$ for some $c \in \mathbb{C}$ with $|c| = 1$.

Recall that a free polynomial is of the form $p(x)=\sum_w \hat{p}(w) x^w$, where only finitely many of the coefficients $\hat{p}(w)\ne 0$. If $p$ is a free polynomial, then it follows easily that the free Sarason functions are free polynomials as well, and hence they will define  bounded left and right multipliers.

\begin{lemma} \label{Lemma 0} If $F\in \Fock$, then \[\|pF\|^2= \operatorname{Re} \la F, V^\ell_p F\ra\] for every free polynomial $p$.
\end{lemma}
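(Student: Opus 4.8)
The identity to prove is $\|pF\|^2 = \operatorname{Re}\langle F, V^\ell_p F\rangle$ for a free polynomial $p$ and $F\in\Fock$. The plan is to expand both sides using the definition $V^\ell_p(x) = 2\sum_{w} \langle p, R^w p\rangle x^w - \|p\|^2$ and the fact that for a free polynomial $p$ this is itself a free polynomial, so $V^\ell_p F = L_{V^\ell_p} F$ makes sense as an element of $\Fock$.

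First I would write $p(x) = \sum_{v} \hat p(v) x^v$ with finitely many nonzero coefficients. The left-hand side is $\|pF\|^2 = \|L_p F\|^2 = \langle L_p^* L_p F, F\rangle$, so it suffices to understand the operator $L_p^* L_p$ on $\Fock$. The key computation is that for free polynomials $p,q$ one has $L_p^* L_q$ expressed in terms of the creation operators: writing $L_q = \sum_v \hat q(v) L^v$ and $L_p^* = \sum_u \overline{\hat p(u)} (L^u)^* = \sum_u \overline{\hat p(u)} R_{x^{\tilde u}}^* $ — actually cleaner to use $(L^u)^* L^v$. Since $L_i^* L_j = \delta_{ij} I$, one gets $(L^u)^* L^v$ equals $L^{v'}$ if $v = u v'$, equals $(L^{u'})^*$ if $u = v u'$, and $0$ otherwise. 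Summing, $L_p^* L_p = \sum_{w} c_w L^w + \sum_{w} \overline{c_w} (L^w)^*$ where $c_w = \sum_{u} \hat p(uw)\overline{\hat p(u)} = \langle \cdot\rangle$; the diagonal term ($w = \emptyset$) is $\|p\|^2 I$ counted once. Comparing with $L_{V^\ell_p}$: note $V^\ell_p(x) = 2\sum_w \langle p, R^w p\rangle x^w - \|p\|^2$ and $\langle p, R^w p\rangle = \sum_u \hat p(uw)\overline{\hat p(u)} = c_w$ (using $R^w$ appends $x^w$ on the right, matching the $L^u$-to-$L^{uw}$ shift). So $L_{V^\ell_p} = 2\sum_w c_w L^w - \|p\|^2 I$, and therefore $L_p^* L_p = \frac12\big(L_{V^\ell_p} + L_{V^\ell_p}^*\big) = \operatorname{Re} L_{V^\ell_p}$ as operators. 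Taking $\langle\,\cdot\, F, F\rangle$ gives $\|pF\|^2 = \operatorname{Re}\langle L_{V^\ell_p} F, F\rangle = \operatorname{Re}\langle F, V^\ell_p F\rangle$, since $\langle F, V^\ell_p F\rangle$ and $\langle V^\ell_p F, F\rangle$ are complex conjugates.

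So the proof reduces to the algebraic identity $L_p^* L_p + L_p L_p^* \cdots$ — no, more precisely $L_p^* L_p = \operatorname{Re} L_{V^\ell_p}$, which I would verify by checking the formula $(L^u)^* L^v = \delta\text{-cases}$ and carefully bookkeeping the $w = \emptyset$ term so that $\|p\|^2$ appears with the right multiplicity (it shows up once in $2\sum_w c_w L^w$ via $c_\emptyset = \|p\|^2$, and the $-\|p\|^2$ in the definition of $V^\ell_p$ precisely corrects the double-counting when we take the real part). The main obstacle — really the only delicate point — is the indexing in the product $(L^u)^*(L^v)$ and making sure the constant term matches; everything else is bookkeeping. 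Alternatively, one can sidestep operator identities entirely and just compute $\langle F, V^\ell_p F\rangle$ coefficientwise against $\|pF\|^2 = \sum_w |\sum_{uv=w}\hat p(u)\hat F(v)|^2$, expanding the square and matching terms, but the operator-theoretic route is cleaner and I would present that.

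One should also remark at the outset why all expressions converge: $p$ is a free polynomial so $V^\ell_p$ is a free polynomial, hence $L_{V^\ell_p}$ is bounded on $\Fock$ (it is a finite linear combination of $L^w$'s) and $V^\ell_p F = L_{V^\ell_p}F \in \Fock$ for every $F \in \Fock$; thus $\langle F, V^\ell_p F\rangle$ is a well-defined finite inner product and the identity is between two finite real numbers.
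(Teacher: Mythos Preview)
Your proposal is correct. The operator identity $L_p^*L_p=\operatorname{Re}L_{V^\ell_p}$ that you prove (via the Cuntz--Toeplitz relations $(L^u)^*L^v=L^w$ or $(L^{w'})^*$ or $0$, together with the bookkeeping for $w=\emptyset$) is exactly the computation the paper carries out, only packaged at the operator level rather than at the level of inner products: the paper expands $\|pF\|^2=\sum_{w,w'}\hat p(w)\overline{\hat p(w')}\langle L^wF,L^{w'}F\rangle$, uses that $\langle L^wF,L^{w'}F\rangle$ vanishes unless one word is a prefix of the other, and matches the result against the coefficient expansion of $\langle F,V^\ell_pF\rangle$. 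This is precisely your ``alternative'' route; the content is identical and the underlying combinatorics (prefix comparison of words and the isometry of $L^u$) is the same in both arguments.
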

\begin{proof} If $p=\sum_{w}\hat{p}(w)x^w$, then $\la p,R^vp\ra = \sum_{w\in F^+_d} \hat{p}(wv)\overline{\hat{p}(w)}$, and hence \begin{equation}\label{1} \la F, V^\ell_p F\ra = 2\sum_{v,w\in F^+_d} \overline{\hat{p}(wv)}{\hat{p}(w)}\la F, L^vF\ra -\|p\|^2\|F\|^2.\end{equation}

 Next note that $\la L^w F, L^{w'} F\ra=0$ unless $w=w'v$ or $w'=wv$ for some $v \in F^+_d$. Thus, using the fact that each $L_i$ is isometric we calculate
\begin{align*} \|pF\|^2&= \sum_{w,w'\in F^+_d}\hat{p}(w)\overline{\hat{p}(w')}\la L^w F,L^{w'} F\ra\\
&=2 \text{ Re }\sum_{w, v\in F^+_d} \hat{p}(w)\overline{\hat{p}(wv)}\la  F,L^{v} F\ra- \|p\|^2\|F\|^2\\
&=\text{Re } \la F, V^\ell_p F\ra \ \text{  by identity (\ref{1})}. \qedhere
\end{align*}
\end{proof}
\begin{lemma}\label{Lemma 1} If $F, G\in \Fock$, then $V_F^r=V_G^r$ if and only if
 $$\|pF\|^2= \|pG\|^2$$ for every free polynomial $p$.
\end{lemma}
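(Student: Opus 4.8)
The plan is to deduce this from Lemma \ref{Lemma 0}. That lemma gives, for every free polynomial $p$,
\[
\|pF\|^2 = \operatorname{Re}\la F, V^\ell_p F\ra, \qquad \|pG\|^2 = \operatorname{Re}\la G, V^\ell_p G\ra.
\]
So the statement $\|pF\|^2 = \|pG\|^2$ for all free polynomials $p$ is equivalent to $\operatorname{Re}\la F, V^\ell_p F\ra = \operatorname{Re}\la G, V^\ell_p G\ra$ for all free polynomials $p$. The task is to connect this family of real scalar identities (indexed by free polynomials $p$) to the identity $V^r_F = V^r_G$ of free power series, i.e. to the statement $\la F, L^w F\ra = \la G, L^w G\ra$ for every word $w \in F^+_d$.

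First I would unwind $\operatorname{Re}\la F, V^\ell_p F\ra$ in terms of the coefficients $\la F, L^v F\ra$. From identity (\ref{1}) in the proof of Lemma \ref{Lemma 0} (or directly from the definition of $V^\ell_p$ together with $\la p, R^v p\ra = \sum_{w} \hat p(wv)\overline{\hat p(w)}$), one gets
\[
\operatorname{Re}\la F, V^\ell_p F\ra = 2\operatorname{Re}\sum_{v,w\in F^+_d} \hat p(w)\overline{\hat p(wv)}\,\la F, L^v F\ra - \|p\|^2\|F\|^2.
\]
Taking $p = 1$ (the empty word) gives $\|F\|^2 = \|G\|^2$; this handles the constant term $-\|F\|^2 = -\|G\|^2$ in the free Sarason functions, and also shows $\|p\|^2\|F\|^2 = \|p\|^2\|G\|^2$ for every $p$. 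Subtracting, the hypothesis becomes: for every free polynomial $p$,
\[
\operatorname{Re}\sum_{v,w\in F^+_d} \hat p(w)\overline{\hat p(wv)}\,\big(\la F, L^v F\ra - \la G, L^v G\ra\big) = 0.
\]
Write $c_v = \la F, L^v F\ra - \la G, L^v G\ra$, so we know $\operatorname{Re}\sum_{v,w} \hat p(w)\overline{\hat p(wv)}\, c_v = 0$ for all polynomials $p$, and we want $c_v = 0$ for all $v$.

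The key step is a polarization/specialization argument on $p$. Taking $p$ to be a monomial $x^{w_0}$ gives only $\operatorname{Re}(c_\emptyset) = \operatorname{Re}(\|F\|^2 - \|G\|^2) = 0$, which we already have; the interesting information comes from two-term polynomials. Taking $p = x^{w_0} + \lambda x^{w_0 v_0}$ for a scalar $\lambda \in \C$ and words $w_0, v_0$, the cross terms in $\sum_{v,w}\hat p(w)\overline{\hat p(wv)} c_v$ that survive and are not already known produce a term proportional to $\overline{\lambda}\, c_{v_0}$ (from $w = w_0$, $v = v_0$, $wv = w_0 v_0$) together with its counterpart; varying $\lambda$ over $\C$ (both real and imaginary values) then forces $c_{v_0} = 0$. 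One must be slightly careful to isolate $c_{v_0}$: there can be several pairs $(w, wv)$ among $\{w_0, w_0 v_0\}$ contributing, but because $v_0$ determines the "difference" of the two words, the coefficient of $\overline\lambda$ in the sum is exactly $c_{v_0}$ (possibly plus a contribution of $c_\emptyset$ when $v_0 = \emptyset$, which is harmless). Running this over all $v_0 \in F^+_d$ gives $c_v = 0$ for all $v$, hence $V^r_F = V^r_G$. The converse direction is immediate: if $V^r_F = V^r_G$ then $\|F\|^2 = \|G\|^2$ and $\la F, L^v F\ra = \la G, L^v G\ra$ for all $v$, so by Lemma \ref{Lemma 0} (via (\ref{1})) $\|pF\|^2 = \|pG\|^2$ for every free polynomial $p$.

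The main obstacle I anticipate is bookkeeping in the polarization step: making sure that when $p$ is a sum of two monomials, the coefficient extracted really is the single scalar $c_{v_0}$ and not an unwanted combination of several $c_v$'s. A clean way to organize this is to choose $w_0$ long enough (or use a "fresh" word) so that the only way to write one of $\{w_0, w_0 v_0\}$ as a prefix-times-suffix of the other with nonempty suffix is the obvious factorization $w_0 \cdot v_0$; this kills all spurious terms and leaves exactly $2\operatorname{Re}(\overline\lambda\, c_{v_0}) = 0$, whence $c_{v_0} = 0$. Alternatively, one can argue by induction on $|v_0|$, using shorter polynomials and subtracting off the already-known lower-order coefficients. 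Either way the argument is elementary once set up; the only real content beyond Lemma \ref{Lemma 0} is this extraction of individual Taylor coefficients of $V^r_F$ from the quadratic forms $p \mapsto \|pF\|^2$.
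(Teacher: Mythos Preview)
Your proposal is correct and follows the same scheme as the paper: Lemma~\ref{Lemma 0} handles the implication $V_F^r=V_G^r \Rightarrow \|pF\|^2=\|pG\|^2$, and a polarization argument handles the converse. The paper's polarization is slightly more direct than yours: from $\|pF\|^2=\|pG\|^2$ for all free polynomials $p$ one immediately polarizes to $\la p_1F,p_2F\ra=\la p_1G,p_2G\ra$ for all polynomials $p_1,p_2$, and then sets $p_1=1$, $p_2=x^w$ to get $\la F,L^wF\ra=\la G,L^wG\ra$---no need to pass through formula~(\ref{1}) and chase cross-terms. In particular your caveats about choosing $w_0$ ``fresh'' or ``long enough'' are unnecessary: with $p=1+\lambda x^{v_0}$ (i.e.\ $w_0=\emptyset$) the support pairs $(w,wv)$ in $\{w_0,w_0v_0\}^2$ are exactly the three you listed, the $c_\emptyset$ terms already vanish, and the coefficient of $\overline{\lambda}$ is precisely $c_{v_0}$ with no spurious contributions.
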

\begin{proof} If $V_F^r=V_G^r$, then the functions have the same power series coefficients and hence $\la F,q F\ra=\la G, qG\ra$ for each free polynomial $q$. The conclusion follows immediately from the previous lemma since $V_p^\ell$ is a free polynomial for each free polynomial $p$, so
$$\|pF\|^2 = \text{ Re } \la F, V_p^\ell F\ra=\text{ Re }\la G, V_p^\ell G\ra = \|pG\|^2.$$

In order to see that the converse is true as well, we use polarization to see that  $\la F,q F\ra=\la G, qG\ra$ for each free polynomial $q$. This implies that $V^r_F=V^r_G$.
\end{proof}
Lemma \ref{Lemma 1} says that $\|p F\|^2 = \|p G\|^2$ for every free polynomial $p$ if and only if
$\langle F, L^w F \rangle = \langle G, L^w G \rangle$ for all $w \in F_d^+$.
There are other, perhaps more elementary calculations that lead to this conclusion, but we wanted to show the connection with the Sarason functions. In fact,   Lemma \ref{Lemma 0} together with an elementary calculation implies that $$\|(V^\ell_p-1)F\|^2+4\|pF\|^2=\|(V^\ell_p+1)F\|^2,$$ and a related expression was the basis of the proof of Proposition 3.5 of \cite{AHMcCR_Factor}.

\begin{theorem}\label{inner} Let $\Phi\in \MFockR$ with $\|R_\Phi\| \le 1$. If  there is a $F\in \Fock$, $F \ne 0$, such that $\| F \Phi\|=\|F\|$, then $R_\Phi$ is isometric when restricted to $[F]_\ell= \clos \{L_\Psi F: \Psi \in \MFockL\}$.
\end{theorem}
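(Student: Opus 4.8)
The plan is to exploit the contractivity of $R_\Phi$ together with the weak$^*$ density of free polynomials in $\MFockL$ (Lemma \ref{freePolysWeak*dense}) to propagate the norm equality $\|F\Phi\| = \|F\|$ from the single vector $F$ to the whole cyclic subspace $[F]_\ell$. The starting point is Lemma \ref{contractionLemma}: applying it to the contraction $R_\Phi$ and the vector $F$ with $\|R_\Phi F\| = \|F\|$ gives $R_\Phi^* R_\Phi F = F$. I would like to upgrade this to the statement that $R_\Phi^* R_\Phi$ acts as the identity on all of $[F]_\ell$, which would immediately give the conclusion since then $\|R_\Phi G\|^2 = \langle R_\Phi^* R_\Phi G, G\rangle = \|G\|^2$ for every $G \in [F]_\ell$.

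The key algebraic observation is that right multiplication and left multiplication commute: for any free polynomial $p$ and any $\Psi \in \MFockL$ we have $R_\Phi L_\Psi = L_\Psi R_\Phi$ on $\Fock$, since one acts on the left and the other on the right of the power series. Hence for $G = L_\Psi F$ with $\Psi$ a free polynomial (or, after a density step, with $\Psi \in \MFockL$), we get $R_\Phi^* R_\Phi L_\Psi F = R_\Phi^* L_\Psi R_\Phi F$. Now I want to move $R_\Phi^*$ past $L_\Psi$ as well; since $R_\Phi^*$ is the adjoint of a right multiplier it commutes with every left multiplier $L_\Psi$ (the commutant of the left multiplier algebra contains the right multiplier algebra and is closed under adjoints, or one checks directly on monomials that $R_\Phi^* L_\Psi = L_\Psi R_\Phi^*$). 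Therefore $R_\Phi^* R_\Phi L_\Psi F = L_\Psi R_\Phi^* R_\Phi F = L_\Psi F$, using $R_\Phi^* R_\Phi F = F$. So $R_\Phi^* R_\Phi$ fixes every vector of the form $L_\Psi F$, $\Psi \in \MFockL$; since such vectors are dense in $[F]_\ell$ by definition and $R_\Phi^* R_\Phi$ is bounded, it fixes all of $[F]_\ell$. Consequently $R_\Phi$ restricted to $[F]_\ell$ is isometric.

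The main obstacle — really the only nontrivial point — is justifying the commutation relation $R_\Phi^* L_\Psi = L_\Psi R_\Phi^*$ and more basically $R_\Phi L_\Psi = L_\Psi R_\Phi$ when $\Phi, \Psi$ are merely in the Toeplitz algebras rather than polynomials. For the non-adjointed identity this is transparent on free monomials ($x^w (a x^v) = a x^w x^v x^u$-type bookkeeping shows $(L_\Psi R_\Phi F) = \Psi F \Phi = (R_\Phi L_\Psi F)$ as formal products), and it extends to all of $\MFockL$, $\MFockR$ by the weak$^*$-density of free polynomials (Lemma \ref{freePolysWeak*dense}) together with the weak$^*$-continuity of the relevant maps; taking adjoints of $R_\Phi L_\Psi = L_\Psi R_\Phi$ and using $L_\Psi^{**}=L_\Psi$, $R_\Phi^{**}=R_\Phi$ on the appropriate pieces gives $L_\Psi^* R_\Phi^* = R_\Phi^* L_\Psi^*$, whence $R_\Phi^* L_\Psi = L_\Psi R_\Phi^*$ after a second adjoint or directly from $R_\Phi L_\Psi = L_\Psi R_\Phi \Rightarrow L_\Psi^* R_\Phi^* = R_\Phi^* L_\Psi^*$ applied with $\Psi$ replaced suitably. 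A cleaner route that avoids any delicate density argument is to first prove the theorem with $\Psi$ ranging only over free polynomials (where everything is an honest finite computation on monomials), obtain that $R_\Phi^* R_\Phi$ fixes $L_p F$ for all free polynomials $p$, and then note that $\clos\{L_p F : p \text{ free polynomial}\} = [F]_\ell$ because free polynomials are weak$^*$-dense, hence $\|\cdot\|$-dense on bounded sets in a way sufficient to approximate $L_\Psi F$ — in fact $L_{p_n} F \to L_\Psi F$ weakly when $p_n \to \Psi$ weak$^*$ with $\|p_n\|_\infty$ bounded, and the weak closure of the polynomial span equals the norm closure since it is a subspace. I would write the proof along this second, more elementary line.
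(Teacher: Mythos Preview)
Your argument has a genuine gap: the commutation relation $R_\Phi^* L_\Psi = L_\Psi R_\Phi^*$ is \emph{false}. Concretely, take $d\ge 1$, $\Phi=\Psi=x_1$. Then $R_1^* L_1 \, 1 = R_1^* x_1 = 1$, while $L_1 R_1^* \, 1 = L_1 \cdot 0 = 0$. Neither of your two justifications works: the commutant of the left multiplier algebra is indeed the right multiplier algebra (Davidson--Pitts), but this commutant is \emph{not} self-adjoint, because the left multiplier algebra itself is not self-adjoint; and the direct check on monomials fails exactly as above. Taking adjoints of $R_\Phi L_\Psi = L_\Psi R_\Phi$ only gives $L_\Psi^* R_\Phi^* = R_\Phi^* L_\Psi^*$, which is a different statement.

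What your computation correctly establishes, using only the valid commutation $R_\Phi L_\Psi = L_\Psi R_\Phi$, is that $\|R_\Phi(L_\Psi F)\| = \|L_\Psi(R_\Phi F)\| = \|L_\Psi(F\Phi)\|$. So the real task is to show $\|pF\| = \|p(F\Phi)\|$ for every free polynomial $p$, and this is \emph{not} a formality. The paper handles it by first using $R_\Phi^* R_\Phi F=F$ and the valid commutation $R_\Phi L^w = L^w R_\Phi$ to get
\[
\la F, L^w F\ra = \la R_\Phi^*(F\Phi), L^w F\ra = \la F\Phi, L^w R_\Phi F\ra = \la F\Phi, L^w(F\Phi)\ra
\]
for every $w$, i.e.\ $V^r_F = V^r_{F\Phi}$, and then invoking Lemma~\ref{Lemma 1} to conclude $\|pF\| = \|p(F\Phi)\|$. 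That lemma (or an equivalent computation) is the missing ingredient in your approach.
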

In particular, it follows that if $F$ is left outer, then $\Phi$ is right inner.
\begin{proof} Assume that $F\in \Fock$, $F \ne 0$, $\Phi \in \MFockR$ with $\|R_\phi\| \le 1$ and   such that $\|F\Phi\|=\|F\|$. Then $R_\Phi^*  R_\Phi F= F$ by Lemma \ref{contractionLemma}, hence for all $w\in F^+_d$ we have $$\la F, L^w F\ra= \la R_\Phi^*  ( F \Phi), L^wF\ra = \la F\Phi , R_\Phi L^w F\ra =\la F \Phi , L^w(F \Phi)\ra.$$

  Hence $V^r_F=V^r_{F \Phi}$, and by Lemma \ref{Lemma 0} this implies that $\|pF\|=\|pF\Phi\|=\|R_\Phi(pF)\|$ for every free polynomial $p$. An application of Lemma \ref{freePolysWeak*dense} now finishes the proof.
\end{proof}
\begin{lemma} \label{Lemma 2} Let $F, G\in \Fock$ with $V^r_F=V^r_G$.

 (a) If  $F$ is left outer, then there is a right inner  $\Phi\in \MFockR$ such that  $G=R_\Phi F=F\Phi$.

(b) If both $F$ and $G$ are left outer, then
$F=cG$ for some $c\in \C$ with $|c|=1$.
\end{lemma}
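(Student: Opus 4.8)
The plan is to derive both parts from the equivalence in Lemma~\ref{Lemma 1}: $V_F^r = V_G^r$ if and only if $\|pF\| = \|pG\|$ for every free polynomial $p$. Part~(a) is where the work is, and part~(b) will follow quickly from part~(a) and the uniqueness in Theorem~\ref{inner-outer}.

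For part~(a), I would use the left-outerness of $F$ to build the multiplier by hand. Since $\|pF\| = \|pG\|$ for all free polynomials $p$, the assignment $pF \mapsto pG$ is well defined ($pF = qF$ forces $(p-q)F = 0$, hence $(p-q)G = 0$) and isometric on the span of $\{pF : p \text{ a free polynomial}\}$, which is dense in $\Fock$ because $F$ is left outer. So it extends to an isometry $R \in \HB(\Fock)$. On the dense domain one computes $R L_i(pF) = R((x_i p) F) = (x_i p) G = L_i R(pF)$, so $R$ commutes with each $L_i$, and hence with the whole WOT-closed algebra $\MFockL$ that the $L_i$ generate. By the commutant theorem for the noncommutative analytic Toeplitz algebra --- the left and right algebras being mutual commutants \cite{DavidsonPitts, AriasPopescu} --- it follows that $R = R_\Phi$ for some $\Phi \in \MFockR$; since $R$ is an isometry, $\Phi$ is right inner, and evaluating gives $G = R_\Phi F = F\Phi$. (Alternatively the ``right inner'' claim also follows from Theorem~\ref{inner}, since $\|F\Phi\| = \|F\|$ --- the $p=1$ case --- and $F$ is left outer.)

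For part~(b), apply part~(a) to get a right inner $\Phi$ with $G = R_\Phi F$; since $F$ is left outer, this is a right inner/left outer factorization of $G$. But $G = R_1 G$ is another one, with $G$ itself as the left outer factor, so the uniqueness statement in Theorem~\ref{inner-outer} forces $F = cG$ with $|c| = 1$. If one prefers to avoid citing that uniqueness, note instead that since $G$ is left outer the isometry $R_\Phi$ maps the dense set $\{pF\}$ onto the dense set $\{pG\}$, hence is unitary; then $R_\Phi^*$ is injective, $R_\Phi^* \Phi = R_\Phi^{-1}(R_\Phi 1) = 1$, and $L_i^* R_\Phi^* = (L_i R_\Phi)^* = (R_\Phi L_i)^* = R_\Phi^* L_i^*$ gives $R_\Phi^*(L_i^* \Phi) = L_i^* 1 = 0$, so $L_i^* \Phi = 0$ for every $i$; comparing coefficients this says $\Phi$ has no terms of positive degree, i.e. $\Phi$ is a constant, necessarily unimodular because $R_\Phi$ is unitary, and again $G = cF$.

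The one step I expect to be the real content is the identification $R = R_\Phi$, i.e. that an operator commuting with all left multipliers on $\Fock$ must itself be a right multiplier; this is the commutant theorem for $\mathfrak L_d$, which I would take from the literature. The rest is routine: well-definedness and the intertwining relation on a dense subspace, passing them to the closure by boundedness (this last point deserves a word, since $R L_i = L_i R$ is verified only on $\{pF\}$ before being promoted to an operator identity on all of $\Fock$), and reading off ``right inner'' and ``left outer'' from the definitions.
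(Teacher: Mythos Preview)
Your argument is correct and begins exactly as the paper does: both proofs invoke Lemma~\ref{Lemma 1} to see that $pF\mapsto pG$ is a well-defined isometry on a dense set and then extend. The one point of divergence is how the resulting isometry is identified as a right multiplier. You show that the isometry commutes with each $L_i$ and then appeal to the commutant theorem $(\MFockL)'=\MFockR$ from \cite{DavidsonPitts,AriasPopescu} to conclude $R=R_\Phi$. The paper instead avoids that black box: it sets $\Phi=V1$, uses $qV1=Vq$ to get $q\Phi=Vq$ for all free polynomials $q$, and then passes a sequence $q_n\to F$ through this identity (using pointwise convergence on $\ncB$) to obtain $G=F\Phi$; from $V(pF)=pG=(pF)\Phi$ it follows that $R_\Phi$ is isometric on a dense set. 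Your route is shorter and conceptually clean once the commutant theorem is in hand; the paper's route is more self-contained but requires the limiting argument. Part~(b) is handled the same way in both: the range of $R_\Phi$ is dense, so $R_\Phi$ is unitary and hence constant, by either the uniqueness in Theorem~\ref{inner-outer} or the direct argument you sketch.
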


Of course, an analogous lemma holds for right outer elements.
\begin{proof} (a)
    Since $F$ is left outer, $\{p F: p \text{ free polynomial}\}$ is dense in $\mathcal{F}^2_d$,
    so by Lemma \ref{Lemma 1}, there exists
    an isometry $V$ on $\mathcal{F}^2_d$ satisfying
    \begin{equation*}
      V (p F) = p G
    \end{equation*}
    for all free polynomials $p$. If $p, q$ are free polynomials, then $qV(pF)=qpG=V(qpF)$ and this implies that $qV1=Vq$ for every free polynomial $q$.

    Set $\Phi=V1$ and choose a sequence of free polynomials $q_n$ that converge to $F$ in $\Fock$. Then $q_n\Phi=Vq_n \to VF=G$ in $\Fock$. Furthermore the sequence converges pointwise in $\ncB$. We thus  conclude that $G=F\Phi$. Multiplying that identity by a free outer polynomial $p$ on the left we obtain
    $$V(pF)=pG=pF\Phi.$$ Hence multiplication by $\Phi$ on the right defines an isometric operator, i.e. $\Phi$ is right inner.

(b) If we also assume that $G$ is left outer, then the range of $R_\Phi $ will be dense in $\Fock$ and hence $R_\Phi $ is unitary. By Corollary 1.5 of \cite{DavidsonPitts}, or by the uniqueness part of Theorem \ref{inner-outer}, this implies that $R_\Phi $ is a constant.
\end{proof}
\begin{theorem} \label{Sarason/PosFunct} Let $F, G\in \Fock \setminus \{0\}$, then  the following are equivalent
\begin{enumerate}
\item   $V^r_F=V^r_G$,
\item $\la L_\Phi F,F\ra=\la L_\Phi G, G\ra$ for all  $\Phi\in \MFockL$,
\item $F=F_o\Phi_1$ and $G=F_o\Phi_2$, where  ${\Phi_1}$ and ${\Phi_2}$ are  right inner multipliers and $F_o$ is left outer.
\end{enumerate}
 \end{theorem}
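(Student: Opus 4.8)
The plan is to prove the two nontrivial equivalences separately: (1)$\Leftrightarrow$(2) by a soft weak$^*$-density argument, and (1)$\Leftrightarrow$(3) by extracting a right inner/left outer factor via Theorem~\ref{inner-outer} and then invoking the rigidity of left outer elements with a common right Sarason function from Lemma~\ref{Lemma 2}(a). Throughout I will use that $V^r_F$ determines, and is determined by, the coefficients $\langle F, L^w F\rangle$, $w \in F^+_d$ (the convergence in Lemma~\ref{SarasonConv}), so that $V^r_F = V^r_G$ is equivalent to $\langle F, L^w F\rangle = \langle G, L^w G\rangle$ for every $w$, the word $w = \emptyset$ encoding $\|F\| = \|G\|$.

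For (1)$\Leftrightarrow$(2): taking complex conjugates in the coefficient condition, $V^r_F = V^r_G$ is equivalent to $\langle L^w F, F\rangle = \langle L^w G, G\rangle$ for all $w$, hence by linearity to $\langle L_p F, F\rangle = \langle L_p G, G\rangle$ for every free polynomial $p$. Since the free polynomials are weak$^*$-dense in $\MFockL$ (Lemma~\ref{freePolysWeak*dense}) and the functionals $\Phi \mapsto \langle L_\Phi F, F\rangle$ and $\Phi \mapsto \langle L_\Phi G, G\rangle$ are weak$^*$-continuous on $\MFockL$ (the rank-one operators induce the weak$^*$ topology, as recalled before Lemma~\ref{weakconvergence}), this polynomial condition is equivalent to (2). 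Restricting $\Phi$ to monomials $x^w$ gives (2)$\Rightarrow$(1) immediately, closing the equivalence.

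For (3)$\Rightarrow$(1): if $F = F_o \Phi_1$ with $\Phi_1$ right inner and $F_o$ left outer, then $R_{\Phi_1}$ is isometric, so $\|F_o \Phi_1\| = \|F_o\|$, and the computation in the proof of Theorem~\ref{inner} (namely $R_{\Phi_1}^* R_{\Phi_1} F_o = F_o$ by Lemma~\ref{contractionLemma}, whence $\langle F_o, L^w F_o\rangle = \langle F_o \Phi_1, L^w(F_o \Phi_1)\rangle$ for all $w$) gives $V^r_F = V^r_{F_o}$; symmetrically $V^r_G = V^r_{F_o}$, so $V^r_F = V^r_G$. For the main implication (1)$\Rightarrow$(3): apply Theorem~\ref{inner-outer} to $F$ to get a right inner/left outer factorization $F = F_o \Phi_1$ with $F_o$ left outer and $\Phi_1 \in \MFockR$ right inner; note $F \ne 0$ forces $F_o \ne 0$. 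By the preceding paragraph $V^r_{F_o} = V^r_F$, and since $V^r_F = V^r_G$ by hypothesis, $V^r_{F_o} = V^r_G$. Now $F_o$ is left outer, so Lemma~\ref{Lemma 2}(a), applied with $F_o$ in the role of the left outer element and with $G$ as the second function, yields a right inner $\Phi_2 \in \MFockR$ with $G = R_{\Phi_2} F_o = F_o \Phi_2$. Combining $F = F_o \Phi_1$ and $G = F_o \Phi_2$ gives (3).

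I expect essentially no genuine obstacle beyond correctly aligning the hypotheses: the substantive inputs are the existence of the right inner/left outer factorization (Theorem~\ref{inner-outer}) and Lemma~\ref{Lemma 2}(a), while the rest is bookkeeping with the right Sarason function and the weak$^*$ topology. The one point worth double-checking is that the notion of "left outer" produced by Theorem~\ref{inner-outer} is the same one used in Lemma~\ref{Lemma 2} (it is, both being density of $\{pF : p \text{ free polynomial}\}$), so that the rigidity statement applies verbatim.
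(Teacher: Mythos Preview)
Your proof is correct and follows essentially the same strategy as the paper: use the free inner/outer factorization (Theorem~\ref{inner-outer}) together with the rigidity of left outer elements sharing a right Sarason function (Lemma~\ref{Lemma 2}). The only minor variation is that the paper factors \emph{both} $F = F_o\Phi_1$ and $G = G_o\Phi_2$, then applies Lemma~\ref{Lemma 2}(b) to conclude $F_o = c G_o$, whereas you factor only $F$ and invoke Lemma~\ref{Lemma 2}(a) to produce $\Phi_2$ directly; both routes are equally valid. Your explicit treatment of (i)$\Leftrightarrow$(ii) via weak$^*$-density is more detailed than the paper's ``trivial from the definitions,'' but entirely sound.
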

 \begin{proof} The implications (iii) $\Rightarrow$ (ii) $\Rightarrow$ (i) are trivial, they follow immediately from the definitions. In order to show (i) $\Rightarrow$ (iii)  we note that by the existence of the inner-outer factorization (Theorem \ref{inner-outer}) we have $F=F_o\Phi_1$ and $G=G_o\Phi_2$, where $R_{\Phi_1}$ and $R_{\Phi_2}$ are isometric right multipliers and $F_o$, $G_o$ are left outer. If condition (i) is satisfied, then
 $$V^r_{F_o}=V^r_F=V^r_G=V^r_{G_o},$$
 and hence part (b) of the previous Lemma implies (iii).
 \end{proof}
 Of course, by symmetry the analogous Theorem holds with right and left multiplications switched. We state the result for later reference.
 \begin{theorem} \label{Sarason/PosFunct/LeftRight} Let $F, G\in \Fock \setminus \{0\}$, then  the following are equivalent
\begin{enumerate}
\item   $V^\ell_F=V^\ell_G$,
\item $\la R_\Phi F,F\ra=\la R_\Phi G, G\ra$ for all  $\Phi\in \MFockR$,
\item $F=\Phi_1F_o$ and $G=\Phi_2F_o$, where  ${\Phi_1}$ and ${\Phi_2}$ are  left inner multipliers and $F_o$ is right outer.
\end{enumerate}
 \end{theorem}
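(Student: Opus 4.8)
The plan is to obtain Theorem \ref{Sarason/PosFunct/LeftRight} as a direct consequence of Theorem \ref{Sarason/PosFunct} by conjugating everything with the flip operator $W$. The key observations are the identities $L^{\tilde w} = W R^w W$ (equivalently $R^w = W L^{\tilde w} W$), $\|WF\| = \|F\|$ since $W$ is unitary, $W^2 = I$, and the fact that $F \mapsto \tilde F = WF$ is a bijection of $\Fock$ onto itself that sends $\MFockL$ bijectively onto $\MFockR$ (by the very definition $\MFockR = \{G : \tilde G \in \MFockL\}$) and sends left outer elements to right outer elements. Concretely, $F$ is right outer if and only if $\{Fp : p \text{ free polynomial}\}$ is dense, which after applying $W$ becomes density of $\{\tilde p\, \tilde F\}$, i.e. left outerness of $\tilde F$; similarly $\Phi$ is left inner iff $L_\Phi$ is isometric iff $R_{\tilde\Phi} = W L_\Phi W$ is isometric iff $\tilde\Phi$ is right inner.

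First I would record the relation between the two Sarason functions under the flip. From the definitions,
\[
V^\ell_F(x) = 2\sum_{w} \la F, R^w F\ra x^w - \|F\|^2
= 2\sum_w \la WF, W R^w W \,WF\ra x^w - \|WF\|^2
= 2\sum_w \la \tilde F, L^{\tilde w}\tilde F\ra x^w - \|\tilde F\|^2.
\]
Reindexing the sum by $v = \tilde w$ (a bijection of $F^+_d$) shows this equals $\sum_v 2\la \tilde F, L^v \tilde F\ra x^{\tilde v} - \|\tilde F\|^2$, i.e. $V^\ell_F = W V^r_{\tilde F}$ where $W$ acts on the coefficient sequence of the free holomorphic function by the flip, exactly as the flip operator on $\Fock$. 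In particular $V^\ell_F = V^\ell_G$ if and only if $V^r_{\tilde F} = V^r_{\tilde G}$, since the coefficient-flip is injective. This is the crux; the rest is bookkeeping.

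Then I would apply Theorem \ref{Sarason/PosFunct} to the pair $\tilde F, \tilde G$. Condition (i) there, $V^r_{\tilde F} = V^r_{\tilde G}$, is equivalent to (i) here by the previous paragraph. Condition (ii) there reads $\la L_\Phi \tilde F, \tilde F\ra = \la L_\Phi \tilde G, \tilde G\ra$ for all $\Phi \in \MFockL$; writing $\Phi = \tilde\Psi$ with $\Psi \in \MFockR$ and using $L_{\tilde\Psi} = W R_\Psi W$ together with $\tilde F = WF$, this becomes $\la R_\Psi F, F\ra = \la R_\Psi G, G\ra$ for all $\Psi \in \MFockR$, which is (ii) here. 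Condition (iii) there gives $\tilde F = \tilde F_o \Phi_1$, $\tilde G = \tilde F_o \Phi_2$ with $\Phi_1, \Phi_2 \in \MFockR$ right inner and $\tilde F_o$ left outer; applying $W$ (using $W(AB) = WB\,WA$ at the level of power series, i.e. $(FG)\tilde{} = \tilde G \tilde F$) yields $F = \Psi_1 F_o$, $G = \Psi_2 F_o$ with $\Psi_i = \tilde\Phi_i \in \MFockL$ left inner and $F_o = \tilde{F_o}\,\tilde{}$ right outer, which is (iii) here.

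I do not expect any real obstacle; the only thing requiring mild care is the behavior of multiplication under the flip — namely that $\widetilde{FG} = \tilde G \tilde F$ and hence $R_{\tilde G} = W L_G W$ (already noted in Section \ref{SecFock}) and that this matches up left/right inner and left/right outer correctly — but all of these are immediate from the definitions and have effectively been used already in the proof of Lemma \ref{SarasonFunctionsLemma}. So the write-up is simply: "Apply the flip operator $W$. Since $R_{\tilde G} = W L_G W$, conjugation by $W$ interchanges $\MFockL$ and $\MFockR$, interchanges left and right inner multipliers, interchanges left and right outer elements, and, as the computation above shows, turns $V^\ell$ into $V^r$ of the flipped function. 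The three equivalences then follow at once from Theorem \ref{Sarason/PosFunct} applied to $\tilde F, \tilde G$." One could even omit it entirely with the remark "by symmetry," which is presumably why the authors only state it "for later reference."
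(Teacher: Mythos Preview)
Your proposal is correct and is exactly the ``by symmetry'' argument the paper alludes to: the authors do not give a separate proof of Theorem \ref{Sarason/PosFunct/LeftRight} at all, merely stating that ``by symmetry the analogous Theorem holds with right and left multiplications switched.'' Your write-up makes this precise via the flip operator, which is the intended mechanism.
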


 \section{\texorpdfstring{$*$-invariant subspaces of $\SymFock$}{Star invariant subspaces of F2d}}\label{*invariant}

We say that a subspace $\HH \subseteq \SymFock$ is $*$-invariant, if $L_\Phi^* \HH \subseteq \HH$ for all $\Phi \in \MFockL$.
\begin{lemma} \label{SymFock} Let $H\in \SymFock \setminus \{0\}$, and suppose $H=\Phi_{\ell}F_r=F_{\ell}\Phi_r$ are its left inner/right outer and right inner/left outer factorizations, normalized
  so that $F_r(0) > 0$ and $F_\ell(0) > 0$. Then

(a) $F_r=F_{\ell}\in \SymFock$ and

(b) $P_\SymFock L_{\Phi_\ell}|\SymFock= P_\SymFock R_{\Phi_r}|\SymFock \cong M_g$, where $g\in \Mult(H^2_d)$ satisfies $g(z)=\Phi_r(z)=\Phi_{\ell}(z)$ for all $z\in \Bd$.

It follows that $H$ is right outer, if and only if it is left outer.
Furthermore, if $\HH \subseteq \SymFock \subseteq \Fock$ is a $*$-invariant subspace such that $H\in \HH$, then $F_r=F_{\ell}\in \HH$.
\end{lemma}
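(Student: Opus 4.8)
The plan is to realize each outer factor as $H$ pushed through the adjoint of the relevant creation‑operator multiplier, to use the flip operator $W$ to turn the left‑inner/right‑outer factorization into a right‑inner/left‑outer one, and then to invoke the uniqueness in Theorem \ref{inner-outer}. First I would show $F_r\in\SymFock$: since $\Phi_\ell$ is left inner, $L_{\Phi_\ell}$ is an isometry, so $L_{\Phi_\ell}^*L_{\Phi_\ell}=I$ and hence $F_r=L_{\Phi_\ell}^*H$; by the remarks closing Section \ref{SecFock}, $\SymFock$ is invariant under $L_\Phi^*$ for every $\Phi\in\MFockL$, and applying this with $\Phi=\Phi_\ell$ and $H\in\SymFock$ gives $F_r\in\SymFock$. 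The analogous computation for $R_{\Phi_r}$, using $R_{\tilde G}^*|\SymFock=L_G^*|\SymFock$, gives $F_\ell=R_{\Phi_r}^*H\in\SymFock$, although this will also come out of the next step.

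The crux is to identify $F_r$ with $F_\ell$. Since $F_r\in\SymFock$ we have $WF_r=F_r$. Because $F_r$ is right outer and the flip carries right‑outer functions to left‑outer ones (immediate from $R_G=WL_{\tilde G}W$, unitarity of $W$, and the density characterization of outerness), $\widetilde{F_r}=WF_r$ is left outer; but $\widetilde{F_r}=F_r$, so $F_r$ is \emph{left} outer as well. Applying $W$ to $H=\Phi_\ell F_r$ and using $W(AB)=\widetilde B\,\widetilde A$, $\widetilde H=H$, $\widetilde{F_r}=F_r$, yields $H=F_r\,\widetilde{\Phi_\ell}$, where $\widetilde{\Phi_\ell}$ is right inner because $R_{\widetilde{\Phi_\ell}}=WL_{\Phi_\ell}W$ is an isometry. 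Thus $H=F_r\,\widetilde{\Phi_\ell}$ is a right‑inner/left‑outer factorization of $H$, so by the uniqueness part of Theorem \ref{inner-outer} it agrees with $H=F_\ell\Phi_r$ up to a unimodular constant; since $F_r(0)>0$ and $F_\ell(0)>0$, that constant is $1$, giving $F_r=F_\ell\in\SymFock$ and $\Phi_r=\widetilde{\Phi_\ell}$. This is part (a).

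For part (b), from $\Phi_r=\widetilde{\Phi_\ell}$ and $R_{\tilde G}^*|\SymFock=L_G^*|\SymFock$ I get $R_{\Phi_r}^*|\SymFock=L_{\Phi_\ell}^*|\SymFock$, and taking Hilbert‑space adjoints within $\SymFock$ turns this into $P_\SymFock L_{\Phi_\ell}|\SymFock=P_\SymFock R_{\Phi_r}|\SymFock$. The identity $(L_{\Phi_\ell}^*|\SymFock)U=UM_g^*$ with $g(z)=\Phi_\ell(z)$ (again from Section \ref{SecFock}), read after taking adjoints, gives $U^*\big(P_\SymFock L_{\Phi_\ell}|\SymFock\big)U=M_g$, and $\Phi_r(z)=\widetilde{\Phi_\ell}(z)=\Phi_\ell(z)=g(z)$ since the flip does not change evaluations on $\Bd$. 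For the two closing assertions: $H$ is right outer iff, comparing $H=1\cdot H$ with $H=\Phi_\ell F_r$ and using uniqueness, $\Phi_\ell$ is a unimodular constant, which holds iff $\widetilde{\Phi_\ell}=\Phi_r$ is a unimodular constant, i.e. iff $H$ is left outer. Finally, if $\HH\subseteq\SymFock$ is $*$-invariant and $H\in\HH$, then $F_r=L_{\Phi_\ell}^*H\in\HH$ directly from the definition of $*$-invariance (as $\Phi_\ell\in\MFockL$), and $F_\ell=F_r$ by (a).

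The main obstacle — really the only nonroutine point — is the observation in the second paragraph that $F_r\in\SymFock$ forces $F_r$ to be flip‑invariant and therefore simultaneously left and right outer; this is precisely what makes the uniqueness theorem applicable and collapses the two factorizations onto each other. Everything else is bookkeeping with the intertwiners $R_G=WL_{\tilde G}W$, $R_{\tilde G}^*|\SymFock=L_G^*|\SymFock$, and $(L_G^*|\SymFock)U=UM_g^*$.
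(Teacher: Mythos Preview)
Your proof is correct and takes a genuinely different route from the paper's. The paper argues via the free Sarason functions: it uses Theorems \ref{Sarason/PosFunct} and \ref{Sarason/PosFunct/LeftRight} to get $V^r_H=V^r_{F_\ell}$ and $V^\ell_H=V^\ell_{F_r}$, then invokes Lemma \ref{SarasonFunctionsLemma} (left and right Sarason functions agree on $\SymFock$) to obtain $V^r_{F_r}=V^r_{F_\ell}$, and finally applies Lemma \ref{Lemma 2} to produce inner factors relating $F_r$ and $F_\ell$, which are then shown to be trivial. Your argument bypasses all of this machinery: once $F_r=L_{\Phi_\ell}^*H\in\SymFock$, flip-invariance immediately makes $F_r$ left outer, and applying $W$ to $H=\Phi_\ell F_r$ converts the given left-inner/right-outer factorization into a right-inner/left-outer one, so uniqueness in Theorem \ref{inner-outer} finishes. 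Your approach is more elementary and self-contained, and it yields the sharper identity $\Phi_r=\widetilde{\Phi_\ell}$ (the paper only extracts $\Phi_r(z)=\Phi_\ell(z)$ on $\Bd$ by dividing by the nonvanishing $F_\ell(z)$); the paper's route, on the other hand, illustrates the Sarason-function framework that drives the rest of the article.
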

\begin{proof} First we note that for any $*$-invariant subspace $\HH$ with $H\in \HH$ we have $F_r=L^*_{\Phi_\ell}H\in \HH$ and $F_\ell=R^*_{\Phi_r}H \in \HH$. In particular, $F_r, F_\ell \in \SymFock$. By the implications (iii) $\Rightarrow$ (i) of Theorems \ref{Sarason/PosFunct} and \ref{Sarason/PosFunct/LeftRight} applied with $H$ and $F_\ell$ (resp. $H$ and $F_r$) we have $V^r_H=V^r_{F_\ell}$ and $V^{\ell}_H=V^\ell_{F_r}$. As noted in Lemma \ref{SarasonFunctionsLemma}, for functions in $\SymFock$ the left and right Sarason functions agree, hence $V^r_{F_r}=V^r_{F_\ell}$. Thus, by  Lemma \ref{Lemma 2}(a)  we conclude that $F_r= F_\ell \Phi_1$  for a right inner multiplier $\Phi_1 \in \MFockR$. Analogously, $F_\ell=\Phi_2 F_r$ for a left inner multiplier $\Phi_2\in \MFockL$. Then $F_r=\Phi_2 F_r \Phi_1$. Thus the right outer function $F_r$ is in the range of the left inner multiplier $L_{\Phi_2}$, hence $\Phi_2=1$ by the remark in the paragraph following Theorem \ref{inner-outer} and the normalization hypothesis. This proves (a).

Also, it implies that for each $z\in \Bd$ we have $H(z)=F_\ell(z) \Phi_r(z)= \Phi_\ell(z)F_\ell(z)$. Since $F_\ell(z)\ne 0$ for all $z\in \Bd$ we conclude that $\Phi_r(z)= \Phi_\ell(z)$ and hence $L^*_{\Phi_\ell}|\SymFock=R^*_{\Phi_r}|\SymFock$. Thus  (b) follows as well.

Now if $H$ is right outer, then $\Phi_\ell=c$ where $|c|=1$. Then $H(z)=cF_r(z)=F_\ell(z)\Phi_r(z)$ and the identity $F_r=F_\ell$ implies $\Phi_r=c$. Hence $H=F_\ell$ is left outer. The converse direction follows by symmetry.
\end{proof}

Now we extend the definition of free outer  functions to elements in $*$-invariant subspaces of $\SymFock$.
If $F \in \mathcal{H}^2_d$, we let $P_F$ denote the linear functional on $\MFockL$ defined by
\begin{equation*}
  P_F (\Phi) = \langle L_\Phi F, F \rangle.
\end{equation*}

\begin{definition}\label{free-outer-in*invariantDef} Let $\HH\subseteq \SymFock$ be a $*$-invariant subspace.
  If $F\in \HH \setminus \{0\}$, then $F$ is called $\HH$-free outer, if $$|F(0)|=\sup\{|G(0)|: G\in \HH, P_F=P_G\}.$$
\end{definition}

\begin{theorem} \label{free-outer-in*invariant} Let $\HH\subseteq \SymFock$ be a $*$-invariant subspace, and let $F\in \HH$. Then the following are equivalent:
\begin{enumerate}
\item $F$ is $\HH$-free outer,
\item $F$ is right outer in $\Fock$,
\item $F$ is left outer in $\Fock$.
\end{enumerate}
\end{theorem}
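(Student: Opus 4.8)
The plan is to prove the cycle of implications (ii) $\Leftrightarrow$ (iii) first, and then close the loop with (ii) $\Rightarrow$ (i) and (i) $\Rightarrow$ (ii), using the structural results already established. The equivalence (ii) $\Leftrightarrow$ (iii) is essentially Lemma \ref{SymFock}: since $\HH \subseteq \SymFock$, any $F \in \HH$ is a symmetric Fock space element, and Lemma \ref{SymFock} tells us that its left inner/right outer and right inner/left outer factorizations have the \emph{same} outer factor $F_o = F_r = F_\ell \in \SymFock$, and that $F$ is right outer if and only if it is left outer. So (ii) and (iii) are equivalent with no further work.

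For (ii) $\Rightarrow$ (i): suppose $F$ is right outer in $\Fock$. Let $G \in \HH$ with $P_F = P_G$. By definition $P_F(\Phi) = \langle L_\Phi F, F\rangle$, so $P_F = P_G$ is exactly condition (ii) of Theorem \ref{Sarason/PosFunct}, hence $V^r_F = V^r_G$. Since $\HH$ is $*$-invariant and $\HH \subseteq \SymFock$, we have $G \in \SymFock$; let $G = G_o \Psi$ be its right inner/left outer factorization with $G_o$ left outer. As $F$ is (right, hence by Lemma \ref{SymFock}) left outer, Lemma \ref{Lemma 2}(a) applied to the pair $F$, $G$ (using $V^r_F = V^r_G = V^r_{G_o}$, the last equality because $G = G_o\Psi$ with $\Psi$ right inner gives $R_\Psi^* R_\Psi G_o = G_o$, so the coefficients $\langle G_o, L^w G_o\rangle = \langle G, L^w G\rangle$ agree) yields that $G = F \Phi$ for some right inner $\Phi \in \MFockR$, hence $\|R_\Phi\| \le 1$ and so $|G(0)| = |F(0)\Phi(0)| \le |F(0)|$ since a contractive right multiplier satisfies $|\Phi(0)| \le \|R_\Phi\| \le 1$ (evaluating at $X = 0$). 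Taking the supremum over all such $G$ and noting $F$ itself is an admissible competitor gives $|F(0)| = \sup\{|G(0)| : G \in \HH, P_F = P_G\}$, which is exactly (i).

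For (i) $\Rightarrow$ (ii): suppose $F$ is $\HH$-free outer but, for contradiction, not right outer in $\Fock$. Factor $F = F_o \Phi$ as right inner/left outer in $\SymFock$; by Lemma \ref{SymFock}(a), $F_o \in \SymFock$, and since $\HH$ is $*$-invariant and $F \in \HH$, Lemma \ref{SymFock} gives $F_o \in \HH$ as well. Because $F$ is not right outer, $\Phi$ is not a unimodular constant, so $\|R_\Phi\| = 1$ with $|\Phi(0)| < 1$ (a non-constant contractive multiplier is strictly contractive at the normalization point --- this uses that $\Phi$ is a left inner times... more carefully: $R_\Phi$ isometric and non-constant forces $|\Phi(0)| < 1$, since $|\Phi(0)|^2 + \|\text{rest}\|^2 = \|\Phi\|^2 \le 1$ and the ``rest'' is nonzero). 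Then $V^r_{F_o} = V^r_F$ by Theorem \ref{inner}/Theorem \ref{Sarason/PosFunct} (the pair $F_o$ left outer, $F = F_o\Phi$ with $R_\Phi$ isometric), hence $P_{F_o} = P_F$, so $F_o$ is an admissible competitor in the supremum defining free outerness of $F$. But $|F_o(0)| = |F(0)| / |\Phi(0)| > |F(0)|$, contradicting (i). Hence $\Phi$ is constant and $F$ is right outer.

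The main obstacle I anticipate is bookkeeping around the normalizations and the relationship between $P_F$ (a functional on $\MFockL$) and the conditions in Theorems \ref{Sarason/PosFunct} and \ref{Sarason/PosFunct/LeftRight}: one must be careful that $P_F = P_G$ as defined via $\langle L_\Phi \cdot, \cdot\rangle$ corresponds to equality of \emph{right} Sarason functions $V^r$, and that in $\SymFock$ this coincides with equality of $V^\ell$ by Lemma \ref{SarasonFunctionsLemma}, so the left/right asymmetry in the free setting collapses. The other point requiring a little care is the strict inequality $|\Phi(0)| < 1$ for a non-constant inner multiplier, which is what actually drives the contradiction in (i) $\Rightarrow$ (ii); this is immediate from $1 = \|\Phi\|^2_{\Fock} = \sum_w |\hat\Phi(w)|^2$ together with $\hat\Phi \ne \hat\Phi(\emptyset)\delta_\emptyset$. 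Everything else is a direct application of the lemmas already proved in Sections \ref{SecFreeInnerOuter} and \ref{*invariant}.
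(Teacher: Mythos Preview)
Your proof is correct and follows essentially the same approach as the paper: the equivalence (ii)$\Leftrightarrow$(iii) via Lemma \ref{SymFock}, the implication to (i) via Theorem \ref{Sarason/PosFunct} and Lemma \ref{Lemma 2}(a), and the implication from (i) by factoring $F$ and observing that a non-constant inner factor has $|\Phi(0)|<1$ (the paper phrases this last step directly rather than by contradiction, and uses the left inner/right outer factorization where you use right inner/left outer, but these are cosmetic). The detour through $G_o$ in your (ii)$\Rightarrow$(i) is unnecessary---Lemma \ref{Lemma 2}(a) applies directly to the pair $F,G$ once $V^r_F=V^r_G$---but it does no harm.
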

\begin{proof} The equivalence of (ii) and (iii) is part of Lemma \ref{SymFock}. Suppose that $F$ is left outer and that $G\in \HH$ is any function satisfying $P_G=P_F$. Then by (a trivial implication in) Theorem \ref{Sarason/PosFunct} we have $V^r_G=V^r_F$ and hence Lemma \ref{Lemma 2}(a) implies that $G= F\Phi$ for some right inner multiplier $\Phi\in \MFockR$. Then $|G(0)|=|F(0)\Phi(0)|\le |F(0)|$ and hence $F$ is $\HH$-free outer.

Conversely, suppose that $F$ is $\HH$-free outer, and consider its left inner/right outer factorization $F=\Phi G$. Then $P_F=P_G$ and $G\in \HH$ by Lemma \ref{SymFock}. Hence the fact that $F$ is $\HH$-free outer implies that $|\Phi(0)|=1$. Hence $1=|\Phi(0)|\le \|\Phi\|_{\HF_d^2}\le \|\Phi\|_\infty=1$, and this implies that $\Phi$ is constant and hence $F$ is right outer. \end{proof}

With the following Theorem we will link certain inner multipliers of $\Fock$ to subinner functions on spaces with complete Pick kernel. If $\HH \subseteq \SymFock$ is a $*$-invariant subspace, then we will write $P_\HH$ for the projection of $\Fock$ onto $\HH$ and we note that the fact that $L_\Phi^*|\SymFock=R_{\tilde{\Phi}}^*|\SymFock$ implies that $P_\HH \Phi F= P_\HH F \tilde{\Phi}$ for all $F\in \HH$ and $\Phi\in \MFockL$.
\begin{theorem}\label{inner-in*invariant}  Let $\HH\subseteq \SymFock$ be a $*$-invariant subspace, and let $\Phi\in \MFockL$, $\|\Phi\|_\infty \le 1$. Then the following are equivalent:

(a) $\Phi$ is left inner and $\ran L_\Phi \cap \HH \ne (0)$,

(b) $\tilde{\Phi}$ is right inner and $\ran R_{\tilde{\Phi}} \cap \HH \ne (0)$,

(c) there is $F\in \HH$ such that $\|P_\HH (\Phi F)\|=\|F\|\ne 0$,

(d) there is an $\HH$-free outer function $F\in \HH$ such that $\|P_\HH (\Phi F)\|=\|F\|\ne 0$.
\end{theorem}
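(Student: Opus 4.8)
**Proof strategy for Theorem \ref{inner-in*invariant}.**

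The plan is to prove the cyclic chain of implications (a) $\Rightarrow$ (b) $\Rightarrow$ (c) $\Rightarrow$ (d) $\Rightarrow$ (a), drawing on the machinery already assembled in Sections \ref{SecFreeInnerOuter} and \ref{*invariant}. The equivalence of (a) and (b) is the easiest: the flip operator $W$ satisfies $R_{\tilde\Phi}=WL_\Phi W$ and $W$ restricts to the identity on $\SymFock$ (hence on any $*$-invariant $\HH\subseteq\SymFock$), so $L_\Phi$ is isometric iff $R_{\tilde\Phi}$ is, and $\ran L_\Phi\cap\HH=W(\ran R_{\tilde\Phi}\cap\HH)=\ran R_{\tilde\Phi}\cap\HH$. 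For (b) $\Rightarrow$ (c): if $R_{\tilde\Phi}$ is isometric and $R_{\tilde\Phi}G\in\HH$ for some $G\ne 0$, set $F=R_{\tilde\Phi}^*G=G\tilde\Phi^*$-type object — more carefully, pick $0\ne H\in\ran R_{\tilde\Phi}\cap\HH$ and let $F=R_{\tilde\Phi}^*H\in\HH$ (using $R_{\tilde\Phi}^*|\SymFock=L_\Phi^*|\SymFock$ and $*$-invariance of $\HH$). Then $R_{\tilde\Phi}F=H$ since $R_{\tilde\Phi}$ is isometric with $R_{\tilde\Phi}R_{\tilde\Phi}^*$ equal to the projection onto $\ran R_{\tilde\Phi}\ni H$, and $\|P_\HH(\Phi F)\|=\|P_\HH(F\tilde\Phi)\|$ by the identity $P_\HH\Phi F=P_\HH F\tilde\Phi$ noted just before the theorem; since $F\tilde\Phi=R_{\tilde\Phi}F=H\in\HH$, this equals $\|H\|=\|R_{\tilde\Phi}F\|=\|F\|\ne 0$.

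For (c) $\Rightarrow$ (d), the idea is to replace $F$ by its $\HH$-free outer factor. Given $F\in\HH$ with $\|P_\HH(\Phi F)\|=\|F\|$, apply Lemma \ref{contractionLemma} to the contraction $G\mapsto P_\HH(\Phi G)$ on $\HH$ to get that the associated positive contraction fixes $F$; this forces $P_{\Phi F}=P_F$ as functionals on $\MFockL$ (the same computation as in the remarks following Lemma \ref{contractionLemma}, now in the Fock setting via $\langle L_\Psi P_\HH\Phi F,P_\HH\Phi F\rangle=\langle L_\Psi\Phi F,\Phi F\rangle$ using that $\Phi F\perp\HH^\perp$... — I need to be careful here, so I'd instead argue directly via the right inner factorization). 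Concretely: take the right inner/left outer factorization $F=F_o\Phi_1$ of $F$ in $\Fock$; by Lemma \ref{SymFock}, $F_o\in\HH$ and $F_o$ is $\HH$-free outer, and $P_F=P_{F_o}$. Since $R_{\tilde\Phi}$ restricted to $[F]_\ell$ is isometric by Theorem \ref{inner} (applied with the roles set up so that $\|F\tilde\Phi\|=\|F\|$), and $F_o\in[F]_\ell$ because $F_o=R_{\Phi_1}^*F$ is a weak-* type limit of $L_\Psi F$'s — actually $F_o\in[F]_\ell$ since the left-outer factor always lies in the left-multiplier-invariant subspace generated by $F$ — we get $\|F_o\tilde\Phi\|=\|F_o\|$. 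Then $F_o\tilde\Phi=R_{\tilde\Phi}F_o$ and, as above, $\|P_\HH(\Phi F_o)\|=\|P_\HH(F_o\tilde\Phi)\|=\|R_{\tilde\Phi}F_o\|=\|F_o\|\ne 0$, giving (d).

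Finally (d) $\Rightarrow$ (a): suppose $F\in\HH$ is $\HH$-free outer with $\|P_\HH(\Phi F)\|=\|F\|\ne 0$. By Theorem \ref{free-outer-in*invariant}, $F$ is left outer in $\Fock$. Since $P_\HH$ is a contraction and $\|P_\HH(\Phi F)\|=\|F\|=\|L_\Phi\|^{-1}$-bounded version $\ge\|\Phi F\|\ge$ hmm — one has $\|F\|=\|P_\HH(\Phi F)\|\le\|\Phi F\|\le\|F\|$, so $\|\Phi F\|=\|F\|$, i.e. $L_\Phi$ preserves the norm of $F$. By Theorem \ref{inner} (in its left-multiplier form, via the flip) applied to the left-outer $F$, $L_\Phi$ is isometric on $[F]_\ell$, but more importantly the "in particular" clause gives that $\Phi$ is left inner outright because $F$ is left outer. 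Moreover $\Phi F\in\ran L_\Phi$; I should check $\Phi F\in\HH$, which follows since $\|P_\HH(\Phi F)\|=\|\Phi F\|$ forces $\Phi F\in\HH$, so $\ran L_\Phi\cap\HH\ni\Phi F\ne 0$. This establishes (a). \textbf{The main obstacle} I anticipate is bookkeeping the interplay between left multipliers $L_\Phi$ and right multipliers $R_{\tilde\Phi}$ together with the compression $P_\HH$ — specifically verifying cleanly that $\|P_\HH(\Phi G)\|=\|G\|$ is equivalent to $\Phi G\in\HH$ with $\|\Phi G\|=\|G\|$, and that the relevant outer factor genuinely lies in $[F]_\ell$ — rather than any deep new idea, since all the hard analytic content is already packaged in Theorems \ref{inner}, \ref{free-outer-in*invariant}, \ref{Sarason/PosFunct}, and Lemma \ref{SymFock}.
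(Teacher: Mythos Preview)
Your cycle (a)$\Leftrightarrow$(b), (b)$\Rightarrow$(c), (c)$\Rightarrow$(d), (d)$\Rightarrow$(a) is close to the paper's (which does (a)$\Leftrightarrow$(b) and then (a)$\Rightarrow$(c)$\Rightarrow$(d)$\Rightarrow$(b)), and your arguments for (a)$\Leftrightarrow$(b), (b)$\Rightarrow$(c), and (d)$\Rightarrow$(a) are essentially correct.

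The genuine gap is in (c)$\Rightarrow$(d). You factor $F=F_o\Phi_1$ (left outer/right inner), observe $\|F\tilde\Phi\|=\|F\|$, and invoke Theorem~\ref{inner} to get $R_{\tilde\Phi}$ isometric on $[F]_\ell$. To transfer this to $F_o$ you assert $F_o\in[F]_\ell$, but this is \emph{false} in general. Since $F_o$ is left outer and $\Phi_1$ right inner, $[F]_\ell=\overline{\{pF_o\Phi_1:p\text{ free poly}\}}=R_{\Phi_1}\Fock$; so $F_o\in[F]_\ell$ would force $F_o\in\ran R_{\Phi_1}$. For a concrete counterexample take $\HH=\SymFock$ and $F=x_1$: then $F_o=1$, $\Phi_1=x_1$, and $[F]_\ell=x_1\Fock$, which does not contain $1$. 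Your informal justifications (that $F_o=R_{\Phi_1}^*F$ is approximable by $L_\Psi F$'s, or that ``the left-outer factor always lies in $[F]_\ell$'') do not hold; $R_{\Phi_1}^*$ need not preserve the left-invariant subspace $[F]_\ell$. There is a related second slip: even granting $\|F_o\tilde\Phi\|=\|F_o\|$, your step $\|P_\HH(F_o\tilde\Phi)\|=\|R_{\tilde\Phi}F_o\|$ tacitly assumes $F_o\tilde\Phi\in\HH$, which you have not shown.

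The paper's fix is to use the \emph{left} inner factor instead: write $F=\Psi G$ with $\Psi$ left inner and $G$ right outer (the same $G=F_o$ by Lemma~\ref{SymFock}), and exploit that $\HH^\perp$ is $L_\Psi$-invariant, i.e.\ $P_\HH L_\Psi=P_\HH L_\Psi P_\HH$. Then
\[
\|G\|=\|F\|=\|P_\HH(\Phi\Psi G)\|=\|P_\HH(\Psi G\tilde\Phi)\|=\|P_\HH L_\Psi P_\HH(G\tilde\Phi)\|=\|P_\HH L_\Psi P_\HH(\Phi G)\|\le\|P_\HH(\Phi G)\|\le\|G\|,
\]
forcing $\|P_\HH(\Phi G)\|=\|G\|$ directly, with no need for $G\in[F]_\ell$ or for $G\tilde\Phi\in\HH$. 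The co-invariance of $\HH$ under left multipliers is the missing ingredient in your (c)$\Rightarrow$(d).
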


 \begin{proof} Let $W$ denote the flip operator, then the identity $WL_\Phi W= R_{\tilde \Phi}$ implies that $\Phi$ is left inner, if and only if $\tilde{\Phi}$ is right inner.
   Furthermore, if $G=F \tilde{\Phi} \in \ran R_{\tilde{\Phi}} \cap \HH$, then $G=\tilde{G}$ since $\HH\subseteq \SymFock$, and hence $G=\tilde{G} = \Phi \tilde{F} \in \ran L_{{\Phi}} \cap \HH$. Thus, (b) $\Rightarrow $(a), and the converse holds similarly.

 We will now show the implications (a) $\Rightarrow$ (c) $\Rightarrow$ (d) $\Rightarrow$ (b). Suppose (a) holds. Then there is $F\in \Fock$, $F \ne 0$, such that $\Phi F=H\in \HH$. Then $\Phi F= P_\HH (\Phi F)$ implies $\|P_\HH (\Phi F)\|=\|\Phi F\|=\|F\|$ and the $*$-invariance of $\HH$ implies that $F=L^*_\Phi H\in \HH$. Thus (c) holds.

 Next we assume that (c) holds, i.e. there is a nonzero $F\in \HH$ such that $\|P_\HH (\Phi F)\|=\|F\|$. Then $$\|F\|=\|P_\HH (\Phi F)\|\le\| \Phi F\|\le \|F\|,$$ which means that $\|P_\HH (\Phi F)\|=\| \Phi F\|$ and hence $\Phi F \in \HH$.  Let $F=\Psi G$ be the left inner/right outer factorization of $F$. Then $G= L^*_\Psi F\in \HH$ as well, $G \ne 0$ is $\HH$-free outer by Theorem \ref{free-outer-in*invariant}, and
 \begin{align*}\|G\|&=\|F\|=\|P_\HH(\Phi \Psi G)\|\\
   &= \|P_\HH (\Psi G \tilde \Phi)\| \text{ by the remark before the statement} \\
   &= \|P_\HH L_\Psi P_\HH (G \tilde \Phi)\| \text{ since }\HH \text{ is co-invariant for left multipliers }\\
 &= \|P_\HH L_\Psi P_\HH (\Phi G)\| \\
 &\le \|P_\HH (\Phi G)\| \\
 &\le \|G\|
 \end{align*}
 Thus (d) holds with the function $G$.

 Now assume that (d) holds. Then as remarked before the statement of the Theorem we have $\|P_\HH(F\tilde{\Phi})\|=\|P_\HH(\Phi F)\|=\|F\|$, and hence with an argument as in  (c)$\Rightarrow$(d) we see that $ F {\tilde{\Phi}}\in \HH$ and  $\| F\tilde{\Phi}\|=\|F\|$. By Theorem \ref{free-outer-in*invariant} $F$ is left-outer, hence by Theorem \ref{inner} $\tilde{\Phi}$ is right inner with $F\tilde{ \Phi} \in \ran R_{\tilde{\Phi}} \cap \HH$, i.e.\ (b) holds.
 \end{proof}
\section{Normalized complete Pick spaces}\label{SecPickspaces}

  In this Section we will prove the main Theorems that were stated in Section 1. If $k_x(y)=\frac{1}{1-\la u(y),u(x)\ra}$ for some function $u:X \to \Bd$ for $d\in \N\cup\{\infty\}$ with $u(x_0)=0$, then by a theorem of Agler-McCarthy \cite{AgMcC_completePickKernels} we can identify $\HH_k$ with the $*$-invariant subspace $$\HH=\text{ closed linear span of }\{K_z : z\in \ran u\}\subseteq \SymFock\subseteq \Fock.$$ As in the earlier sections for $z\in \Bd$ we have used $K_z$ to denote the point evaluation functional, $K_z\in \SymFock \subseteq \Fock$, which for $z, w \in \Bd$ satisfies $K_z(w)=\frac{1}{1-\la w,z\ra}$.  The identity $k_x(y)=K_{u(x)}(u(y))$ can be used to show that the map $Uk_x =K_{u(x)}$ extends to be a linear isometry $U:\HH_k\to \Fock$ with range equal to $\HH$. Thus, $UU^*=P_\HH$, the projection onto $\HH$, and we have $U^*=C_u$,  where $C_uF(x)=F(u(x))$ for all $x\in X$.

 Furthermore, if $\Phi \in \MFockL$, then the definition $\varphi(x)=\Phi(u(x))$ defines a $\varphi \in \Mult(\HH_k)$ with $UM^*_\varphi=L^*_\Phi U$, and hence $M_\varphi$ is unitarily equivalent to $P_\HH L_\Phi|\HH$, and $\|\varphi\|_{\Mult(\HH_k)}\le \|\Phi\|_\infty$. Conversely, if $\varphi\in \Mult(\HH_k)$, then there is $\Phi \in \MFockL$ such that $\|\Phi\|_\infty=\|\varphi\|_{\Mult(\HH_k)}$  and $\varphi(x)=\Phi(u(x))$. We call any such $\Phi$ a lift of $\varphi$, see \cite{Frazho}, \cite{PopescuCommutantL}. Another relevant reference in this context is \cite{DavidsonLe}.

 Note that for a given complete Pick kernel $k$, the function $u$ is not unique. Indeed, if $V$ is any unitary operator on $\C^d$ (or $\ell^2$ if $d=\infty$), then the function $v: X \to \Bd$, $v(x)=Vu(x)$ can be used to define the same Pick kernel $k$. The different $v$ will lead to a different embedding of $\HH_k$ in $\Fock$. Throughout this section we will keep the $u$ fixed. Any uniqueness statement about liftings are to be understood as uniqueness statements for this fixed embedding.

\begin{lemma}
  \label{lem:free_outer_H}
  In the setting above, let $h \in \mathcal{H}_k \setminus \{0\}$ and let $H = U h \in \mathcal{H}^2_d$.
  Then $h$ is free outer in $\mathcal{H}_k$ if and only if $H$ is $\mathcal{H}$-free outer.
\end{lemma}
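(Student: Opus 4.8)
The plan is to transport everything through the unitary $U : \HH_k \to \HH$ from the setup and then read off the equivalence directly from the two definitions of ``free outer.'' Two elementary translations do the bulk of the work. First, since $u(z_0)=0$ we have $U k_{z_0} = K_0 = 1 \in \Fock$, so for every $g \in \HH_k$ with $G := Ug \in \HH$ the reproducing property and the normalization $k_{z_0}=1$ give
\[
  g(z_0) = \langle g, k_{z_0}\rangle_{\HH_k} = \langle G, 1\rangle_{\Fock} = G(0);
\]
in particular $h(z_0) = H(0)$. Second, if $\Phi \in \MFockL$ and $\varphi \in \Mult(\HH_k)$ is the associated multiplier $\varphi(x)=\Phi(u(x))$, then $U M_\varphi^* = L_\Phi^* U$ yields $U M_\varphi = P_\HH L_\Phi U$, so that for every $g \in \HH_k$
\[
  P_g(\varphi) = \langle M_\varphi g, g\rangle_{\HH_k} = \langle P_\HH L_\Phi G, G\rangle_{\Fock} = \langle L_\Phi G, G\rangle_{\Fock} = P_G(\Phi),
\]
and likewise $P_h(\varphi) = P_H(\Phi)$.

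The key step is then to show that $U$ maps $\HE_h = \{g \in \HH_k : P_g = P_h\}$ bijectively onto $\{G \in \HH : P_G = P_H\}$. If $P_g = P_h$ as functionals on $\Mult(\HH_k)$, then, because every $\Phi \in \MFockL$ induces some $\varphi \in \Mult(\HH_k)$ with $P_G(\Phi)=P_g(\varphi)$ and $P_H(\Phi)=P_h(\varphi)$, we get $P_G = P_H$ on $\MFockL$. Conversely, if $P_G = P_H$ on $\MFockL$, then, because every $\varphi \in \Mult(\HH_k)$ has a lift $\Phi \in \MFockL$, we get $P_g(\varphi) = P_G(\Phi) = P_H(\Phi) = P_h(\varphi)$ for all $\varphi$, i.e.\ $P_g = P_h$. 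Since $U$ is a surjective isometry, this gives the asserted bijection between $\HE_h$ and $\{G \in \HH : P_G = P_H\}$.

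Finally I would simply combine: using $g(z_0)=G(0)$ on $\HE_h$ and the bijection just established,
\[
  \sup\{|g(z_0)| : g \in \HE_h\} = \sup\{|G(0)| : G \in \HH,\ P_G = P_H\},
\]
while $h(z_0) = H(0)$; hence the defining equality ``$|h(z_0)| = \sup\{|g(z_0)|:g\in\HE_h\}$'' for $h$ being free outer in $\HH_k$ is literally the defining equality ``$|H(0)| = \sup\{|G(0)|:G\in\HH,\ P_G=P_H\}$'' for $H$ being $\HH$-free outer, and the lemma follows. The only point needing care is the \emph{two-sidedness} of the multiplier/lift correspondence invoked in the bijection step, namely that every $\Phi\in\MFockL$ restricts to a multiplier of $\HH_k$ with the intertwining identity $UM_\varphi^*=L_\Phi^*U$ and that every multiplier of $\HH_k$ admits a lift in $\MFockL$; both are part of the standing setup of this section, so no real obstacle arises and the argument is essentially bookkeeping. (One could alternatively route through Theorem~\ref{free-outer-in*invariant} and the earlier identifications of free outer functions via the inner--outer factorization, but the direct translation above is shorter.)
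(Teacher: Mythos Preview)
Your proposal is correct and takes essentially the same approach as the paper: both arguments reduce to the two observations that (i) under the unitary $U$, the equivalence $P_g=P_h$ on $\Mult(\HH_k)$ corresponds to $P_G=P_H$ on $\MFockL$ via the two-sided multiplier/lift correspondence, and (ii) $g(z_0)=G(0)$. The paper's version is simply a more compressed statement of the same bookkeeping.
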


\begin{proof}
  Let $h_1,h_2 \in \mathcal{H}_k$ and define $H_i = U h_i$ for $i=1,2$.
  As explained above, $\varphi \in \Mult(\mathcal{H}_k)$ if and only if $\varphi = \Phi \circ u$ for some $\Phi \in \MFockL$.
  In this case,
  \begin{equation*}
    \langle \Phi H_i, H_i \rangle = \langle P_{\mathcal{H}} L_\Phi H_i , H_i \rangle = \langle \varphi h_i , h_i \rangle.
  \end{equation*}
  Hence $P_{h_1} = P_{h_2}$ if and only if $P_{H_1} = P_{H_2}$.
  Moreover, $H_i(0) = h_i(x_0)$. The result follows from these two observations.
\end{proof}

\begin{proof}[Proof of Theorem \ref{subinner/free outer}] The existence is just Theorem \ref{inner-outer} applied to functions in $\HH$ combined with the results of the previous section. Indeed, if $f\in \HH_k$, then $f=F\circ u$ for  $F=Uf\in \HH$. Then $F=\Phi G$ for some left inner multiplier $\Phi$ and a right outer function $G$ (Theorem \ref{inner-outer}). Theorem \ref{free-outer-in*invariant} shows that $G=L_{\Phi}^*F\in \HH$ is $\HH$-free outer.
  Let $g = U^* G = G \circ u$. Then $G = U g$ and $g$ is free outer in $\mathcal{H}_k$ by Lemma \ref{lem:free_outer_H}.
We also observe that $\|g\|=\|G\|=\|\Phi G\|=\|F\|=\|f\|$, hence if we set  $\varphi= \Phi\circ u$, then $f=\varphi g $ and $\varphi$ is subinner by definition.

Now suppose $f=\varphi g\in \HH_k$, where $(\varphi, g)$ is any subinner/free outer pair. We set $F=Uf, G=Ug$ and we use the lifting theorem to deduce the existence of a $\Phi \in \MFockL$ with $\|\Phi\|_\infty=1$ and $\Phi\circ u= \varphi$. Then  $F= P_\HH \Phi G\in \HH$ with $\|P_\HH \Phi G\|=\|F\|=\|G\|$. By Theorem \ref{inner-in*invariant} $\Phi$ is  left inner and since $\|G\|=\|P_\HH \Phi G\|\le \|\Phi G\|\le \|G\|$
we must have $\Phi G=P_\HH \Phi G$. Thus, $F=\Phi G$ is a left inner/right outer factorization of $F$, which we know to be unique up to a multiplicative unimodular constant (see Theorem \ref{inner-outer}). That implies that the factorization $f=\varphi g\in \HH_k$ was unique since $g(x_0)>0$, see Definition \ref{subinnerDef}.
 \end{proof}
\begin{proof}[Proof of Theorem \ref{Pf=Pg}] Let $f,g\in \HH_k$ have the same free outer factor $h$, then $f=\varphi h$, $g=\psi h$ for subinner functions $\varphi, \psi$ and $\|f\|=\|h\|=\|g\|$. Then the identities $h= M^*_\varphi f = M^*_\psi g$ imply that $P_f=P_h=P_g$, see Lemma \ref{contractionLemma}.

Conversely, we suppose that $f,g \in \HH_k$ with $P_f=P_g$.
As in the proof of Lemma \ref{lem:free_outer_H}, we find that $\langle \Phi F, F \rangle = \langle \Phi G ,G \rangle$ for all $\Phi \in \MFockL$.
Hence by Theorem \ref{Sarason/PosFunct} there is a left outer factor $H$ and right inner factors $\Psi_1$ and $\Psi_2$ such that $F=H\Psi_1$ and $G=H\Psi_2$. By Lemma \ref{SymFock} the left inner/right outer factorization has the same outer factor $H$, hence there are left inner factors $\Phi_1$ and $\Phi_2$ such that $F=\Phi_1 H$ and $G=\Phi_2 H$.
Then as in the proof of Theorem \ref{subinner/free outer} it follows that $h=H\circ U$ is the free outer factor of both $f$ and $g$. \end{proof}

The uniqueness part of the following Theorem can be understood to be a more general version of the fact that solutions to extremal classical Nevanlinna-Pick problems are unique and given by Blaschke products, and of a generalization thereof by Sarason \cite[Proposition 5.1]{Sarason67}; also see Theorem \ref{ExtremalPick}.
\begin{theorem} \label{uniqueLiftofSubinner} If  $\varphi \in \Mult(\HH_k)$ is subinner, then there is a unique $\Phi\in \MFockL$ such $\|\Phi\|_\infty =1$ and $\Phi\circ u=\varphi$. Moreover,  $\Phi$ is necessarily left inner.
\end{theorem}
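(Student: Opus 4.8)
The plan is to exhibit one norm-one lift, show that \emph{every} norm-one lift is left inner, and then pin down the lift uniquely by letting it act on the image of a free outer function.

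First, since $\varphi\in\Mult(\HH_k)$, the lifting theorem recalled before Lemma~\ref{lem:free_outer_H} produces at least one $\Phi\in\MFockL$ with $\|\Phi\|_\infty=\|\varphi\|_{\Mult(\HH_k)}=1$ and $\Phi\circ u=\varphi$. Because $\varphi$ is subinner, Corollary~\ref{subinner has free outer} supplies a free outer $g\in\HH_k$ with $\|\varphi g\|=\|g\|\ne 0$; fix this $g$ and put $G=Ug\in\HH$ and $F=U(\varphi g)$. From the intertwining relation $UM_\varphi=P_\HH L_\Phi U$ we get $F=P_\HH(\Phi G)$, and the chain
\[
  \|G\|=\|\varphi g\|=\|F\|=\|P_\HH(\Phi G)\|\le\|\Phi G\|\le\|\Phi\|_\infty\|G\|=\|G\|
\]
forces $\|P_\HH(\Phi G)\|=\|\Phi G\|$, hence $\Phi G=P_\HH(\Phi G)=F\in\HH$. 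In particular $\|P_\HH(\Phi G)\|=\|G\|\ne 0$, so Theorem~\ref{inner-in*invariant} (the implication (c)$\Rightarrow$(a)) shows $\Phi$ is left inner. Since nothing in this paragraph used more about $\Phi$ than that it is a norm-one lift of $\varphi$, we conclude that every norm-one lift of $\varphi$ is left inner; this is the ``moreover'' assertion.

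For uniqueness, let $\Phi_1,\Phi_2\in\MFockL$ both satisfy $\|\Phi_i\|_\infty=1$ and $\Phi_i\circ u=\varphi$. Applying the previous paragraph to each $\Phi_i$ with the \emph{same} fixed $g$ gives $\Phi_1 G=F=\Phi_2 G$, so $(\Phi_1-\Phi_2)G=0$ in $\Fock$. Now $g$ is free outer in $\HH_k$, so by Lemma~\ref{lem:free_outer_H} the vector $G$ is $\HH$-free outer, and by Theorem~\ref{free-outer-in*invariant} together with Lemma~\ref{SymFock} it is right outer; thus $\{Gp: p\text{ a free polynomial}\}$ is dense in $\Fock$. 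For every such $p$,
\[
  L_{\Phi_1-\Phi_2}(Gp)=\big((\Phi_1-\Phi_2)G\big)p=0,
\]
and since $L_{\Phi_1-\Phi_2}=L_{\Phi_1}-L_{\Phi_2}$ is bounded on $\Fock$, it vanishes identically; evaluating at the constant function $1$ gives $\Phi_1=\Phi_2$.

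The whole argument is essentially an assembly of the preceding sections, and the single point requiring genuine care is the first step: subinner-ness is exactly what makes a norm-one lift act \emph{isometrically} on the image $G$ of a free outer function, so that $\Phi G$ lands \emph{exactly} in $\HH$ and therefore equals $F$ regardless of which lift is chosen. Once that is established, the outerness of $G$ in $\Fock$ — density of $\{Gp\}$ — immediately removes the ambiguity, and I do not anticipate any obstacle beyond bookkeeping with the isometry $U$ and the distinction between left and right multipliers.
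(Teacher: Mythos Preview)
Your proof is correct and follows essentially the same route as the paper: pick a free outer $g$ on which $\varphi$ is isometric, show that any norm-one lift $\Phi$ satisfies $\Phi G=F\in\HH$ and is therefore left inner via Theorem~\ref{inner-in*invariant}, and conclude uniqueness from the right outerness of $G$. The only cosmetic difference is that the paper phrases the last step as ``$F=\Phi H$ is a left inner/right outer factorization, hence unique by Theorem~\ref{inner-outer}'', whereas you unpack this by hand using the density of $\{Gp:p\text{ free polynomial}\}$; the content is the same.
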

\begin{proof} The existence of $\Phi$ is a consequence of the lifting theorem, which was mentioned in the introductory remarks of this section.

  Since $\varphi$ is subinner, we may choose  $0\ne h\in \HH_k$ with $\|\varphi h\|=\|h\|$. By Corollary \ref{subinner has free outer} we may assume that $h$ is free outer and we also assume $h(x_0)>0$. Set $f=\varphi h$, $F=Uf$, and $H=Uh\in \HH$. Since  $h$ is free outer in $\HH_k$, it follows that   $H$ must be $\HH$-free outer by Lemma \ref{lem:free_outer_H}.  Thus by Theorem \ref{free-outer-in*invariant} $H$ is right outer in $\Fock$. Also $H(0)=H(u(x_0))=h(x_0)>0$.

Let $\Phi\in \MFockL$ be any multiplier satisfying $\|\Phi\|_\infty =1$ and $\Phi\circ u=\varphi$.
The properties of $\Phi$ imply that $UM_\varphi=P_\HH L_\Phi U$, and hence $U(\varphi h) = P_{\HH}\Phi H$. We have $$\|H\|= \|h\|=\|\varphi h\|=\|P_{\HH}\Phi H\| \le \|\Phi H\|\le \|H\|.$$
Thus we must have $ \|P_{\HH}\Phi H\|= \|H\|$, and we conclude from  Theorem \ref{inner-in*invariant} that $\Phi$ is left inner. Furthermore, the equality $\|P_{\HH}\Phi H\| = \|\Phi H\|$ implies that $ \Phi H \in \HH$ and hence $F=U(\varphi h)=\Phi H$.

Thus, $\Phi$ and $H$ are  the unique left inner/right outer factors of the function $F=\Phi H\in \Fock$. In particular, $\Phi$ was determined uniquely.
\end{proof}
\begin{theorem}\label{freeOuterIsCyclic} If $f\in \HH_k$ is a free outer function, then $f$ is cyclic in $\HH_k$.\end{theorem}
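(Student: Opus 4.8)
The plan is to transfer the statement to the free Fock space via the embedding $U \colon \HH_k \to \HH \subseteq \SymFock \subseteq \Fock$ used throughout this section, and then to invoke the fact that ``left outer'' in $\Fock$ is precisely ``cyclic for the left multiplication algebra''. Concretely, set $F = Uf$. Since $f$ is free outer in $\HH_k$, Lemma \ref{lem:free_outer_H} gives that $F$ is $\HH$-free outer, and Theorem \ref{free-outer-in*invariant} then shows that $F$ is left outer in $\Fock$; that is, $\{L_\Phi F : \Phi \in \MFockL\}$ is dense in $\Fock$.

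Next I would push this density down to $\HH$ by applying the orthogonal projection $P_\HH$. Since $P_\HH$ is norm-continuous and maps $\Fock$ onto $\HH$, the set $\{P_\HH L_\Phi F : \Phi \in \MFockL\}$ is dense in $\HH$. Now recall from the opening paragraphs of this section that for $\Phi \in \MFockL$ the function $\varphi = \Phi \circ u$ lies in $\Mult(\HH_k)$ and satisfies $UM_\varphi^* = L_\Phi^* U$, equivalently $U M_\varphi U^* = P_\HH L_\Phi$; hence $P_\HH L_\Phi F = U(\varphi f)$. Therefore $U\big(\Mult(\HH_k) f\big) \supseteq \{P_\HH L_\Phi F : \Phi \in \MFockL\}$, which is dense in $\HH = \ran U$. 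Since $U$ is a unitary from $\HH_k$ onto $\HH$, it follows that $\Mult(\HH_k) f$ is dense in $\HH_k$, i.e.\ $[f] = \HH_k$, so $f$ is cyclic.

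I do not expect a real obstacle here: essentially all the content has already been absorbed into Theorem \ref{free-outer-in*invariant} (free outer $\Leftrightarrow$ left/right outer) and the definition of left outer. The only points requiring a little care are the elementary observation that applying a norm-continuous surjective projection to a dense set yields a dense set in the range, and the bookkeeping with $U$ and the intertwining relation $U M_\varphi U^* = P_\HH L_\Phi$; one could alternatively phrase the last step with free polynomials $p$ and the multipliers $p \circ u$, using the form of left outerness recorded after Lemma \ref{freePolysWeak*dense}.
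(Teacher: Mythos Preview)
Your proof is correct and follows essentially the same path as the paper's: both invoke Lemma \ref{lem:free_outer_H} and Theorem \ref{free-outer-in*invariant} to identify $F=Uf$ as outer in $\Fock$ and then transfer cyclicity back to $\HH_k$ via the embedding $U$. The only cosmetic difference is that the paper uses right outerness (approximating $1$ by $R_{H_n}F$ and applying $U^*=C_u$), whereas you use left outerness and project density through $P_\HH$; both routes are equally short.
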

\begin{proof} This follows from Theorem \ref{free-outer-in*invariant}  with arguments as used in the previous proofs. Indeed, if  $f\in \HH_k$ is free outer, then $f=F\circ u$ for  $F=Uf\in \HH$. Then $F$ must be $\mathcal{H}$-free outer in $\HH=\ran U$ by Lemma \ref{lem:free_outer_H}.
  Theorem \ref{free-outer-in*invariant} implies that $F$ is right outer in $\mathcal{F}^2_d$. Hence there are $H_n\in \mathcal{F}^{\infty,r}_d$ such that $R_{H_n} F\to 1$. Set $h_n(x)=H_n(u(x))$, then as explained in the beginning of this section $h_n \in \Mult(\HH_k)$ and $\|h_n f-1\|_{\HH_k} \le \|R_{H_n}F-1\|_{\mathcal{F}^2_d}\to 0$. Since $\Mult(\mathcal{H}_k)$ is dense in $\mathcal{H}$ (see also Lemma \ref{Lemma7.2} below), $f$ is cyclic in $\HH_k$.
\end{proof}

\section{The Sarason function}\label{SecSarasonF}

In this section we will prove Theorems \ref{SarasonFunction} and \ref{H2 interpolating}. We start with some elementary facts about Pick kernels. The first two lemmas are versions of \cite[Prop. 4.4]{Serra} and its proof. For completeness we record the short proofs. Let $X$ be a set and $w_0\in X$, $\HK$ be a Hilbert space, $u: X \to \HK$ be a function with $u(w_0)=0$, and let $$k_w(z)= \frac{1}{1-\la u(z),u(w)\ra_\HK}$$ be a normalized Pick kernel that is the reproducing kernel for the Hilbert function space $\HH$. Of course, in order for this to be well-defined we need to assume $\|u(z)\|<1$ for all $z\in X$.

\begin{lemma}\label{bMult} If $x\in \HK$, then $\varphi_x(z)=\la u(z),x\ra_{\HK}$ defines a  multiplier on $\HH$ with $\|\varphi_x\|_{\MuH}\le \|x\|$.
\end{lemma}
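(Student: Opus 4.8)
The plan is to realize $\varphi_x$ as the symbol of a natural contractive operator built from the embedding $u$ of $X$ into the unit ball of $\HK$. First I would recall that the normalized complete Pick structure gives, for each $z \in X$, the reproducing kernel $k_z \in \HH$ with $k_z(w) = \frac{1}{1 - \la u(w), u(z)\ra_\HK}$, and that a function $\psi : X \to \C$ is a multiplier of norm $\le 1$ precisely when the kernel $(1 - \psi(z)\overline{\psi(w)}) k_z(w)$ is positive semidefinite on $X$. So the cleanest route is to verify, for $\psi = \frac{1}{\|x\|}\varphi_x$ (assuming $x \ne 0$; the case $x = 0$ being trivial), that
\[
\Big(1 - \frac{1}{\|x\|^2}\la u(z),x\ra_\HK \overline{\la u(w),x\ra_\HK}\Big) \cdot \frac{1}{1 - \la u(z), u(w)\ra_\HK}
\]
is a positive semidefinite kernel in $z, w$.

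The key computation is that the numerator $1 - \frac{1}{\|x\|^2}\la u(z),x\ra \la x, u(w)\ra = 1 - \la P u(z), P u(w)\ra$, where $P$ is the rank-one orthogonal projection onto $\C x$ in $\HK$. Hence the product above equals $\frac{1 - \la P u(z), P u(w)\ra}{1 - \la u(z), u(w)\ra}$. Writing $u(z) = P u(z) \oplus (I-P) u(z)$, one has $\la u(z), u(w)\ra = \la P u(z), P u(w)\ra + \la (I-P)u(z), (I-P)u(w)\ra$, so with $a(z) = \la P u(z), Pu(w)\ra$ (a positive semidefinite kernel, being a Gram kernel) and $b(z) = \la (I-P)u(z),(I-P)u(w)\ra$ (likewise positive semidefinite), the expression is $\frac{1 - a}{1 - a - b} = \frac{1}{1 - \frac{b}{1-a}}$. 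Since $\|u(z)\| < 1$ we have $|a| < 1$ in the sense that $1 - a$ defines a positive semidefinite kernel bounded below, so $\frac{1}{1-a}$ is positive semidefinite (Neumann series $\sum a^n$), hence $\frac{b}{1-a}$ is positive semidefinite with values of modulus $< 1$, and finally $\frac{1}{1 - b/(1-a)} = \sum_{n \ge 0} (b/(1-a))^n$ is positive semidefinite as a convergent sum of Schur products of positive semidefinite kernels. This gives $\|\psi\|_{\MuH} \le 1$, i.e. $\|\varphi_x\|_{\MuH} \le \|x\|$.

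Alternatively, and perhaps more in the spirit of the Fock-space machinery already set up, I could instead observe that $\varphi_x$ is the restriction to $\ran u$ of a degree-one free polynomial $\Phi_x(X) = \sum_{i} \overline{x_i} X_i$ (reading $x = (x_i)$ in coordinates of $\HK$), whose left multiplication operator $L_{\Phi_x} = \sum_i \overline{x_i} L_i$ on $\Fock$ has norm $\le \|x\|$ because $[L_1, L_2, \dots]$ is a row isometry; then $\|\varphi_x\|_{\MuH} \le \|L_{\Phi_x}\|_\infty \le \|x\|$ by the comparison $\|\varphi\|_{\Mult(\HH_k)} \le \|\Phi\|_\infty$ for any lift, established at the start of Section \ref{SecPickspaces}. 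The main (and really only) obstacle is bookkeeping: making sure the positive-definiteness manipulations with the Schur product and the geometric series are justified given $\|u(z)\| < 1$, and correctly matching the coordinate conventions $\la u(z), u(w)\ra$ vs.\ $\la u(w), u(z)\ra$ in the kernel; neither is deep, so this lemma should be short.
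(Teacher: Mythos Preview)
Your first approach is correct and follows the same overall strategy as the paper: normalize to $\|x\|=1$, decompose $\la u(z),u(w)\ra$ via the projection onto $\C x$, and verify that $(1-\varphi_x(z)\overline{\varphi_x(w)})k_w(z)$ is positive semidefinite using the Schur product theorem. The paper, however, short-circuits your geometric-series detour: writing $a=\varphi_x(z)\overline{\varphi_x(w)}$ and $b=\la Pu(z),Pu(w)\ra$ with $P$ the projection onto $x^\perp$ (so $\la u(z),u(w)\ra=a+b$), one has directly
\[
(1-a)k_w(z)=\frac{1-a}{1-a-b}=1+\frac{b}{1-a-b}=1+b\cdot k_w(z),
\]
which is positive semidefinite by a single application of Schur to the product of the Gram kernel $b$ with $k$. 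This avoids checking that $\frac{1}{1-a}$ is positive semidefinite and that the series $\sum_n (b/(1-a))^n$ converges pointwise off the diagonal, both of which are true but add length.

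Your second approach via the Fock-space lift is a genuinely different and equally valid route; it trades the elementary Schur calculation for the machinery of Section~\ref{SecPickspaces}, and has the mild advantage of immediately explaining \emph{why} $\varphi_x$ is contractive (it is a compression of a row contraction), whereas the kernel argument is more opaque but self-contained.
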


\begin{proof} There is nothing to prove if $x=0$. Hence it will suffice to prove the Lemma for $\|x\|=1$. Let $P$ denote the projection of $\HK$ onto $x^\perp$, then $\la u(z), u(w)\ra_{\HK} =\varphi_x(z) \overline{\varphi_x(w)} +\la P u(z),u(w)\ra_{\HK}$ and hence
  $$(1-\varphi_x(z) \overline{\varphi_x(w)})k_w(z)= 1+\la P u(z),u(w)\ra_{\HK} k_w(z).$$ By the Schur product theorem this is positive definite, hence $\varphi_x$ must be a contractive multiplier on $\HH$; see for example \cite[Corollary 2.37]{AgMcC}.
\end{proof}

\begin{lemma}\label{Lemma7.2} If $w\in X$, then $k_w \in \Mult(\mathcal{H})$ and $\|k_w\|_{\MuH}\le 2\|k_w\|^2$.\end{lemma}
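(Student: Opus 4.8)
The plan is to exhibit $k_w$ explicitly as a combination of multipliers that we already control, namely the functions $\varphi_x$ from Lemma \ref{bMult} and the constant function $1$. Writing $k_w(z) = \frac{1}{1 - \la u(z), u(w)\ra_{\HK}}$ and applying the geometric series expansion formally, one sees that $k_w = \sum_{n\ge 0} \varphi_{u(w)}^n$, which suggests that $k_w = (1 - \varphi_{u(w)})^{-1}$ should be a multiplier whenever $\|\varphi_{u(w)}\|_{\MuH} < 1$. However, $\|\varphi_{u(w)}\|_{\MuH} \le \|u(w)\|$ may be arbitrarily close to $1$, so a naive Neumann series bound gives a constant like $\frac{1}{1 - \|u(w)\|}$ rather than the claimed $2\|k_w\|^2$. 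So instead I would argue directly with positive-definiteness.

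The key computation is the following identity: for $z, z' \in X$,
\begin{equation*}
  k_w(z) \overline{k_w(z')} (1 - \la u(z), u(z')\ra_{\HK})
  = k_w(z)\overline{k_w(z')} - \varphi_{u(w)}(z)\overline{k_w(z)}\,\overline{\varphi_{u(w)}(z')} k_w(z') \cdot (\text{correction}),
\end{equation*}
so let me set this up more carefully. Note that $k_w(z)(1 - \la u(z),u(w)\ra) = 1$, i.e. $k_w(z) - \varphi_{u(w)}(z) k_w(z) = 1$, hence $\varphi_{u(w)} k_w = k_w - 1$. Since $\varphi_{u(w)} \in \MuH$ with $\|\varphi_{u(w)}\|_{\MuH} \le \|u(w)\| \le 1$ by Lemma \ref{bMult}, the operator $M_{\varphi_{u(w)}}$ is a contraction on $\HH$. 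Now $k_w \in \HH$, and the identity $\varphi_{u(w)} k_w = k_w - 1$ shows that multiplication by $\varphi_{u(w)}$ maps $k_w$ to $k_w - 1$. To show $k_w$ itself is a multiplier, I would instead observe that $1 = k_w - \varphi_{u(w)} k_w$ and try to invert; but the clean route is: the function $\psi := \overline{k_w}$-scaled version, or better, use that for a normalized complete Pick kernel the reproducing kernel $k_w$ being a multiplier with the stated norm bound follows from the Schur-product/positive-definiteness argument verifying that
\begin{equation*}
  \bigl(4\|k_w\|^4 - k_w(z)\overline{k_w(z')}\bigr) k_{z'}(z)
\end{equation*}
is positive definite, or equivalently that $M_{k_w}^* M_{k_w} \le 4\|k_w\|^4 I$. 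The concrete way to see this: for any $h \in \HH$, expand $\|k_w h\|^2$ and bound it using $\|M_{\varphi_{u(w)}}\| \le 1$ together with $k_w = 1 + \varphi_{u(w)} k_w$, iterating or splitting to collect a factor $\|k_w\|^2 = k_w(w)$.

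The main obstacle I anticipate is getting the clean constant $2\|k_w\|^2$ rather than something weaker. The trick is presumably to write $k_w = 1 + \varphi_{u(w)} k_w$ and then, for $h \in \HH$,
\begin{equation*}
  \|k_w h\| \le \|h\| + \|\varphi_{u(w)} k_w h\| \le \|h\| + \|k_w h\| \cdot (\text{something} < 1)?
\end{equation*}
That does not close directly because $\|M_{\varphi_{u(w)}}\|$ need not be strictly less than $1$. The resolution is to use the finer fact that $M_{\varphi_{u(w)}}$ is not merely a contraction but that $\|\varphi_{u(w)} g\|^2 \le \|g\|^2 - |g(w_0)|^2 \cdot c$ or some defect estimate coming from the complete Pick structure — more precisely, from $(1 - \varphi_{u(w)}(z)\overline{\varphi_{u(w)}(z')})$ being a sum of a rank-one positive kernel plus the defect, one extracts that $I - M_{\varphi_{u(w)}}^* M_{\varphi_{u(w)}} \ge $ a rank-one positive operator built from $k_w$. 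Feeding this defect estimate into the bound on $\|k_w h\|$ and using $|(k_w h)(w_0)|$-type terms, one should land on the factor $2\|k_w\|^2$, perhaps with the $2$ coming from a step like $\|k_w h\|^2 \le 2\|h\|^2 + 2\|\varphi_{u(w)} k_w h\|^2$ combined with the defect. I would write out this defect inequality first, since everything else is routine manipulation once it is in hand.
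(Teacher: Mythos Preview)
Your proposal goes astray precisely at the point where you dismiss the Neumann series. You correctly observe that $k_w = (1-\varphi_{u(w)})^{-1}$ and that Lemma~\ref{bMult} gives $\|\varphi_{u(w)}\|_{\MuH}\le \|u(w)\|$. But then you say this bound ``may be arbitrarily close to $1$'' and that the Neumann series yields only $\frac{1}{1-\|u(w)\|}$ ``rather than the claimed $2\|k_w\|^2$.'' In fact $\|u(w)\|<1$ strictly, since $u$ maps $X$ into the \emph{open} unit ball of $\HK$, so the series converges; and more importantly
\[
\frac{1}{1-\|u(w)\|}=\frac{1+\|u(w)\|}{1-\|u(w)\|^2}\le \frac{2}{1-\|u(w)\|^2}=2\,k_w(w)=2\|k_w\|^2.
\]
This one-line inequality is exactly what is needed, and it is the paper's entire proof. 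The constant $\frac{1}{1-\|u(w)\|}$ that you discarded \emph{is} bounded by $2\|k_w\|^2$.

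Everything that follows in your proposal---the attempted Schur-product argument, the defect-operator estimate $I - M_{\varphi_{u(w)}}^* M_{\varphi_{u(w)}}\ge (\text{rank one})$, the step $\|k_w h\|^2\le 2\|h\|^2+2\|\varphi_{u(w)}k_w h\|^2$---is an attempt to recover from a non-existent difficulty, and none of it is brought to a conclusion. Your statement that ``$\|M_{\varphi_{u(w)}}\|$ need not be strictly less than $1$'' is also false for the reason above. Drop all of that and simply run the geometric series bound followed by the displayed inequality.
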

\begin{proof} If $w\in X$, then by the previous lemma $\|\varphi_{u(w)}\|_{\MuH} \le \|u(w)\|<1$. Note that $k_w=\frac{1}{1-\varphi_{u(w)}}$. Hence
  \begin{equation*}
\|k_w\|_{\MuH} \le \sum_{n=0}^\infty \|\varphi_{u(w)}\|^n_{\MuH}\le \frac{1}{1-\|u(w)\|_{\HK}}\le 2\|k_w\|^2_{\HH}.
\qedhere
  \end{equation*}
\end{proof}
\begin{lemma} \label{continuityLemma}  If $z, z_n\in X$ such that $k_{z_n}\to k_z$ in $\HH$, then $k_{z_n}\to k_z$ in $\MuH$.
\end{lemma}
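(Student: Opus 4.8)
The plan is to reduce the statement to the strong convergence $u(z_n)\to u(z)$ in $\HK$, and then to feed this into the bounded (conjugate\nobreakdash-)linear assignment $x\mapsto \varphi_x$ from $\HK$ into $\MuH$ supplied by Lemma \ref{bMult}.

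First I would extract the required scalar information from the hypothesis. Since $\|k_{z_n}\|^2_{\HH}=k_{z_n}(z_n)=(1-\|u(z_n)\|^2_{\HK})^{-1}$, the convergence $k_{z_n}\to k_z$ in $\HH$ forces $\|u(z_n)\|^2_{\HK}\to \|u(z)\|^2_{\HK}$, and in particular $\sup_n\|k_{z_n}\|_{\HH}<\infty$, so that $\sup_n\|k_{z_n}\|_{\MuH}<\infty$ by Lemma \ref{Lemma7.2}. Likewise, $\la u(z_n),u(z)\ra_{\HK}=\varphi_{u(z)}(z_n)=1-k_z(z_n)^{-1}=1-\la k_z,k_{z_n}\ra_{\HH}^{-1}\to 1-\|k_z\|_{\HH}^{-2}=\|u(z)\|^2_{\HK}$, where we use that $\|k_z\|_{\HH}\ge 1$ so that the reciprocals are eventually defined and converge. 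Combining, $\|u(z_n)-u(z)\|^2_{\HK}=\|u(z_n)\|^2_{\HK}-2\re\la u(z_n),u(z)\ra_{\HK}+\|u(z)\|^2_{\HK}\to 0$.

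Next I would record the resolvent-type identity for the kernels regarded as multipliers. As in the proof of Lemma \ref{Lemma7.2}, $k_w=(1-\varphi_{u(w)})^{-1}$, and since $x\mapsto\varphi_x$ is additive,
$$k_{z_n}-k_z=\frac{\varphi_{u(z_n)}-\varphi_{u(z)}}{(1-\varphi_{u(z_n)})(1-\varphi_{u(z)})}=k_{z_n}k_z\,\varphi_{u(z_n)-u(z)}$$
as an identity in $\MuH$. Taking multiplier norms, using submultiplicativity together with $\|\varphi_x\|_{\MuH}\le\|x\|_{\HK}$ from Lemma \ref{bMult}, gives
$$\|k_{z_n}-k_z\|_{\MuH}\le \|k_{z_n}\|_{\MuH}\,\|k_z\|_{\MuH}\,\|u(z_n)-u(z)\|_{\HK},$$
which tends to $0$ by the uniform bound on $\|k_{z_n}\|_{\MuH}$ obtained above and the limit of the previous paragraph.

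The argument is essentially routine; the one step that needs a little care is the passage from the Hilbert-space convergence $k_{z_n}\to k_z$ to the genuine norm convergence $u(z_n)\to u(z)$ in $\HK$, which is precisely what the two scalar limits in the second paragraph furnish. No input beyond Lemmas \ref{bMult} and \ref{Lemma7.2} is required.
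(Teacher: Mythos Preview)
Your proof is correct and follows essentially the same route as the paper: both establish the resolvent-type identity $k_{z_n}-k_z=k_{z_n}k_z\,\varphi_{u(z_n)-u(z)}$, reduce the multiplier-norm convergence to showing $u(z_n)\to u(z)$ in $\HK$, and deduce the latter from the two scalar limits $\|k_{z_n}\|^2\to\|k_z\|^2$ and $k_{z_n}(z)\to k_z(z)$ via the formula for the kernel. The only cosmetic difference is that you track the full inner product $\la u(z_n),u(z)\ra$ while the paper tracks only its real part, which of course suffices for the same conclusion.
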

\begin{proof} Note that $k_z-k_w= \varphi_{(u(z)-u(w))}k_zk_w$, hence by Lemmata \ref{bMult} and \ref{Lemma7.2},
$$\|k_z-k_w\|_{\MuH} \le 4\|k_z\|^2\|k_w\|^2 \|u(z)-u(w)\|_{\HK}.$$
Now suppose $k_{z_n} \to k_z$ in $\HH$. Then $k_{z_n}(z_n)=\|k_{z_n}\|^2\to \|k_z\|^2=k_z(z)$ and $k_{z_n}(z) \to k_z(z)$. We know that $k_x(y)=\frac{1}{1-\la u(y),u(x)\ra_\HK}$ for all $x,y\in X$, thus $\|u(z_n)\|^2_\HK\to \|u(z)\|^2_\HK$ and $\mathrm{Re}\la u(z), u(z_n)\ra_\HK \to \|u(z)\|^2_\HK$. Hence $$\|u(z_n)-u(z)\|^2_{\HK}= \|u(z_n)\|^2_{\HK}+ \|u(z)\|^2_{\HK}- 2 \mathrm{Re}\la u(z), u(z_n)\ra_\HK  \to 0$$ and the lemma follows.
\end{proof}

\begin{proof}[Proof of Theorem \ref{SarasonFunction}] The equivalence of (i) and (ii) is Theorem \ref{Pf=Pg}, it was established in Section \ref{SecPickspaces}. (ii)$\Rightarrow$(iii) is trivial. We will now show  (iii)$\Rightarrow$(ii).

If $f,g \in \MuH$, then $V_f= M_f^*f\in \HH$ and $P_f(\varphi)= \la \varphi, M_f^* f\ra$ for all $\varphi\in \MuH$, and similarly for $g$. Thus, if $V_f=V_g$, then $M_f^*f=M_g^*g$ and hence $P_f=P_g$.

Next assume that  $d\in \N \cup \{\infty\}$, $X=\mathbb{B}_d$, and that we are given coefficients $\{a_\alpha\}$ such that for each $\alpha \in I_d$ $a_\alpha >0$ and $\sum_{\alpha\in I_d} a_\alpha |z^\alpha|^2<\infty$ for each $z\in \Bd$. Then an application of the Cauchy-Schwarz inequality implies that the series
$$k_w(z)=\sum_{\alpha\in I_d} a_\alpha z^\alpha \overline{w}^\alpha$$ converges absolutely for all $z,w\in \mathbb{B}_d$. Then $\{f_\alpha(z)=\sqrt{a_\alpha} z^\alpha: \alpha \in I_d\}$ is an orthonormal basis for $\HH$ and every $f\in \HH$ is of the form $f=\sum_{\alpha\in I_d} \la f,f_\alpha\ra f_\alpha$, hence $$f(z)=\la f, k_z\ra =\sum_{\alpha\in I_d} \hat{f}(\alpha) z^\alpha,$$ where $\hat{f}(\alpha) =\la f,f_\alpha\ra \sqrt{a_\alpha}$, and the series converges absolutely for each $z\in \Bd$. Furthermore, we have $$\|f\|^2=\sum_{\alpha\in I_d}| \la f,f_\alpha\ra|^2=\sum_{\alpha\in I_d}\frac{|\hat{f}(\alpha)|^2}{a_\alpha}.$$

 Let $\HL$ denote the weak*-closure of the finite linear combinations of reproducing kernels $k_w$ in $\MuH$. If $f,g \in \HH$ such that $V_f=V_g$, then $P_f$ and $P_g$ agree on $\HL$. We will show that $\HL =\MuH$.

We will first show that $\mathcal{L}$ contains all monomials.
Let $\alpha \in I_d$. Then there is  $n\in \N$ such that $\alpha \in I_n$. If $n<d$, then we identify $\mathbb{B}_n$ with $\{z\in \Bd: z_j=0 \text{ for all } j>n\}\subseteq \Bd$. Note that if $w\in \mathbb{B}_n$, then for all $z\in \Bd$ we have $k_w(z)=\sum_{\beta\in I_n} a_\beta \overline{w}^\beta z^\beta$.
A standard result about power series  shows that the last series converges uniformly
on compact subsets of $\mathbb{B}_n \times \mathbb{B}_n$; see, for instance, \cite[Corollary 1.16]{Range86}.
Therefore the function $(z,w) \mapsto k_w(z)$ is continuous on $\mathbb{B}_n \times \mathbb{B}_n$.
Since
\begin{equation*}
  \|k_w - k_z\|^2 = k_w(w) - 2 \operatorname{Re} k_w(z) + k_z(z),
\end{equation*}
we see that $w \mapsto k_w$ is continuous as a function $\mathbb{B}_n \to \mathcal{H}$,
and hence as a function $\mathbb{B}_n \to \Mult(\mathcal{H})$ by Lemma \ref{continuityLemma}.

Next, let $\sigma$ be the rotationally invariant Borel probability measure on $\mathbb{B}_n$ and let $ 0 <r <1$.
For fixed $z \in \mathbb{B}_d$, the series $k_w(z) = \sum_{\beta \in I_n} a_\beta \overline{w}^\beta z^\beta$,
regarded as a function of $w$, converges uniformly on compact subsets of $\mathbb{B}_n$.
Hence
\begin{equation*}
  \int_{\partial \mathbb{B}_n} w^\alpha k_{r w}(z) d \sigma(w) = a_\alpha r^{|\alpha|} \int_{\partial \mathbb{B}_n} |w^\alpha|^2 \, d \sigma(w) z^\alpha.
\end{equation*}
On the other hand, the integral $\int_{\partial \mathbb{B}_n} w^\alpha k_{ rw} d \sigma(w)$
converges in multiplier norm and in particular belongs to $\mathcal{L}$.
This shows that $z^\alpha \in \mathcal{L}$, and so $\mathcal{L}$ contains all polynomials.

If $d<\infty$, then the proof of Lemma 4.1 of \cite{GrRiSu} applies to show that the polynomials are weak*-dense in $\MuH$, hence $\HL=\MuH$ in that case. We note here that
the proof of Lemma 4.1 of \cite{GrRiSu} shows that every multiplier is a weak operator topology
limit of a bounded sequence of polynomials; moreover the weak operator and weak* topologies agree on bounded
subsets of $B(\mathcal{H})$.

We finish the proof by showing that the polynomials are also weak* dense in case $d= \infty$.
If $z\in \mathbb{B}_\infty$, $t\in \R$, and $n \in \N$, then let $$z_{t,n}=(z_1, \dots, z_n, e^{it}z_{n+1}, e^{it}z_{n+2}, \dots).$$
Given $\varphi \in \Mult(\mathcal{H})$, let $\varphi_{t,n} (z) = \varphi(z_{t,n})$.
Since $k_{w_{t,n}}(z_{t,n}) = k_w(z)$, it follows that
$\varphi_{t,n} \in \Mult(\mathcal{H})$ and $\|\varphi_{t,n}\|_{\Mult(\mathcal{H})} = \|\varphi\|_{\Mult(\mathcal{H})}$
for all $n \in \mathbb{N}$ and $t \in \mathbb{R}$.
Moreover, absolute convergence of the power series expansion of $\varphi$ and the Weierstra\ss\ M-test
show that for each $z \in \mathbb{B}_\infty$ and $n \in \mathbb{N}$,
the  power series of $\varphi_{t,n}(z)$ converges uniformly in $t \in \mathbb{R}$, and that the
map $t \mapsto \varphi_{t,n}(z)$
is continuous. Hence $t \mapsto \varphi_{t,n}$ is weak*-continuous as a map $\mathbb{R} \to \Mult(\mathcal{H})$.

Uniform convergence shows that for each $z \in \mathbb{B}_\infty$,
\begin{equation*}
  \int_{0}^{2 \pi} \varphi_{t,n}(z) \frac{dt}{2 \pi} = \varphi(z_1, \ldots,z_n, 0, 0, \ldots).
\end{equation*}
Thus, if $\varphi_n(z) = \varphi(z_1,\ldots,z_n,0, 0, \ldots)$, then
$\varphi_n = \int_{0}^{2 \pi} \varphi_{t,n} \frac{dt}{2 \pi}$, and the integral converges in the weak* topology.
Hence $\|\varphi_n\|_{\Mult(\mathcal{H})} \le \|\varphi\|_{\Mult(\mathcal{H})}$.
Moreover, $\varphi_n$ converges to $\varphi$ in the norm of $\mathcal{H}$; hence $\varphi_n$ converges to $\varphi$
pointwise on $\mathbb{B}_\infty$ and thus in the weak* topology of $\Mult(\mathcal{H})$.

It remains to show that each $\varphi_n$ belongs to the weak* closure of the polynomials.
Let $\psi = \varphi_n$ for some $n \in \mathbb{N}$.
Then $\psi_{t,1}(z) = \psi(e^{ i t} z)$, and as observed above, the map $t \mapsto \psi_{t,1}$
is weak* continuous.
In this setting, the argument in the proof of Lemma 4.1 of \cite{GrRiSu}
shows that the Fej\'er means of $\psi$, which are polynomials, converge to $\psi$ in the weak* topology.
\end{proof}

\begin{proof}[Proof of Theorem \ref{H2 interpolating}] For $z, \lambda\in \D$ let $k_\lambda(z)=\frac{1}{1-\overline{\lambda}z}$ denote the Szeg\H{o} kernel. As before let $\HL$ denote the weak*-closure of the finite linear combinations of reproducing kernels of $\mathcal{H} = H^2(\mathbb{D}) |_X$ in $\MuH$.
  Assume that $\lambda_1=0$.  For  $j\in \N$ we define $\psi_j:X\to \C$  to be 1 at $\lambda_j$ and 0 at all the other points of $X$. We will first show that each $\psi_j\in \HL\subseteq \MuH$.

Fix $j\in \N$, and let $N > j$. Then since the functions $k_{\lambda_n}|X$ are linearly independent there is a function $f_N=\sum_{n=1}^N a_n k_{\lambda_n}$ that equals 1 at $\lambda_j$ and $0$ at $\lambda_n$ for all $1\le n\le N, n\ne j$. Then $f_N$ is a rational function with simple poles at $1/{\overline{\lambda}_n}$ for $n=2, \dots, N$ and the numerator of $f_N$ has degree at most $N-1$ and vanishes at $\lambda_n$ for $1\le n\le N, n\ne j$.
Note that $a_n \neq 0$ for each $n$, since the numerator of $f_N$ must have $N-1$ zeros.
Hence $$f_N(z)= \frac{1-|\lambda_j|^2}{1-\overline{\lambda}_jz}\prod_{n=1, n\ne j}^N \frac{b_n(z)}{b_n(\lambda_j)}$$ where $b_n(z)= \frac{\lambda_n-z}{1-\overline{\lambda}_nz}$. Then $f_N \in \HL$.
(Alternatively, one could define $f_N$ by the last formula and observe that $f_N \in \mathcal{L}$ by partial
fraction decomposition.) Moreover,

$$\|f_N\|_{\MuH} \le \|f_N\|_{H^\infty} \le \frac{2}{\prod_{n=1, n\ne j}^\infty |b_n(\lambda_j)|}<\infty,$$ and $f_N(\lambda_n)\to \delta_{nj}$ for each $n \in \N$. This shows that $f_N \to \psi_j$ in the weak*-topology of $\MuH$. Hence $\psi_j\in \HL$.

If $\varphi\in H^\infty$, then $\varphi|X\in \MuH$ with $\|\varphi\|_{\MuH}\le \|\varphi\|_{H^\infty}$. Conversely, by the commutant lifting theorem every element in $\MuH$ is of the form $\varphi|X$ for some $\varphi\in H^\infty$ with $\|\varphi\|_{\MuH}= \|\varphi\|_{H^\infty}$.

For $N\in \N$ set $B_N(z) =\prod_{n>N} \frac{\overline{\lambda}_n}{|\lambda_n|}b_n(z)$. Then for every $\varphi \in H^\infty$ we note that $B_N \varphi \to \varphi$ in the weak*-topology of $H^\infty$. This implies that $B_N \varphi|X \to \varphi|X$  in the weak*-topology of $\MuH$. But we have \begin{equation*}
(B_N \varphi)|X =\sum_{j=1}^N B_N(\lambda_j)\varphi(\lambda_j) \psi_j\in \HL \ \text{ for each }N. \qedhere\end{equation*}
\end{proof}

\section{Approximation by subinner functions}\label{SecApproxBySubinner}

In every reproducing kernel Hilbert space $\HH_k$ the set of finite linear combinations of reproducing kernels is dense. If $\HH_k$ is separable, then there is a countable set $Y\subseteq X$ such that the set of finite linear combinations of $\{k_x\}_{x\in Y}$ is dense in $\HH_k$.
Indeed, since subspaces of separable metric spaces are separable, there exists a countable set $Y \subseteq X$ so
that $\{k_x: x \in Y\}$ is dense in $\{k_x: x \in X\}$, hence the set of finite linear combinations of $\{k_x\}_{x \in Y}$ is dense in $\HH_k$.
Let $Y=\{x_n\}\subseteq X$ be any such countable subset of a separable complete Pick space $\HH_k$, then $$\HM_n= \Big\{\sum_{j=1}^nc_jk_{x_j}: c_j\in \C \Big\}$$ defines a nested sequence of finite dimensional $*$-invariant subspaces of $\HH_k$ whose union is dense in $\HH_k$.

Next we record a simple lemma.

\begin{lemma} \label{subinnerMatch} Let $k$ be a normalized complete Pick kernel and let $\HM\subseteq \HH_k$ be a finite dimensional $*$-invariant subspace.

If $\varphi\in \Mult (\HH_k)$ with $\|M_\varphi^*|\HM\|=1$, then there is a subinner function $\psi$ such that $\varphi-\psi \in \HM^\perp$. Moreover,
there exist $f, g \in \mathcal{M}$ with $g = \psi f$ and $\|f\|=\|g\|\ne 0$. \end{lemma}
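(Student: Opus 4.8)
The plan is to obtain $\psi$ by solving a Pick interpolation problem whose data are the compression of $M_\varphi$ to $\HM$, and then to recognize that $\psi$ is subinner via the equality case of the Cauchy--Schwarz inequality, in the spirit of Lemma \ref{contractionLemma}. To set up, note that since $\HM$ is $*$-invariant the operator $M_\varphi^*$ maps $\HM$ into $\HM$; write $T=M_\varphi^*|\HM\in\HB(\HM)$, so $\|T\|=1$. Because $\HM$ is finite dimensional this norm is attained: there is $h\in\HM$ with $\|h\|=1=\|M_\varphi^*h\|$. Moreover, for every $\eta\in\Mult(\HH_k)$ the operators $M_\varphi^*$ and $M_\eta^*$ commute and leave $\HM$ invariant, so $T$ commutes with $M_\eta^*|\HM$; dually, $T^*=P_\HM M_\varphi|\HM$ is a contraction lying in the commutant of the compressed multiplication algebra $\{P_\HM M_\eta|\HM:\eta\in\Mult(\HH_k)\}$.

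I would then invoke the commutant lifting theorem (equivalently, Pick interpolation) for normalized complete Pick spaces; within the framework of this paper it can also be deduced by transporting the commutant lifting theorem for $\Fock$ through the embedding $U$ of Section \ref{SecPickspaces}. Applied to the finite-dimensional co-invariant subspace $\HM$ and to the contraction $T^*$ in the commutant, it produces $\psi\in\Mult(\HH_k)$ with $\|\psi\|_{\Mult(\HH_k)}\le1$ and $P_\HM M_\psi|\HM=T^*$, equivalently $M_\psi^*|\HM=M_\varphi^*|\HM$. Since $\varphi-\psi=(\varphi-\psi)\cdot1=M_{\varphi-\psi}1\in\HH_k$, pairing against $k_{z_0}=1$ gives, for every $h'\in\HM$, $\la\varphi-\psi,h'\ra=\la 1,M_\varphi^*h'-M_\psi^*h'\ra=0$, so $\varphi-\psi\in\HM^\perp$. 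Also $\|\psi\|_{\Mult(\HH_k)}\ge\|M_\psi^*|\HM\|=\|T\|=1$, hence $\|\psi\|_{\Mult(\HH_k)}=1$.

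Finally I would check the ``moreover'' and that $\psi$ is subinner. Put $f=M_\psi^*h=M_\varphi^*h$ and $g=h$; both lie in $\HM$ by $*$-invariance, and $\|f\|=\|M_\varphi^*h\|=1=\|g\|\ne0$. Since
\[
\la M_\psi f,g\ra=\la f,M_\psi^*h\ra=\|f\|^2=\|f\|\,\|g\|
\]
while $\|M_\psi f\|\le\|\psi\|_{\Mult(\HH_k)}\|f\|=1$, the equality case of Cauchy--Schwarz forces $M_\psi f=g$ and $\|M_\psi f\|=\|f\|$. Thus $g=\psi f$ with $\|f\|=\|g\|\ne0$, and since $\|\psi\|_{\Mult(\HH_k)}=1$ this exhibits $\psi$ as subinner in the sense of Definition \ref{subinnerDef}. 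The only step that is not purely formal is the interpolation step: one needs that the positivity encoded in $\|M_\varphi^*|\HM\|\le1$ is enough to extend the compressed data to a \emph{contractive} multiplier of $\HH_k$ realizing that compression, and this is precisely where the complete Pick hypothesis enters; everything else is the equality case of Cauchy--Schwarz together with bookkeeping.
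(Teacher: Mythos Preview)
Your proof is correct and follows essentially the same route as the paper: apply commutant lifting for complete Pick kernels (the paper cites Ball--Trent--Vinnikov explicitly) to extend the compression $T=M_\varphi^*|\HM$ to $M_\psi^*$ with $\|\psi\|_{\Mult(\HH_k)}=1$, deduce $\varphi-\psi\in\HM^\perp$ from $M_{\varphi-\psi}^*|\HM=0$, and then use finite-dimensionality to find a norm-attaining vector. Your final step via the equality case of Cauchy--Schwarz is exactly Lemma~\ref{contractionLemma} applied to $M_\psi^*$ (the paper invokes that lemma directly to get $M_\psi M_\psi^* g=g$), so there is no substantive difference. One cosmetic point: the phrase ``pairing against $k_{z_0}=1$'' is a slip---you are pairing $\varphi-\psi$ against $h'\in\HM$, using only that $1\in\HH_k$; the computation itself is fine.
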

\begin{proof} Let $T=M_\varphi^*|\HM$. Then $T:\HM\to \HM$, $\|T\|=1$, and $TM_u^*=M_u^*T$ for all $u \in \Mult(\HH_k)$. Hence by the Ball-Trent-Vinnikov Commutant Lifting Theorem, \cite{BallTrentVinnikov}, there is a multiplier $\psi\in \Mult(\HH_k)$ with $\|\psi\|_{\Mult(\HH_k)}=1$ and $T=M_\psi^*|\HM$. The identity $M_\psi^*|\HM=M_\varphi^*|\HM$ implies $\ran M_{(\varphi-\psi)} \subseteq \HM^\perp$.

Furthermore, since $\HM$ is finite dimensional $M_\psi^*|\HM$ attains its norm. Hence there is $g\in \HM$ with $\|M_\psi^* g\|=\|g\|=1$. Then  by Lemma \ref{contractionLemma} we have $M_\psi M_\psi^*g=g$. Lemma \ref{subinnerMatch} evidently follows with $f= M_\psi^*g\in \HM$.
\end{proof}

If $\HH_k$ is a finite dimensional complete Pick space, then the subinner functions are precisely the multipliers of norm 1,
as operators on finite dimensional spaces attain their norm.
We will now prove that in infinite dimensional complete Pick spaces, every function in the unit ball of the multiplier algebra is a weak* limit of subinner functions.
\begin{theorem} Let $k$ be a normalized complete Pick kernel on a set $X$, and suppose $\HH_k$ is separable and infinite dimensional.

Then for every $\varphi \in \Mult(\HH_k)$ with $\|\varphi\|_{\Mult(\HH_k)}\le 1$, there is a sequence of subinner functions $\varphi_n$ such that $\varphi_n(x)\to \varphi(x)$ for every $x\in X$.

Moreover, if we are given a nested sequence of finite dimensional $*$-invariant subspaces $\{\HM_n\}_{n\in \N}$ such that
\begin{enumerate}
  \item $1\in\HM_n\subseteq \HM_{n+1} \subseteq \Mult(\mathcal{H}_k)$ for each $n$,
\item $\HH_k=\bigvee_{n=1}^\infty \HM_n$,
\end{enumerate}
 then for each $n$ the subinner function $\varphi_n$ can be chosen to satisfy $g_n = \varphi_n f_n$ for some functions $f_n,g_n \in \mathcal{M}_n$.
\end{theorem}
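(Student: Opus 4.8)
The plan is to prove the \emph{moreover} statement and recover the first assertion from it by taking $\mathcal M_n$ to be the span of reproducing kernels $k_{x_1},\dots ,k_{x_n}$, where $x_1=z_0$ and $\{k_{x_j}\}$ has dense linear span (possible by separability, as recalled before the theorem); these $\mathcal M_n$ are nested finite–dimensional $*$-invariant subspaces contained in $\Mult(\mathcal H_k)$ by Lemma~\ref{Lemma7.2}, contain $1=k_{z_0}$, and span $\mathcal H_k$. So fix such a sequence $\{\mathcal M_n\}$; we may assume $\mathcal M_{n-1}\subsetneq\mathcal M_n$ for all $n$ (with $\mathcal M_0:=\{0\}$), the general case following by reusing a $\varphi_n$ across runs of equal subspaces. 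For each $n$ I will produce a subinner $\varphi_n$ such that $M_{\varphi_n}^*|_{\mathcal M_{n-1}}=M_\varphi^*|_{\mathcal M_{n-1}}$ and $\|M_{\varphi_n}^*|_{\mathcal M_n}\|=1$. The second condition, together with finite dimensionality of $\mathcal M_n$, means $M_{\varphi_n}^*$ attains its norm at some $g_n\in\mathcal M_n$; Lemma~\ref{contractionLemma} then gives $M_{\varphi_n}M_{\varphi_n}^*g_n=g_n$, so with $f_n:=M_{\varphi_n}^*g_n\in\mathcal M_n$ we get $g_n=\varphi_n f_n$ and $\|f_n\|=\|g_n\|\neq0$. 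The first condition forces $\la w,\varphi_n\ra=\la M_{\varphi_n}^*w,1\ra=\la M_\varphi^*w,1\ra=\la w,\varphi\ra$ for every $w\in\mathcal M_{n-1}$, i.e.\ $\varphi_n-\varphi\perp\mathcal M_{n-1}$; since $\bigcup_m\mathcal M_m$ is dense in $\mathcal H_k$ and $\|\varphi_n\|_{\mathcal H_k}\leq\|\varphi_n\|_{\Mult(\mathcal H_k)}\leq1$, this yields $\varphi_n\to\varphi$ weakly in $\mathcal H_k$, hence $\varphi_n(x)=\la\varphi_n,k_x\ra\to\varphi(x)$ for every $x\in X$.

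To build $\varphi_n$ it suffices, by Lemma~\ref{subinnerMatch} applied on $\mathcal M_n$, to find an auxiliary $\varphi'\in\Mult(\mathcal H_k)$ with $\|\varphi'\|_{\Mult(\mathcal H_k)}\leq1$, $M_{\varphi'}^*|_{\mathcal M_{n-1}}=M_\varphi^*|_{\mathcal M_{n-1}}$ and $\|M_{\varphi'}^*|_{\mathcal M_n}\|=1$; Lemma~\ref{subinnerMatch} then produces a subinner $\varphi_n$ with $M_{\varphi_n}^*|_{\mathcal M_n}=M_{\varphi'}^*|_{\mathcal M_n}$ (hence also agreement on $\mathcal M_{n-1}$) together with the required $f_n,g_n\in\mathcal M_n$. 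Write $T=M_\varphi^*|_{\mathcal M_{n-1}}$, a contraction commuting with $\{M_u^*|_{\mathcal M_{n-1}}:u\in\Mult(\mathcal H_k)\}$. If $\|T\|=1$ we may take $\varphi'=\varphi$, since then $1=\|T\|\leq\|M_\varphi^*|_{\mathcal M_n}\|\leq\|\varphi\|_{\Mult(\mathcal H_k)}\leq1$. Assume $\|T\|<1$. The key point is that there is a \emph{nonzero} operator $\Delta$ on $\mathcal M_n$ commuting with $\{M_u^*|_{\mathcal M_n}\}$ and vanishing on $\mathcal M_{n-1}$. Granting this, set $S_t:=M_\varphi^*|_{\mathcal M_n}+t\Delta$ for $t\geq0$: every $S_t$ commutes with $\{M_u^*|_{\mathcal M_n}\}$ and restricts to $T$ on $\mathcal M_{n-1}$, while $\|S_0\|\leq1$ and $\|S_t\|\to\infty$, so by continuity $\|S_{t^*}\|=1$ for some $t^*\geq0$. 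By the Ball--Trent--Vinnikov commutant lifting theorem \cite{BallTrentVinnikov} there is $\varphi'$ with $\|\varphi'\|_{\Mult(\mathcal H_k)}=\|S_{t^*}\|=1$ and $M_{\varphi'}^*|_{\mathcal M_n}=S_{t^*}$, which has the desired properties. (For $n=1$ one may instead simply take $\varphi_1\equiv1$ and $f_1=g_1=1\in\mathcal M_1$.)

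It remains to produce $\Delta$. By the unit–ball form of commutant lifting, a contractive operator on $\mathcal M_n$ commuting with $\{M_u^*|_{\mathcal M_n}\}$ has the form $M_\psi^*|_{\mathcal M_n}$ with $\|\psi\|_{\Mult(\mathcal H_k)}\leq1$, so I want a nonzero multiplier $\psi$, $\|\psi\|_{\Mult(\mathcal H_k)}\leq1$, with $M_\psi^*|_{\mathcal M_{n-1}}=0$ but $M_\psi^*|_{\mathcal M_n}\neq0$; equivalently $\clos(\psi\mathcal H_k)\subseteq\mathcal M_{n-1}^\perp$ and $\clos(\psi\mathcal H_k)\not\subseteq\mathcal M_n^\perp$. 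Now $\mathcal M_{n-1}^\perp$ is a multiplier–invariant subspace (because $\mathcal M_{n-1}$ is $*$-invariant), and it strictly contains $\mathcal M_n^\perp$ since $\mathcal M_{n-1}\subsetneq\mathcal M_n$. By the theorem of McCullough and Trent \cite{McCulloughTrent} on invariant subspaces of $\Mult(\mathcal H_k)$, $\mathcal M_{n-1}^\perp=\clos\sum_k M_{\psi_k}\mathcal H_k$ for a sequence of multipliers $\psi_k$; as this subspace is not contained in $\mathcal M_n^\perp$, some $\clos(\psi_k\mathcal H_k)$ is not either, and rescaling that $\psi_k$ to norm $\leq1$ gives the required $\psi$, with $\Delta:=M_\psi^*|_{\mathcal M_n}$.

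The only genuine obstacle is the construction of $\varphi'$: raising the norm of the compression to $\mathcal M_n$ to be \emph{exactly} $1$ without disturbing the compression to $\mathcal M_{n-1}$. This is precisely where the hypothesis that $\mathcal H_k$ is infinite dimensional is used — through the strict inclusion $\mathcal M_n^\perp\subsetneq\mathcal M_{n-1}^\perp$ that makes $\Delta$ available — and where the Beurling–type structure theorem enters. Everything else is routine: the extraction of $f_n,g_n$ is Lemma~\ref{subinnerMatch} combined with Lemma~\ref{contractionLemma}, and the convergence $\varphi_n\to\varphi$ is a weak–convergence argument on the dense subspace $\bigcup_m\mathcal M_m$.
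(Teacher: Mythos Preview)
Your proof is correct and follows the same overall architecture as the paper's: perturb $\varphi$ so that the compressed norm on $\mathcal{M}_n$ becomes exactly $1$, then apply Lemma~\ref{subinnerMatch} (i.e.\ commutant lifting plus finite dimensionality) to produce the subinner $\varphi_n$ together with $f_n,g_n\in\mathcal{M}_n$. The convergence argument is likewise the same in spirit.

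Where you differ is in the construction of the perturbation. The paper works more concretely: it picks a unit vector $\psi_n\in\mathcal{M}_n\ominus\mathcal{M}_{n-1}$, which is a multiplier by the hypothesis $\mathcal{M}_n\subseteq\Mult(\mathcal{H}_k)$, sets $\varphi_{n,t}=\varphi+t\psi_n$, and uses $1\in\mathcal{M}_n$ to bound $\|M_{\varphi_{n,t}}^*|_{\mathcal{M}_n}\|\ge t\|\psi_n\|-1$ from below, so some $t_n\in[0,2]$ works. You instead invoke the McCullough--Trent Beurling-type theorem on $\mathcal{M}_{n-1}^\perp$ to locate a multiplier $\psi$ with $M_\psi^*|_{\mathcal{M}_{n-1}}=0$ but $M_\psi^*|_{\mathcal{M}_n}\neq0$, and then interpolate. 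Your route buys you the cleaner conclusion $M_{\varphi_n}^*|_{\mathcal{M}_{n-1}}=M_\varphi^*|_{\mathcal{M}_{n-1}}$, but at the cost of importing a structure theorem; the paper's choice is more elementary and uses the hypothesis $\mathcal{M}_n\subseteq\Mult(\mathcal{H}_k)$ directly. Note also that your intermediate commutant lifting (producing $\varphi'$ from $S_{t^*}$) is redundant: it is the same lifting step already contained in Lemma~\ref{subinnerMatch}, so you could apply that lemma straight to $\varphi+t^*\psi$ and take $\varphi_n$ to be the $\psi$ it outputs.
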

Taking $\mathcal{M}_n$ as at the beginning of the section, the approximating subinner functions can be thought of as ratios of finite linear combinations of reproducing kernels.
 If for $1\le d <\infty$, we have $\HH_k \subseteq \Hol(\Bd)$, the monomials $\{z^\alpha\}_{\alpha \in \N_0^d}$ form an orthogonal basis for $\HH_k$ and are multipliers of $\HH_k$, then one easily checks that
 one can take $\HM_n$ to be the span of  polynomials that are homogeneous of order $\le n$, and the approximating subinner functions are going to be rational functions. Thus, Theorem \ref{Caratheodory Approximation} follows.
\begin{proof}
  We first observe that there exist sequences $\{\mathcal{M}_n\}$ of subspaces as in the statement.
Let $x_0\in X$ be the normalization point of the reproducing kernel $k$, i.e. $k_{x_0}=1$, and choose
 $\{x_n\}_{n\ge 1}\subseteq X$ such that the linear span of $\{k_{x_n}\}_{n \ge 0}$ is dense in $\HH_k$.
  For $n \ge 0$ set $\HM_n =\{\sum_{j=0}^n c_j k_{x_j}: c_0, \dots, c_n\in \C\}$, then $1\in\HM_n$ for each $n$ and the $\HM_n$'s form a nested sequence of finite dimensional $*$-invariant subspaces such that $\HH_k=\bigvee_{n=1}^\infty \HM_n$.
  Moreover, by Lemma \ref{Lemma7.2}, we have $\mathcal{M}_n \subseteq \Mult(\HH_k)$.

Next let $\varphi \in \Mult(\HH_k)$ with $\|\varphi\|_{\Mult(\HH_k)}\le 1$ and fix a sequence of subspaces $\HM_n$ satisfying the conditions (i) and (ii) of the Theorem. Clearly, we may assume that $\HM_{n-1} \subsetneq \HM_{n}$ for all $n$,
so there is $\psi_n \in \HM_n \ominus \HM_{n-1}$ with $\|\psi_n\| = 1$. Then $\{\psi_n\}$ is a sequence of multipliers
that is orthonormal in $\mathcal{H}_k$.
Hence $\{\psi_n\}$ converges to $0$ weakly and so $\psi_n(x) \to 0$ for each $x\in X$.
  Write $P_n$ for the projection of $\HH_k$ onto $\HM_n$. Then $I-P_n\to 0$ in the strong operator topology.

  Fix $n \ge 1$. For $t\ge 0$ set $\varphi_{n,t} = \varphi +t \psi_n$. Then if $t\ge 2$ we have
  \begin{align*}\|M^*_{\varphi_{n,t}}|\HM_n\|=\|P_{n}M_{\varphi_{n,t}}|\HM_n\| &\ge \|P_{n}(\varphi+t\psi_n) 1\|_{\HH_k} \\ &\ge t\|\psi_n\|_{\HH_k}-1 \ge 1.\end{align*} Here we used that $1\in \HM_n$. We also have
  $$\|M^*_{\varphi_{n,0}}|\HM_n\|=\|M^*_{\varphi}|\HM_n\| \le \|\varphi\|_{\Mult(\HH_k)}\le 1.$$ Thus, by continuity  there is $0\le t_n \le 2$ such that $\|M^*_{\varphi_{n,t_n}}|\HM_n\|=1$. Now we apply Lemma \ref{subinnerMatch} to conclude that there is a subinner function $\varphi_n$, satisfying $g_n = \varphi_n f_n$ for some $f_n,g_n \in \mathcal{M}_n$, such that $\varphi_{n,t_n}-\varphi_n\perp \HM_n$.

  Then $\varphi_n-\varphi= (I-P_n)(\varphi_n-\varphi_{n,t_n})+t_n \psi_n$ and hence
  for all $x\in X$ we have
  \begin{align*}|\varphi_n(x)-\varphi(x)|   &\le |\la \varphi_n-\varphi_{n,t_n}, (I-P_n)k_x\ra|+2|\psi_n(x)|\\
  &\le (\|\varphi_n\|+\|\varphi_{n,t_n}\|)\|(I-P_n)k_x\|+2|\psi_n(x)|\\
  &\le 3\|(I-P_n)k_x\|+2|\psi_n(x)| \to 0 \qedhere\end{align*}
\end{proof}
\section{Solutions to extremal Pick problems}\label{SecPickProblem}

For $\lambda_1, \dots, \lambda_n \in X$ and $w_1, \dots, w_n \in \C$ let $K=(k_{\lambda_i}(\lambda_j))$ be the Gramian of the kernels, and let $P=\left( (1-\overline{w}_iw_j)k_{\lambda_i}(\lambda_j)\right)$ be the Pick matrix. The Pick problem with data  $\lambda_1, \dots, \lambda_n \in X$ and $w_1, \dots, w_n \in \C$ is to find a $\varphi \in \MuH$ with $\|\varphi\|_\MuH \le 1$ and $\varphi(\lambda_j)=w_j$ for $j=1, \dots, n$. It is a property of complete Pick kernels, that a Pick problem has a solution, if and only if the Pick matrix $P\ge 0$, i.e. $\la Pa,a\ra \ge 0$ for all $a\in \C^n$, see  \cite{McCullNePickInt}, \cite{Quiggin}, \cite{McCullCaraInt}; see also \cite{AgMcC}.

\begin{lemma} If $P\ge 0$, then the null space of $K$ is contained in the null space of $P$.\end{lemma}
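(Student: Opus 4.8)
The plan is to realize the Pick matrix $P$ as $K$ minus a positive semidefinite matrix manufactured from $K$, and then to invoke the elementary fact that a positive semidefinite matrix annihilates every vector on which its quadratic form vanishes.

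First I would record the identity $P = K - D K D^{*}$, where $D = \operatorname{diag}(\overline{w}_1,\dots,\overline{w}_n)$. Entrywise, $(DKD^{*})_{ij} = \overline{w}_i\, k_{\lambda_i}(\lambda_j)\, w_j$, so the $(i,j)$ entry of $K - DKD^{*}$ is $(1-\overline{w}_i w_j) k_{\lambda_i}(\lambda_j) = P_{ij}$, as claimed. Since $K$ is the Gram matrix of $k_{\lambda_1},\dots,k_{\lambda_n}$ it is positive semidefinite, and hence so is $DKD^{*}$; in particular $\langle DKD^{*}a,a\rangle = \langle K D^{*} a, D^{*} a\rangle \ge 0$ for every $a \in \C^n$.

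Now I would take $a$ with $Ka = 0$, so that $\langle Ka,a\rangle = 0$, and compute
\[
\langle Pa,a\rangle = \langle Ka,a\rangle - \langle DKD^{*}a,a\rangle = -\langle K D^{*} a, D^{*} a\rangle \le 0 .
\]
On the other hand $P \ge 0$ by hypothesis, so $\langle Pa,a\rangle \ge 0$, whence $\langle Pa,a\rangle = 0$. Writing $P = P^{1/2}P^{1/2}$ for the positive square root, this gives $\|P^{1/2}a\|^2 = 0$, so $P^{1/2}a = 0$ and therefore $Pa = 0$. Thus $\ker K \subseteq \ker P$.

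I do not expect a real obstacle; the only thing requiring a little care is the bookkeeping of complex conjugates in the identity $P = K - DKD^{*}$ (equivalently, recognizing $1-\overline{w}_i w_j$ as a rank-one modification of the all-ones matrix). An alternative route, which foreshadows the later use of the solution $\varphi$, is to invoke the complete Pick property directly: since $P \ge 0$ there is $\varphi \in \Mult(\HH)$ with $\|\varphi\| \le 1$ and $\varphi(\lambda_j) = w_j$, and then for $g = \sum_j \overline{a_j}\, k_{\lambda_j}$ one has $M_\varphi^{*} g = \sum_j \overline{a_j}\,\overline{w_j}\, k_{\lambda_j}$ together with $\langle Pa,a\rangle = \|g\|^2 - \|M_\varphi^{*} g\|^2$; if $Ka = 0$ then $\|g\|^2 = \langle Ka,a\rangle = 0$, so $g=0$, $M_\varphi^{*} g = 0$, and $\langle Pa,a\rangle=0$, and one concludes as before. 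The first argument is shorter and uses only that $K\ge 0$ and $P\ge 0$.
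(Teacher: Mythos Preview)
Your proposal is correct and essentially identical to the paper's argument: the paper sets $f=\sum_j a_j k_{\lambda_j}$ and $g=\sum_j a_j \overline{w_j} k_{\lambda_j}$ and uses $\|f\|^2=\langle Ka,a\rangle$, $\|f\|^2-\|g\|^2=\langle Pa,a\rangle$, which is exactly your identity $P=K-DKD^*$ rewritten in $\HH$ rather than in $\C^n$. The only cosmetic difference is that the paper's formulation in terms of the vectors $f,g\in\HH$ is reused verbatim in the proof of the subsequent theorem on extremal Pick problems, so it saves a line there.
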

 \begin{proof} Let $a=(a_1,\dots, a_n)\in \C^n$ with $Ka=0$. Set $f= \sum_{j=1}^n a_j k_{x_j}$ and $g=  \sum_{j=1}^n a_j \overline{w}_j k_{x_j}$. Then $\|f\|^2=\la Ka,a\ra_{\C^n}$ and $\|f\|^2-\|g\|^2=\la Pa,a\ra_{\C^n}$. Thus, $Ka=0$ implies that $\|f\|=0$, then the positivity of $P$ implies that $\|g\|=0$ as well and hence $\la Pa,a\ra_{\C^n}=0.$ This implies that $Pa=0$ since $P\ge 0$.
 \end{proof}

 Thus, if $P\ge 0$, then the rank of $P$ must be less than or equal to the rank of $K$. Of course, $K$ will be full rank unless
 the embedding $u$ from $X$ into ${\mathbb B}_d$  is not injective.
 \begin{definition} A Pick problem is called extremal, if $P\ge 0$ and $\rank \ P< \rank \ K$.
 \end{definition}

 It is known that solutions to extremal Pick problems are unique; see for instance \cite[Theorem 8.33]{AgMcC} for the case
 when $K$ has full rank.
\begin{theorem}\label{ExtremalPick} Solutions to extremal Pick problems are given by subinner functions. \end{theorem}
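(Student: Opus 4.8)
The plan is to reduce the statement to the behaviour of $M_\varphi^*$ on the finite-dimensional subspace spanned by the kernels at the interpolation nodes, and then to invoke the equality case of the Cauchy--Schwarz inequality, Lemma \ref{contractionLemma}.

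First I would fix data $\lambda_1,\dots,\lambda_n\in X$ and $w_1,\dots,w_n\in\C$ for which the Pick problem is extremal, let $\varphi\in\MuH$ be a solution (so $\|\varphi\|_{\MuH}\le 1$ and $\varphi(\lambda_j)=w_j$), and set $\HM$ equal to the span of $k_{\lambda_1},\dots,k_{\lambda_n}$, a finite-dimensional $*$-invariant subspace of $\HH_k$. Since $M_\varphi^*k_{\lambda_j}=\overline{w_j}\,k_{\lambda_j}$, the operator $T:=M_\varphi^*|_\HM$ on $\HM$ depends only on the data and not on the particular solution. For $a=(a_1,\dots,a_n)\in\C^n$ and $g=\sum_j a_j k_{\lambda_j}$ one has the identities $\|g\|^2=\la Ka,a\ra$ and $\|g\|^2-\|M_\varphi^* g\|^2=\la Pa,a\ra$ used already in the proof of the null-space Lemma above, so that $\|M_\varphi^* g\|^2=\la(K-P)a,a\ra$.

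The key step, and the only place where extremality enters, is to show $\|T\|=1$. If instead $\|T\|<1$, then $\|M_\varphi^* g\|\le c\|g\|$ on $\HM$ for some $c<1$, which by the identities above is exactly the statement $P\ge\eps K$ with $\eps=1-c^2>0$. But then $Pa=0$ forces $0=\la Pa,a\ra\ge\eps\la Ka,a\ra\ge 0$, hence $\la Ka,a\ra=0$ and $Ka=0$ because $K\ge 0$; so the null space of $P$ is contained in that of $K$. Together with the reverse inclusion furnished by the Lemma above, this yields $\rank P=\rank K$, contradicting extremality. I expect this rank comparison to be the substantive point; everything surrounding it is bookkeeping with positive semidefinite forms.

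Once $\|T\|=1$, finite-dimensionality of $\HM$ guarantees that $T$ attains its norm, so there is $g\in\HM$ with $\|g\|=1=\|M_\varphi^* g\|$. In particular $\|\varphi\|_{\MuH}=\|M_\varphi^*\|\ge 1$, so $\|\varphi\|_{\MuH}=1$. Since $M_\varphi^*$ is a contraction on $\HH_k$ with $\|M_\varphi^* g\|=\|g\|$, Lemma \ref{contractionLemma} gives $M_\varphi M_\varphi^* g=g$; setting $h=M_\varphi^* g\ne 0$ we obtain $\|h\|=1$ and $\|M_\varphi h\|=\|M_\varphi M_\varphi^* g\|=\|g\|=1=\|h\|$, so $\varphi$ is subinner by Definition \ref{subinnerDef}. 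Alternatively one could feed $\|T\|=1$ into Lemma \ref{subinnerMatch} to obtain a subinner $\psi$ with $\varphi-\psi\in\HM^\perp$, hence solving the same extremal problem, and then conclude $\varphi=\psi$ from uniqueness of the solution; the route sketched here avoids appealing to that uniqueness.
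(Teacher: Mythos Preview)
Your proof is correct and relies on the same two ingredients as the paper's argument: the quadratic-form identities $\|g\|^2=\la Ka,a\ra$ and $\|g\|^2-\|M_\varphi^*g\|^2=\la Pa,a\ra$, followed by Lemma \ref{contractionLemma}. The difference is that you take a detour through establishing $\|T\|=1$ by contradiction and then appeal to finite-dimensionality to produce a norm-attaining vector, whereas the paper reads the vector off directly from the rank condition. Since $\rank P<\rank K$ and $\ker K\subseteq\ker P$, one can immediately choose $a\in\C^n$ with $Pa=0$ but $Ka\ne 0$; then $f=\sum_j a_j k_{\lambda_j}$ satisfies $\|f\|^2=\la Ka,a\ra>0$ and $\|f\|^2-\|M_\varphi^*f\|^2=\la Pa,a\ra=0$, so $\|M_\varphi^*f\|=\|f\|$ without any further work. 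Your contradiction argument is essentially proving the contrapositive of this observation. Both routes are valid; the direct choice of $a$ just shortens the proof to a few lines.
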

\begin{proof} The hypothesis implies that there is $a\in \C^n$ such that $Pa=0$ and $Ka\ne 0$. We use the same functions $f$ and $g$ as in the proof of the previous lemma, then $\|f\|\ne 0$ and $\|f\|=\|g\|$. If $\varphi$ is any solution to the Pick problem, then the equalities $M_\varphi^*k_{x_j}=\overline{w}_jk_{x_j}$ imply that $g=M_\varphi^* f$. By Lemma \ref{contractionLemma} with $T=M_\varphi^*$ we conclude that $f=\varphi g$. Hence $\varphi$ is subinner.
\end{proof}

Note that an extremal Pick problem can be regarded as a multiplier of $\mathcal{H} \big|_{ \{\lambda_1,\ldots,\lambda_n \}}$ of norm $1$,
which is subinner by finite dimensionality. This observation relates this section to Theorem \ref{uniqueLiftofSubinner}.

\section{The Dirichlet space}\label{SecDiriExample}
The Dirichlet space $D$ is defined as $$D=\{f\in \Hol(\D): \int_\D |f'(z)|^2 dA(z)<\infty\}$$ with norm given by $$\|f\|^2_D=\|f\|^2_{H^2} + \int_\D |f'|^2 \frac{dA}{\pi},$$
where $d A$ denotes integration with respect to planar Lebesgue measure.
Then, as mentioned in the Introduction, the reproducing kernel is $\frac{1}{\overline{w}z}\log\frac{1}{1-\overline{w}z}$ and it is well-known that this is a complete Pick kernel, see e.g. \cite[Corollary 7.41]{AgMcC}. We start this section by showing that  in $D$ a theorem like the Bergman inner-outer factorization that was mentioned in the Introduction does not hold.
\begin{theorem}\label{DiriNotFactor} There is a function $f\in D$ that is not of the form $f=\varphi h$, where $\varphi, h\in D$ and $\varphi$ is $D$-extremal, and $h$ is cyclic in $D$.
\end{theorem}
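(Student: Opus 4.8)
The plan is to produce a function $f \in D$ for which the claimed factorization fails by a counting/degree obstruction near the boundary. The idea is to use the known fine boundary regularity of $D$-extremal functions: by the results of Richter--Sundberg (\cite{RiSuTAMS}), a $D$-extremal function $\varphi$ is a contractive multiplier of $D$, its derivative lies in the Nevanlinna class, and $(z\varphi)'$ has a meromorphic pseudocontinuation across $\partial\D$. Moreover $\varphi$ is analytic across any boundary arc on which its cluster set avoids the unit circle; in particular a $D$-extremal function cannot extend analytically past $\overline{\D}$ unless it is a finite Blaschke product (for a finite Blaschke product the $D$-extremal ones are exactly $z^n$, up to disc automorphisms fixing $0$, once one imposes the normalization $k_{z_0}$ with $z_0 = 0$). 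So the strategy is: pick $f$ analytic and nonvanishing on a neighborhood of $\overline{\D}$, with $f(0) \neq 0$, and argue that any factorization $f = \varphi h$ with $\varphi$ $D$-extremal and $h$ cyclic forces $\varphi$ to be a non-trivial finite Blaschke product, which then forces $f$ to have a zero in $\D$ --- a contradiction.

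The key steps, in order, would be: (1) Fix $f \in D$ that is analytic in a disc of radius $>1$ and has no zeros in $\overline{\D}$, e.g. $f \equiv 1$ will not work since then $f$ is free outer, so instead take something like $f(z) = 2 + z$ or more safely a function whose cyclicity properties are easy to control; one wants $f$ zero-free and regular but \emph{not} cyclic-with-the-wrong-factor in a way that blocks the factorization. Actually the cleanest choice is to exploit the gap between subinner/free outer factorization (Theorem \ref{subinner/free outer}) and the extremal factorization: every $f$ has a \emph{subinner} factor $\varphi$, and one shows that for a suitably chosen regular zero-free $f$ the subinner factor $\varphi$ is non-constant (e.g. via Example \ref{ExampleDirichlet} or the remark after Theorem \ref{DiriSubinner}, where non-constant functions in $D$ are simultaneously subinner and free outer). (2) Suppose toward a contradiction $f = \varphi h$ with $\varphi$ $D$-extremal and $h$ cyclic. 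Since $\varphi$ is a contractive multiplier (by \cite{RiSuTAMS}, or Corollary 4.2 of \cite{AHMcCR_Factor}) and $h$ is cyclic, $\varphi$ is subinner and $h$ is the free outer factor of $f$ up to a unimodular constant, by the uniqueness in Theorem \ref{subinner/free outer}; hence the $D$-extremal $\varphi$ must coincide with the subinner factor of $f$. (3) Now invoke the boundary-regularity dichotomy for $D$-extremal functions: a $D$-extremal $\varphi$ that extends analytically past $\overline{\D}$ (which it must, being a factor of the regular function $f = \varphi h$ with $h$ zero-free and regular --- here one needs $h$ regular too, so one should choose $f$ so that its free outer factor is regular, using the remark that the free outer factor of a finite linear combination of reproducing kernels is again such a combination, hence entire up to poles outside $\overline{\D}$) must be a finite Blaschke product. (4) The only $D$-extremal finite Blaschke products (with the normalization at $z_0=0$) are, by the extremality condition $\langle \psi \varphi, \varphi\rangle = \psi(0)$, the monomials $z^n$; so $\varphi(z) = z^n$ for some $n \ge 0$, forcing $f$ to vanish to order $n$ at $0$. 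Choosing $f$ with $f(0) \neq 0$ forces $n = 0$, i.e. $\varphi$ constant, i.e. $f$ itself is free outer --- so it suffices to exhibit a \emph{regular, zero-free} $f \in D$ that is \emph{not} free outer. That last existence statement is exactly what Example \ref{ExampleDirichlet} (or the constructions in Section \ref{SecDiriExample} showing free outer polynomials in $D$ must be zero-free in a disc strictly larger than $\overline{\D}$) provides: a polynomial with a zero just outside $\overline{\D}$ is regular and zero-free on $\overline{\D}$ but is not free outer in $D$.

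The main obstacle I expect is controlling the regularity of the free outer factor $h$ so that the analytic-continuation argument in step (3) applies: one needs to know that when $f$ is nice (say a polynomial, or a finite linear combination of reproducing kernels) the free outer factor $h$ is also nice enough to extend past $\overline{\D}$. For finite linear combinations of reproducing kernels this is handled by the observation in the paper that $h \in [f]_*$ and is itself a finite linear combination of the same kernels, hence a rational function with poles only at the reflected points $1/\overline{\lambda_j}$, which lie outside $\overline{\D}$; so $h$ is analytic on a neighborhood of $\overline{\D}$, and then $\varphi = f/h$ is as well. A secondary point is justifying that a $D$-extremal function analytic past the boundary must be a finite Blaschke product: on a boundary arc where $\varphi$ is analytic, $(z\varphi)'$ and its pseudocontinuation agree, so $\varphi$ is rational; a rational $D$-extremal function of unit multiplier norm with no poles in $\overline{\D}$ is an inner rational function, i.e. a finite Blaschke product. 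I would cite \cite{RiSuTAMS} for the pseudocontinuation and the contractive multiplier property, and finish by the elementary identification of $D$-extremal finite Blaschke products with the monomials $z^n$ via the defining identity $\langle \psi\varphi,\varphi\rangle_D = \psi(0)$ tested against $\psi(z)=z$.
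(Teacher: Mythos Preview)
Your approach has a fatal gap. The function $f$ you propose to use---a polynomial (or finite linear combination of reproducing kernels) that is zero-free on $\overline{\D}$ but not free outer, such as $z-\lambda$ with $1<|\lambda|<\sqrt{2}$ from Example \ref{ExampleDirichlet}---is \emph{cyclic} in $D$: since $1/f$ is analytic in a neighborhood of $\overline{\D}$, it is a multiplier, and $f\cdot(1/f)=1$ shows $[f]=D$. But the constant function $1$ is $D$-extremal (one checks $\langle\psi\cdot 1,1\rangle_D=\psi(0)$ directly), so $f=1\cdot f$ is already a factorization of the required type $\varphi h$ with $\varphi=1$ extremal and $h=f$ cyclic. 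Any cyclic $f$ admits this trivial factorization, so the $f$ witnessing Theorem \ref{DiriNotFactor} must be non-cyclic. Your whole strategy of using a boundary-regular zero-free function therefore cannot succeed.

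Step (2) is also broken on its own terms. From $f=\varphi h$ with $\varphi$ $D$-extremal and $h$ cyclic you cannot invoke uniqueness in Theorem \ref{subinner/free outer}: that uniqueness applies only to subinner/free outer \emph{pairs}, which requires both that $h$ be free outer (not merely cyclic---the paper explicitly exhibits cyclic non-free-outer functions) and that $\|\varphi h\|=\|h\|$, neither of which is given. So you cannot conclude that $\varphi$ coincides with the subinner factor of $f$.

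The paper's proof is entirely different. It takes $f$ non-cyclic by construction: $\varphi$ is the extremal function for $\HM=D\cap\psi H^2$ with $\psi$ a singular inner function, and $f=h\varphi$ where $h=\sqrt{\log(e/(1-z))}$. The point is that one first shows (via the observation $[\varphi h]=[\varphi]$ for any extremal/cyclic factorization) that in any such factorization of $f$ the extremal factor must be $\varphi$; then one proves $h\varphi\in D$ and $[h\varphi]=[\varphi]$ by delicate estimates using local Dirichlet integrals and Carleson's formula, while $h=f/\varphi\notin D$. Thus $f$ has the correct extremal function $\varphi$, but the quotient $f/\varphi$ fails to lie in $D$, and hence no extremal/cyclic factorization of $f$ exists.
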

\begin{proof}
Recall from \cite{RiDiri} that every invariant subspace $\HM$ of the Dirichlet space $D$ is generated by $\mathcal{M} \ominus z \mathcal{M}$, and this space is
one dimensional. Hence $\HM$ is of the form $\HM=[\varphi]$ for some extremal function $\varphi$, and the function $\varphi$ is unique up to a multiplicative constant of modulus 1. In this case, we will say that $\varphi$ is the extremal function for $\HM$. Similarly, we will say that $\varphi$ is the extremal function for a function $f\in D$, if $[f]=[\varphi]$.
If $\varphi, h\in D$ where $\varphi$ is $D$-extremal and $h$ is cyclic in $D$, then since $\varphi$ is a contractive multiplier (\cite{RiSuTAMS}) one easily checks that $\varphi$ is the extremal function for $\varphi h$. Thus, in order to prove the Theorem it will suffice to show that there is an extremal function $\varphi$ and an analytic function $h$ such that $h \notin D$, but $f=h\varphi \in D$ with $[\varphi]=[f]$.

For $z\in \D$ consider the function $$g(z) =\log\frac{e}{1-z}= 1+\sum_{n=1}^\infty \frac{z^n}{n}.$$ Then there is a $C>0$ such that \begin{equation} \label{eqn:Dirichlet_est} \log \frac{e}{2}\le \log\frac{e}{|1-z|}\le |g(z)| \le C \log\frac{e}{|1-z|} \end{equation} and hence $g$ has an analytic square root, $h=\sqrt{g}$. Then $$|h'(z)|^2 \ge \frac{1}{4C \left(\log \frac{e}{|1-z|}\right)|1-z|^2}$$ and hence $h \notin D$, the Dirichlet space.

For $z\in \D$ and $|w|=1$ let $P_z(w)=\frac{1-|z|^2}{|1-\overline{w}z|^2}$ be the Poisson kernel for $\D$, and use
$$D_w(\varphi) = \int_{\D}|\varphi'(z)|^2 P_z(w) \frac{dA}{\pi} = \int_{|z|=1}\frac{|\varphi(w)-\varphi(z)|^2}{|w-z|^2}\frac{|dz|}{2\pi}$$ to denote the local Dirichlet integral of $\varphi\in D$, see \cite{RiSuMMJ}. Then, if $0<r< 1$ and $z\in \D$
\begin{align*}|h(rz)|^2&\le C\log\frac{e}{|1-rz|}\\
&=C\int_{|w|=1} \log \frac{e}{|1-rw|}\ P_z(w) \frac{|dw|}{2\pi}
\end{align*}
and hence
\begin{align*}
\int_{\D} |h(rz)\varphi'(z)|^2 \frac{dA}{\pi} &\le C\int_{|w|=1} \log \frac{e}{|1-rw|}\left(\int_{\D}|\varphi'(z)|^2 P_z(w) \frac{dA}{\pi}\right) \frac{|dw|}{2\pi}\\
&\le C\int_{|w|=1} \log \frac{e}{|1-rw|} D_w(\varphi) \frac{|dw|}{2\pi}\\
&\le K,
\end{align*}
where by Proposition 4.5 of \cite{RiYi} the constant $K$ is independent of $0<r<1$, whenever $\varphi$ is a Dirichlet extremal function. By Fatou's lemma this implies that $h\varphi'\in L^2_a$, the Bergman space.

Now let $\varphi$ be the extremal function for the invariant subspace $\HM=D\cap \psi H^2$, where $\psi(z)=e^{-\frac{1+z}{1-z}}$. Note that $\HM\ne (0)$ since one easily checks that $(1-z)^2\psi \in \HM$.
Then $\varphi= \psi\varphi_1$ for some $\varphi_1\in H^2$. By use of Carleson's formula for the Dirichlet integral, \cite{Ca_Diri} (also see \cite{RiSuMMJ} or Theorems 7.6.1 and 7.6.7 in \cite{EKM+14}),  we see that $\varphi_1 \in D$ and
$$\|\varphi\|^2_D= \int_{|z|=1}\frac{2}{|1-z|^2}|\varphi_1(z)|^2 \frac{|dz|}{2\pi}+\|\varphi_1\|^2_D.$$
Thus, $\varphi(z)/(1-z) \in H^2\subseteq L^2_a$.

For $z\in \D$ and $0<r<1$ we have from \eqref{eqn:Dirichlet_est} that
$$|h'(rz)|^2 \le \frac{1}{4 \log\frac{e}{2}}\frac{1}{|1-rz|^2}\le \frac{1}{\log\frac{e}{2}}\frac{1}{|1-z|^2}.$$
This implies that for all $0<r<1$
$$\|h_r'\varphi\|^2_{L^2_a} \le \frac{1}{\log\frac{e}{2}}\ \|\frac{\varphi}{1-z}\|^2_{L^2_a},$$ where $h_r(z)=h(rz)$.

Hence $f=h\varphi\in D$ since $(h\varphi)'=h'\varphi+h\varphi' \in L^2_a$. Furthermore, the above estimates show that $\|h_r\varphi\|_{D}$ is bounded independently of $r$ and hence $h_r\varphi\to f $ weakly in $D$. Thus  $f\in [\varphi]$. The inclusion $\varphi\in [f]$ follows from Corollary 5.5 of \cite{RiSuMMJ} since $|h|$ is bounded below on $\D$. Hence $[\varphi]=[f]$, i.e. $\varphi$ is the extremal function for $f$.
\end{proof}

Next we establish that subinner functions in the Dirichlet space in a certain sense differ drastically from their $H^2(\D)$-analogues.
\begin{theorem} \label{DiriDim=1} Let $D$ be the Dirichlet space. If $\varphi \in \Mult(D)$ is non-constant with $\|\varphi\|_{\Mult(D)}\le 1$, then  $\dim \ker (I-M_\varphi^*M_\varphi)\le 1$.
\end{theorem}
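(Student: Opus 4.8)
The plan is to prove the equivalent statement that the subspace $\mathcal{N} := \ker(I - M_\varphi^* M_\varphi)$ contains no nonzero function vanishing at $0$; granting this, the evaluation $h \mapsto h(0)$ is injective on $\mathcal{N}$, so $\dim \mathcal{N} \le 1$. By Lemma \ref{contractionLemma} we have $\mathcal{N} = \{h \in D : \|\varphi h\|_D = \|h\|_D\}$, so the question is entirely about when $M_\varphi$ acts isometrically on a vector.

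The engine of the argument is the elementary identity $\|z f\|_D^2 = \|f\|_D^2 + \|f\|_{H^2}^2$ for $f \in D$, which follows at once from $\|\sum_n a_n z^n\|_D^2 = \sum_n (n+1)|a_n|^2$; this extra $H^2$-term is exactly what makes $D$ behave so differently from $H^2(\D)$. So I would take $h \in \mathcal{N}$ with $h(0) = 0$, write $h = z h_*$ with $h_* \in \Hol(\D)$, and note $h_* \in D$ since dividing out $z$ can only decrease the Dirichlet norm. Applying the identity to $h_*$ and to $\varphi h_* \in D$, and using $\varphi z h_* = z \varphi h_*$, the hypothesis $\|\varphi h\|_D = \|h\|_D$ becomes
\[
  \|\varphi h_*\|_D^2 + \|\varphi h_*\|_{H^2}^2 = \|h_*\|_D^2 + \|h_*\|_{H^2}^2 .
\]
Here $\|\varphi h_*\|_D \le \|h_*\|_D$ because $\|\varphi\|_{\Mult(D)} \le 1$, and $\|\varphi h_*\|_{H^2} \le \|h_*\|_{H^2}$ because $\|\varphi\|_{H^\infty} \le \|\varphi\|_{\Mult(D)} \le 1$ and $D \subseteq H^2(\D)$. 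Hence both inequalities are equalities; in particular $\int_{\bD} (1 - |\varphi|^2)\,|h_*|^2\,\frac{|dz|}{2\pi} = 0$.

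The conclusion is then the following: if $h_* \neq 0$, then $|h_*| > 0$ a.e. on $\bD$, forcing $|\varphi| = 1$ a.e. on $\bD$; since $\varphi \in H^\infty(\D)$ this says $\varphi$ is a classical inner function. But a non-constant inner function $\psi$ satisfies $\|\psi\|_D^2 = 1 + \sum_{n \ge 1} n|\widehat{\psi}(n)|^2 > 1 = \|1\|_D^2$, whence $\|\psi\|_{\Mult(D)} \ge \|M_\psi 1\|_D = \|\psi\|_D > 1$ — contradicting $\|\varphi\|_{\Mult(D)} \le 1$ together with $\varphi$ being non-constant. Therefore $h_* = 0$, i.e. $h = 0$, which completes the proof. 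I do not expect a genuine obstacle: the only nontrivial ingredient is the identity for $\|z f\|_D^2$, and the remaining inputs (the contractive inclusion $\Mult(D) \subseteq H^\infty(\D)$, and the fact that non-constant inner functions are not contractive multipliers of $D$) are standard. One could alternatively deduce $\dim \mathcal{N} \le 1$ by identifying $\mathcal{N}$ with a wandering subspace and invoking Richter's theorem that such subspaces of $D$ are one-dimensional, but the direct route above is shorter.
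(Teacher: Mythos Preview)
Your proof is correct and takes a genuinely different, more elementary route than the paper's. The paper argues via the subinner/free outer factorization: for nonzero $f\in\mathcal{N}$ it writes $f=\psi g$, shows that the free outer factor $g$ also lies in $\mathcal{N}$, and then uses an identity involving local Dirichlet integrals (Lemma~5.3 of \cite{LuoRi}) together with Fatou's lemma to force $|\psi|=1$ a.e.\ on $\partial\D$, whence $\psi$ is constant and $f$ is free outer; two independent elements of $\mathcal{N}$ would then yield a free outer function vanishing at $0$, contradicting cyclicity. You bypass the free-outer machinery entirely: the shift identity $\|z f\|_D^2=\|f\|_D^2+\|f\|_{H^2}^2$ turns $\|\varphi h\|_D=\|h\|_D$ for $h=zh_*$ into equality in $\|\varphi h_*\|_{H^2}\le\|h_*\|_{H^2}$, which makes $\varphi$ itself inner and hence constant. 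Your argument is shorter and uses nothing from the rest of the paper; the paper's argument, on the other hand, gives the stronger intermediate fact that every nonzero element of $\mathcal{N}$ is free outer, which is of independent interest in that context. One caution on your closing remark: it is not obvious that $\mathcal{N}$ is a wandering subspace for $M_z$, so the alternative route via Richter's theorem would need more work to set up.
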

It follows from this, that if $\varphi$ is subinner in the Dirichlet space, and if $\varphi=\varphi_1\varphi_2$ for some contractive multipliers $\varphi_1$ and $\varphi_2$, then one of the two factors must be constant. Indeed, let $0\ne f\in D$ be such that $\|f\|=\|\varphi f\|$, then $$\|f\|=\|\varphi_1\varphi_2f\|\le \|\varphi_2 f\|\le \|f\|.$$ Thus, $\|\varphi_2 f\|=\|f\|$, and similarly $\|\varphi_1f\|=\|f\|$. Then if $\varphi_1$ is not a constant, then as $\varphi_2 f, f \in \ker (I-M_{\varphi_1}^*M_{\varphi_1})$, by the Theorem we must have that $f$ and $\varphi_2f$ are linearly dependent, hence $\varphi_2$ must be constant.

This property of Dirichlet subinner functions is even true for their unique lifts to left inner multipliers of $\Fock$: Let $u:\D\to \mathrm{ball}(\ell^2)$ a function satisfying $\frac{1}{z\overline{w}}\log \frac{1}{1-z\overline{w}}= \frac{1}{1-\la u(z), u(w)\ra_{\ell^2}}$. Then as in Section \ref{SecPickspaces} $u$ induces an isometric mapping $U:D\to \Fock$ with $d=\infty$ such that $U^*=C_u$. We write $\HH_D$ for the range of $U$.

 Now, if $\varphi$ is subinner in the Dirichlet space, then in Theorem \ref{uniqueLiftofSubinner} we proved that there is a unique left inner multiplier $\Phi\in \MFockL$ such that $\varphi=\Phi \circ u$.
 If $\Phi=\Phi_1\Phi_2$ for  left inner multipliers $\Phi_1, \Phi_2$, let $\varphi_j=\Phi_j\circ u$ for $j=1,2$. Then each $\varphi_j$ defines a contractive multiplier of $D$ with $\varphi=\varphi_1\varphi_2$. By the above we conclude that one of the two factors is a constant of modulus 1. Thus its value at 0 has modulus 1, which implies that its lift has value of modulus 1 at the origin. Thus, one of the left inner factors $\Phi_1$ or $\Phi_2$ must be constant.

\begin{proof} Let $\varphi \in \Mult(D)$ be non-constant with $\|\varphi\|_{\Mult(D)}\le 1$. We will show that every nonzero function in $\ker (I-M_\varphi^*M_\varphi)$ must be free outer. Then, if $\ker (I-M_\varphi^*M_\varphi)$ contained two linearly independent functions $f$ and $g$, then some non-trivial linear combination of $f$ and $g$ must be zero at the origin. Hence $\ker (I-M_\varphi^*M_\varphi)$ would contain a nontrivial non-cyclic function. Then our claim will contradict Theorem \ref{freeOuterIsCyclic}.

Thus, let $0 \ne f\in \ker (I-M_\varphi^*M_\varphi)$.

Then $f$ has a subinner/free outer factorization $f= \psi g$, and as in the proof of Corollary \ref{subinner has free outer} we see that the free outer factor $g\in \ker (I-M_\varphi^*M_\varphi)$. Note that the associated vector states satisfy $P_f=P_g$. Under this hypothesis it is easy to see that Lemma 5.3 of \cite{LuoRi} implies that  $$\|pg\|^2_D-\|pf\|^2_D= \int_{|z|=1} D_z(p)(|g(z)|^2-|f(z)|^2) \frac{|dz|}{2\pi}$$ for every polynomial $p$. Here $D_z(p)$ denotes the local Dirichlet integral of $p$, $$D_z(p)=\int_{|w|=1}\left|\frac{p(z)-p(w)}{z-w}\right|^2 \frac{|dw|}{2\pi}.$$
Now let $p_n$ be a sequence of polynomials such that $p_ng\to \varphi g$ in the Dirichlet norm. Then $p_ng\to \varphi g$ in $H^2$ and by possibly considering a subsequence, we may assume that $p_n(w)\to \varphi(w)$ for a.e. $w\in \T$. Then by Fatou's lemma we have for a.e.\ $z\in \T$ that $D_z(\varphi) \le \liminf_{n\to \infty} D_z(p_n)$. Thus, substituting $f=\psi g$ and using Fatou's lemma again we obtain
\begin{align*}0 &\le \int_{|z|=1}D_z(\varphi)(1-|\psi|^2)|g|^2 \frac{|dz|}{2\pi}\\
&\le \liminf_{n\to \infty}  \int_{|z|=1} D_z(p_n)(1-|\psi|^2)|g|^2 \frac{|dz|}{2\pi}\\
&= \liminf_{n\to \infty} \|p_ng\|^2_D-\|p_nf\|^2_D\\
&= \|\varphi g\|^2_D-\|\varphi f\|^2_D=0
\end{align*}
Since $\varphi$ is not constant it follows that $D_z(\varphi)>0$ for a.e.\ $z\in \T$ and since $g\ne 0$ this implies that $|\psi(z)|=1$ for a.e. $z\in \T$. Thus, $\psi$ is an inner function, but it is well-known that the only inner functions that are contractive multipliers of the Dirichlet space are constant.
Indeed, this follows since $\|\psi\|_{H^2} = 1 \ge \|\psi\|_D$ for such $\psi$.
This implies that $f$ is free outer.
\end{proof}

\begin{example} \label{ExampleDirichlet} Let $\lambda \in \C$. We will determine the subinner/free outer factorization of $g(z)=z-\lambda$ in $D$.

  If $\lambda=0$, then $\varphi(z)= \frac{z}{\sqrt 2}$ is an extremal function in $D$, hence it is subinner with free outer factor $f(z)= \sqrt 2$. If $\lambda \ne 0$, then note that the Sarason function of $g$ equals $V_g(z)= 2+|\lambda|^2 -2\overline{\lambda}z$. Furthermore, we have $$[g]_*=\{a+bz: a, b\in \C\}.$$ Thus, the subinner/free outer factorization of $g$ is of the form $g= \varphi f$, where $f(z)=a+bz$ with $a>0$ and $V_f=V_g$. We calculate that the only possible $f$'s are $f(z)= \sqrt{2}-\frac{\overline{\lambda}}{\sqrt{2}}z$ and $f(z)=-\frac{\overline{\lambda}}{|\lambda|}g(z)$. Maximizing the value of $f(0)$ we conclude that $g$ is a free outer function, if and only if $|\lambda|\ge \sqrt{2}$. If $|\lambda|<\sqrt{2}$, then $\sqrt{2}-\frac{\overline{\lambda}}{\sqrt{2}}z$ is the free outer factor of $g$, and $\varphi(z)= \sqrt{2} \ \frac{z-\lambda}{2-\overline{\lambda}z}$ is the subinner factor.
\end{example}

\begin{theorem} There is a sequence of real numbers $R_n>1 $ such that whenever $p$ is a polynomial of degree $\le n$ that is free outer in $D$, then $p(z)\ne 0$ for all $|z|<R_n$.\end{theorem}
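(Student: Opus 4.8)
\textbf{The approach: compactness in $\mathcal{P}_n$, the space of polynomials of degree $\le n$.} Suppose the statement fails for some fixed $n$. Since free outer functions are cyclic (Theorem \ref{freeOuterIsCyclic}) and it is classical (Brown--Shields) that a polynomial is cyclic in $D$ exactly when it has no zeros in $\overline{\mathbb{D}}$, every free outer polynomial of degree $\le n$ is already zero-free on $\overline{\mathbb{D}}$. So the failure of the theorem yields free outer polynomials $p_k\in\mathcal{P}_n$ with $\|p_k\|_D=1$ having a zero $\zeta_k$ with $1<|\zeta_k|<1+\tfrac1k$. On $\mathcal{P}_n$ the norm $\|\cdot\|_D$ is equivalent to the Euclidean norm on coefficients, so after passing to a subsequence $p_k\to p$ with $\|p\|_D=1$, and $\zeta_k\to\zeta_0$ with $|\zeta_0|=1$; since $p_k\to p$ uniformly on bounded sets, $p(\zeta_0)=0$. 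Thus $p$ is a unit-norm polynomial of degree $\le n$ with a zero on $\partial\mathbb{D}$, hence not cyclic, hence \emph{not} free outer. The whole proof therefore reduces to proving the opposite: a $\|\cdot\|_D$-limit of free outer polynomials of degree $\le n$ is again free outer. (The analogue for ``cyclic'' is false, e.g.\ $z-(1+\tfrac1k)\to z-1$; but $z-(1+\tfrac1k)$ fails to be free outer once $1+\tfrac1k<\sqrt2$, by Example \ref{ExampleDirichlet}, which is exactly the phenomenon making the theorem true.)

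\textbf{Reducing the closedness to a finite-dimensional problem.} By Theorems \ref{Pf=Pg} and \ref{SarasonFunction}, for $f,g\in D$ one has $\mathcal{E}_f=\mathcal{E}_g$ iff $V_f=V_g$; a Herglotz-type representation of $\operatorname{Re}V_f$ (via Carleson's formula, as in the computations in the proof of Theorem \ref{DiriNotFactor}; see also \cite{AHMcCR_Factor, GrRiSu}) identifies this with the condition $|f|^2+D_\bullet(f)=|g|^2+D_\bullet(g)$ a.e.\ on $\partial\mathbb{D}$, $D_w$ the local Dirichlet integral. Next, the free outer factor $h_p$ of $p$ lies in $[p]_*=\operatorname{clos}\{M_\psi^*p:\psi\in\Mult(D)\}$ (as observed in the introduction), and for $\ell>n\ge\deg p$ one has $\langle M_\psi^*p,z^\ell\rangle_D=\langle p,\psi z^\ell\rangle_D=0$ because $\psi z^\ell$ has vanishing Taylor coefficients of order $<\ell$; hence $[p]_*\subseteq\mathcal{P}_n$, so $h_p\in\mathcal{P}_n$ and $\sup\{|g(0)|:g\in\mathcal{E}_p\}=\max\{|g(0)|:g\in\mathcal{E}_p\cap\mathcal{P}_n\}$. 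Moreover, for $q\in\mathcal{P}_n$ the Sarason function $V_q$ is a polynomial of degree $\le n$ whose coefficients depend polynomially on those of $q$, with zeroth coefficient $\|q\|_D^2$; thus $q\mapsto V_q$ is a proper, degree-$2$-homogeneous polynomial map on $\mathcal{P}_n$ and each fiber $\mathcal{E}_q\cap\mathcal{P}_n=\{g\in\mathcal{P}_n:V_g=V_q\}$ is compact.

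\textbf{The remaining step, which I expect to be the main obstacle.} With the reductions above, ``$p$ is free outer'' amounts to the assertion that for every $g$ in the compact fiber $\mathcal{E}_p\cap\mathcal{P}_n$ (equivalently, for $g=h_p$) there are $g_k\in\mathcal{E}_{p_k}\cap\mathcal{P}_n$ with $g_k\to g$: granting this, $|g(0)|=\lim|g_k(0)|\le\lim|p_k(0)|=|p(0)|$ since each $p_k$ is free outer, and with $p\in\mathcal{E}_p$ this gives $|p(0)|=\sup\{|g(0)|:g\in\mathcal{E}_p\}$, i.e.\ $p$ is free outer --- the contradiction we want. This is a lower-semicontinuity statement for the fibers of the Sarason map $q\mapsto V_q$ at free outer points, and it is the technical heart; the hands-on degree-$1$ case is exactly the computation in Example \ref{ExampleDirichlet}. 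I would establish it using the \emph{uniqueness} of the free outer factor (Theorems \ref{subinner/free outer}, \ref{Pf=Pg}): since $h_{p_k}$ is a unimodular multiple of $p_k$ and $p_k\to p$, the free outer functions $h_{p_k}$ converge in norm, and uniqueness should force the limit to be the free outer factor of $p$; alternatively one exploits the $2$-homogeneity of $q\mapsto V_q$ and the fact that $\|g\|_D$ is constant on each fiber to reduce to showing that a free outer point of a fiber is a regular point of this quadratic map (its failure would produce an extra $V$-preserving direction $v$ at $p$ with $\operatorname{Re}(\overline{p(0)}\,v(0))=0$, a rigidity one then has to rule out). Either way, the indispensable input is that $h_p$ is the unique extremal element of its fiber.
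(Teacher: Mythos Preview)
Your proposal has a genuine gap at exactly the point you flag as ``the main obstacle'': you never establish that a norm limit of free outer polynomials of degree $\le n$ is again free outer. Your first sketch via uniqueness is circular: since $p_k$ is free outer, $h_{p_k}=\alpha_k p_k$ with $|\alpha_k|=1$, so $h_{p_k}\to\alpha p$ (after passing to a subsequence so that $\alpha_k\to\alpha$); asserting that this limit is the free outer factor of $p$ is literally the statement that $p$ is free outer, which is what you want to prove. Your second sketch --- that a free outer point is a regular point of the Sarason map restricted to $\mathcal{P}_n$ --- is an interesting geometric reformulation, but you do not prove it, and there is no obvious reason why the quadratic map $q\mapsto V_q$ should be a submersion at free outer points; the ``rigidity one then has to rule out'' is the whole content. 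Without this step your compactness argument does not close.

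The paper's proof is completely different and bypasses this difficulty. It writes the Dirichlet kernel as $k_w(z)=(1-\sum_{k\ge1}c_k(\overline{w}z)^k)^{-1}$ with $c_k>0$, $\sum c_k=1$, and considers the truncated complete Pick kernel $k^n_w(z)=(1-\sum_{k=1}^n c_k(\overline{w}z)^k)^{-1}$. Since the coefficients $a_k(n)$ of $k^n$ agree with those of $k$ for $0\le k\le n$, the $D$-norm and the $\mathcal{H}_n$-norm agree on $\mathcal{P}_n$; hence so do the Sarason functions, and therefore a polynomial of degree $\le n$ is free outer in $D$ if and only if it is free outer in $\mathcal{H}_n$. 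But $k^n$ is a complete Pick kernel that extends analytically to a disc of radius $R_n>1$ (because $\sum_{k=1}^n c_k<1$), so free outer --- hence cyclic --- functions in $\mathcal{H}_n$ cannot vanish on $|z|<R_n$. This gives the theorem immediately, with no limiting argument and no appeal to closedness of the free outer class.
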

\begin{proof} It is known that the reproducing kernel of the Dirichlet space is of the form $k_w(z) =\frac{1}{1-\sum_{k=1}^\infty c_k \overline{w}^kz^k}$ for  coefficients that satisfy $c_k>0$ for all $k\ge 1$; this can be seen using a lemma of Kaluza, see Lemma 7.38 in \cite{AgMcC} and its proof. Since $k_z(z) \to \infty$ as $|z|\to 1$ we must have $\sum_{k=1}^\infty c_k=1$.  For $n \ge 1$ set $$k_w^n(z)= \frac{1}{1-\sum_{k=1}^n c_k \overline{w}^kz^k}.$$ It is easy to verify that $k^n_w(z) =\sum_{k=0}^\infty a_k(n) \overline{w}^kz^k$ where $a_k(n)=\frac{1}{k+1}$ for all $0\le k\le n$. $k^n$ is a normalized complete Pick kernel of a Hilbert function space $\HH_n$. For all polynomials of degree $\le n$ we have $\|p\|^2_D = \|p\|^2_{\HH_n}=\sum_{k=0}^n (k+1)|\hat{p}(k)|^2$. This implies that the Sarason functions $V_p$ of polynomials $p$ of degree $\le n$ are the same, whether they are computed with respect to $D$ or with respect to $\HH_n$. Since free outer polynomials $p$ are solutions to the extremal problem $\sup\{|q(0)|: V_q=V_p, q\in [p]_*\}$ it follows that such a polynomial of degree $\le n$ is free outer in $D$, if and only if it is free outer in $\HH_n$.

  $k^n$ extends to be defined in a disc of radius $R_n>1$, and hence any cyclic function in $\HH_n$ cannot have any zeros in the open disc of radius $R_n$. Since free outer functions must be cyclic (by Theorem \ref{freeOuterIsCyclic}), the result follows.
\end{proof}

\section{The Drury-Arveson space}\label{SectionDruryArveson}
The Dirichlet space embeds in the Drury-Arveson space $H^2_\infty$, see the discussion at the beginning of Section \ref{SecPickspaces}. Under this embedding the function $f(z)=z-1$ that is cyclic in $D$ can be seen to be  sent to the function $h(z_1,z_2,\dots)=\sqrt{2}z_1-1$, which is not cyclic in $H^2_\infty$. Then $h$ is not free outer in $H^2_\infty$ by Theorem \ref{freeOuterIsCyclic}, and hence $f$ cannot be free outer in $D$ by an argument similar to Lemma \ref{lem:free_outer_H}.  Thus, it is of interest to show that there are cyclic functions in $H^2_d$ that are not free outer.
\begin{example}\label{ExampleDruryArveson} If $d\ge 2$, the functions
 $f(z)=1\pm 2z_1z_2$ are cyclic in $H^2_d$, but not free outer. In fact, in the case of $f(z)=1+2z_1z_2$ we have
$$f(z)=\frac{1+2z_1z_2}{\sqrt{2}(1+z_1z_2)}
{\sqrt{2}(1+z_1z_2)}$$
where $g(z)=\sqrt{2}(1+z_1z_2)$ is free outer, and $\varphi(z)=\frac{1+2z_1z_2}{\sqrt{2}(1+z_1z_2)}$ is subinner. \end{example}

\begin{proof} Fix $d\ge 2$ and note that $f(z)\ne 0$ for $z\in \Bd$. Hence for $0\le r<1$  the functions $g_r(z)=\frac{1}{1+r2z_1z_2}$ are analytic in a neighborhood of $\overline{\Bd}$. Thus,  $g_r\in \Mult(H^2_d)$  and
$(g_rf)(z)=\frac{f(z)}{1+r2z_1z_2}\in [f]$ for each $0\le r<1$. Furthermore,
\begin{align*} \Big\|\frac{f(z)}{1+r2z_1z_2}\Big\|_{H^2_d}&=\Big\|1+(1-r)\frac{2z_1z_2}{1+r2z_1z_2} \Big\|_{H^2_d}\\ &\le 1+(1-r) \Big\|\frac{2z_1z_2}{1+r2z_1z_2} \Big\|_{H^2_d} \end{align*}
and by Stirling's formula,
\begin{align*}
  \Big\| \frac{2 z_1 z_2}{1 + r 2 z_1 z_2} \Big\|_{H^2_d}^2 = \sum_{n=0}^\infty r^{2 n} 4^{n + 1} \frac{((n+1)!)^2}{(2 n + 2)!}
  &\lesssim \sum_{n=0}^\infty r^{2 n} (n+1)^{1/2} \\
  &\le \sum_{n=0}^\infty r^{2 n} (n+1) \\
  &= \frac{1}{(1 - r^2)^2}.
\end{align*}
Hence $g_r f$ is bounded in $H^2_d$ and so $g_r f \to 1$ weakly and $f$ is cyclic in $H^2_d$.

Next we compute the subinner/free outer factorization of $f$:

 $[1+2z_1z_2]_*=\{h(z)=c_0+c_1z_1+c_2z_2+c_3z_1z_2: c_j\in \C\}$ and one calculates
\begin{align*}&\la h, k_wh\ra =\\ &= |c_0|^2  + |c_1|^2  +|c_2|^2 + \frac{|c_3|^2}{2}+(\overline{c_0}c_1+\frac{c_3\overline{c_2}}{2})w_1+(\overline{c_0}c_2+\frac{c_3\overline{c_1}}{2})w_2 + \overline{c_0}c_3w_1w_2.\end{align*}
So $\la h, k_wh\ra=\la f, k_wf\ra$ for all $w$, if and only if
\begin{align*} |c_0|^2  + |c_1|^2  +|c_2|^2 + \frac{|c_3|^2}{2}&= 3,\\
\overline{c_0}c_1+\frac{c_3\overline{c_2}}{2}&=0,\\
\overline{c_0}c_2+\frac{c_3\overline{c_1}}{2}&=0,\\
\overline{c_0}c_3&=2.\end{align*} One further computes that $c_1=c_2=0$ and $(c_0,c_3)= (1,2), (\sqrt{2}, \sqrt{2})$ are the only solutions with $c_0>0$.
This implies that
$$[1+2z_1z_2]_*\cap \mathcal E_f= \{\alpha f,\beta g: |\alpha|=|\beta|=1\}.$$
Since $g(0)>f(0)$ we conclude that $g$ must be the free outer factor of $f$, hence the function $\varphi$ as above must be the subinner factor of $f$.
\end{proof}

Thus, $1+z_1z_2$ and $1-z_1z_2$ are free outer.
We will now show that the product $f(z_1,z_2)=1-z_1^2z_2^2$ is free outer as well. Since $[1-z_1^2z_2^2]_*$ is 9-dimensional, a proof along the lines as above would seem to involve some lengthy calculations. The following different approach uses Theorem \ref{free-outer-in*invariant} and Lemma \ref{lem:free_outer_H}, which states that a function is free outer if and only if it is left or right outer in $\Fock$.

 Let $S=\{1122, 1212,1221,2112,2121,2211\}$, so the free version of $f$ is $F(x_1,x_2)=1-\frac{1}{6}\sum_{w\in S}x^w$.

  Let $\Phi = \frac{1}{6} \sum_{w \in S} x^w$. Then $\|R_\Phi\| \le 1$. If $R_\Phi^* H = H$ for some $H \in \Fock$,
  then $R_\Phi H = H$ by Lemma \ref{contractionLemma}, and a look
at power series expansions yields $H = 0$. Hence $I - R_\Phi^*$ is injective, so $R_F = I - R_\Phi$ has dense range. Hence $F$ is left outer.

\begin{example}\label{ProdNotFreeOuter} The functions $g_j(z_1,z_2)=(1-z_j)$ are free outer in $H^2_2$ for $j=1,2$, but the product $(1-z_1)(1-z_2)$ is not.
\end{example}
It is easy to see that each $g_j$ is free outer. For example, one  verifies that if $a>0$ and $b\in \C$ so that $g(z_1,z_2)=a+bz_1\in [g_1]_*$, then $g$ satisfies $P_g=P_{g_1}$ only if $g=g_1$.

In order to prove that $g(z_1,z_2)=(1-z_1)(1-z_2)$ is not free outer, we consider for $a>0$ the one-parameter family of functions
$$f_a(z_1,z_2)= a-\frac{3a}{1+2a^2}(z_1+z_2)+\frac{z_1z_2}{a}.$$ Note that $f_1=g$. We will show that there is $a>1$ such that $V_{f_a}=V_g$. Then $|f_a(0)|>|g(0)|$ and hence $g$ cannot be free outer.

Set $$r(a)=\|f_a\|^2=  a^2 + 18\frac{a^2}{(1+2a^2)^2}+\frac{1}{2a^2}.$$
Then one calculates that for every $a>0$ we have
$$V_{f_a}(z)=r(a)-3(z_1+z_2)+2z_1z_2.$$
Thus, we will have $V_{f_a}=V_g$, if and only if $r(a)=r(1)$. Since $r$ is continuous and $r(a)\to \infty$ as $a\to \infty$, the result follows from the fact that $r'(1)=-1/3<0$.

\section{Common free outer factors}\label{SecVectorvalued}
In the following the existence of a common factorization of a sequence of functions goes back to work of Jury and Martin \cite{JuMa2}.
\begin{theorem}\label{VectorFactorization} Let $\HH_k$ be a Hilbert function space with normalized complete Pick kernel $k$ and let $\{f_n\}\subseteq \HH_k$ with $0 \neq \sum_{n\ge 0}\|f_n\|^2<\infty$.

Then there is a free outer function $g\in \HH_k$ and a sequence of multipliers $\{\varphi_n\} \subseteq \MuH_k$ such that
\begin{enumerate}
\item $f_n=\varphi_n g$ for each $n$,
\item $\|g\|^2=\sum_n\|f_n\|^2$,
\item $\sum_n \|\varphi_n h\|^2 \le \|h\|^2 $ for each $h\in \HH$.
\end{enumerate}
The factorization is unique if we assume that $g(z_0)>0$. Here $z_0$ is the point of normalization of the reproducing kernel $k$.
\end{theorem}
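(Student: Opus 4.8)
The plan is to transfer the problem to the free Fock space, carry out a common free inner--outer factorization there, and then descend to $\HH_k$, using the identification $\HH_k \cong \HH \subseteq \SymFock \subseteq \Fock$ of Section~\ref{SecPickspaces}. Set $F_n = U f_n \in \HH$, so that $\sum_n \|F_n\|^2 = \sum_n \|f_n\|^2 \in (0,\infty)$. The first task --- and the one requiring the most care --- is to produce a \emph{single} left outer $G \in \Fock$ with $\langle L_\Phi G, G\rangle = \sum_n \langle L_\Phi F_n, F_n\rangle$ for all $\Phi \in \MFockL$. The functional $\Lambda(\Phi) = \sum_n \langle L_\Phi F_n, F_n\rangle$ is positive and weak*-continuous on $\MFockL$ (the series converges in the predual since $\sum_n \|F_n\|^2 < \infty$), so by property $\mathbb{A}_1$ of $\MFockL$ it is represented by a single vector, $\Lambda(\Phi) = \langle L_\Phi G_0, G_0\rangle$ for some $G_0\in\Fock$; replacing $G_0$ by the left outer factor $G$ in its right inner/left outer factorization $G_0 = R_\Theta G$ (Theorem~\ref{inner-outer}) and using that $R_\Theta$ is isometric and commutes with each $L_\Phi$ preserves this identity. (Alternatively, the existence of such a $G$ may be quoted from the vector-valued free inner--outer factorization of $\bigoplus_n F_n$, which is the route of \cite{JuMa2}.) Then Lemma~\ref{Lemma 0}, applied to each summand, yields $\|pG\|^2 = \sum_n \|p F_n\|^2$ for every free polynomial $p$, and the case $\Phi = 1$ gives $\|G\|^2 = \sum_n \|F_n\|^2$.

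Next I would read off the multipliers. For each $n$, the map $pG \mapsto p F_n$ is well defined and contractive on the set $\{pG : p \text{ a free polynomial}\}$, which is dense because $G$ is left outer, since $\|p F_n\|^2 \le \sum_j \|p F_j\|^2 = \|pG\|^2$; hence it extends to a contraction $V_n$ on $\Fock$ commuting with every $L_q$, so with all of $\MFockL$ by Lemma~\ref{freePolysWeak*dense}, and therefore $V_n = R_{\Psi_n}$ for some $\Psi_n \in \MFockR$ with $F_n = R_{\Psi_n}G$. On the dense set, $\sum_n \|R_{\Psi_n}(pG)\|^2 = \sum_n\|p F_n\|^2 = \|pG\|^2$, so $\sum_n \|R_{\Psi_n}H\|^2 \le \|H\|^2$ for all $H \in \Fock$; in particular $\sum_n\|R_{\Psi_n}G\|^2 = \sum_n\|F_n\|^2 = \|G\|^2$, whence by the equality case, Lemma~\ref{contractionLemma}, $G = \sum_n R_{\Psi_n}^* R_{\Psi_n}G = \sum_n R_{\Psi_n}^* F_n$. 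Since each $F_n \in \HH \subseteq \SymFock$ we have $R_{\Psi_n}^* F_n = L_{\tilde\Psi_n}^* F_n \in \HH$ by $*$-invariance of $\HH$, so $G \in \HH$. Thus $g := U^*G \in \HH_k$ satisfies $\|g\|^2 = \|G\|^2 = \sum_n\|f_n\|^2$; with $\varphi_n := \Psi_n \circ u \in \Mult(\HH_k)$ (so that $M_{\varphi_n} = U^*R_{\Psi_n}U$, cf.\ Section~\ref{SecPickspaces}) one gets $f_n = M_{\varphi_n}g = \varphi_n g$ and $\sum_n\|\varphi_n h\|^2 = \sum_n\|U^*R_{\Psi_n}(Uh)\|^2 \le \sum_n \|R_{\Psi_n}(Uh)\|^2 \le \|h\|^2$. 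Finally, $G$ is left outer in $\Fock$ and lies in the $*$-invariant subspace $\HH$, so $G$ is $\HH$-free outer by Theorem~\ref{free-outer-in*invariant}, whence $g$ is free outer in $\HH_k$ by Lemma~\ref{lem:free_outer_H}; since $g(z_0) = G(0) \neq 0$ (as $G$ is left outer), multiplying $g$ by a unimodular constant and the $\varphi_n$ by its inverse arranges $g(z_0) > 0$.

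For uniqueness, suppose $(g, \{\varphi_n\})$ satisfies (i)--(iii) with $g(z_0) > 0$. Conditions (i)--(iii) force $\sum_n\|M_{\varphi_n}g\|^2 = \sum_n\|f_n\|^2 = \|g\|^2$ while $\sum_n M_{\varphi_n}^*M_{\varphi_n} \le I$, so Lemma~\ref{contractionLemma} gives $g = \sum_n M_{\varphi_n}^* f_n$. Then for every $\varphi \in \Mult(\HH_k)$, using that multiplication operators commute,
\[
  \langle M_\varphi g, g\rangle = \sum_n \langle M_\varphi M_{\varphi_n}g, f_n\rangle = \sum_n \langle M_\varphi f_n, f_n\rangle,
\]
which depends only on $\{f_n\}$. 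Hence any two solutions $g, g'$ satisfy $P_g = P_{g'}$, so by Theorem~\ref{Pf=Pg} they share the same free outer factor; being themselves free outer, $g' = cg$ with $|c|=1$, and $g(z_0), g'(z_0) > 0$ force $c = 1$. Since free outer functions are cyclic (Theorem~\ref{freeOuterIsCyclic}), hence zero-free, $\varphi_n g = f_n = \varphi_n' g$ yields $\varphi_n = \varphi_n'$ for all $n$. The main obstacle in this plan is the very first step: producing the scalar representing vector $G$ (equivalently, that the sum of the positive functionals $\langle L_\bullet F_n, F_n\rangle$ is again a vector functional), which is where property $\mathbb{A}_1$ of the free analytic Toeplitz algebra is essential and must be invoked with care; everything afterward is assembly of the results of Sections~\ref{SecFreeInnerOuter}--\ref{SecPickspaces}.
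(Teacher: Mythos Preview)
Your uniqueness argument is essentially identical to the paper's. Your existence argument is correct but takes a genuinely different route. The paper does not work in $\Fock$ at all here: it simply quotes Theorem~1.1 of \cite{JuMa2} to obtain a factorization $f_n=\varphi_n g$ satisfying (i)--(iii) with some $g\in\HH_k$ (not a priori free outer), then applies the scalar Theorem~\ref{subinner/free outer} to write $g=\psi g_0$ as a subinner/free outer pair and replaces $\varphi_n$ by $\varphi_n\psi$. Your approach instead reconstructs the Jury--Martin factorization inside $\Fock$ and simultaneously produces a left outer $G$, so no upgrading step is needed; the price is that you must show by hand that $G\in\HH$ (which you do via $G=\sum_n R_{\Psi_n}^*F_n=\sum_n L_{\tilde\Psi_n}^*F_n$ and $*$-invariance). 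The paper's route is shorter because it recycles the scalar case; yours is more self-contained and makes explicit why the common factor lands in $\HH$.

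One point to tighten: property~$\mathbb{A}_1$ as stated in the paper (every weak*-continuous functional is of the form $\langle L_\bullet F,G\rangle$) does \emph{not} by itself give a representation $\langle L_\bullet G_0,G_0\rangle$ for your positive functional $\Lambda$; you need the stronger fact that positive normal functionals on $\MFockL$ are vector states. That is true (and is a consequence of the Davidson--Pitts/Arias--Popescu machinery, or equivalently of the vector-valued free inner--outer factorization you mention as an alternative), but it is more than ``$\mathbb{A}_1$ plus positivity''. You already flag this step as the main obstacle; just be explicit that what is invoked is the vector-state property of positive normal functionals on the free Toeplitz algebra, or else take the alternative route through the column free inner--outer factorization of $\bigoplus_n F_n$ directly, which is what \cite{JuMa2} does and what the paper ultimately quotes.
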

\begin{proof} Theorem 1.1 of \cite{JuMa2} shows the existence of a common factorization $f_n= \varphi_n g$ that satisfies conditions (i),(ii), and (iii). By Theorem \ref{subinner/free outer} $g= \psi g_0$ for some free outer function $g_0$ with $\|g_0\|^2=\|g\|^2=\sum_n\|f_n\|^2$, and a subinner function $\psi$. Then $f_n = \tilde{\varphi}_n g_0$ with $\tilde{\varphi}_n=\varphi_n \psi$. Thus, this factorization satisfies (i) and (ii), and it satisfies (iii) since $\psi$ is a contractive multiplier.

  In order to prove the uniqueness of the factorization note that conditions (i),(ii),(iii) and Lemma \ref{contractionLemma} imply that $g=\sum_{n}M_{\varphi_n}^*M_{\varphi_n} g$ and hence
\begin{align}\label{Psum}\sum_{n} \la \psi f_n,f_n\ra =\sum_{n}\la \psi \varphi_n g, \varphi_ng\ra = \la \psi g,g\ra \text{ for every }\psi \in \MuH.\end{align} Thus by Theorem \ref{Pf=Pg}, if $g(z_0)>0$, then $g$ is the unique free outer function satisfying (\ref{Psum}). Since free outer functions are cyclic, we must have $g(x)\ne 0$ for all $x$. This implies that the $\varphi_n$'s are uniquely determined as well.
\end{proof}
We will call $g$ the common free outer factor of the functions $f_n$. Let $\tau c(\HH)$ denote the trace class operators in $\HB(\HH)$, and write  $\|L\|_{\tau c}$ for the trace norm of $L\in \tau c(\HH)$.

\begin{corollary} \label{A1(1)} Let $\HH$ be  a normalized complete Pick space.
 If $L\in \tau c(\HH)$, then there are $f,g \in \HH$ such that  $f $ is free outer,  $\|f\|\|g\|\le \|L\|_{\tau c}$, and  $\tr(LM_\varphi) = \la \varphi f,g\ra$ for all $\varphi \in \MuH$.
\end{corollary}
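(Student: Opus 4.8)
The plan is to use the standard fact that a trace-class operator $L \in \tau c(\HH)$ admits a singular-value decomposition $L = \sum_{n\ge 0} s_n \, e_n \otimes d_n$, where $\{e_n\}$ and $\{d_n\}$ are orthonormal sequences in $\HH$, $s_n \ge 0$, and $\|L\|_{\tau c} = \sum_n s_n$. Here I write $e \otimes d$ for the rank-one operator $x \mapsto \la x, d\ra e$, so that $\tr((e\otimes d) M_\varphi) = \tr(M_\varphi (e \otimes d)) = \la M_\varphi e, d\ra = \la \varphi e, d\ra$; summing, $\tr(L M_\varphi) = \sum_n s_n \la \varphi e_n, d_n\ra$ for every $\varphi \in \MuH$.

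Next I would set $f_n = \sqrt{s_n}\, e_n$ and $g_n = \sqrt{s_n}\, d_n$, so that $\sum_n \|f_n\|^2 = \sum_n s_n = \|L\|_{\tau c} = \sum_n \|g_n\|^2$ (assuming $L \ne 0$; the case $L = 0$ is trivial with $f = g = 0$, so we may take $f$ to be any free outer function and $g=0$ if one insists on $f$ being free outer). Apply Theorem \ref{VectorFactorization} to the square-summable sequence $\{f_n\}$: there is a free outer function $f \in \HH$ with $\|f\|^2 = \sum_n \|f_n\|^2 = \|L\|_{\tau c}$ and multipliers $\{\varphi_n\}$ with $f_n = \varphi_n f$ and $\sum_n \|\varphi_n h\|^2 \le \|h\|^2$ for all $h$. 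The latter condition says precisely that the column operator $C = [M_{\varphi_n}]^T \colon \HH \to \bigoplus_n \HH$ is a contraction. Now define $g = C^* \big( \bigoplus_n g_n \big) = \sum_n M_{\varphi_n}^* g_n \in \HH$; since $C^*$ is a contraction, $\|g\| \le \big(\sum_n \|g_n\|^2\big)^{1/2} = \|L\|_{\tau c}^{1/2}$. Therefore $\|f\|\,\|g\| \le \|L\|_{\tau c}^{1/2} \cdot \|L\|_{\tau c}^{1/2} = \|L\|_{\tau c}$, which is the desired norm bound.

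It remains to verify the trace identity. For $\varphi \in \MuH$,
\begin{align*}
\la \varphi f, g\ra &= \Big\la \varphi f, \sum_n M_{\varphi_n}^* g_n \Big\ra = \sum_n \la M_{\varphi_n} \varphi f, g_n\ra = \sum_n \la \varphi \varphi_n f, g_n \ra = \sum_n \la \varphi f_n, g_n\ra \\
&= \sum_n \la \varphi \sqrt{s_n} e_n, \sqrt{s_n} d_n\ra = \sum_n s_n \la \varphi e_n, d_n\ra = \tr(L M_\varphi),
\end{align*}
where the interchange of sum and inner product is justified by Cauchy--Schwarz together with $\sum_n \|\varphi_n (\varphi f)\| \|g_n\| < \infty$ (or, more simply, by norm convergence of $\sum_n M_{\varphi_n}^* g_n$ in $\HH$). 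This completes the proof. The only mildly delicate point — and the one I would expect to require the most care to state cleanly — is the passage from the coefficient-wise data $\{s_n, e_n, d_n\}$ of the singular-value decomposition to the single pair $(f, g)$: one must package condition (iii) of Theorem \ref{VectorFactorization} as contractivity of the column operator in order to apply its adjoint to $\bigoplus_n g_n$ and keep the norm under control. Everything else is bookkeeping.
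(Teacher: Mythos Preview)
Your proof is correct and follows essentially the same approach as the paper's: both write $L$ as a sum of rank-one operators with $\sum_n \|f_n\|\,\|g_n\| = \|L\|_{\tau c}$ and $\|f_n\|=\|g_n\|$ (you via the singular-value decomposition, the paper by citing the abstract trace-class fact), apply Theorem~\ref{VectorFactorization} to obtain the common free outer factor $f$ and contractive column $\{M_{\varphi_n}\}$, set $g=\sum_n M_{\varphi_n}^* g_n$, and verify the trace identity and norm bound exactly as you do. The only cosmetic difference is that you make the SVD explicit and discuss the $L=0$ case separately.
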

In particular, if $f_n \in \HH$ with $\sum_n \|f_n\|^2 <\infty$, then there is a free outer function $f$ such that $P_f(\varphi)=\la \varphi f,f\ra =\sum_n \la \varphi f_n, f_n\ra$ for all $\varphi \in \MuH$, and hence $V_f = \sum_n V_{f_n}$.

The Corollary implies that   $\MuH$ has what is known as property $\mathbb{A}_1(1)$. As mentioned in the Introduction the result is known and our proof is in principle the same as in \cite{DavidsonHamilton}.
But the proof shows that with the results and concepts from this paper one can give a proof in terms of factorizations in Hilbert function spaces.
\begin{proof} Since $L$ is a trace class operator, there are $f_n, g_n \in \HH$ with $\|L\|_{\tau c}=\sum_n \|f_n\|\|g_n\| $ and such that $\tr (LM_\varphi)=\sum_n \la \varphi f_n,g_n\ra $ for all  $\varphi\in \MuH$.

We may assume that $\|f_n\|=\|g_n\|$ for each $n$, then $\|L\|_{\tau c}=\sum_n \|f_n\|^2<\infty$. Let $f$ be the common free outer factor of the $f_n$'s, then $f_n =\varphi_nf $ for some multipliers $\varphi_n$ satisfying (iii) of Theorem \ref{VectorFactorization}, i.e. the $\varphi_n$'s define a contractive column operator. The adjoint of that column operator is contractive also,  hence $g= \sum_n M^*_{\varphi_n} g_n$ converges in $\HH$ and $\|g\|^2 \le \sum_n\|g_n\|^2$. Thus $\|f\|\|g\| \le \sum_n \|f_n\|\|g_n\|= \|L\|_{\tau c}$ and whenever $\varphi \in \MuH$ we have
$$\la \varphi f,g\ra= \sum_n \la  \varphi f, M^*_{\varphi_n}g_n\ra = \sum_n \la \varphi \varphi_n f,g_n\ra = \sum_n \la \varphi f_n,g_n\ra= \tr (LM_\varphi).$$
If $f_n = g_n$ for all $n$, then as in the preceding proof, $f = \sum_n M_{\varphi_n}^* f_n = g$.
\end{proof}
It is well known that
$${\mathcal A}=\{M_\varphi: \varphi \in \MuH\}$$ is WOT-closed and hence weak-$*$ closed in $\HB(\HH)$ and that $\MuH$ is isometrically isomorphic to the dual of $\tau c(\HH)/^\perp {\mathcal A}$. For $L \in \tau c(\HH)$ we let $[L]$ denote the equivalence class of $L$ in $\tau c(\HH)/^\perp {\mathcal A}$, then by the Corollary we have
\begin{align} \label{inf}\|[L]\|= \inf\{\|f\| \|g\|: [L]=[f \otimes g]\}.\end{align}
Here, we write $f \otimes g$ for the rank one operator $h \mapsto \langle h,g \rangle f$, and
$\tr( M_\varphi (f \otimes g)) = \langle \varphi f,g \rangle$.

\begin{theorem} \label{infimum} Let $\HH$ be a normalized complete Pick space, and let $L\in \tau c(\HH)$ such that $[L]\ne 0$. Then the infimum in (\ref{inf}) is attained, if and only if there is a subinner/free outer pair $(\varphi,f)$ such that $[L]=[f \otimes \varphi f]$.

 Furthermore, if this happens, then the weak*-continuous functional on $\MuH$ induced by $L$ attains its norm at the subinner function $\varphi$. \end{theorem}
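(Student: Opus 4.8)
The plan is to prove the two implications separately and to read off the norm-attaining statement along the way. Write $\Phi_L$ for the weak$^*$-continuous functional on $\MuH$ induced by $L$, so that $\Phi_L(\psi)=\tr(L M_\psi)=\langle \psi f',g'\rangle$ whenever $[L]=[f'\otimes g']$, and recall that $\|\Phi_L\|=\|[L]\|$ since $\MuH$ is isometrically the dual of $\tau c(\HH)/^\perp{\mathcal A}$; because $[L]\neq 0$ we have $\|[L]\|>0$. For the implication from the factorization, suppose $[L]=[f\otimes \varphi f]$ for a subinner/free outer pair $(\varphi,f)$. Then $\Phi_L(\varphi)=\langle \varphi f,\varphi f\rangle=\|\varphi f\|^2=\|f\|^2$, using $\|\varphi f\|=\|f\|$, so $\|f\|^2=|\Phi_L(\varphi)|\leq \|\Phi_L\|\,\|\varphi\|_{\MuH}=\|[L]\|$ (as $\|\varphi\|_{\MuH}=1$), while $\|[L]\|\leq \|f\|\,\|\varphi f\|=\|f\|^2$ by (\ref{inf}). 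Hence $\|[L]\|=\|f\|^2=\|f\|\,\|\varphi f\|$, so the infimum in (\ref{inf}) is attained at $(f,\varphi f)$; moreover $\Phi_L(\varphi)=\|f\|^2=\|[L]\|=\|\Phi_L\|$ with $\|\varphi\|_{\MuH}=1$, which says $\Phi_L$ attains its norm at $\varphi$. Since this argument applies to every subinner/free outer pair representing $[L]$, it also gives the ``furthermore'' in general.

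For the converse, assume the infimum in (\ref{inf}) is attained, say $[L]=[f_0\otimes g_0]$ with $\|f_0\|\,\|g_0\|=\|[L]\|$; replacing $(f_0,g_0)$ by $(t f_0,t^{-1} g_0)$ for a suitable $t>0$ (which does not change $f_0\otimes g_0$) we may assume $\|f_0\|=\|g_0\|$, hence $\|f_0\|^2=\|g_0\|^2=\|[L]\|>0$. First I would reduce to the case where one of the two factors is free outer: let $f_0=\varphi_0 h$ be its subinner/free outer factorization, so $h$ is free outer with $h(z_0)>0$ and $\|h\|=\|f_0\|$, set $g_1=M_{\varphi_0}^* g_0$, and use that $M_\psi$ commutes with $M_{\varphi_0}$ to obtain $\Phi_L(\psi)=\langle \psi\varphi_0 h,g_0\rangle=\langle \psi h,g_1\rangle$ for every $\psi\in\MuH$, i.e.\ $[L]=[h\otimes g_1]$. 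Then $\|[L]\|\leq \|h\|\,\|g_1\|\leq \|f_0\|\,\|g_0\|=\|[L]\|$ forces $\|g_1\|=\|g_0\|=\|h\|$. Now choose $\psi_n\in\MuH$ with $\|\psi_n\|_{\MuH}\leq 1$ and, after multiplying by unimodular constants, $\Phi_L(\psi_n)=\langle\psi_n h,g_1\rangle\to \|[L]\|=\|h\|\,\|g_1\|$; from $|\langle\psi_n h,g_1\rangle|\leq \|\psi_n h\|\,\|g_1\|\leq \|h\|\,\|g_1\|$ we get $\|\psi_n h\|\to \|h\|$, and the equality case of the Cauchy--Schwarz inequality then gives $\psi_n h\to g_1$ in $\HH$.

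The heart of the argument is to identify $g_1$, and for this I would prove the vector-state identity $P_{g_1}=P_h$. Put $D_n=I-M_{\psi_n}^* M_{\psi_n}\geq 0$; this is a positive contraction because $\|\psi_n\|_{\MuH}\leq 1$, and $\langle D_n h,h\rangle=\|h\|^2-\|\psi_n h\|^2\to 0$. For $\psi\in\MuH$, the Cauchy--Schwarz inequality for the positive semidefinite form $(u,v)\mapsto\langle D_n u,v\rangle$ gives $|\langle D_n(\psi h),h\rangle|\leq \|\psi h\|\,\langle D_n h,h\rangle^{1/2}\to 0$, so, using once more that $M_\psi$ commutes with $M_{\psi_n}$,
\[
\langle \psi\psi_n h,\psi_n h\rangle=\langle M_{\psi_n}^* M_{\psi_n}M_\psi h,h\rangle=\langle \psi h,h\rangle-\langle D_n(\psi h),h\rangle\to \langle \psi h,h\rangle,
\]
while the left-hand side tends to $\langle \psi g_1,g_1\rangle$ because $\psi_n h\to g_1$. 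Therefore $P_{g_1}=P_h$, so by Theorem \ref{Pf=Pg} the functions $g_1$ and $h$ have the same free outer factor; since $h$ is free outer with $h(z_0)>0$, that common factor is $h$ itself, hence $g_1=\varphi h$ where $\varphi$ is the subinner factor of $g_1$. Because $\|\varphi h\|=\|g_1\|=\|h\|\neq 0$, the pair $(\varphi,h)$ is a subinner/free outer pair; with $f:=h$ we get $[L]=[h\otimes g_1]=[f\otimes \varphi f]$, and the claim about the norm follows from the ``furthermore'' established in the first paragraph.

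I expect the genuine obstacle to be exactly this last identification. Extracting the norming sequence $\{\psi_n\}$ and forcing $\psi_n h\to g_1$ is routine Hilbert-space manipulation, but one then has to squeeze out the \emph{equality of vector states} $P_{g_1}=P_h$ — this is where the positivity of $I-M_{\psi_n}^* M_{\psi_n}$ does the work — in order to apply Theorem \ref{Pf=Pg}, and one has to be careful that the factorization one arrives at really has the same free outer function on both sides of the tensor, which is what necessitates the preliminary reduction through the subinner/free outer factorization of $f_0$.
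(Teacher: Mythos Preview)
Your proof is correct, but it takes a genuinely different route from the paper in the hard direction. Both proofs begin the same way: normalize so that $\|f_0\|=\|g_0\|$, and reduce to the case where the left factor is free outer by passing through the subinner/free outer factorization of $f_0$ and replacing $g_0$ by $M_{\varphi_0}^*g_0$. After that, the paper observes that since $\MuH$ is the dual of $\tau c(\HH)/^\perp{\mathcal A}$, the Hahn--Banach theorem furnishes an actual $\varphi\in\MuH$ with $\|\varphi\|_{\MuH}=1$ and $\langle \varphi f,g\rangle=\|[L]\|=\|f\|\|g\|$; the equality case of Cauchy--Schwarz then gives $\varphi f=g$ in one stroke, and $\|\varphi f\|=\|f\|$ falls out immediately. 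Your approach instead works with a norming sequence $\{\psi_n\}$, shows $\psi_n h\to g_1$, and then squeezes out the vector-state identity $P_{g_1}=P_h$ via the positivity of $I-M_{\psi_n}^*M_{\psi_n}$, finally invoking Theorem~\ref{Pf=Pg}. This is more hands-on and entirely valid, but it trades the single Hahn--Banach application for a limiting argument plus the machinery of Theorem~\ref{Pf=Pg}; the paper's argument is shorter precisely because the duality between $\MuH$ and its predual guarantees norm attainment outright, which is the step you are effectively re-proving by hand.
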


\begin{proof} Let $L\in \tau c(\HH)$ and suppose that there are $f,g \in \HH$ such that $[L]=[f\otimes g]$ and $\|[L]\|=\|f\|\|g\| \ne 0$. Note that if $f$ is not free outer, then $f=\psi f_0$ for some subinner/free outer pair $(\psi, f_0)$. Then for all $u \in \MuH$ we have $\la uf,g\ra=\la uf_0, M_\psi^* g\ra$ and $\|f_0\|\|M^*_\psi g\|\le \|f\|\|g\|$. Thus, without loss of generality we  may assume that $f$ is free outer with $f(z_0)>0$, and we will also  assume that $\|f\|=\|g\|$.

Then by the Hahn-Banach Theorem there is $\varphi\in \MuH$ with $\|\varphi\|_{\MuH}=1$ and
$$\|f\|^2=\|f\|\|g\|=\la \varphi f,g\ra \le \|\varphi f\|\|g\|\le \|f\|\|g\|=\|f\|^2.$$ Thus, we have equality in the Cauchy-Schwarz inequality, hence $\varphi f =\alpha g$ for some $\alpha \in \C$. It is easily seen that  $\alpha=1$. Hence $[L]=[f \otimes \varphi f]$, and the equality above implies $\|f\|=\|\varphi f\|$, i.e. $(\varphi,f)$ is a subinner/free outer pair. Furthermore, we see that the functional induced by $[L]$ attains its norm at $\varphi$.

The proof of the converse is trivial.
\end{proof}

It turns out that there is a modified subinner/free outer factorization for functions in all spaces whose reproducing kernel has a complete Pick factor.

\begin{corollary} \label{SpacesWithPickFactor} Let $\HH_k$ and $\HH_s$ be separable Hilbert function spaces on a set $X$ such that $s$ is a  complete Pick kernel, which is normalized at $z_0\in X$, and  such that $k/s $ is positive definite.

  If $f\in \HH_k \setminus \{0\}$, then there is a unique pair of functions $\varphi\in \Mult(\HH_s,\HH_k)$ and $g\in \HH_s$ such that
\begin{enumerate}
\item $f=\varphi g$,
\item $\|f\|_{\HH_k}=\|g\|_{\HH_s}$,
\item $g$ is free outer with $g(z_0)>0$,
\item  $\|\varphi\|_{\Mult(\HH_s,\HH_k)}\le 1$.
\end{enumerate}
\end{corollary}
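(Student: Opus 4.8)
The plan is to reduce Corollary \ref{SpacesWithPickFactor} to the vector-valued factorization of Theorem \ref{VectorFactorization} via the standard correspondence between contractive multipliers $\Mult(\HH_s,\HH_k)$ and rows of multipliers of $\HH_s$. Since $k/s$ is positive definite and $s$ is a normalized complete Pick kernel, the Agler--McCarthy/McCullough--Trent type description gives that $k = \big(\sum_{n\ge 0}\psi_n \overline{\psi_n}\big) s$ for some sequence $\{\psi_n\}\subseteq \Mult(\HH_s)$ whose associated column operator $h\mapsto (\psi_n h)_n$ is a contraction on $\HH_s$; equivalently, there is a coisometry from $\HH_s\otimes \ell^2$ onto $\HH_k$ intertwining the multiplication operators. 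Concretely, I would first record the (known) fact that $f\in\HH_k$ if and only if $f=\sum_n \psi_n h_n$ for some $\{h_n\}\subseteq\HH_s$ with $\sum_n\|h_n\|_{\HH_s}^2<\infty$, and that $\|f\|_{\HH_k}=\inf\{(\sum_n\|h_n\|^2)^{1/2}\}$, the infimum being attained.

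Given $f\in\HH_k\setminus\{0\}$, write $f=\sum_n\psi_n h_n$ with $\sum_n\|h_n\|^2=\|f\|_{\HH_k}^2$. Apply Theorem \ref{VectorFactorization} to the sequence $\{h_n\}\subseteq\HH_s$: there is a free outer $g\in\HH_s$ with $\|g\|_{\HH_s}^2=\sum_n\|h_n\|^2=\|f\|_{\HH_k}^2$, and multipliers $\{\eta_n\}\subseteq\Mult(\HH_s)$ with $h_n=\eta_n g$ and $\sum_n\|\eta_n h\|^2\le\|h\|^2$ for all $h\in\HH_s$. Normalize so that $g(z_0)>0$. Now set $\varphi=\sum_n\psi_n\eta_n$. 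Since both the column operator $(\psi_n)$ from the factorization of $k/s$ and the column operator $(\eta_n)$ from Theorem \ref{VectorFactorization} are contractive, the operator $M_\varphi\colon\HH_s\to\HH_k$ defined by $M_\varphi h=\sum_n\psi_n\eta_n h$ is a composition of a contraction $\HH_s\to\HH_s\otimes\ell^2$ with a contraction $\HH_s\otimes\ell^2\to\HH_k$, hence contractive; this gives $\varphi\in\Mult(\HH_s,\HH_k)$ with $\|\varphi\|\le 1$, i.e. (iv). Then $\varphi g=\sum_n\psi_n\eta_n g=\sum_n\psi_n h_n=f$, giving (i), and (ii), (iii) hold by construction.

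For uniqueness, suppose $f=\varphi g$ is any pair satisfying (i)--(iv). I would argue as in the proof of Theorem \ref{subinner/free outer}: the pointwise estimate $|\varphi(z)|\le\|k_z\|/\|s_z\|$ (or rather the operator identity from $\|\varphi\|\le 1$) combined with $\|f\|_{\HH_k}=\|g\|_{\HH_s}$ forces $M_\varphi$ to be isometric on the cyclic subspace generated by $g$. Since $g$ is free outer, it is cyclic in $\HH_s$ (Theorem \ref{freeOuterIsCyclic}), so $M_\varphi^*M_\varphi=I$ on all of $\HH_s$; this pins down $g$ as the common free outer factor of $\{h_n\}$ for the (essentially canonical) representation of $f$, hence $g$ is unique with $g(z_0)>0$, and then $\varphi=f/g$ is determined since $g$ vanishes nowhere. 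Here one must check that different representations $f=\sum_n\psi_n h_n$ attaining the norm all lead to the same free outer $g$; this follows because $P_g$ on $\Mult(\HH_s)$ is determined by the positive functional $u\mapsto\la M_\varphi u g, g\ra=\la (M_\varphi^*M_\varphi) u g,g\ra$ applied appropriately, and by Theorem \ref{Pf=Pg} the free outer function with a given vector state and positive value at $z_0$ is unique.

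The main obstacle I anticipate is making precise and clean the correspondence between $\Mult(\HH_s,\HH_k)$ and contractive columns/rows over $\Mult(\HH_s)$ when $k/s$ is positive definite — i.e. the ``Leech-type'' or Schur-complement factorization $k=\big(\sum_n\psi_n\overline{\psi_n}\big)s$ with the column operator $(\psi_n)$ contractive on $\HH_s$ — and verifying that the norm $\|f\|_{\HH_k}$ really equals the infimum of $\sum_n\|h_n\|^2$ over representations $f=\sum_n\psi_n h_n$, with the infimum attained. This is where the complete Pick hypothesis on $s$ is essential (it is exactly what makes $\HH_k$ unitarily equivalent to a quotient/subspace construction over $\HH_s$), and citing \cite{AHMcCR_Factor}, Section 4, where these facts are developed, should let me invoke rather than reprove it. Once that dictionary is in hand, everything else is a routine transcription of Theorem \ref{VectorFactorization} and the uniqueness argument already used for Theorem \ref{subinner/free outer}.
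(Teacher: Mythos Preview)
Your existence argument is essentially the paper's: positive definiteness of $k/s$ yields functions $g_n$ with $k_w(z)=\sum_n g_n(z)\overline{g_n(w)}\,s_w(z)$; the paper packages this as an \emph{isometry} $T:\HH_k\to\bigoplus_n\HH_s$, $Tk_w=(\overline{g_n(w)}\,s_w)_n$, whose adjoint $T^*(h_n)=\sum_n g_n h_n$ is contractive into $\HH_k$, and then applies Theorem~\ref{VectorFactorization} to $(f_n)=Tf$. This gives $\sum_n\|f_n\|^2=\|f\|^2$ directly, without your ``infimum attained'' detour. (A terminological quibble: the $g_n$ need not lie in $\Mult(\HH_s)$, and the relevant operator is a contractive \emph{row} $\bigoplus_n\HH_s\to\HH_k$, not a column on $\HH_s$.)

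Your uniqueness argument, however, has a genuine gap. From $\|\varphi\|\le 1$ and $\|\varphi g\|_{\HH_k}=\|g\|_{\HH_s}$ Lemma~\ref{contractionLemma} only gives $M_\varphi^*M_\varphi g=g$; this does \emph{not} force $M_\varphi$ to be isometric on the cyclic subspace $[g]$, let alone on all of $\HH_s$. The commutation trick behind Theorem~\ref{inner} is unavailable here, since $M_\varphi$ maps between different spaces and $\Mult(\HH_s)$ need not act on $\HH_k$. The paper's argument is simpler and sidesteps this: from $M_\varphi^*f=M_\varphi^*M_\varphi g=g$ one computes, for every $\psi\in\Mult(\HH_s)$,
\[
\langle\psi g,g\rangle_{\HH_s}=\langle\psi g,M_\varphi^*f\rangle_{\HH_s}=\langle\psi\varphi g,f\rangle_{\HH_k}=\langle\psi f,f\rangle_{\HH_k},
\]
so the vector state $P_g$ is determined by $f$ alone, and Theorem~\ref{Pf=Pg} gives uniqueness of $g$ (hence of $\varphi=f/g$, since free outer functions have no zeros). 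Your final paragraph gropes toward this, but the expressions $\langle M_\varphi u g,g\rangle$ are ill-typed (the two vectors live in different spaces) and still rely on the unjustified $M_\varphi^*M_\varphi=I$.
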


\begin{proof} The condition that $k/s$ is positive definite implies that there is a sequence of functions $\{g_n\}$ on $X$ such that $k_w(z)=\sum_{n\ge 0} g_n(z) \overline{g_n(w)}s_w(z)$. Then the operator $T: \HH_k \to \bigoplus_{n \ge 0} \HH_s, k_w\to \{\overline{g_n(w)}s_w\}$ extends to be isometric. Furthermore, $T^*: \bigoplus_{n\ge 0} \HH_s \to \HH_k, T^*\{h_n\}=\sum_{n\ge 0} g_n h_n$ is a contraction.

  Let $f\in \HH_k$ with $f\ne 0$. Then the functions $f_n\in \HH_s$ defined by $Tf=\{f_n\}$ will satisfy $\|f\|^2_{\HH_k}= \sum_n \|f_n\|^2_{\HH_s}$ and $f=T^*Tf= \sum_{n\ge 0} g_nf_n$. Now let $g$ be the common outer factor of the $f_n$'s, $\varphi_n= f_n/g$, and $\varphi= \sum_{n\ge 0} g_n \varphi_n$. Then $\|g\|_{\HH_s}=\|f\|_{\HH_k}$, and by the fact that $T^*$ is a contraction and (iii) of Theorem \ref{VectorFactorization}, we have for all $h\in \HH_s$
$$\|\varphi h\|^2_{\HH_k} = \|\sum_{n\ge 0} g_n \varphi_n h\|^2_{\HH_k} \le \sum_{n\ge 0} \|\varphi_n h\|^2_{\HH_s}\le \|h\|^2_{\HH_s}.$$ Thus, $\varphi$ is a contractive multiplier from $\HH_s$ to $\HH_k$.

The uniqueness follows along similar lines as before. If $f=\varphi g=\varphi_0 g_0$ for free outer functions $g_0,g\in \HH_s$ with $\|f\|_{\mathcal{H}_k} = \|g\|_{\mathcal{H}_s} = \|g_0\|_{\mathcal{H}_s}$ and contractive multipliers $\varphi, \varphi_0$, then $M_\varphi^*f=g$. Hence  for all $\psi \in \Mult(\HH_s)$ we have
$$\la \psi g,g\ra_{\HH_s}= \la \psi g,M_\varphi^*f\ra_{\HH_s}=\la \psi f,f\ra_{\HH_k}.$$ Similarly for $\varphi_0$ and $g_0$, hence $\la \psi g,g\ra_{\HH_s}=\la \psi g_0,g_0\ra_{\HH_s}$, and $g=g_0$ now follows from Theorem \ref{Pf=Pg}. Hence $\varphi=\varphi_0$ since free outer functions have no zeros.
\end{proof}

For certain spaces on the disc the previous Corollary can be proved directly and this proof extends to cover certain Banach spaces. Let $1\le t<\infty$ and let $\mu$ be a positive Borel measure that is supported in the closed unit disc. Then $P^t(\mu)$ denotes the closure of the polynomials in $L^t(\mu)$.
For background on these spaces  see \cite{AlRiSuPtmu}.

\begin{theorem}\label{Ptmu} Let $1\le t<\infty$, suppose that  every $\lambda \in \D$ is a bounded point evaluation for $P^t(\mu)$, and that $P^t(\mu)$ is irreducible, meaning that $P^t(\mu)$ contains no non-trivial characteristic functions.

  Then every  $g\in P^t(\mu) \setminus \{0\}$ has a factorization $g=f \varphi $ such that
\begin{enumerate}
\item $f\in H^t(\T)$ is an outer function with $\|g\|_{P^t(\mu)}=\|f\|_{H^t(\T)}$, and
\item $\|u \varphi\|_{P^t(\mu)} \le \|u\|_{H^t(\T)}$ for every $u \in {H^t(\T)}.$
\end{enumerate}
If we also assume $f(0)>0$, then the two conditions determine $f$ and $\varphi$ uniquely.
\end{theorem}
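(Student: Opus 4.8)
The plan is to construct the factors directly from the \emph{sweep} (balayage) of a measure, paralleling the role played by the kernel factorization $k_w(z)=\sum_n g_n(z)\overline{g_n(w)}\,s_w(z)$ in the proof of Corollary \ref{SpacesWithPickFactor}. For a finite positive Borel measure $\lambda$ on $\overline{\D}$ let $\widehat\lambda=\lambda|_{\T}+\bigl(\int_{\D}P_z\,d\lambda|_{\D}(z)\bigr)\,dm$ denote its sweep onto $\T$, where $P_z$ is the Poisson kernel; then $\widehat\lambda\ll m$, $\widehat\lambda(\T)=\lambda(\overline{\D})$, and $\int_{\overline{\D}}v\,d\lambda\le\int_{\T}v^{\ast}\,d\widehat\lambda$ for every nonnegative subharmonic $v$ on $\D$ that is dominated by the Poisson integral of its boundary values $v^{\ast}$. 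The first step is to record the relevant structure of $P^{t}(\mu)$ from \cite{AlRiSuPtmu}: under the two hypotheses, every $h\in P^{t}(\mu)$ belongs to the Smirnov class on $\D$ and has nontangential boundary values $h^{\ast}$ $m$-a.e.\ on $\T$; $\mu|_{\T}\ll m$ and $h=h^{\ast}$ $\mu|_{\T}$-a.e.; and for $h\ne 0$ one has $\log|h^{\ast}|\in L^{1}(\T)$.

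Given $g\in P^{t}(\mu)\setminus\{0\}$, I would let $U\,dm$ be the sweep of $|g|^{t}\,d\mu$ and let $f$ be the outer function in $H^{t}(\T)$ with $|f|^{t}=U$ $m$-a.e.\ and $f(0)>0$. Existence of $f$ requires $\log U\in L^{1}(\T)$: integrability of $\log^{+}U$ follows from $U\in L^{1}$ via Jensen, while integrability of $\log^{-}U$ follows from the lower bound $U\ge|g^{\ast}|^{t}\,\tfrac{d\mu|_{\T}}{dm}$ together with the $\log|g^{\ast}|\in L^{1}$ statement, together with (when $\mu|_{\D}\ne 0$) the bound $U\ge\int_{\D}P_{z}|g|^{t}\,d\mu|_{\D}$. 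Since the sweep preserves total mass, $\|f\|_{H^{t}}^{t}=\int_{\T}U\,dm=\int_{\overline{\D}}|g|^{t}\,d\mu=\|g\|_{P^{t}(\mu)}^{t}$, which gives (i); the normalization $f(0)>0$ then determines $f$ uniquely. The companion factor is $\varphi:=g/f$, which is holomorphic on $\D$ because $f$ is zero-free.

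For (ii) --- equivalently $\|p\varphi\|_{P^{t}(\mu)}\le\|p\|_{H^{t}}$ for all polynomials $p$ --- I would split $\|p\varphi\|_{P^{t}(\mu)}^{t}=\int_{\D}|p\varphi|^{t}\,d\mu|_{\D}+\int_{\T}|(p\varphi)^{\ast}|^{t}\,d\mu|_{\T}$, note that $p\varphi=pg/f$ lies in $N^{+}(\D)$ with $|(p\varphi)^{\ast}|^{t}=|p^{\ast}|^{t}|g^{\ast}|^{t}/U$, and control the interior integral by the genuine harmonic majorant of $|p\varphi|^{t}$, i.e.\ by the Poisson integral of $|(p\varphi)^{\ast}|^{t}$ (using that $pg/f$ in fact lies in $H^{t}(\T)$, possibly after an approximation of $g$ by functions bounded below). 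The point of the choice $|f|^{t}=U$ is that, after this interior bound is swept to the boundary and combined with the $\T$-contribution, the two pieces recombine to at most $\int_{\T}|p^{\ast}|^{t}\,dm$. Finally, uniqueness (under $f(0)>0$) I would obtain along the lines of the earlier uniqueness arguments in the paper (cf.\ the proof of Corollary \ref{SpacesWithPickFactor}): from $g=f_{0}\varphi_{0}=f_{1}\varphi_{1}$ one tests the multiplier inequalities against functions of the form $v f_{0},\,v f_{1}$ with $v\in H^{\infty}$, forces $|f_{0}^{\ast}|=|f_{1}^{\ast}|$ a.e., and concludes $f_{0}=f_{1}$ by the normalization and then $\varphi_{0}=\varphi_{1}$ since $f_{i}$ is zero-free.

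I expect the genuine difficulty to lie in the verification of (ii): the naive estimate $\int_{\D}|p\varphi|^{t}\,d\mu|_{\D}\le\int_{\T}|(p\varphi)^{\ast}|^{t}\,d\widehat{\mu|_{\D}}$ is \emph{too lossy} to yield the sharp bound $\|p\varphi\|_{P^{t}(\mu)}\le\|p\|_{H^{t}}$, since the resulting boundary weight need not be pointwise $\le 1$; one must instead exploit that $pg/f\in H^{t}(\T)$ so that $|p\varphi|^{t}$ is controlled by its true least harmonic majorant rather than merely by a sweep, and exploit precisely how $f$ is manufactured from the sweep of $|g|^{t}\,d\mu$. This is also exactly where the Smirnov-class structure of $P^{t}(\mu)$ supplied by \cite{AlRiSuPtmu} is indispensable, and the existence of $f$ with the correct norm, i.e.\ $\log U\in L^{1}(\T)$, rests on the same circle of ideas.
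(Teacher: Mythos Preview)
Your overall architecture --- sweep $|g|^t\,d\mu$ to the circle, take $f$ outer with $|f|^t$ equal to the sweep density, set $\varphi=g/f$ --- is exactly the paper's. But two pieces diverge, and one of them is the source of the gap you yourself flag.

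\textbf{The contractivity estimate (ii).} You try to control $|p\varphi|^t=|pg/f|^t$ by its harmonic majorant and then combine interior and boundary pieces; you note this is delicate and the naive sweep is too lossy. The paper sidesteps this completely: apply subharmonicity to $|p|^t$ \emph{alone}. For a polynomial $p$ and $z\in\D$ one has $|p(z)|^t\le P[|p|^t](z)$, so
\[
\|pg\|_{P^t(\mu)}^t=\int_{\overline{\D}}|p|^t|g|^t\,d\mu\le\int_{\overline{\D}}P[|p|^t](z)|g(z)|^t\,d\mu(z)=\int_{\T}|p|^t h\,dm=\|pf\|_{H^t}^t,
\]
where $h$ is the sweep density and the last equality is the definition $|f|^t=h$. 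This already says $\|pg\|_{P^t(\mu)}\le\|pf\|_{H^t}$; for general $u\in H^t$ use that $f$ is outer to approximate $u$ by $p_nf$ in $H^t$ and apply Fatou. No splitting, no Smirnov-class harmonic majorant for $p\varphi$, no delicate recombination. Your difficulty is self-inflicted.

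\textbf{Log-integrability of $h$.} You appeal to structure theory from \cite{AlRiSuPtmu} to bound $\log^{-}h$. The paper does something simpler: the inequality just displayed (with $\lambda$ fixed and $g(\lambda)\ne 0$) shows that $p\mapsto p(\lambda)$ is bounded on polynomials in $L^t(h\,dm)$, and Szeg\H{o}'s theorem then forces $\log h\in L^1(\T)$.

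\textbf{Uniqueness.} Your sketch (``test against $vf_0,vf_1$ and force $|f_0^\ast|=|f_1^\ast|$'') is not a proof as written, and I do not see how it concludes. The paper's argument is a first-order perturbation: take $p(z)=1+az^n$ in $\|pg\|_{P^t(\mu)}\le\|pf\|_{H^t}$, expand $|1+az^n|^t=1+t\,\mathrm{Re}(az^n)+O(|a|^2)$, use (i) to cancel the constant terms, divide by $|a|$ and let $a\to 0$ over all arguments to obtain $\int_{\overline{\D}}z^n|g|^t\,d\mu=\int_{\T}z^n|f|^t\,dm$ for all $n\in\mathbb{Z}$. Thus $|f|^t\,dm$ is forced to be the sweep of $|g|^t\,d\mu$, which pins down $|f|$ a.e.\ and hence $f$ (outer, $f(0)>0$) and then $\varphi$.
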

Thus, $M_\varphi$ is a contractive multiplier from $H^t(\T)$ into $P^t(\mu)$ that attains its norm at the function $f$, so $\varphi$ is subinner in the previous terminology. Let
$$\mathcal E_g=\{h\in H^t(\T): \int_{|z|=1} z^n |h|^t \frac{|dz|}{2\pi}= \int_{\overline{\D}} z^n |g|^t d\mu(z) \text{ for all }n \ge 0\}.$$
It will follow from the proof that $f$ satisfies
 $$|f(0)|= \sup \{|h(0)|: h \in \mathcal E_g\},$$ i.e. $f$ is free outer in the previous terminology.
\begin{proof} We first show the existence of the factorization. Let $g\in P^t(\mu)$, $g \ne 0$. Note that there are really two versions of $g$, an analytic function that is defined everywhere in the open unit disc, and a $L^t(\mu)$-function that is defined a.e. $[\mu]$. It is known that the two agree a.e. $[\mu]$ in $\D$ and that $\mu|\T$ is absolutely continuous, see e.g. \cite{AlRiSuPtmu}.

Define
$$h(e^{i\theta})= \int_{\overline{\D}}\frac{1-|z|^2}{|1-\overline{z}e^{i\theta}|^2} |g(z)|^t d\mu(z).$$
Then $h\in L^1(\T)$ with $\|h\|_{L^1(\T)}=\|g\|^t_{P^t(\mu)}$.

For $u\in L^1(\T)$ write $$P[u](z)= \int_{0}^{2\pi}\frac{1-|z|^2}{|1-\overline{z}e^{i\theta}|^2} u(e^{i\theta}) \frac{d\theta}{2\pi}$$ for its Poisson integral. Note that for all $z\in \D$ and every  polynomial $p$ we have $|p(z)|^t \le P[|p|^t](z)$. Hence for all $\lambda \in \D$ we have
\begin{align*}|p(\lambda)g(\lambda)|^t &\le c \int_{\overline{\D}}|p(z)g(z)|^t d\mu \\
&\le c \int_{\overline{\D}}P[|p|^t](z)\ |g(z)|^t d\mu \\
&=c\int_0^{2\pi}|p(e^{i\theta})|^t h(e^{i\theta}) \frac{d\theta}{2\pi}.\end{align*}
Since $g\ne 0$ there must be $\lambda\in \D$ such that $g(\lambda)\ne 0$. Then Szeg\H{o}'s Theorem implies that $h$ must be $\log$-integrable; see for instance Theorem V.8.2 and the discussion in Chapter II.3 in \cite{Gamelin69}. Therefore we can define an outer function $f \in H^t$ such that $|f|^t=h$ a.e. on $\T$.
 Then $\|g\|_{P^t(\mu)} =\|f\|_{H^t(\T)}$. The measure $|f|^t \frac{|dz|}{2\pi}$ is the sweep of $|g(z)|^t d\mu(z)$ to $\T$.

 We now define an analytic function $\varphi$ in the open unit disc by $\varphi = g/f$ and we can also define $\varphi=g/f$ $[\mu]$ a.e. on $\overline{\D}$.

  The later part of  the previous string of inequalities shows that
$$\|pg\|^t_{P^t(\mu)} \le \|pf\|^t_{H^t}$$ for every polynomial $p$. Let $u\in H^t$. Since $f$ is outer, there is a sequence of polynomials such that $p_nf \to u$ in $H^t$, and we may assume that $p_n(e^{i\theta})f(e^{i\theta}) \to u(e^{i\theta})$ a.e. It is then clear that $p_n(\lambda)\to \frac{u}{f}(\lambda)$ $\mu $ a.e. Hence Fatou's lemma implies
$$\|u\varphi\|^t_{P^t(\mu)} \le \|u\|^t_{H^t}.$$

Next we show the uniqueness. Suppose $f$ and $\varphi$ satisfy the two conditions with $f(0)>0$. By the second condition we have $\|pg\|^t_{P^t(\mu)}\le \|pf\|^t_{H^t(\T)}$ for every polynomial. We take $p(z)=1+az^n$ and use $|1+az^n|^t= 1+t\mathrm{Re }\ az^n + O(|a|^2)$ and obtain
$$\int_{\overline{\D}}(1 +t \mathrm{Re }\ az^n + O(|a|^2))|g|^td\mu \le \int_{|z|=1} (1+ t \mathrm{Re }\ az^n + O(|a|^2))|f|^t \frac{|dz|}{2\pi}.$$
Now write $a=re^{i\theta}$. Then use the first condition and subtract $\|g\|^t_{P^t(\mu)}=\|f\|^t_{H^t(\T)}$ on each side, divide by $r$ and let $r\to 0$. After rearranging we obtain
$$\mathrm{Re } \ e^{i\theta} \left(\int_{\overline{\D}}z^n |g|^td\mu - \int_{|z|=1}z^n |f|^t \frac{|dz|}{2\pi} \right)\le 0 \ \text{ for all } e^{i\theta} \text{ and all }n \ge 1.$$
Thus $\int_{\overline{\D}}z^n |g|^td\mu = \int_{|z|=1}z^n |f|^t \frac{|dz|}{2\pi}$ for all integers $n$. This shows that $|f|^t$ is uniquely determined by the two conditions, hence since $f$ is outer it is uniquely determined. Then $\varphi$ is also uniquely determined.
\end{proof}

\section{Weak products}\label{SecWeakProd}
\begin{proof}[Proof of Theorem \ref{WeakProductNorm}] By scaling $h$, we may assume that $\|h\|_{\mathcal{H} \odot \mathcal{H}} = 1$.
  As mentioned in the Introduction, by the factorization result of Jury and Martin and the fact that $\HH$ has the column-row property with constant $1$ (Theorem 1.3 of \cite{JuMa2} and Theorem 1.2 of \cite{HartzColumnRow}),
  there exist $g_1,g_2 \in \mathcal{H}$ with $\|g_1\| = \|g_2\| = 1$ and
  \begin{equation*}
    h = g_1 g_2 = \Big( \frac{g_1 + g_2}{2} \Big)^2 + \Big( \frac{i (g_1 - g_2)}{2} \Big)^2.
  \end{equation*}
  Set $f_1 = (g_1 + g_2)/2$ and $f_2 = i(g_1 - g_2)/2$. By the parallelogram law,
  $\|f_1\|^2 + \|f_2\|^2 = 1$.

  By Theorem \ref{VectorFactorization} there is a  contractive column multiplier
  $
  \begin{bmatrix}
    \varphi_1 & \varphi_2
  \end{bmatrix}^T$ and a free outer function $f \in \mathcal{H}$ with $\|f\| = 1$ and
  $f_i = \varphi_i h$ for $i=1,2$. Thus,
  \begin{equation*}
    h = (\varphi_1^2 +  \varphi_2^2) f^2
    =
    \begin{bmatrix}
      \varphi_1 & \varphi_2
    \end{bmatrix}
    \begin{bmatrix}
      \varphi_1 \\ \varphi_2
    \end{bmatrix} f^2.
  \end{equation*}
  Let $\varphi = \varphi_1^2 + \varphi_2^2 \in \Mult(\mathcal{H})$,
  so that $h = \varphi f^2$.
  The column-row property with constant $1$ implies that $\|\varphi\|_{\Mult(\mathcal{H})} \le 1$.
  Observe that
  \begin{equation*}
    1 = \|h\|_{\mathcal{H} \odot \mathcal{H}}
    = \|\varphi f^2\|_{\mathcal{H} \odot \mathcal{H}}
    \le \|\varphi f\| \|f\| \le \|f\|^2 \le 1
  \end{equation*}
  so equality holds throughout.
  In particular, $\|\varphi f\| = \|f\|$, so $(\varphi,f)$ is a subinner/free outer pair,
  and $\|h\|_{\mathcal{H} \odot \mathcal{H}} = \|f\|^2$.
  Finally,
  note that $\varphi$ is in particular a contractive multiplier
  of  $\mathcal{H} \odot \mathcal{H}$, so
  \begin{equation*}
    \|h \|_{\mathcal{H} \odot \mathcal{H}} \le \|f^2\|_{\mathcal{H} \odot \mathcal{H}}
    \le \|f\|^2 = \|h\|_{\mathcal{H} \odot \mathcal{H}},
  \end{equation*}
  hence $\|h\|_{\mathcal{H} \odot \mathcal{H}} = \|f^2\|_{\mathcal{H} \odot \mathcal{H}}$.
\end{proof}
\section{Subinner functions in weighted Dirichlet spaces}\label{Sec:SubinnerDirichlet}
We explained in the Introduction that subinner functions in normalized complete Pick spaces are extreme points of the unit ball of $\Mult(\HH)$. In this Section we will prove a simple sufficient condition for a function to be subinner. As a Corollary we will see that for example in the Dirichlet space  subinner functions are quite common.

Our approach is based on an observation of Clou\^{a}tre and Davidson,  \cite[Lemma 7.4]{CD16}. It starts with the fact that an operator $A\in \HB(\HH)$ attains its norm, whenever $\|A\|_e<\|A\|$. Indeed, the hypothesis implies that $\|A^*A\|_e< \|A^*A\|=\|A\|^2$. Hence $\sigma(A^*A) \cap \{r\in \R: r >\|A^*A\|_e\}$ consists of isolated eigenvalues of $A^*A$, and since $A^*A$ is positive there must be a nonzero vector $x\in \HH$ such that $A^*Ax=\|A\|^2x$. Then $\|Ax\|=\|A\|\|x\|$. Clou\^{a}tre and Davidson further observed that for the Drury-Arveson space multipliers $\varphi$ that are multiplier norm limits of polynomials have the property that $\|\varphi\|_\infty=\|M_\varphi\|_e$. We will now show that the argument generalizes to many other spaces, and it leads to regularity conditions on $\varphi$ that will be sufficient for $\varphi$ to be subinner.

 Let $d\in \N$. A regular unitarily invariant space is a reproducing kernel Hilbert space $\mathcal{H}$ on  $\mathbb{B}_d$  whose
kernel is of the form
\begin{equation}
 \label{regularKernel} k_w(z) = \sum_{n=0}^\infty a_n \langle z,w \rangle^n,
\end{equation}
where $a_0 = 1, a_n > 0$ for all $n$ and $\frac{a_n}{a_{n+1}} \to 1$. It is well-known that a lemma of Kaluza's can be used to show that $k_w(z)$ will be a normalized complete Pick kernel, whenever $\frac{a_n}{a_{n+1}}$ is non-increasing, see e.g. \cite[Lemma 7.38]{AgMcC}.

If $f=\sum_{n=0}^\infty f_n$ is the expansion of $f$ into homogeneous polynomials of degree $n$, then the norm $\HH$ is given by $$\|f\|^2= \sum_{n=0}^\infty \frac{\|f_n\|^2_{H^2_d}}{a_n}.$$

A one-parameter family of regular unitarily invariant spaces on $\Bd$  is given by the reproducing kernels $$k^{\alpha}_w(z) = \sum_{n=0}^\infty (n+1)^{-{\alpha}} \la z,w\ra^n, \  \  \  \ {\alpha}\in \R.$$ We denote the corresponding Hilbert spaces by $\HD_{\alpha}$.

An application of Kaluza's lemma shows that $k^{\alpha}$ is a complete Pick kernel, whenever ${\alpha}\ge 0$.

A related one-parameter family of reproducing kernels on $\Bd$ is given by
$$\tilde{k}^\alpha_w(z)= \frac{1}{(1-\la z,w\ra)^{1-\alpha}}=\sum_{n=0}^\infty c_n(\alpha) \la z,w\ra^n, \alpha <1.$$ We will write $\tilde{\HD}_\alpha$ for the corresponding space of analytic functions.
The binomial series shows that
 $$c_n(\alpha) = \frac{\prod_{k=1}^n(k-\alpha)}{n!}, \ \ n\in \N.$$ Thus, another application of Kaluza's Lemma implies that $\tilde{k}^\alpha$ is a normalized complete Pick kernel, whenever $0\le \alpha<1$. Furthermore, by Gauss's formula for the Gamma function we have for all $\alpha <1$ \begin{equation} \label{CoeffRatio} \begin{aligned}[m]0<\Gamma(1-\alpha)&=(-\alpha)\Gamma(-\alpha) \\ &= \lim_{n\to \infty} \frac{ n! (n+1)^{-\alpha}}{(1-\alpha)(2-\alpha) \cdots (n-\alpha)}\\&=\lim_{n\to \infty} \frac{(n+1)^{-\alpha}}{c_n(\alpha)},\end{aligned}\end{equation} see e.g. \cite{ConwayFOCV} page 178. This implies that for all $\alpha <1$ we have $\HD_\alpha= \tilde{\HD}_\alpha$ with equivalence of norms.

It is well-known that for ${\alpha}<-d+1$, the norm of $\tilde{\HD}_{\alpha}$ is  a weighted Bergman norm,
$$\|f\|^2_{\tilde{\HD}_\alpha} =c_\alpha \int_{\Bd} |f(z)|^2 (1-|z|^2)^{-d-{\alpha}} dV(z).$$ Here $V$ is used to denote Lebesgue measure on $\Bd$ and $c_\alpha$ is a normalization constant chosen so that $\|1\|^2_{\tilde{\HD}_\alpha}=1$, see e.g. \cite{ZhuBallBook}, Theorem 2.2.

Particular examples are the Drury-Arveson space $H^2_d=\HD_0=\tilde{\HD}_0$, and for $d=1$  we have $D=\HD_1$, the Dirichlet space. We also have $\tilde{\HD}_{1-d}=H^2(\partial \Bd)$ and $\tilde{\HD}_{-d}=L^2_a(dV)$, the unweighted Bergman space, all with equality of norms.

\

Now let $\HH$ be a regular unitarily invariant space. The polynomials are multipliers of $\mathcal{H}$, so we may define $A(\mathcal{H})$ to be the norm
closure of the polynomials inside of $\Mult(\mathcal{H})$. Furthermore, we write $C^*(A(\HH))$ for the $C^*$-algebra generated in $\HB(\HH)$ by $\{M_\varphi: \varphi\in A(\HH)\}$, and we use $K(\HH)$ to denote the compact operators in $\HB(\HH)$.

For us the  significance of being a regular unitarily invariant is that,
just like in the Hardy space,
we have a short exact sequence of $C^*$-algebras
\begin{equation}
  \label{eqn:short_exact}
  0 \to K(\mathcal{H}) \to C^*(A(\mathcal{H})) \to C( \partial \mathbb{B}_d) \to 0,
\end{equation}
where the second map is the inclusion and the third map is the unital $*$-homomorphism that takes a function in $A(\mathcal{H})$ to its boundary values;
see \cite[Theorem 4.6]{GHX04}. It particular, this implies that $M_\varphi$ is essentially normal and the essential norm satisfies $\|M_\varphi\|_e=\|\varphi\|_\infty$, whenever $\varphi \in A(\mathcal{H})$. Thus, by the discussion at the beginning of the section, a multiplier $\varphi\in A(\HH)$ will be subinner, whenever $\|\varphi\|_\infty <\|\varphi\|_{\MuH}=1$.

In order to be able to consider multipliers that may not be in $A(\HH)$ we use a second condition. We say that the one-function Corona Theorem holds for $\MuH$, if whenever $\varphi \in \MuH$ with $|\varphi(z)|\ge c>0$ for all $z\in \Bd$, then $1/\varphi\in \MuH$.  It is known that  the one-function Corona Theorem holds for $\Mult(\HD_{\alpha})$ for all $ {\alpha}\in \R$. For ${\alpha}\le 1-d$ this is trivial since $\Mult(\HD_\alpha)=H^\infty(\Bd)$ in this case, and for ${\alpha}\ge 0$ it is a consequence of the full Corona Theorem, see \cite{CosteaSawWick}. For a proof of just the one-function Corona Theorem see  \cite{FangXia} (${\alpha}=0$) and \cite{RichterSunkes},  Theorem 5.4 for all ${\alpha}<1$. By use of Corollary 3.4 of \cite{AHMRRadBesov} one can extend the proof of \cite{RichterSunkes}  to cover all $\alpha \in \R$.
For an example of a regular unitarily invariant Pick kernel that does not satisfy the one-function Corona Theorem see \cite{AHMcCR_Smirnov}.

If $\MuH$ satisfies the one-function Corona Theorem, then one easily shows that $\sigma(M_\varphi)=\overline{\varphi(\Bd)}$ for every $\varphi\in \MuH$. Thus, if $M_\varphi$ is essentially normal, then its
essential norm equals its essential spectral radius which is dominated by the spectral radius. For such multipliers we  thus obtain $\|M_\varphi\|_e = r_e(M_\varphi) \le r(M_\varphi)=\|\varphi\|_\infty$.

Hence we have proved the following Proposition.

\begin{prop}
  \label{prop:subinner_mult_norm}
  Let $\mathcal{H}$ be a regular unitarily invariant space, let $\varphi \in \MuH$
  and suppose that $\|\varphi\|_\infty < \|\varphi\|_{\Mult(\mathcal{H})} = 1$.

If either  \begin{itemize} \item $\varphi\in A(\HH)$ or \item $\MuH$ satisfies the one-function Corona theorem and $M_\varphi$ is essentially normal,\end{itemize} then  $\varphi$ is subinner, i.e.\ $M_{\varphi}$ attains its norm.
\end{prop}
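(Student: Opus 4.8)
The plan is to exploit the general principle, stated right before the Proposition, that an operator $A \in \HB(\HH)$ attains its norm whenever $\|A\|_e < \|A\|$: indeed then $\|A^*A\|_e < \|A^*A\| = \|A\|^2$, so the part of $\sigma(A^*A)$ above $\|A^*A\|_e$ consists of isolated eigenvalues of finite multiplicity, and positivity of $A^*A$ forces an eigenvector $x \neq 0$ with $A^*Ax = \|A\|^2 x$, whence $\|Ax\| = \|A\|\,\|x\|$. Applying this with $A = M_\varphi$, it suffices to show that the hypotheses in either bullet imply $\|M_\varphi\|_e < \|M_\varphi\| = \|\varphi\|_{\Mult(\HH)} = 1$, and since by assumption $\|\varphi\|_\infty < 1$, it is enough to show $\|M_\varphi\|_e \le \|\varphi\|_\infty$ in both cases. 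Then $M_\varphi$ attains its norm at some nonzero $h$, i.e. $\|\varphi h\| = \|h\| \neq 0$, which is precisely the definition of $\varphi$ being subinner (Definition \ref{subinnerDef}(b)), given that $\|\varphi\|_{\Mult(\HH)} = 1$.

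For the first bullet, where $\varphi \in A(\HH)$: here I invoke the short exact sequence \eqref{eqn:short_exact}, which tells us that the quotient map $C^*(A(\HH)) \to C(\dB)$ sends a function in $A(\HH)$ to its boundary values and has kernel $K(\HH)$. Since the essential norm $\|M_\varphi\|_e$ is exactly the norm of the image of $M_\varphi$ in the Calkin algebra, which by restriction equals its norm in $C^*(A(\HH))/K(\HH) \cong C(\dB)$, we get $\|M_\varphi\|_e = \|\varphi|_{\dB}\|_\infty = \|\varphi\|_\infty$ (the last equality because $\varphi \in A(\HH)$ extends continuously to $\overline{\Bd}$, so its sup over $\Bd$ equals its sup over $\dB$ by the maximum principle applied to the polynomials approximating $\varphi$, together with uniform convergence). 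In fact, as noted in the text, \eqref{eqn:short_exact} already delivers that $M_\varphi$ is essentially normal with $\|M_\varphi\|_e = \|\varphi\|_\infty$ for every $\varphi \in A(\HH)$, so in this case $\|M_\varphi\|_e = \|\varphi\|_\infty < 1 = \|M_\varphi\|$ and we are done.

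For the second bullet, the argument is the one sketched in the two paragraphs preceding the Proposition. Assuming the one-function Corona theorem for $\Mult(\HH)$, one shows $\sigma(M_\varphi) = \overline{\varphi(\Bd)}$: the inclusion $\overline{\varphi(\Bd)} \subseteq \sigma(M_\varphi)$ is standard (if $\lambda = \varphi(z)$ then $M_{\varphi - \lambda}$ is not bounded below, as $(\varphi-\lambda)k_z \perp$ enough), and conversely if $\lambda \notin \overline{\varphi(\Bd)}$ then $\varphi - \lambda$ is bounded away from zero on $\Bd$, so $1/(\varphi-\lambda) \in \Mult(\HH)$, giving invertibility of $M_{\varphi-\lambda}$. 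Hence $r(M_\varphi) = \|\varphi\|_\infty$. Now if $M_\varphi$ is essentially normal, then in the Calkin algebra its image is normal, so $\|M_\varphi\|_e$ equals the essential spectral radius $r_e(M_\varphi)$, which is bounded above by the spectral radius $r(M_\varphi) = \|\varphi\|_\infty < 1$. Therefore $\|M_\varphi\|_e < 1 = \|M_\varphi\|$, the norm is attained, and $\varphi$ is subinner.

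The only point requiring genuine care — and the main obstacle — is the normality step in the second bullet: one must be sure that for a \emph{normal} element $n$ of a $C^*$-algebra the norm equals the spectral radius, and that the image of an essentially normal operator in the Calkin algebra is normal, so that $\|M_\varphi\|_e = r_e(M_\varphi)$; after that the chain $r_e(M_\varphi) \le r(M_\varphi) = \|\varphi\|_\infty$ is routine. Everything else (the norm-attainment principle, the identification of the essential norm with the Calkin norm, and the identification of the Calkin norm with the sup norm in the $A(\HH)$ case via \eqref{eqn:short_exact}) is standard $C^*$-algebra bookkeeping.
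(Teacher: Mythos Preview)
Your proposal is correct and follows essentially the same argument as the paper: the paper proves the proposition in the discussion preceding its statement, using exactly the norm-attainment principle for $\|A\|_e<\|A\|$, the short exact sequence \eqref{eqn:short_exact} to get $\|M_\varphi\|_e=\|\varphi\|_\infty$ for $\varphi\in A(\HH)$, and for the second bullet the chain $\|M_\varphi\|_e=r_e(M_\varphi)\le r(M_\varphi)=\|\varphi\|_\infty$ via essential normality and the one-function Corona theorem. Your justification of $\overline{\varphi(\Bd)}\subseteq\sigma(M_\varphi)$ is a bit garbled; the clean statement is that $M_{\varphi-\lambda}^*k_z=\overline{(\varphi(z)-\lambda)}\,k_z=0$ when $\lambda=\varphi(z)$, so $M_{\varphi-\lambda}$ is not invertible.
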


The following result is well-known; see \cite[Example 1, p. 99]{Shields74} for $d=1$ and \cite[Theorem 4.1]{OF06} for $d \ge 1$.
Since we do not have a convenient reference for the proof, we sketch an argument that uses reproducing kernels.

\begin{lemma}\label{DalphaAlgebra}  Let $d \in \mathbb{N}$.   If $\alpha > 1$, then $A(\HD_\alpha)=\Mult(\mathcal{D}_\alpha) = \mathcal{D}_\alpha$ with equivalence of norms.
\end{lemma}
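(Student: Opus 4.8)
The plan is to show that for $\alpha > 1$ the space $\mathcal{D}_\alpha$ is already a Banach algebra under pointwise multiplication (with an equivalent norm), from which all three identities follow. The key analytic input is that when $\alpha > 1$ the coefficient sequence $a_n = (n+1)^{-\alpha}$ (or equivalently, in view of \eqref{CoeffRatio}, the binomial coefficients $c_n(\alpha)$) is summable in a strong enough sense that the reproducing kernel is bounded on $\overline{\mathbb{B}_d}$, and more importantly that $\sup_{z \in \mathbb{B}_d} \|k_z\|^2 = \sup_z \sum_n a_n |z|^{2n} < \infty$. I would first record this: since $\alpha > 1$, $\sum_{n=0}^\infty a_n < \infty$, so $\mathcal{D}_\alpha \hookrightarrow H^\infty(\mathbb{B}_d)$ contractively up to a constant, by Cauchy--Schwarz on the power series $f(z) = \sum \widehat{f}(\alpha) z^\alpha$ together with $\|f\|_{\mathcal{D}_\alpha}^2 = \sum_n a_n^{-1} \|f_n\|_{H^2_d}^2$.

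The main step is the algebra estimate $\|fg\|_{\mathcal{D}_\alpha} \le C \|f\|_{\mathcal{D}_\alpha}\|g\|_{\mathcal{D}_\alpha}$. I would pass to homogeneous expansions $f = \sum_m f_m$, $g = \sum_n g_n$, so that $(fg)_k = \sum_{m+n=k} f_m g_n$, and use submultiplicativity of the $H^2_d$ norm on homogeneous polynomials, $\|f_m g_n\|_{H^2_d} \le \|f_m\|_{H^2_d}\|g_n\|_{H^2_d}$ (this holds for the Drury--Arveson norm on homogeneous parts — it is the statement that the full Fock/symmetric Fock norm is a cross norm, equivalently $\|f_m g_n\| \le \|f_m\|\,\|g_n\|$ because multiplication by a homogeneous polynomial on $H^2_d$ has norm equal to its $H^2_d$ norm restricted to appropriate pieces). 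Then
\[
\|fg\|_{\mathcal{D}_\alpha}^2 = \sum_k a_k^{-1} \Big\| \sum_{m+n=k} f_m g_n \Big\|_{H^2_d}^2
\le \sum_k a_k^{-1} \Big( \sum_{m+n=k} \|f_m\|_{H^2_d}\|g_n\|_{H^2_d}\Big)^2,
\]
and I would bound this by comparing with the product of the two series $\sum_m a_m^{-1}\|f_m\|^2$ and $\sum_n a_n^{-1}\|g_n\|^2$. The arithmetic heart is the inequality
\[
\sum_{m+n=k} \frac{1}{\sqrt{a_m a_n}} \le C\, \frac{1}{\sqrt{a_k}} \cdot \big(\text{something } \ell^1 \text{ in } k\big),
\]
more precisely that $a_k^{-1/2} \lesssim$ the convolution behaves well; using $a_n \asymp (n+1)^{-\alpha}$ this reduces to the classical fact that for $\alpha > 1$ the sequences $(n+1)^{\alpha/2}$ form a convolution algebra weight, i.e. $(k+1)^{\alpha} \le C \sum_{m+n=k}(m+1)^{\alpha/2}(n+1)^{\alpha/2} \cdot$ (harmless factor), after a Cauchy--Schwarz splitting. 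The cleanest route is: apply Cauchy--Schwarz inside the $m+n=k$ sum against weights $w_m = (m+1)^{-\beta}$ for a suitable $\beta$ with $\alpha/2 < \beta$, reducing to $\sum_m (m+1)^{-2\beta+\alpha} < \infty$ and a weight comparison $(k+1)^{-\alpha} \sum_{m+n=k}(m+1)^{2\beta - \alpha}(\cdots) \le C$; this works precisely because $\alpha > 1$ gives enough room. I expect \emph{this convolution/weight estimate to be the main obstacle} — it is elementary but requires choosing the exponents correctly and is the only place $\alpha > 1$ (rather than $\alpha \ge 1$) is genuinely used.

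Once the algebra inequality is in hand, the rest is soft. Since $1 \in \mathcal{D}_\alpha$ and $\mathcal{D}_\alpha$ is a Banach algebra under an equivalent norm, every $f \in \mathcal{D}_\alpha$ acts as a bounded multiplication operator, so $\mathcal{D}_\alpha \subseteq \Mult(\mathcal{D}_\alpha)$ with $\|f\|_{\Mult} \le C\|f\|_{\mathcal{D}_\alpha}$. The reverse inclusion $\Mult(\mathcal{D}_\alpha) \subseteq \mathcal{D}_\alpha$ is automatic and contractive: $\varphi = \varphi \cdot 1 = M_\varphi 1 \in \mathcal{D}_\alpha$ with $\|\varphi\|_{\mathcal{D}_\alpha} = \|M_\varphi 1\| \le \|M_\varphi\| \|1\| = \|\varphi\|_{\Mult}$. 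Hence $\Mult(\mathcal{D}_\alpha) = \mathcal{D}_\alpha$ with equivalent norms. Finally, for $A(\mathcal{D}_\alpha)$: the polynomials are dense in $\mathcal{D}_\alpha$ in the Hilbert space norm (they span the kernel functions / monomials form an orthogonal basis), and since the $\mathcal{D}_\alpha$ norm dominates the multiplier norm up to a constant, polynomials are also dense in $\mathcal{D}_\alpha = \Mult(\mathcal{D}_\alpha)$ in multiplier norm; therefore $A(\mathcal{D}_\alpha)$, the multiplier-norm closure of the polynomials, equals all of $\Mult(\mathcal{D}_\alpha)$. This chains the three equalities together and completes the proof.
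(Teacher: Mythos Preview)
Your overall strategy is sound and yields a correct proof once the convolution estimate is made precise, but your route differs from the paper's. The paper never proves the Banach algebra inequality $\|fg\|\le C\|f\|\,\|g\|$ directly. Instead it works entirely with reproducing kernels and the Schur product theorem: from the Shields estimate $\sum_{k=0}^n a_k a_{n-k}/a_n\le C^2$ (with $a_n=(n+1)^{-\alpha}$) one gets $C^2 k^\alpha-(k^\alpha)^2\ge 0$; since $\|f\|\le 1$ gives $k^\alpha-f\overline f\ge 0$, multiplying the latter by $k^\alpha$ via Schur and adding yields $k^\alpha(C^2-f\overline f)\ge 0$, i.e.\ $\|M_f\|\le C$. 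The density argument for $A(\HD_\alpha)$ is then identical to yours.

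Both arguments rest on the same arithmetic fact---Shields' convolution bound $\sum_{m+n=k}(m+1)^{-\alpha}(n+1)^{-\alpha}\le C^2(k+1)^{-\alpha}$, which is exactly where $\alpha>1$ is used. The paper's approach is shorter because it bypasses the auxiliary fact $\|f_m g_n\|_{H^2_d}\le\|f_m\|_{H^2_d}\|g_n\|_{H^2_d}$ for homogeneous pieces (true, via the symmetric-Fock picture where multiplication is symmetrization of a tensor product and symmetrization is a projection) and avoids your Cauchy--Schwarz splitting of the $k$-sum. Your approach, on the other hand, produces the stronger statement that $\HD_\alpha$ is a Banach algebra, not just that $\HD_\alpha=\Mult(\HD_\alpha)$; and it would transfer more readily to settings without a nice kernel factorization. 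One small correction: the displayed inequality in your sketch, ``$(k+1)^{\alpha}\le C\sum_{m+n=k}(m+1)^{\alpha/2}(n+1)^{\alpha/2}$'', points in the wrong direction; what you actually need (and what the Cauchy--Schwarz step you describe does produce) is the Shields bound above, which then gives $\sum_k(k+1)^\alpha\bigl(\sum_{m+n=k}\|f_m\|\,\|g_n\|\bigr)^2\le C^2\|f\|^2\|g\|^2$ after inserting and removing the weights $(m+1)^{\alpha/2}$, $(n+1)^{\alpha/2}$.
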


\begin{proof}
  It suffices to show $\mathcal{D}_\alpha = \Mult(\mathcal{D}_\alpha)$. Since polynomials are dense in $\mathcal{D}_\alpha$,
  it then follows that $A(\mathcal{D}_\alpha) = \Mult(\mathcal{D}_\alpha)$.

  Clearly, there is a contractive inclusion $\Mult(\mathcal{D}_a) \subset \mathcal{D}_a$.
  To show a continuous inclusion in the other direction, let $a_n = (n+1)^{-\alpha}$.
  The argument in \cite[Example 1, p. 99]{Shields74} shows that there exists $C \ge 0$ such that
  \begin{equation*}
    \sum_{k=0}^n \frac{a_k a_{n-k}}{a_n} \le C^2 \quad \text{ for all }  n \in \mathbb{N},
  \end{equation*}
  from which it follows that
  \begin{equation}
    \label{eqn:kernel_ineq}
    C^2 k^\alpha - (k^\alpha)^2 \ge 0,
  \end{equation}
  meaning that this function is positive definite.
  Now, if $f \in \mathcal{D}_\alpha$ with $\|f\| \le 1$, then
  \begin{equation*}
    k^\alpha_w(z) - f(z) \overline{f(w)} \ge 0.
  \end{equation*}
  Multplying by $k^\alpha$, using the Schur product theorem and \eqref{eqn:kernel_ineq}, it follows that
  \begin{equation*}
    k^\alpha_w(z) ( C^2 - f(z) \overline{f(w)}) \ge 0,
  \end{equation*}
  so $f$ is a multiplier of multiplier norm at most $C$.
\end{proof}

We obtain the following consequence for subinner functions in $\mathcal{D}_\alpha$ for $\alpha > 1$.

\begin{theorem}\label{subinnerDa_algebra} If ${\alpha}>1$, then $\varphi\in \Mult(\HD_{\alpha})$ is subinner, if and only if $\|\varphi\|_{\Mult(\HD_{\alpha})}=1$.\end{theorem}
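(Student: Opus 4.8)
The forward implication is immediate from Definition~\ref{subinnerDef}(b): a subinner function has multiplier norm one by definition. For the converse, fix $\alpha>1$ and $\varphi\in\Mult(\HD_\alpha)$ with $\|\varphi\|_{\Mult(\HD_\alpha)}=1$; the task is to produce $h\ne0$ in $\HD_\alpha$ with $\|\varphi h\|=\|h\|$. The first step is to record two structural facts. By Lemma~\ref{DalphaAlgebra}, $\varphi\in A(\HD_\alpha)=\HD_\alpha$, so $\varphi$ extends continuously to $\overline{\Bd}$ and, by compactness, $|\varphi(\zeta_0)|=\|\varphi\|_\infty$ for some $\zeta_0\in\overline{\Bd}$. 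Moreover $\HD_\alpha$ is a regular unitarily invariant complete Pick space, since its kernel is $\sum_{n\ge0}a_n\la z,w\ra^n$ with $a_n=(n+1)^{-\alpha}$, $a_0=1$, $a_n>0$, $a_n/a_{n+1}=\big((n+2)/(n+1)\big)^\alpha\to1$, and the ratios $a_n/a_{n+1}$ are non-increasing (Kaluza's lemma).

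If $|\varphi(\zeta_0)|<1$, then $\|\varphi\|_\infty<\|\varphi\|_{\Mult(\HD_\alpha)}=1$ and $\varphi\in A(\HD_\alpha)$, so Proposition~\ref{prop:subinner_mult_norm} applies verbatim and yields that $M_\varphi$ attains its norm; thus $\varphi$ is subinner.

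The remaining case $|\varphi(\zeta_0)|=1$ is where $\alpha>1$ (rather than merely $\alpha\ge0$) enters. Because $\alpha>1$ we have $\sum_{n\ge0}a_n<\infty$, so for every $\zeta\in\overline{\Bd}$ the function $k_\zeta(z)=\sum_{n\ge0}a_n\la z,\zeta\ra^n$ is a genuine element of $\HD_\alpha$, with $\|k_\zeta\|^2=\sum_{n\ge0}a_n|\zeta|^{2n}\le\sum_{n\ge0}a_n$. Since $\sum_{n\ge0}a_n<\infty$, the series $\sum_{n\ge0}a_n\la z,\zeta\ra^n$ converges uniformly on $\overline{\Bd}\times\overline{\Bd}$, so $\zeta\mapsto k_\zeta$ is norm-continuous from $\overline{\Bd}$ into $\HD_\alpha$; hence the reproducing identity $\la f,k_\zeta\ra=f(\zeta)$ and the eigenvector relation $M_\varphi^*k_\zeta=\overline{\varphi(\zeta)}k_\zeta$, valid for $\zeta\in\Bd$, pass to all $\zeta\in\overline{\Bd}$ by continuity. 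Now apply this at $\zeta_0$. Since $k_{\zeta_0}(0)=1$ we have $k_{\zeta_0}\ne0$, and with $T=M_\varphi^*$ we have $\|T\|=\|M_\varphi\|=1$ and $\|Tk_{\zeta_0}\|=|\varphi(\zeta_0)|\,\|k_{\zeta_0}\|=\|k_{\zeta_0}\|$. By Lemma~\ref{contractionLemma}, $M_\varphi M_\varphi^*k_{\zeta_0}=k_{\zeta_0}$. Taking $h=M_\varphi^*k_{\zeta_0}=\overline{\varphi(\zeta_0)}k_{\zeta_0}\ne0$ we get $M_\varphi^*M_\varphi h=M_\varphi^*k_{\zeta_0}=h$, so $\|M_\varphi h\|=\|h\|$ and $\varphi$ is subinner. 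This completes the proof.

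The only subtle point, and the one I expect to be the main obstacle in writing the details carefully, is precisely the case $\|\varphi\|_\infty=1$: it lies outside the strict-inequality hypothesis of Proposition~\ref{prop:subinner_mult_norm}, and is resolved by the fact that for $\alpha>1$ the reproducing kernels $k_\zeta$ survive as elements of $\HD_\alpha$ all the way to the boundary of $\Bd$. Everything else is routine: checking that $\HD_\alpha$ is regular unitarily invariant (so that the short exact sequence \eqref{eqn:short_exact}, and hence Proposition~\ref{prop:subinner_mult_norm}, are available) and invoking Lemma~\ref{DalphaAlgebra}.
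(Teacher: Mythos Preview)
Your proof is correct. The case split and the boundary-kernel argument in Case~2 are valid: for $\alpha>1$ the series $\sum_n (n+1)^{-\alpha}\la z,\zeta\ra^n$ converges uniformly on $\overline{\Bd}\times\overline{\Bd}$, so $\zeta\mapsto k_\zeta$ extends continuously in norm to $\overline{\Bd}$, the eigenvector relation $M_\varphi^*k_\zeta=\overline{\varphi(\zeta)}k_\zeta$ passes to the boundary, and a point $\zeta_0$ with $|\varphi(\zeta_0)|=1$ yields a unit vector on which $M_\varphi^*$ (and then $M_\varphi$) attains norm.

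The paper proceeds differently. Rather than treating $\|\varphi\|_\infty=1$ as a separate case, it shows that this case never occurs for non-constant $\varphi$: since $\alpha>1$ forces $\sup_{z\in\Bd}\|k_z\|<\infty$, Proposition~4.7 of \cite{Serra} (equivalently Lemma~\ref{lem:liminf}) gives $\|\varphi\|_\infty<\|\varphi\|_{\Mult(\HD_\alpha)}$ for every non-constant $\varphi$. Thus Proposition~\ref{prop:subinner_mult_norm} applies directly, and the constant case is trivial. So your Case~2 is in fact vacuous except for constants, but your argument there is nonetheless self-contained and slightly more elementary, since it avoids invoking Serra's strict inequality and produces the norm-attaining vector explicitly. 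The paper's route, on the other hand, yields the extra information that $\|\varphi\|_\infty<1$ always holds for non-constant $\varphi$ when $\alpha>1$.
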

\begin{proof} It is clear that subinner functions must have multiplier norm equal to 1. Let $\varphi\in \Mult(\HD_{\alpha})$  with $\|\varphi\|_{\Mult(\HD_{\alpha})}=1$. If $\varphi$ is constant, then it clearly must be subinner, thus we assume that $\varphi$ is not constant.  Since  ${\alpha}>1$ we note that $\sup_{|z|<1}\|k_z^{\alpha}\|<\infty$, and this implies that  every function $f\in \HD_{\alpha}$ is bounded. Hence Proposition 4.7 of \cite{Serra} (see also Lemma \ref{lem:liminf} below) implies $\|\varphi\|_\infty < \|\varphi\|_{\Mult(\HD_{\alpha})}$. Thus, by Proposition \ref{prop:subinner_mult_norm} and Lemma \ref{DalphaAlgebra} we conclude that $\varphi$ is subinner.    \end{proof}

  \begin{theorem} Let $\HH$ be a reproducing kernel Hilbert space with a normalized complete Pick kernel. If $\HH$ contains an unbounded function, then there is $\psi\in \MuH$ such that $\|\psi\|_{\Mult(\HH)}=1$, but $\psi$ is not subinner.
  \end{theorem}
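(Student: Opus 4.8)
The plan is to prove the contrapositive-flavored statement directly: assuming $\HH$ is a normalized complete Pick space that contains an unbounded function, construct a multiplier $\psi$ with $\|\psi\|_{\Mult(\HH)} = 1$ that is not subinner. The key dichotomy to exploit is that subinner functions are exactly the extreme points of the unit ball of $\Mult(\HH)$ that arise via the Fock space embedding as having a unique isometric lift (Theorem \ref{uniqueLiftofSubinner}), and more usefully here, that by Corollary \ref{subinner has free outer} every subinner $\varphi$ admits a free outer $f$ with $\|\varphi f\| = \|f\| \neq 0$. So it suffices to produce $\psi$ with $\|\psi\|_{\Mult(\HH)} = 1$ such that $M_\psi$ does \emph{not} attain its norm.

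**The construction.** First I would pick an unbounded $f \in \HH$. The natural candidate for $\psi$ is a scalar multiple $\psi = c \varphi_x$ of one of the basic multipliers $\varphi_x(z) = \la u(z), x\ra_\HK$ from Lemma \ref{bMult}, or more robustly, I would leverage that an unbounded function in a complete Pick space forces the kernel ratios $\|k_z\|$ to be unbounded over $X$: if $\sup_z \|k_z\| < \infty$, then $|f(z)| = |\la f, k_z\ra| \le \|f\| \sup_z \|k_z\|$ would bound every $f \in \HH$, a contradiction. Hence $\|k_{z_n}\| \to \infty$ along some sequence $z_n$, equivalently $\|u(z_n)\|_\HK \to 1$. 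Now the idea is: a multiplier $\psi$ of norm $1$ is subinner iff $\|\psi h\| = \|h\|$ for some $h \neq 0$, and by Lemma \ref{contractionLemma} this forces $M_\psi^* M_\psi h = h$, i.e. $1 \in \sigma_p(M_\psi^* M_\psi)$. I would choose $\psi$ so that $M_\psi^* M_\psi$ has spectrum reaching up to $1$ but with $1$ \emph{not} an eigenvalue — the unboundedness of $\HH$ is precisely what gives room for $1$ to sit at the top of the essential spectrum rather than being an isolated eigenvalue. Concretely, consider $\varphi_x$ for a suitable unit vector $x \in \HK$; since $\varphi_x(z) = \la u(z), x\ra$ and $\sup_z \|u(z)\| = 1$, one can arrange $\|\varphi_x\|_\infty = 1$ while choosing $x$ in the direction "towards which $u(z_n)$ accumulates", making $|\varphi_x(z_n)| \to 1$. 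Then $M_{\varphi_x}^* M_{\varphi_x} k_{z_n} = |\varphi_x(z_n)|^2 k_{z_n}$ shows $\|M_{\varphi_x}\| = 1$ (using also $\|\varphi_x\|_{\Mult} \le \|x\| = 1$ from Lemma \ref{bMult}, and that the pointwise values force $\|M_{\varphi_x}\| \ge 1$), while the approximate eigenvectors $k_{z_n}/\|k_{z_n}\|$ do not converge (they tend weakly to $0$ since $\|k_{z_n}\| \to \infty$), so $1$ is in the approximate point spectrum but, if we are careful, not an eigenvalue.

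**Ruling out that $M_\psi$ attains its norm.** The cleanest route: suppose $\psi = \varphi_x$ were subinner, so there is $h \neq 0$ with $M_\psi^* M_\psi h = h$, i.e. $(1 - |\varphi_x|^2) $ annihilates $h$ in the sense $\la (I - M_\psi^* M_\psi) h, h \ra = 0$, which in many of these spaces (and certainly after passing to the Fock picture) forces $h$ to be supported where $|\varphi_x| = 1$ a.e.; but $|\varphi_x(z)| = |\la u(z), x\ra| < \|u(z)\| < 1$ for \emph{every} $z \in X$ by strict Cauchy--Schwarz (unless $u(z)$ is parallel to $x$, and even then $< 1$ since $\|u(z)\| < 1$) — so $|\varphi_x(z)| < 1$ pointwise on $X$, hence $M_\psi^* M_\psi h = h$ would give $|\varphi_x(z)|^2 h(z) + (\text{off-diagonal terms}) = h(z)$, and a direct argument using that a contractive multiplier with $|\psi| < 1$ everywhere cannot fix a nonzero vector (this is essentially Lemma 2.2 of \cite{AHMcCR_Smirnov}, invoked already in the proof of Corollary \ref{freeouter}, combined with Lemma \ref{contractionLemma}) yields $h = 0$, a contradiction. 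The one gap to fill carefully is ensuring $\|\varphi_x\|_{\Mult(\HH)} = 1$ rather than $< 1$, i.e. that $M_{\varphi_x}$ genuinely has norm $1$; this is where I use $|\varphi_x(z_n)| \to 1$ along the sequence with $\|u(z_n)\| \to 1$, choosing $x$ as (a weak-* / weak limit point of) $u(z_n)/\|u(z_n)\|$, so that $\|M_{\varphi_x}\| \ge \sup_n |\varphi_x(z_n)| = 1$.

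**Main obstacle.** The hard part will be the existence of a good direction $x$: in the $d = \infty$ case the vectors $u(z_n)/\|u(z_n)\|$ live in the unit ball of $\ell^2$ and need only converge weakly, so $\la u(z_n), x\ra$ might not approach $1$ even though $\|u(z_n)\| \to 1$ — weak convergence does not preserve norm. To handle this I would instead argue more structurally: either a single $\varphi_x$ works, or one builds $\psi$ as a suitable infinite "product/sum" — e.g. using that $\HH$ unbounded means $\Mult(\HH) \subsetneq \{f : \sup|f| < \infty\} \cap \HH$ fails in the relevant direction — perhaps cleanest is to transfer the whole problem to the Fock space $\Fock$ via the embedding of Section \ref{SecPickspaces}: an unbounded function in $\HH$ lifts to $F \in \HH \subseteq \Fock$, and one constructs a left-inner-\emph{free} but not-subinner multiplier by taking $\psi$ to be the restriction to $X$ of a left inner $\Phi \in \MFockL$ whose compression $P_\HH L_\Phi|\HH$ has $1$ in its essential spectrum but not as an eigenvalue, the failure of norm attainment being forced by the unboundedness (non-compactness of the inclusion data). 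I would try the elementary $\varphi_x$ construction first and fall back on this Fock-space / functional-calculus argument if the weak-convergence issue proves genuinely obstructive.
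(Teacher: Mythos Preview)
Your proposal has a genuine gap in the step ``Ruling out that $M_\psi$ attains its norm.'' You argue that since $|\varphi_x(z)| = |\langle u(z), x\rangle| < 1$ for every $z \in X$, the equation $M_{\varphi_x}^* M_{\varphi_x} h = h$ forces $h = 0$. But this implication is simply false in a complete Pick space. Lemma~2.2 of \cite{AHMcCR_Smirnov} says that \emph{every} non-constant contractive multiplier $\varphi$ satisfies $|\varphi(z)| < 1$ for all $z \in X$ --- yet subinner functions exist (Theorem~\ref{subinner/free outer}), and for each of them there is a nonzero $h$ with $M_\varphi^* M_\varphi h = h$. So the pointwise condition $|\psi| < 1$ on $X$ tells you nothing about whether $M_\psi$ attains its norm; you have conflated the behaviour of $M_\psi^*$ on the kernel functions (where $M_\psi^* k_z = \overline{\psi(z)} k_z$) with the behaviour of $M_\psi^* M_\psi$ on general vectors. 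On top of this, the weak-convergence obstruction you flag for $d = \infty$ is real and unresolved: if $u(z_n)/\|u(z_n)\|$ converges weakly to $0$, no choice of $x$ gives $|\varphi_x(z_n)| \to 1$, so you may never reach $\|\varphi_x\|_{\Mult} = 1$.

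The paper's argument avoids both problems by a completely different device: it invokes the Smirnov-type representation from \cite{AHMcCR_Factor}, which writes an unbounded $f \in \HH$ as $f = \varphi/(1 - \psi)$ where the \emph{column} $(\psi, \varphi)^T$ is contractive on $\HH$. Unboundedness of $f$ forces $\psi(z_n) \to 1$ along some sequence, hence $\|\psi\|_{\Mult} = 1$; and since $\varphi \neq 0$ the column witnesses that $\psi$ is not column extreme, so by Corollary~\ref{subinner has free outer} $\psi$ is not subinner. The point is that ``not column extreme'' is a robust, directly verifiable obstruction to being subinner, whereas ``does not attain its norm'' is what you were trying to check and is much harder to certify.
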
   Serra had already proved that if a reproducing kernel Hilbert space with normalized complete Pick kernel contains an unbounded function, then
 there are functions in $\Mult(\HH)$ such that $\|\varphi\|_\infty=\|\varphi\|_{\Mult(\HH)}$, \cite{Serra}. The proof of our theorem provides an alternate approach to that result.
  \begin{proof} Let $f\in \HH$, $\|f\|=1$ and assume that $f$ is unbounded. By Theorem 1.1 of \cite{AHMcCR_Factor} there are $\varphi, \psi\in \MuH$ such that $\|\psi h\|^2+\|\varphi h\|^2\le \|h\|^2$ for all $h\in \HH$ and
  $f=\varphi/(1-\psi)$. Since $\varphi$ is bounded, we see that the unboundedness of $f$ implies that there are points $z_n$ such that $\psi(z_n)\to 1$. Hence $1\le \|\psi\|_\infty \le \|\psi\|_{\MuH}\le 1$. Thus, $\|\psi\|_{\MuH}= 1$ and it is not column extreme, hence by Corollary \ref{subinner has free outer} it cannot be subinner.
  \end{proof}

 In order to obtain a sufficient condition for being subinner we start by proving a lemma.

\begin{lemma}\label{lem:liminf} Let $\HH_k$ be a reproducing kernel Hilbert space on $\Bd$ with reproducing kernel $k$, which is normalized at 0. If $\varphi\in \Mult(\HH_k)$ is not constant, if $\|\varphi\|_\infty=1$, and if
$$\liminf_{|z|\to 1}(1-|\varphi(z)|^2)\|k_z\|^2 < \frac{1-|\varphi(0)|}{1+|\varphi(0)|},$$ then  $\|\varphi\|_\infty < \|\varphi\|_{\Mult(\HH_k)}$.
\end{lemma}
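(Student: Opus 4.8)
The plan is to argue by contradiction, using only the normalization $k_0 \equiv 1$ and a two-point test of contractivity. Since $M_\varphi^* k_w = \overline{\varphi(w)} k_w$ for a multiplier $\varphi$, one always has $\|\varphi\|_\infty \le \|\varphi\|_{\Mult(\HH_k)}$, so it suffices to exclude the case $\|\varphi\|_{\Mult(\HH_k)} = 1$. We may moreover assume $|\varphi(0)| < 1$: if $|\varphi(0)| = 1$, then $\frac{1-|\varphi(0)|}{1+|\varphi(0)|} = 0$, and the strict inequality in the hypothesis cannot hold because its left-hand side is a $\liminf$ of nonnegative quantities. So assume, toward a contradiction, that $M_\varphi$ is a contraction on $\HH_k$ and $|\varphi(0)| < 1$.

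The key step is to fix $z \in \Bd$ and test $\|M_\varphi^* g\| \le \|g\|$ on the vector $g = a \cdot 1 + b\, k_z$, $a,b \in \C$. Using $1 = k_0$, the reproducing property gives $\langle 1, k_z\rangle = \langle k_z, 1\rangle = k_z(0) = 1$, $\|1\|^2 = 1$, and $\|k_z\|^2 = k_z(z)$, while $M_\varphi^*(a\cdot 1 + b\, k_z) = a\overline{\varphi(0)} + b\overline{\varphi(z)}\, k_z$. Expanding $\|a\cdot 1 + b\, k_z\|^2 - \|M_\varphi^*(a\cdot 1 + b\, k_z)\|^2 \ge 0$ for all $a,b$ then says precisely that the Hermitian matrix
\begin{equation*}
\begin{pmatrix} 1-|\varphi(0)|^2 & 1-\varphi(0)\overline{\varphi(z)} \\ 1-\overline{\varphi(0)}\varphi(z) & (1-|\varphi(z)|^2)\,\|k_z\|^2 \end{pmatrix}
\end{equation*}
is positive semidefinite (this is just the restriction of the positive multiplier kernel $(1-\varphi(\cdot)\overline{\varphi(\cdot)})k(\cdot,\cdot)$ to the pair $\{0,z\}$). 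Taking determinants yields
\begin{equation*}
(1-|\varphi(z)|^2)\,\|k_z\|^2 \;\ge\; \frac{|1-\varphi(0)\overline{\varphi(z)}|^2}{1-|\varphi(0)|^2}.
\end{equation*}

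Finally, I would invoke the elementary estimate $|1 - \varphi(0)\overline{\varphi(z)}| \ge 1 - |\varphi(0)|\,|\varphi(z)| \ge 1 - |\varphi(0)|$, valid because $|\varphi(z)| \le \|\varphi\|_\infty = 1$, to conclude
\begin{equation*}
(1-|\varphi(z)|^2)\,\|k_z\|^2 \;\ge\; \frac{(1-|\varphi(0)|)^2}{1-|\varphi(0)|^2} \;=\; \frac{1-|\varphi(0)|}{1+|\varphi(0)|} \qquad \text{for every } z \in \Bd.
\end{equation*}
Passing to the $\liminf$ as $|z| \to 1$ contradicts the hypothesis, so $\|\varphi\|_{\Mult(\HH_k)} > 1 = \|\varphi\|_\infty$. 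There is no genuine obstacle in this argument; the only points demanding care are the bookkeeping with the normalization $k_0 = 1$ in the two-point Gram computation, and the observation that the step $|1-\varphi(0)\overline{\varphi(z)}| \ge 1-|\varphi(0)|$ is exactly where the assumption $\|\varphi\|_\infty = 1$ (as opposed to merely $\|\varphi\|_{\Mult(\HH_k)} = 1$) enters.
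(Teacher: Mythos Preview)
Your proof is correct and follows essentially the same route as the paper's: argue by contradiction assuming $\|\varphi\|_{\Mult(\HH_k)}=1$, use positivity of the $2\times 2$ Pick matrix at $\{0,z\}$, take the determinant, and bound $|1-\varphi(0)\overline{\varphi(z)}|\ge 1-|\varphi(0)|$ using $\|\varphi\|_\infty=1$. The only cosmetic differences are that the paper picks a sequence $z_n$ realizing the $\liminf$ up front while you prove the pointwise inequality for every $z$ and then pass to the $\liminf$, and that you deduce $|\varphi(0)|<1$ directly from the hypothesis rather than from non-constancy; both are fine.
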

Note that the lemma also implies  Proposition 4.7 of \cite{Serra} that was used in the  proof of Theorem \ref{subinnerDa_algebra}.
\begin{proof} By the hypothesis we may choose $z_n \in \Bd$ such that $|z_n|\to 1$ and $\lim_{n \to \infty}(1-|\varphi(z_n)|^2)\|k_{z_n}\|^2 < \frac{1-|\varphi(0)|}{1+|\varphi(0)|}$.
 Note that since $\varphi$ is not constant, we have $|\varphi(0)|<1$.

We know that $\|\varphi\|_\infty \le \|\varphi\|_{\Mult(\HH_k)}$, so suppose that $\|\varphi\|_{\Mult(\HH_k)}=1$. Then for any $z, w\in \Bd$ the corresponding $2\times 2$ Pick matrix is positive semidefinite. For $w=0$, $z=z_n$ this is easily seen to be equivalent to

$$(1-|\varphi(z_n)|^2)(1-|\varphi(0)|^2)\|k_{z_n}\|^2\ge   |1-\overline{\varphi(0)}\varphi(z_n)|^2.$$

This implies
$${(1-|\varphi(z_n)|^2)}\|k_{z_n}\|^2 \ge  \frac{|1-\overline{\varphi(0)}\varphi(z_n)|^2}{1-|\varphi(0)|^2} \ge \frac{1-|\varphi(0)|}{1+|\varphi(0)|}.$$ This contradicts the hypothesis, hence we conclude $\|\varphi\|_\infty < \|\varphi\|_{\Mult(\HH_k)}$.
\end{proof}
We note that the proof of the Lemma shows that the hypothesis implies that what has been called the  ``2-point multiplier norm of $\varphi$'' is strictly larger than $\|\varphi\|_\infty=1$, see \cite{AHMRSubhomog}. For the Drury-Arveson space it turns out that the 2-point multiplier norm equals the $H^\infty$-norm, hence the lemma cannot be used to construct examples of subinner functions in the Drury-Arveson space, see \cite[Lemma 3.3]{HaRiShalit}.

We will apply the lemma to the spaces $\HD_\alpha$, $0<\alpha \le 1$, and to $\tilde{\HD}_\alpha$, $0<\alpha<1$.

In order to apply Proposition \ref{prop:subinner_mult_norm} and Lemma \ref{lem:liminf} we need to know when multiplication operators are essentially normal. For the Dirichlet space $D$ of the unit disc Lech provided a necessary and sufficient condition for $M_\varphi$ to be essentially normal, see \cite{Lech}. He proved that if $\varphi\in \Mult(D)$, then $M_\varphi$ acts as an essentially normal operator on $D$, if and only if $M_{\varphi'}: D \to L^2_a$ is compact. Then one can for example obtain a simple sufficient condition by calculating the Hilbert-Schmidt norm of $M_{\varphi'}$, see the paper by Brown and Shields \cite{BrownShields}, top of page 303. If $d>1$,
then not even the constant function $1$ yields a Hilbert-Schmidt multiplication operator $\mathcal{D}_1 \to \mathcal{D}_{-1}$, but a condition analogous to the Brown-Shields condition turns out to be sufficient for an essentially normal multiplier.

If $0<\alpha<1$, then the following lemma together with equation (\ref{CoeffRatio}) will show that for $f\in \Mult(\HD_\alpha)$ the multiplication operator $M_f$ acts essentially normally on $\HD_\alpha$, if and only if it is essentially normal on $\tilde{\HD}_\alpha$.
In the following we write $(M_\varphi, \mathcal{H})$ to denote multiplication by $\varphi$ as it acts on $\mathcal{H}$.

\begin{lemma}  \label{similarity} Let $d\in \N$ and let $a=\{a_n\}$ and $b=\{b_n\}$ be sequences of positive real numbers that determine regular unitarily invariant spaces $\HH_a$ and $\HH_b$ on $\Bd$ with kernels $k^a$ and $k^b$ defined by (\ref{regularKernel}). If $$\lim_{n\to \infty} \frac{b_n}{a_n}=c>0,$$ then $\HH_a=\HH_b$ with equivalent norms and if $\varphi\in \Mult(\HH_a)=\Mult(\HH_b)$, then $(M_\varphi, \HH_a)$ is essentially normal, if and only if $(M_\varphi, \HH_b)$ is essentially normal.
\end{lemma}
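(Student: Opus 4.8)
The plan is to exhibit a single unitary $W\colon\HH_a\to\HH_b$, independent of $\varphi$, such that $M_\varphi^{(b)}-W M_\varphi^{(a)}W^{*}$ is compact for every $\varphi\in\Mult(\HH_a)=\Mult(\HH_b)$. Since unitary equivalence and compact perturbations both preserve essential normality, this yields the asserted equivalence. The statement $\HH_a=\HH_b$ with equivalent norms comes for free: writing $f=\sum_n f_n$ for the expansion into homogeneous polynomials, one has $\|f\|_{\HH_a}^2=\sum_n\|f_n\|_{H^2_d}^2/a_n$ and similarly for $b$; since $b_n/a_n\to c>0$ with all terms positive there are constants $0<m\le M<\infty$ with $m\le b_n/a_n\le M$ for all $n$, whence $\tfrac1M\|f\|_{\HH_a}^2\le\|f\|_{\HH_b}^2\le\tfrac1m\|f\|_{\HH_a}^2$. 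Equality of the multiplier algebras then follows, because boundedness of $M_\varphi$ on $\HH_a$ is equivalent to boundedness on $\HH_b$.

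First I would record that the inclusion map $\iota\colon\HH_a\to\HH_b$ is a bounded invertible operator intertwining the multiplications exactly, $\iota M_\varphi^{(a)}=M_\varphi^{(b)}\iota$, so $M_\varphi^{(b)}=\iota M_\varphi^{(a)}\iota^{-1}$; the task is then to understand $\iota$ modulo compacts. A short inner-product computation shows that $\Delta:=\iota^{*}\iota$ is block-diagonal with respect to the orthogonal decomposition of $\HH_a$ into spaces of homogeneous polynomials, acting on the degree-$n$ block as the scalar $a_n/b_n$. This is where $d<\infty$ is used: each homogeneous block of $\HH_a$ on $\Bd$ is finite dimensional, and since $a_n/b_n\to 1/c$, the operator $\Delta-\tfrac1c I$ is a norm limit of finite-rank operators, hence compact. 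As $\Delta$ is positive and bounded below (the ratios $a_n/b_n$ being bounded away from $0$ and $\infty$), the functional calculus gives that $\Delta^{1/2}-\tfrac1{\sqrt c}I$ and $\Delta^{-1/2}-\sqrt c\,I$ are compact as well.

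Next I would take the polar decomposition $\iota=W\Delta^{1/2}$ with $W\colon\HH_a\to\HH_b$ unitary, legitimate since $\iota$ and $\Delta^{1/2}$ are invertible. Then $\iota=\tfrac1{\sqrt c}W+(\text{compact})$ and $\iota^{-1}=\Delta^{-1/2}W^{*}=\sqrt c\,W^{*}+(\text{compact})$. Expanding $M_\varphi^{(b)}=\iota M_\varphi^{(a)}\iota^{-1}$ and using that the compacts form an ideal and that $W$ and $M_\varphi^{(a)}$ are bounded, every cross term is compact, leaving $M_\varphi^{(b)}\equiv W M_\varphi^{(a)}W^{*}\pmod{K(\HH_b)}$. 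Consequently $[M_\varphi^{(b)*},M_\varphi^{(b)}]\equiv W[M_\varphi^{(a)*},M_\varphi^{(a)}]W^{*}\pmod{K(\HH_b)}$, so one self-commutator is compact exactly when the other is, which is the claim. Note that $W$ is fixed once and for all, so the conclusion in fact holds simultaneously for all multipliers with the same $W$.

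The only genuinely delicate point is the assertion that $\Delta$ (equivalently $\Delta^{1/2}$) is a compact perturbation of a scalar, which rests precisely on the finite-dimensionality of the homogeneous blocks, i.e.\ on $d<\infty$, together with $a_n/b_n\to 1/c$. Everything else is routine bookkeeping with the ideal of compact operators and with the polar decomposition, and I would not expect any further obstacles.
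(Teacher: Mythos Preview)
Your proof is correct and follows essentially the same strategy as the paper: both arguments exhibit an invertible intertwining operator between the two multiplication operators, observe that this operator is a scalar multiple of a unitary modulo compacts (using finite dimensionality of the homogeneous blocks and $b_n/a_n\to c$), and conclude that essential normality transfers. The only cosmetic difference is that the paper uses the map $S\colon k^a_w\mapsto k^b_w$ intertwining the adjoints and an explicit diagonal unitary, whereas you use the inclusion $\iota$ and its polar decomposition; the resulting unitary is the same.
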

\begin{proof} It is clear that the hypothesis implies that the spaces coincide and the norms are equivalent. Write $T_x=(M_\varphi, \HH_x)$ for $x=a$ and $x=b$.  Let $S\in \HB(\HH_a,\HH_b)$ be defined by $Sf= \sum_{n=0}^\infty \frac{b_n}{a_n}f_n$, where $f = \sum_{n=0}^\infty f_n$ is the homogeneous expansion of $f$. Then the hypothesis implies that $S$ is invertible and $Sk^a_w=k^b_w$ holds for all $w\in \Bd$.

Then we have $$T_b^*Sk_w^a= \overline{\varphi(w)}k^b_w= ST_a^*k^a_w$$ for all $w\in \Bd$. This implies that $T_b^*S= ST_a^*$. Note that
$Uf= \sum_{n=0}^\infty \frac{\sqrt{a_n}}{\sqrt{b_n}} f_n$ is a Hilbert space isomorphism
$\HH_b\to \HH_a$ and we have
$(UT_b^*U^*)(US)= US T_a^*.$
 The lemma now follows from the fact that $US\in \HB(\HH_a)$ is of the form $US=\sqrt{c}I+K$ for some compact operator $K$.\end{proof}

\

If $f \in \Hol(\Bd)$, then we write  $f=\sum_{n=0}^\infty f_n$ for the expansion of $f$ into homogeneous polynomials. We define the spaces
$$\HD_\alpha^0=\{f\in \Hol(\Bd): \|f\|^2_{\HD_\alpha^0} = \sum_{n=0}^\infty (n+1)^\alpha \log(n+2) \|f_n\|^2_{H^2_d}<\infty\}.$$ We will only need the cases where $\alpha \in \{\pm 1\}$. Let $R = \sum_{i=1}^d z_i \frac{\partial}{\partial z_i}$ be the radial derivative operator, $Rf=\sum_{n=1}^\infty nf_n$.

\begin{lemma}\label{MultiplierEstimate} (a) ($\alpha=1$) There is $c>0$ such that $$\|gRf\|_{\HD_{-1}}\le c \|f\|_{\HD_1^0} \|g\|_{\HD_{1}}$$ for all $g \in {\HD_{1}}, f\in \HD_1^0$.

 (b) If $\alpha <1$, then there is $c>0$ such that
$$\|gRf\|_{\HD_{\alpha-2}} \le c \|f\|_{\HD_{1}}\|g\|_{\HD_\alpha}$$ for all $g\in \HD_\alpha$, $f \in \HD_{1}.$
\end{lemma}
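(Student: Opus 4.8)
The plan is to pass to homogeneous expansions and reduce both inequalities to an elementary weighted convolution estimate for scalar sequences, the only nontrivial input being one structural fact about $H^2_d=\HD_0$. Write $f=\sum_{k\ge 0}f_k$ and $g=\sum_{m\ge 0}g_m$ for the expansions into homogeneous polynomials. Then $Rf=\sum_{k\ge 1}kf_k$, and the degree-$n$ homogeneous part of $gRf$ is $(gRf)_n=\sum_{k=1}^n k\,f_k\,g_{n-k}$, a sum taking place inside the (mutually orthogonal, for distinct $n$) space of homogeneous polynomials of degree $n$, so that for every real $\beta$,
\[
  \|gRf\|_{\HD_\beta}^2=\sum_{n\ge 0}(n+1)^\beta\Big\|\sum_{k=1}^n kf_kg_{n-k}\Big\|_{H^2_d}^2 .
\]
The structural input is a uniform multiplication estimate: there is $C_d<\infty$ (with $C_1=1$) such that $\|pq\|_{H^2_d}\le C_d\|p\|_{H^2_d}\|q\|_{H^2_d}$ for all homogeneous polynomials $p,q$ on $\Bd$. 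This follows from three elementary facts: for $p$ homogeneous of degree $j$ one has $\|p\|_{H^\infty(\Bd)}\le\|p\|_{H^2_d}$ (only the degree-$j$ term of $\frac{1}{1-\langle\,\cdot\,,z\rangle}$ pairs with $p$, and $\|\langle\,\cdot\,,z\rangle^j\|_{H^2_d}=|z|^j$); for homogeneous $h$ of degree $m$ one has $\|h\|_{H^2_d}^2=\binom{m+d-1}{d-1}\|h\|_{H^2(\partial\Bd)}^2$; and $\|pq\|_{H^2(\partial\Bd)}\le\|p\|_{H^\infty(\Bd)}\|q\|_{H^2(\partial\Bd)}$. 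Chaining these, and using whichever of $p,q$ has the smaller degree as the $H^\infty$-factor, gives the bound with $C_d=2^{(d-1)/2}$ after the binomial inequality $\binom{j+k+d-1}{d-1}\le 2^{d-1}\binom{\max(j,k)+d-1}{d-1}$. I would isolate this as a short separate lemma; it is the only place where finiteness of $d$ is genuinely used when $d\ge 2$, and for $d=1$ it is trivial.

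Setting $\phi_k=\|f_k\|_{H^2_d}$ and $\gamma_m=\|g_m\|_{H^2_d}$, so that $\|f\|_{\HD_1}^2=\sum_k(k+1)\phi_k^2$, $\|f\|_{\HD_1^0}^2=\sum_k(k+1)\log(k+2)\phi_k^2$, and $\|g\|_{\HD_\alpha}^2=\sum_m(m+1)^\alpha\gamma_m^2$, the triangle inequality together with the multiplication estimate reduces both (a) and (b) to the scalar statement
\[
  \sum_{n\ge 0}(n+1)^{\alpha-2}\Big(\sum_{k=1}^n k\,\phi_k\gamma_{n-k}\Big)^2\;\lesssim\;\Big(\sum_k(k+1)\,\ell_k\,\phi_k^2\Big)\Big(\sum_m(m+1)^\alpha\gamma_m^2\Big),
\]
where $\ell_k=1$ if $\alpha<1$ and $\ell_k=\log(k+2)$ if $\alpha=1$ (note $\alpha-2=-1$ in case (a)). I would prove this by splitting the inner sum at $k=n/2$. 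On the range $k\le n/2$ (so $n-k$ is the high index, carried by $g$) apply Cauchy--Schwarz in $k$ writing $k\phi_k\gamma_{n-k}=\big(k(k+1)^{-1/2}\phi_k\big)\big((k+1)^{1/2}\gamma_{n-k}\big)$; since $k^2/(k+1)\le k+1$ the $f$-part is absorbed into $\|f\|_{\HD_1}^2$ with no logarithm needed, and after exchanging the order of summation the $g$-part contributes, for each fixed $m$, the inner sum $\sum_{m\le n\le 2m}(n+1)^{\alpha-2}(n-m+1)\lesssim(m+1)^\alpha$, which recombines into $\|g\|_{\HD_\alpha}^2$. On the range $k>n/2$ (so, with $m=n-k$, the high index $n-m=k$ is carried by $f$ — exactly where $R$ lands) apply Cauchy--Schwarz in $m$ writing $(n-m)\phi_{n-m}\gamma_m=\big((n-m)(m+1)^{-\alpha/2}\phi_{n-m}\big)\big((m+1)^{\alpha/2}\gamma_m\big)$; the $g$-part is immediately $\le\|g\|_{\HD_\alpha}^2$, and after exchanging summation and using $(n-m)^2\le(n+1)^2$ the $f$-part contributes, for each (large) $k$, the inner sum $\sum_{n=k}^{2k-1}(n+1)^\alpha(n-k+1)^{-\alpha}\lesssim k^\alpha\sum_{j=1}^k j^{-\alpha}$, which is $\lesssim k$ when $\alpha<1$ and $\lesssim k\log k$ when $\alpha=1$; combined with $\sum_k\phi_k^2$ this is precisely $\lesssim\|f\|_{\HD_1}^2$, resp.\ $\lesssim\|f\|_{\HD_1^0}^2$.

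The main obstacle is really just these two bookkeeping computations, and they make transparent where the hypotheses come from: on the $k\le n/2$ piece everything is subcritical, while on the $k>n/2$ piece the harmonic-type sum $\sum_{j\le k}j^{-\alpha}$ is bounded only for $\alpha>1$, equals $\approx\log k$ at $\alpha=1$, and grows like $k^{1-\alpha}$ for $\alpha<1$ — and in each case it is exactly matched by the weight built into the space on the right: the $\log(k+2)$ of $\HD_1^0$ in the endpoint case (a), and the $(k+1)$ of $\HD_1$ in the subcritical case (b), the extra $k^{1-\alpha}$ there being harmless because it is dominated by the factor $k$ supplied by $R$. (An alternative, perhaps cleaner in the case $d=1$, would be to phrase part (a) as the classical statement that $|Rf|^2\,dV$ is a Carleson measure for the Dirichlet space with Carleson norm $\lesssim\|f\|_{\HD_1^0}^2$, and part (b) in terms of weighted Carleson measures for $\HD_\alpha$; but the coefficient computation above has the merit of working uniformly in $d$ with no extra effort beyond the multiplication estimate.)
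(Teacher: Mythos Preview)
Your argument is correct, but it proceeds along a genuinely different line from the paper. The paper does not pass through the pointwise multiplication bound $\|pq\|_{H^2_d}\le C_d\|p\|_{H^2_d}\|q\|_{H^2_d}$ at all; instead it uses the Schur product theorem on reproducing kernels. For (a) one observes that $\|Rf\|_{\HD_{-1}^0}\le\|f\|_{\HD_1^0}$ is the positive-definiteness statement $k^{(-1,0)}_w(z)-Rf(z)\overline{Rf(w)}\ge 0$, where $k^{(-1,0)}$ is the kernel of $\HD_{-1}^0$; multiplying by $k^1_w(z)$ via Schur and then proving the single coefficient inequality
\[
\sum_{k=0}^n\frac{1}{k+1}\cdot\frac{n-k+1}{\log(n-k+2)}\ \le\ c^2(n+1)
\]
(also by a split at $n/2$) yields $c^2k^{-1}-k^{(-1,0)}k^1\ge 0$, hence $c^2k^{-1}_w(z)-Rf(z)\overline{Rf(w)}\,k^1_w(z)\ge 0$, i.e.\ $Rf\in\Mult(\HD_1,\HD_{-1})$ with norm $\le c$. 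For (b) the paper passes to the equivalent kernels $\tilde k^\alpha_w(z)=(1-\langle z,w\rangle)^{\alpha-1}$ and exploits the \emph{exact} identity $\tilde k^\alpha\,\tilde k^{-1}=\tilde k^{\alpha-2}$, so no coefficient estimate whatsoever is needed there. Thus both routes end up splitting a convolution sum at $n/2$, but on different quantities: the paper's kernel route avoids the auxiliary multiplication lemma (and the resulting $d$-dependent constant $C_d=2^{(d-1)/2}$) and makes (b) essentially a one-liner, while your route has the merit of treating (a) and (b) by a single uniform Cauchy--Schwarz computation that makes explicit exactly where and why the extra logarithm at the endpoint $\alpha=1$ is needed.
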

\begin{proof} (a) Note that $\HD_{-1}^0$ has reproducing kernel $$k^{(-1,0)}_w(z) =\sum_{n=0}^\infty \frac{n+1}{\log(n+2)}\la z,w\ra^n,$$ and, of course, for $\alpha\in \{\pm1\}$ the space $\HD_\alpha$ has reproducing kernel $k^\alpha_w(z)= \sum_{n=0}^\infty (n+1)^{-\alpha}\la z,w\ra^n$.

 Let $f\in \HD_1^0$ with $\|f\|_{\HD_1^0}\le 1$. Then $\|Rf\|_{\HD_{-1}^0}\le 1$ and that is equivalent to
 $$k^{(-1,0)}_w(z)-Rf(z) \overline{Rf(w)}\ge 0.$$ We now multiply that inequality by $k^1_w(z)$ and apply the Schur product theorem to conclude that
 $$k^{(-1,0)}_w(z)k^1_w(z) -Rf(z) \overline{Rf(w)}k^1_w(z)\ge 0.$$ Part (a) of the Lemma will follow once we show that there is $c>0$ such that $$c^2k^{-1}_w(z)-k^{(-1,0)}_w(z)k^1_w(z) \ge 0,$$ since adding both inequalites gives that $R f$ is a multiplier from $\mathcal{D}_1$ to $\mathcal{D}_{-1}$ of norm at most $c$. All three reproducing kernels are power series in $\la z,w\ra$, hence the last inequality can be verified by showing that all coeffcients in the resulting power series are $\ge 0$. Hence we have to show that there is $c>0$ such that for each $n \in \N$
$$\sum_{k=0}^n \frac{1}{k+1} \frac{n-k+1}{\log(n-k+2)} \le c^2(n+1).$$ Thus, we calculate
\begin{align*}\sum_{k=0}^n &\frac{1}{k+1} \frac{n-k+1}{\log(n-k+2)}\\ &= \sum_{0\le k \le n/2} \frac{1}{k+1} \frac{n-k+1}{\log(n-k+2)}+\sum_{n/2<k\le n} \frac{1}{k+1} \frac{n-k+1}{\log(n-k+2)}\\
&\le \sum_{0\le k \le n/2} \frac{1}{k+1} \frac{n+1}{\log(\frac{n}{2}+2)}+\sum_{n/2<k\le n} \frac{1}{\log(2)}\\
&\le (1+\log(\tfrac{n}{2}) ) \frac{n+1}{\log(\frac{n}{2}+2)} + \frac{n}{2\log 2}\\
&\le c^2(n+1).
\end{align*}
This proves (a).

(b) follows similarly noting that for $\alpha < 1$ we can substitute the spaces $\tilde{\HD}_\alpha$ and then use that the reproducing kernels satisfy the identity
  $\tilde{k}^\alpha \tilde{k}^{-1}= \tilde{k}^{\alpha-2}$.
\end{proof}

We now obtain a sufficient condition for essential normality of multiplication operators.
\begin{theorem} \label{essentiallyNormal} Let $d\in \N$, and let $\varphi \in H^\infty(\Bd)$.

(a) If $\varphi \in \HD_1^0$, then $\varphi\in \Mult(\HD_1)$ and  $(M_\varphi, \HD_1)$ is essentially normal.

(b) If $\varphi\in \HD_1$, then for each $\alpha < 1$, we have  $\varphi\in \Mult(\HD_\alpha)= \Mult(\tilde{\HD}_\alpha)$ and  $(M_\varphi, \HD_\alpha)$ and $(M_\varphi, \tilde{\HD}_\alpha)$ are essentially normal.
 \end{theorem}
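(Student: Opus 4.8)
The strategy is to combine the multiplier estimates of Lemma \ref{MultiplierEstimate} with a general criterion for essential normality phrased in terms of the radial derivative and the commutator $[M_\varphi^*, M_\varphi]$. For part (a), fix $\varphi \in \HD_1^0 \cap H^\infty(\Bd)$. Lemma \ref{MultiplierEstimate}(a) with $f = \varphi$ shows that multiplication by $R\varphi$ is bounded $\HD_1 \to \HD_{-1}$; together with $\varphi \in H^\infty(\Bd)$ (which controls the lower-order terms in the product rule $R(\varphi g) = (R\varphi) g + \varphi (Rg)$) this gives $\varphi \in \Mult(\HD_1)$. The key point is that the commutator $[M_\varphi^*, M_\varphi]$ on $\HD_1$ can be expressed, up to a compact perturbation, in terms of the operator $M_{R\varphi}$ viewed as a map $\HD_1 \to \HD_{-1}$: the difference between the $\HD_1$ norm and the $H^2_d$ norm is measured by the radial derivative weighted into $\HD_{-1}$, and the ``diagonal'' operator that implements this identification is the identity plus a compact operator on the relevant scale (this uses $a_n/a_{n+1} \to 1$). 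So $M_\varphi$ is essentially normal on $\HD_1$ if and only if $M_{R\varphi} \colon \HD_1 \to \HD_{-1}$ is compact. The logarithmic gain in $\HD_1^0$ over $\HD_1$ is exactly what upgrades the boundedness in Lemma \ref{MultiplierEstimate}(a) to compactness: one approximates $\varphi$ by the partial sums of its homogeneous expansion $\varphi^{(N)} = \sum_{n \le N} \varphi_n$, notes that each $M_{R \varphi^{(N)}}$ is finite rank, and estimates $\|M_{R(\varphi - \varphi^{(N)})}\|_{\HD_1 \to \HD_{-1}} \le c \|\varphi - \varphi^{(N)}\|_{\HD_1^0} \to 0$ by the same Schur-product argument applied to the tail of the expansion. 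Hence $M_{R\varphi}$ is a norm limit of finite-rank operators, so compact, and $(M_\varphi, \HD_1)$ is essentially normal.

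For part (b), the mechanism is parallel but one scale lower. Fix $\alpha < 1$ and $\varphi \in \HD_1 \cap H^\infty(\Bd)$. By \eqref{CoeffRatio} we have $\HD_\alpha = \tilde{\HD}_\alpha$ with equivalent norms, and by Lemma \ref{similarity} (with $a_n = (n+1)^{-\alpha}$ and $b_n = c_n(\alpha)$, whose ratio tends to $\Gamma(1-\alpha)^{-1} > 0$) essential normality of $M_\varphi$ on $\HD_\alpha$ is equivalent to essential normality on $\tilde{\HD}_\alpha$, so it suffices to work on $\tilde{\HD}_\alpha$. Lemma \ref{MultiplierEstimate}(b) gives that $M_{R\varphi}$ is bounded $\HD_\alpha \to \HD_{\alpha - 2}$, and combined with $\varphi \in H^\infty(\Bd)$ and the product rule this yields $\varphi \in \Mult(\HD_\alpha) = \Mult(\tilde{\HD}_\alpha)$. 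As in part (a), the commutator $[M_\varphi^*, M_\varphi]$ on $\HD_\alpha$ is, modulo compacts, controlled by $M_{R\varphi} \colon \HD_\alpha \to \HD_{\alpha-2}$ — here one uses that $\tilde k^{\alpha} \tilde k^{-1} = \tilde k^{\alpha - 2}$, so that the weight shift induced by one radial derivative is precisely the passage from $\tilde{\HD}_\alpha$ to $\tilde{\HD}_{\alpha-2}$. To get compactness one again approximates $\varphi$ by homogeneous partial sums $\varphi^{(N)}$; since $\varphi \in \HD_1$, the tails satisfy $\|\varphi - \varphi^{(N)}\|_{\HD_1} \to 0$, so $\|M_{R(\varphi - \varphi^{(N)})}\|_{\HD_\alpha \to \HD_{\alpha-2}} \to 0$ by Lemma \ref{MultiplierEstimate}(b), and $M_{R\varphi}$ is compact. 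Therefore $(M_\varphi, \tilde{\HD}_\alpha)$, and hence $(M_\varphi, \HD_\alpha)$, is essentially normal.

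The main obstacle I anticipate is making precise the reduction ``essential normality of $M_\varphi$ $\Leftrightarrow$ compactness of $M_{R\varphi}$ into the appropriate weighted space.'' One must write $M_\varphi^* M_\varphi - M_\varphi M_\varphi^*$ explicitly in terms of the reproducing-kernel data, identify the operator $D$ that intertwines the $\HH_a$ inner product with the $H^2_d$ inner product (the map $f \mapsto \sum_n a_n^{-1/2} f_n$, essentially), and check that $D$ differs from a scalar multiple of the identity by a compact operator on the relevant scale — this is where the hypothesis $a_n/a_{n+1} \to 1$ enters. A clean way to organize this is to observe that on a regular unitarily invariant space the operator $M_{z_i}^* M_{z_i} - M_{z_i} M_{z_i}^*$ is compact (indeed this is the content of the short exact sequence \eqref{eqn:short_exact}), extend multiplicatively to polynomials, and then use a density/approximation argument reducing the general multiplier case to the polynomial case controlled by the $M_{R\varphi}$ estimate. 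The bookkeeping with the product rule $R(\varphi g) = (R\varphi)g + \varphi Rg$ and the boundedness of $M_\varphi$ on the ambient Hardy-type space (which follows from $\varphi \in H^\infty(\Bd)$) is routine once this framework is set up.
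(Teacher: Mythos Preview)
Your proposal has the right ingredients --- Lemma \ref{MultiplierEstimate}, compactness of $M_{R\varphi}$ between shifted scales, approximation of $\varphi$ --- but the logical skeleton differs from the paper's in a way that leaves a genuine gap.

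The paper does \emph{not} try to express $[M_\varphi^*,M_\varphi]$ on $\HD_\alpha$ directly in terms of $M_{R\varphi}:\HD_\alpha\to\HD_{\alpha-2}$. Instead it runs an induction in steps of $2$ along the scale: $(R+I)$ is a unitary $\HD_t\to\HD_{t-2}$, and the product rule gives $(R+I)(M_\varphi,\HD_t)=(M_\varphi,\HD_{t-2})(R+I)+M^t_{R\varphi}$. Thus if $M^t_{R\varphi}:\HD_t\to\HD_{t-2}$ is compact, then $(M_\varphi,\HD_t)$ is a compact perturbation of a unitary conjugate of $(M_\varphi,\HD_{t-2})$, so essential normality propagates upward. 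The base of the induction is the weighted Bergman range $\alpha<1-d$, where $M_\varphi^*M_\varphi-M_\varphi M_\varphi^*=H_{\overline\varphi}^*H_{\overline\varphi}$ and one invokes the known fact that the big Hankel operator is compact iff $\varphi$ lies in the little Bloch space; the hypothesis $\varphi\in\HD_1$ forces little Bloch via $(1-|z|^2)|R\varphi(z)|\le\|\varphi\|_{\HD_1}$. Your ``direct'' reduction sidesteps this base case and the intertwining relation, and you yourself flag it as the main obstacle; as stated it is not a proof.

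Two smaller points. First, $M_{R\varphi^{(N)}}:\HD_1\to\HD_{-1}$ is \emph{not} finite rank --- multiplication by a nonzero polynomial is injective on an infinite-dimensional space. What is true (and what the paper uses, with dilations $\varphi_r$ rather than partial sums) is that $R\varphi^{(N)}$ is a multiplier of $\HD_1$ into itself, and the inclusion $\HD_1\hookrightarrow\HD_{-1}$ is compact since $d<\infty$; hence $M_{R\varphi^{(N)}}$ factors through a compact map. Second, in part (b) the induction requires compactness of $M^t_{R\varphi}$ for \emph{all} $t\le\alpha$, not just at the single level $t=\alpha$; Lemma \ref{MultiplierEstimate}(b) gives exactly this for every $t<1$.
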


\begin{proof} We start by discussing the situation when $\alpha <1-d$. Then $\tilde{\HD}_\alpha$ is a weighted Bergman space contained in $L^2(c_\alpha(1-|z|^2)^{-d-\alpha}dV)$ and for $\varphi\in H^\infty(\Bd)$ we have $M_\varphi^*M_\varphi-M_\varphi M_{\varphi}^*= H_{\overline{\varphi}}^* H_{\overline{\varphi}},$ where $H_{\overline{\varphi}}= (I-P_\alpha)M{\overline{\varphi}}|\tilde{\HD}_\alpha$ is the big Hankel operator with symbol $\overline{\varphi}$. Here we used $P_\alpha$ to denote the orthogonal projection of $L^2(c_\alpha(1-|z|^2)^{-d-\alpha}dV)$ onto $\tilde{\HD}_\alpha$. It is known that $H_{\overline{\varphi}}$ is compact, if and only if $\varphi$ is in the little Bloch space, i.e. if $\lim_{|z|\to 1} (1-|z|^2)|R\varphi(z)|=0$, see \cite[Corollary 24]{Zhu92}.

Next we note that for $f\in \HD_1$ and $z\in \Bd$ we have $(1-|z|^2)|Rf(z)|\le \|Rf\|_{\HD_{-1}}\le \|f\|_{\HD_1}$ by a standard estimate for point evaluations. Hence if $\varphi\in \HD_1$, and if $\varphi_r(z)=\varphi(rz)$ for $0<r<1$, then
\begin{align*}\limsup_{|z|\to 1} (1-|z|^2)|R\varphi(z)| &=\limsup_{|z|\to 1} (1-|z|^2)|R(\varphi-\varphi_r)(z)| \\ &\le \|\varphi-\varphi_r\|_{\HD_1} \to 0 \text{ as }r\to 1.\end{align*} Hence $\varphi$ is in the little Bloch space, and thus the hypotheses in (a) and (b) both imply that $(M_\varphi, \tilde{\HD}_\alpha)$ is essentially normal for all $\alpha <1-d$. Then by Lemma \ref{similarity} the same is true for  $(M_\varphi, {\HD}_\alpha)$,  $\alpha <1-d$, and from now on we will just consider the case of $(M_\varphi, {\HD}_\alpha)$.

Fix $\alpha \le 1$, and  observe that for all $t\in \R$ the operator $R+I$ is unitary from $\HD_t$ onto $\HD_{t-2}$.  Furthermore, the product rule implies that $(R+I)(M_\varphi,\HD_t) =(M_\varphi,\HD_{t-2}) (R+I) +M^t_{R\varphi}$, where $M^t_{R\varphi}: \HD_t\to \HD_{t-2}, f \to (R\varphi)f$. Thus, an easy induction argument implies that both parts (a) and (b) of the  lemma follow, if we show that the hypotheses imply that $M^t_{R\varphi}$ is compact for all $t \le \alpha$.

(a) Let $\varphi\in \HD_1^0$. Lemma \ref{MultiplierEstimate} implies that $M^1_{R\varphi}\in \HB(\HD_1,\HD_{-1})$. Note that for $r<1$ the operator $R\varphi_r$ is a multiplier of $\HD_1$ into itself. Since $d<\infty$ the inclusion $\HD_1\to \HD_{-1}$ is compact, hence $M^1_{R\varphi_r}$ is compact.
By Lemma \ref{MultiplierEstimate} again there is $c>0$ such that for all $0\le r<1$ we have $\|M^1_{R\varphi}-M^1_{R\varphi_r}\|_{\HB(\HD_1,\HD_{-1})}\le c\|\varphi-\varphi_r\|_{\HD_1^0}$. It is clear that $\|\varphi-\varphi_r\|_{\HD_1^0}\to 0$, hence $M^1_{R\varphi}$ is a norm limit of compact operators and hence it is compact as well.

In order to finish the proof of (a) and to prove (b) it suffices  to show that $M^t_{R\varphi}$ is compact for all $t<1$, whenever $\varphi\in \HD_1$. The proof is similar to the one just given, except that we substitute the use of Lemma \ref{MultiplierEstimate} (b), where we had used part (a) of that same lemma.
\end{proof}

If $0<\beta \le 1$, then we write  $\mathrm{Lip} \beta$ for the holomorphic functions on $\Bd$ that satisfy a Lipschitz condition of order $\beta$, and we write $\mathrm{lip}\beta$ for those $f\in \mathrm{Lip}\beta$ that satisfy $$\limsup_{|z-w|\to 0} \frac{|f(z)-f(w)|}{|z-w|^\beta}=0.$$
We also let $\mathbb{A}(\mathbb{B}_d)$ denote the ball algebra, i.e.\ the algebra of all holomorphic functions on $\mathbb{B}_d$ that extend
to be continuous on $\overline{\mathbb{B}_d}$.
\begin{theorem} \label{DiriSubinner} Let $d\in \N$.

(a) If  $0 < \alpha < 1$ and $\varphi\in \mathrm{lip}(1-\alpha)\cap \HD_1$, then  $\varphi \in  \Mult(\mathcal{D}_\alpha)$ and $\varphi$ is subinner if and only if $\|\varphi\|_{\Mult(\mathcal{D}_\alpha)} = 1$.

(b) ($\alpha=1$) If $\varphi\in \mathbb{A}(\Bd)\cap \HD_1^0$ and such that
 $\lim_{r\to 1} \log\frac{1}{1-r} |\varphi(z)-\varphi(rz)|=0$ for all $z\in \dB$.
Then $\varphi \in  \Mult(\mathcal{D}_1)$ and $\varphi$ is subinner if and only if $\|\varphi\|_{\Mult(\mathcal{D}_1)} = 1$.
\end{theorem}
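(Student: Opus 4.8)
The plan is to reduce both parts to Proposition~\ref{prop:subinner_mult_norm} by verifying its two hypotheses: that $\|\varphi\|_\infty < \|\varphi\|_{\Mult(\mathcal{D}_\alpha)}$ and that $M_\varphi$ is essentially normal on $\mathcal{D}_\alpha$ (using that $\Mult(\mathcal{D}_\alpha)$ satisfies the one-function Corona theorem, as recalled in the text). The ``only if'' direction is trivial in both parts, since subinner functions have multiplier norm $1$ by definition. So assume $\|\varphi\|_{\Mult(\mathcal{D}_\alpha)}=1$; we may also assume $\varphi$ is non-constant, the constant case being immediate.

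First I would check membership in the multiplier algebra and essential normality. In part~(a), $\varphi\in\mathcal{D}_1$ gives $\varphi\in\Mult(\mathcal{D}_\alpha)$ and essential normality of $(M_\varphi,\mathcal{D}_\alpha)$ directly from Theorem~\ref{essentiallyNormal}(b); in part~(b), $\varphi\in\mathcal{D}_1^0$ gives the same for $(M_\varphi,\mathcal{D}_1)$ from Theorem~\ref{essentiallyNormal}(a). Next I would establish the strict inequality $\|\varphi\|_\infty<\|\varphi\|_{\Mult(\mathcal{D}_\alpha)}$ via Lemma~\ref{lem:liminf}. For this one needs to control $(1-|\varphi(z)|^2)\|k_z^\alpha\|^2$ as $|z|\to1$. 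By Gauss's formula \eqref{CoeffRatio} we have $\|k_z^\alpha\|^2=k_z^\alpha(z)\asymp (1-|z|^2)^{-(1-\alpha)}$ for $0\le\alpha<1$ (and in case~(b), $\alpha=1$, one has $\|k_z^1\|^2\asymp\log\frac{1}{1-|z|^2}$, since $k_w^1(z)=\sum(n+1)^{-1}\la z,w\ra^n$). So it suffices to show
$$(1-|\varphi(z)|^2)\,(1-|z|^2)^{-(1-\alpha)}\to 0 \quad\text{as }|z|\to1$$
in case~(a), and $(1-|\varphi(z)|^2)\log\frac{1}{1-|z|^2}\to0$ in case~(b); then the $\liminf$ in Lemma~\ref{lem:liminf} is $0$, which is $<\frac{1-|\varphi(0)|}{1+|\varphi(0)|}$ because $|\varphi(0)|<1$. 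Since $\|\varphi\|_\infty\le1$, the key pointwise bound is $1-|\varphi(z)|^2\le 2(1-|\varphi(z)|)\le 2|\varphi(\zeta)-\varphi(z)|$ for a suitable boundary point $\zeta$ with $|\varphi(\zeta)|=1$ (or, if no such boundary point exists, $\|\varphi\|_\infty<1$ and we are done immediately). The $\mathrm{lip}(1-\alpha)$ hypothesis in~(a) gives $1-|\varphi(z)|^2 = o(1)\cdot(1-|z|)^{1-\alpha}$ as $|z|\to1$, which is exactly what is needed; in~(b) the hypothesis $\lim_{r\to1}\log\tfrac{1}{1-r}\,|\varphi(z)-\varphi(rz)|=0$ for $z\in\partial\mathbb{B}_d$, combined with $\varphi$ being continuous on $\overline{\mathbb{B}_d}$, yields the corresponding $o\big(1/\log\tfrac{1}{1-|z|}\big)$ estimate for $1-|\varphi(z)|^2$.

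With the strict inequality $\|\varphi\|_\infty<1=\|\varphi\|_{\Mult(\mathcal{D}_\alpha)}$ and essential normality both in hand, Proposition~\ref{prop:subinner_mult_norm} applies (using the one-function Corona theorem branch) and shows $M_\varphi$ attains its norm, i.e.\ $\varphi$ is subinner. I expect the main obstacle to be the boundary estimate in part~(b): one must convert the radial-limit hypothesis $\log\tfrac{1}{1-r}|\varphi(z)-\varphi(rz)|\to0$ into a statement about $1-|\varphi(z)|$ near a full two-dimensional neighborhood of the boundary (not just along radii), which requires combining continuity on $\overline{\mathbb{B}_d}$ with the fact that $\|\varphi\|_\infty=1$ forces $|\varphi|$ to be close to $1$ only near the set where $|\varphi|=1$ on the sphere; a uniform-continuity / compactness argument on $\overline{\mathbb{B}_d}$ together with the radial estimate should close the gap, but the bookkeeping of how fast $1-|z|$ and $\mathrm{dist}(z/|z|,\{|\varphi|=1\})$ interact is the delicate point. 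The $\mathrm{lip}(1-\alpha)$ case~(a) is cleaner since the Lipschitz modulus directly bounds $1-|\varphi(z)|$ in terms of $1-|z|$ uniformly.
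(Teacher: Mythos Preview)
Your approach is essentially the paper's: apply Theorem~\ref{essentiallyNormal} for essential normality, use the one-function Corona branch of Proposition~\ref{prop:subinner_mult_norm}, and verify $\|\varphi\|_\infty<1$ via Lemma~\ref{lem:liminf}. The argument is correct, but you are over-complicating the last step and manufacturing an obstacle that is not there.

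Lemma~\ref{lem:liminf} only asks for a $\liminf$, so it is enough to exhibit \emph{one} sequence $z_n\to\partial\Bd$ along which $(1-|\varphi(z_n)|^2)\|k_{z_n}\|^2\to 0$. The paper simply normalizes $\psi=\varphi/\|\varphi\|_\infty$ (so that Lemma~\ref{lem:liminf} applies with $\|\psi\|_\infty=1$), picks a single boundary point $w$ where $|\psi(w)|=1$, and takes $z=rw$. Then $1-|\psi(rw)|^2\le 2|\psi(w)-\psi(rw)|$, and since $|w-rw|=1-r$, the $\mathrm{lip}(1-\alpha)$ hypothesis in~(a) gives $|\psi(w)-\psi(rw)|=o((1-r)^{1-\alpha})$, while in~(b) the hypothesis is precisely $|\varphi(w)-\varphi(rw)|=o\big(1/\log\tfrac{1}{1-r}\big)$. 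Combined with $\|k_{rw}^\alpha\|^2\asymp(1-r)^{-(1-\alpha)}$ for $0<\alpha<1$ and $\|k_{rw}^1\|^2\asymp\log\tfrac{1}{1-r}$, this yields the required $\liminf=0$ along that radius. There is no need for a uniform estimate near the whole boundary, no compactness argument on $\overline{\Bd}$, and no interaction between $1-|z|$ and the distance of $z/|z|$ to the set $\{|\varphi|=1\}$. Your claim that $(1-|\varphi(z)|^2)(1-|z|^2)^{-(1-\alpha)}\to 0$ for \emph{all} $z\to\partial\Bd$ is in fact false (consider $z$ approaching a boundary point where $|\varphi|<1$), but fortunately it is also unnecessary.
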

By Lemma \ref{similarity} it is clear that a similar theorem holds for $\tilde{\HD}_\alpha$, $0 < \alpha < 1$. We also note that for $\alpha=0$ (the Drury-Arveson space) no direct analogue of this theorem can hold. Indeed, the function $\varphi(z)=\frac{1+z_1}{2}$ is a polynomial of multiplier norm 1 that is not subinner in $\HD_0=H^2_d$.
\begin{proof}  (a) The Lipschitz condition implies that $\varphi\in \mathbb{A}(\Bd)$, hence Theorem \ref{essentiallyNormal} (b) implies that $\varphi \in  \Mult(\mathcal{D}_\alpha)$ and $M_\varphi$ is essentially normal.

 Suppose $\|\varphi\|_{\Mult(\mathcal{D}_\alpha)} = 1$.   The assertion is trivial if $\varphi$ is constant, so we assume that $\varphi$ is non-constant. Then by Proposition \ref{prop:subinner_mult_norm} and the one function Corona theorem for $\mathcal{D}_\alpha$, it suffices to show that $\|\varphi\|_\infty <1.$

  The regularity assumption on $\varphi$ implies that the function  $\psi=\varphi/\|\varphi\|_\infty$ extends to be continuous on $\overline{\Bd}$. Suppose that $|\psi|$ attains its maximum 1 at $w\in \dB$, then for $z=rw\in \Bd$ we have
  \begin{align*} (1-|\psi(z)|^2)\|k_z\|^2 &\le 2|\psi(w)-\psi(rw)|\|k_{rw}\|^2\\
  &\lesssim \frac{|\varphi(w)-\varphi(rw)|}{|w -rw|^{1-\alpha}}\to 0 \ \ \text{ as }r\to 1,\end{align*}
  where we used that $\|k_{r w}\|^2 \approx 1/(1 - r)^{1 - \alpha}$.
  Thus, Lemma \ref{lem:liminf} implies that $\|\varphi\|_\infty<\|\varphi\|_{\Mult(\HD_{\alpha})}=1$.

  The proof of (b) is similar, it relies on Lemma \ref{essentiallyNormal} (a).
\end{proof}

Note that in conjunction with Example 10.3 the theorem shows that for every $|\lambda|\ge \sqrt{2}$ the function $(z-\lambda)/\|z-\lambda\|_{\Mult(D)}$ is both subinner and free outer in the Dirichlet space $D$.

\bibliography{FreeOuterBiblio}

\begin{thebibliography}{10}

\bibitem{AgMcC_completePickKernels}
Jim Agler and John~E. McCarthy.
\newblock Complete {N}evanlinna-{P}ick kernels.
\newblock {\em J. Funct. Anal.}, 175(1):111--124, 2000.

\bibitem{AgMcC}
Jim Agler and John~E. McCarthy.
\newblock {\em Pick interpolation and {H}ilbert function spaces}, volume~44 of
  {\em Graduate Studies in Mathematics}.
\newblock American Mathematical Society, Providence, RI, 2002.

\bibitem{ARSBeurling}
A.~Aleman, S.~Richter, and C.~Sundberg.
\newblock Beurling's theorem for the {B}ergman space.
\newblock {\em Acta Math.}, 177(2):275--310, 1996.

\bibitem{AHMcCR_Smirnov}
Alexandru Aleman, Michael Hartz, John~E. McCarthy, and Stefan Richter.
\newblock The {S}mirnov class for spaces with the complete {P}ick property.
\newblock {\em J. Lond. Math. Soc. (2)}, 96(1):228--242, 2017.

\bibitem{AHMcCR_Factor}
Alexandru Aleman, Michael Hartz, John~E. McCarthy, and Stefan Richter.
\newblock Factorizations induced by complete {N}evanlinna-{P}ick factors.
\newblock {\em Adv. Math.}, 335:372--404, 2018.

\bibitem{AHMRRadBesov}
Alexandru Aleman, Michael Hartz, John~E. McCarthy, and Stefan Richter.
\newblock Radially weighted {B}esov spaces and the {P}ick property.
\newblock In {\em Analysis of operators on function spaces}, Trends Math.,
  pages 29--61. Birkh\"{a}user/Springer, Cham, 2019.

\bibitem{AHMRSubhomog}
Alexandru Aleman, Michael Hartz, John~E. McCarthy, and Stefan Richter.
\newblock Multiplier tests and subhomogeneity of multiplier algebras.
\newblock {\em arXiv:2008.00981}, 2020.

\bibitem{AHMcCR_WP}
Alexandru Aleman, Michael Hartz, John~E. McCarthy, and Stefan Richter.
\newblock Weak products of complete {P}ick spaces.
\newblock {\em Indiana Univ. Math. J.}, 70(1):325--352, 2021.

\bibitem{AlRiSuPtmu}
Alexandru Aleman, Stefan Richter, and Carl Sundberg.
\newblock Nontangential limits in {$P^t(\mu)$}-spaces and the index of
  invariant subspaces.
\newblock {\em Ann. of Math. (2)}, 169(2):449--490, 2009.

\bibitem{ARSW_Bilinear}
Nicola Arcozzi, Richard Rochberg, Eric Sawyer, and Brett~D. Wick.
\newblock Bilinear forms on the {D}irichlet space.
\newblock {\em Anal. PDE}, 3(1):21--47, 2010.

\bibitem{AriasPopescu}
Alvaro Arias and Gelu Popescu.
\newblock Factorization and reflexivity on {F}ock spaces.
\newblock {\em Integral Equations Operator Theory}, 23(3):268--286, 1995.

\bibitem{BallTrentVinnikov}
Joseph~A. Ball, Tavan~T. Trent, and Victor Vinnikov.
\newblock Interpolation and commutant lifting for multipliers on reproducing
  kernel {H}ilbert spaces.
\newblock In {\em Operator theory and analysis ({A}msterdam, 1997)}, volume 122
  of {\em Oper. Theory Adv. Appl.}, pages 89--138. Birkh\"{a}user, Basel, 2001.

\bibitem{BoriHe}
Alexander Borichev and H{\aa}kan Hedenmalm.
\newblock Harmonic functions of maximal growth: invertibility and cyclicity in
  {B}ergman spaces.
\newblock {\em J. Amer. Math. Soc.}, 10(4):761--796, 1997.

\bibitem{BrownShields}
Leon Brown and Allen~L. Shields.
\newblock Cyclic vectors in the {D}irichlet space.
\newblock {\em Trans. Amer. Math. Soc.}, 285(1):269--303, 1984.

\bibitem{Ca_Diri}
Lennart Carleson.
\newblock A representation formula for the {D}irichlet integral.
\newblock {\em Math. Z.}, 73:190--196, 1960.

\bibitem{CD16}
Rapha\"{e}l Clou\^{a}tre and Kenneth~R. Davidson.
\newblock Duality, convexity and peak interpolation in the {D}rury-{A}rveson
  space.
\newblock {\em Adv. Math.}, 295:90--149, 2016.

\bibitem{CRW}
R.~R. Coifman, R.~Rochberg, and Guido Weiss.
\newblock Factorization theorems for {H}ardy spaces in several variables.
\newblock {\em Ann. of Math. (2)}, 103(3):611--635, 1976.

\bibitem{ConwayFOCV}
John~B. Conway.
\newblock {\em Functions of one complex variable}, volume~11 of {\em Graduate
  Texts in Mathematics}.
\newblock Springer-Verlag, New York-Berlin, second edition, 1978.

\bibitem{CosteaSawWick}
\c{S}erban Costea, Eric~T. Sawyer, and Brett~D. Wick.
\newblock The corona theorem for the {D}rury-{A}rveson {H}ardy space and other
  holomorphic {B}esov-{S}obolev spaces on the unit ball in {$\mathbb C^n$}.
\newblock {\em Anal. PDE}, 4(4):499--550, 2011.

\bibitem{DavidsonHamilton}
Kenneth~R. Davidson and Ryan Hamilton.
\newblock Nevanlinna-{P}ick interpolation and factorization of linear
  functionals.
\newblock {\em Integral Equations Operator Theory}, 70(1):125--149, 2011.

\bibitem{DavidsonLe}
Kenneth~R. Davidson and Trieu Le.
\newblock Commutant lifting for commuting row contractions.
\newblock {\em Bull. Lond. Math. Soc.}, 42(3):506--516, 2010.

\bibitem{DavidsonPitts}
Kenneth~R. Davidson and David~R. Pitts.
\newblock Invariant subspaces and hyper-reflexivity for free semigroup
  algebras.
\newblock {\em Proc. London Math. Soc. (3)}, 78(2):401--430, 1999.

\bibitem{DuSchu}
Peter Duren and Alexander Schuster.
\newblock {\em Bergman spaces}, volume 100 of {\em Mathematical Surveys and
  Monographs}.
\newblock American Mathematical Society, Providence, RI, 2004.

\bibitem{EKM+14}
Omar El-Fallah, Karim Kellay, Javad Mashreghi, and Thomas Ransford.
\newblock {\em A primer on the {D}irichlet space}, volume 203 of {\em Cambridge
  Tracts in Mathematics}.
\newblock Cambridge University Press, Cambridge, 2014.

\bibitem{FangXia}
Quanlei Fang and Jingbo Xia.
\newblock Corrigendum to ``{M}ultipliers and essential norm on the
  {D}rury-{A}rveson space''.
\newblock {\em Proc. Amer. Math. Soc.}, 141(1):363--368, 2013.

\bibitem{Frazho}
Arthur~E. Frazho.
\newblock Complements to models for noncommuting operators.
\newblock {\em J. Funct. Anal.}, 59(3):445--461, 1984.

\bibitem{Gamelin69}
Theodore~W. Gamelin.
\newblock {\em Uniform algebras}.
\newblock Prentice-Hall, Inc., Englewood Cliffs, N. J., 1969.

\bibitem{Garnett}
John~B. Garnett.
\newblock {\em Bounded analytic functions}, volume~96 of {\em Pure and Applied
  Mathematics}.
\newblock Academic Press, Inc. [Harcourt Brace Jovanovich, Publishers], New
  York-London, 1981.

\bibitem{GrRiSu}
Devin C.~V. Greene, Stefan Richter, and Carl Sundberg.
\newblock The structure of inner multipliers on spaces with complete
  {N}evanlinna-{P}ick kernels.
\newblock {\em J. Funct. Anal.}, 194(2):311--331, 2002.

\bibitem{GHX04}
Kunyu Guo, Junyun Hu, and Xianmin Xu.
\newblock Toeplitz algebras, subnormal tuples and rigidity on reproducing
  {$\mathbf C[z_1,\dots,z_d]$}-modules.
\newblock {\em J. Funct. Anal.}, 210(1):214--247, 2004.

\bibitem{HartzColumnRow}
Michael Hartz.
\newblock Every complete pick space satisfies the column-row property.
\newblock {\em arXiv:2005.09614}, 2020.

\bibitem{HaRiShalit}
Michael Hartz, Stefan Richter, and Orr Shalit.
\newblock von {N}eumann's inequality for row contractive matrix tuples.
\newblock {\em arXiv:2109.08550}, 2021.

\bibitem{HeKoZhu}
Haakan Hedenmalm, Boris Korenblum, and Kehe Zhu.
\newblock {\em Theory of {B}ergman spaces}, volume 199 of {\em Graduate Texts
  in Mathematics}.
\newblock Springer-Verlag, New York, 2000.

\bibitem{HeJaSh}
H{\aa}kan Hedenmalm, Stefan Jakobsson, and Sergei Shimorin.
\newblock A maximum principle \`a la {H}adamard for biharmonic operators with
  applications to the {B}ergman spaces.
\newblock {\em C. R. Acad. Sci. Paris S\'{e}r. I Math.}, 328(11):973--978,
  1999.

\bibitem{JuMa3}
Michael~T. Jury and Robert T.~W. Martin.
\newblock Extremal multipliers of the {D}rury-{A}rveson space.
\newblock {\em Proc. Amer. Math. Soc.}, 146(10):4293--4306, 2018.

\bibitem{JuMa2}
Michael~T. Jury and Robert T.~W. Martin.
\newblock Factorization in weak products of complete {P}ick spaces.
\newblock {\em Bull. Lond. Math. Soc.}, 51(2):223--229, 2019.

\bibitem{JuMa1}
Michael~T. Jury and Robert T.~W. Martin.
\newblock The {S}mirnov classes for the {F}ock space and complete {P}ick
  spaces.
\newblock {\em Indiana Univ. Math. J.}, 70(1):269--284, 2021.

\bibitem{Lech}
Jaroslaw Lech.
\newblock Essentially normal multiplication operators on the {D}irichlet space.
\newblock {\em Michigan Math. J.}, 42(1):127--140, 1995.

\bibitem{LuoRi}
Shuaibing Luo and Stefan Richter.
\newblock Hankel operators and invariant subspaces of the {D}irichlet space.
\newblock {\em J. Lond. Math. Soc. (2)}, 91(2):423--438, 2015.

\bibitem{McCullCaraInt}
Scott McCullough.
\newblock Carath\'{e}odory interpolation kernels.
\newblock {\em Integral Equations Operator Theory}, 15(1):43--71, 1992.

\bibitem{McCullNePickInt}
Scott McCullough.
\newblock The local de {B}ranges-{R}ovnyak construction and complete
  {N}evanlinna-{P}ick kernels.
\newblock In {\em Algebraic methods in operator theory}, pages 15--24.
  Birkh\"{a}user Boston, Boston, MA, 1994.

\bibitem{McCRi}
Scott McCullough and Stefan Richter.
\newblock Bergman-type reproducing kernels, contractive divisors, and
  dilations.
\newblock {\em J. Funct. Anal.}, 190(2):447--480, 2002.

\bibitem{McCulloughTrent}
Scott McCullough and Tavan~T. Trent.
\newblock Invariant subspaces and {N}evanlinna-{P}ick kernels.
\newblock {\em J. Funct. Anal.}, 178(1):226--249, 2000.

\bibitem{Nikolski}
Nikolai~K. Nikolski.
\newblock {\em Treatise on the shift operator}, volume 273 of {\em Grundlehren
  der Mathematischen Wissenschaften [Fundamental Principles of Mathematical
  Sciences]}.
\newblock Springer-Verlag, Berlin, 1986.
\newblock Spectral function theory, With an appendix by S. V. Khrushch\"{e}v
  and V. V. Peller, Translated from the Russian by Jaak Peetre.

\bibitem{OF06}
Joaqu\'{\i}n Ortega and Joan F\`abrega.
\newblock Multipliers in {H}ardy-{S}obolev spaces.
\newblock {\em Integral Equations Operator Theory}, 55(4):535--560, 2006.

\bibitem{PopescuCommutantL}
Gelu Popescu.
\newblock Isometric dilations for infinite sequences of noncommuting operators.
\newblock {\em Trans. Amer. Math. Soc.}, 316(2):523--536, 1989.

\bibitem{GeluPop}
Gelu Popescu.
\newblock Multi-analytic operators and some factorization theorems.
\newblock {\em Indiana Univ. Math. J.}, 38(3):693--710, 1989.

\bibitem{GeluPopMultiA_I}
Gelu Popescu.
\newblock Multi-analytic operators on {F}ock spaces.
\newblock {\em Math. Ann.}, 303(1):31--46, 1995.

\bibitem{PopescuJFA2006}
Gelu Popescu.
\newblock Free holomorphic functions on the unit ball of {$B( H)^n$}.
\newblock {\em J. Funct. Anal.}, 241(1):268--333, 2006.

\bibitem{GeluPopMulti_II}
Gelu Popescu.
\newblock Free holomorphic functions on the unit ball of {$B(\mathcal H)^n$}.
  {II}.
\newblock {\em J. Funct. Anal.}, 258(5):1513--1578, 2010.

\bibitem{Quiggin}
Peter Quiggin.
\newblock For which reproducing kernel {H}ilbert spaces is {P}ick's theorem
  true?
\newblock {\em Integral Equations Operator Theory}, 16(2):244--266, 1993.

\bibitem{Range86}
R.~Michael Range.
\newblock {\em Holomorphic functions and integral representations in several
  complex variables}, volume 108 of {\em Graduate Texts in Mathematics}.
\newblock Springer-Verlag, New York, 1986.

\bibitem{RiDiri}
Stefan Richter.
\newblock Invariant subspaces of the {D}irichlet shift.
\newblock {\em J. Reine Angew. Math.}, 386:205--220, 1988.

\bibitem{RiSuMMJ}
Stefan Richter and Carl Sundberg.
\newblock A formula for the local {D}irichlet integral.
\newblock {\em Michigan Math. J.}, 38(3):355--379, 1991.

\bibitem{RiSuTAMS}
Stefan Richter and Carl Sundberg.
\newblock Invariant subspaces of the {D}irichlet shift and pseudocontinuations.
\newblock {\em Trans. Amer. Math. Soc.}, 341(2):863--879, 1994.

\bibitem{RiSuWeakProd}
Stefan Richter and Carl Sundberg.
\newblock Weak products of {D}irichlet functions.
\newblock {\em J. Funct. Anal.}, 266(8):5270--5299, 2014.

\bibitem{RichterSunkes}
Stefan Richter and James Sunkes.
\newblock Hankel operators, invariant subspaces, and cyclic vectors in the
  {D}rury-{A}rveson space.
\newblock {\em Proc. Amer. Math. Soc.}, 144(6):2575--2586, 2016.

\bibitem{RiYi}
Stefan Richter and Faruk Yilmaz.
\newblock Regularity for generators of invariant subspaces of the {D}irichlet
  shift.
\newblock {\em J. Funct. Anal.}, 277(7):2117--2132, 2019.

\bibitem{SaloShalitSham1}
Guy Salomon, Orr~M. Shalit, and Eli Shamovich.
\newblock Algebras of bounded noncommutative analytic functions on subvarieties
  of the noncommutative unit ball.
\newblock {\em Trans. Amer. Math. Soc.}, 370(12):8639--8690, 2018.

\bibitem{Sarason67}
Donald Sarason.
\newblock Generalized interpolation in {$H^{\infty }$}.
\newblock {\em Trans. Amer. Math. Soc.}, 127:179--203, 1967.

\bibitem{Serra}
Ant\'{o}nio Serra.
\newblock Multipliers of minimal norm on {D}irichlet type spaces.
\newblock {\em J. Operator Theory}, 52(2):293--302, 2004.

\bibitem{Shields74}
Allen~L. Shields.
\newblock Weighted shift operators and analytic function theory.
\newblock In {\em Topics in operator theory}, pages 49--128. Math. Surveys, No.
  13. Amer. Math. Soc., 1974.

\bibitem{Shi_alpha}
S.~M. Shimorin.
\newblock On a family of conformally invariant operators.
\newblock {\em Algebra i Analiz}, 7(2):133--158, 1995.

\bibitem{ShiASS}
S.~M. Shimorin.
\newblock Approximate spectral synthesis in the {B}ergman space.
\newblock {\em Duke Math. J.}, 101(1):1--39, 2000.

\bibitem{Zhu92}
Kehe Zhu.
\newblock B{MO} and {H}ankel operators on {B}ergman spaces.
\newblock {\em Pacific J. Math.}, 155(2):377--395, 1992.

\bibitem{ZhuBallBook}
Kehe Zhu.
\newblock {\em Spaces of holomorphic functions in the unit ball}, volume 226 of
  {\em Graduate Texts in Mathematics}.
\newblock Springer-Verlag, New York, 2005.

\end{thebibliography}
\end{document}